\newtheorem{thm}{Theorem}[subsection]
\newtheorem{prop}[thm]{Proposition}
\newtheorem{lem}[thm]{Lemma}
\newtheorem{dfn}[thm]{Definition}
\newtheorem{rem}[thm]{Remark}
\newtheorem{conj}[thm]{Conjecture}
\numberwithin{equation}{section}
\newcommand{\npmod}[1]{\!\!\pmod{#1}}
\newcommand{\nnpmod}[1]{\!\!\!\!\pmod{#1}}
\newenvironment{proof}{\par\noindent{\bf[Proof]}}%
                      {$\blacksquare$\noindent\par\vspace{0.5\baselineskip}}
                      {$\blacksquare$\par\noindent}
\font\b=cmr10 scaled \magstep4
\def\bigzerou{\smash{\lower0.7ex\hbox{\b 0}}}
\def\bigastl{\smash{\lower0.7ex\hbox{\b *}}}
\def\bigastu{\smash{\lower2.7ex\hbox{\b *}}}
\def\addots{\mathinner
    {\mkern1mu\raise1pt\hbox{.}\mkern2mu
    \raise4pt\hbox{.}\mkern2mu\raise7pt\vbox{\kern7pt\hbox{.}}\mkern1mu}}
\newcommand{\numsubsection}{\@startsection%
  {subsection}%
  {2}%
  {0mm}%
  {\baselineskip}%
  {-0.2\parindent}%
{\normalfont\large\upshape\bfseries}}%
\def\@dotsep{1.5}
\def\@pnumwidth{1em}
\newcommand{\numsubsubsection}{\@startsection%
  {subsubsection}%
  {2}%
  {0mm}%
  {\baselineskip}%
  {-0.2\parindent}%
{\normalfont\normalsize\upshape\bfseries}}%
\def\@dotsep{1.5}
\def\@pnumwidth{1em}
\title{On certain supercuspidal representations of symplectic groups
  associated with\\ tamely ramified extensions :\\
  the formal degree conjecture and\\
  the root number conjecture} 
\author{Koichi Takase
        \thanks{The author is partially supported by 
                    JSPS KAKENHI Grant Number JP 16K05053}}
\date{}
\begin{document}   


%
%

\maketitle


\section{Introduction}
\label{sec:introduction}

\numsubsection{}
\label{subsec:conjectures-for-split-semi-simple-group}
Let $F/\Bbb Q_p$ be a finite extension with $p\neq 2$ whose 
integer ring $O_F$ has unique maximal ideal $\frak{p}_F$ wich
is generated by $\varpi_F$. The residue class field 
$\Bbb F=O_F/\frak{p}_F$ is a finite field of $q$-elements. The Weil
  group of $F$ is denoted by $W_F$ which is a subgroup of the absolute
  Galois group $\text{\rm Gal}(\overline F/F)$ where 
$\overline F$ is a fixed algebraic closure of $F$ in which we will
  take the algebraic extensions of $F$.

Let $G$ be a connected semi-simple linear algebraic group defined over
$F$. For the sake of simplicity, we will assume that $G$ splits over
$F$. Then the $L$-group $^LG$ of $G$ is equal to the dual group 
$G\sphat$ of $G$. 
An admissible representation
$$
 \varphi:W_F\times SL_2(\Bbb C)\to\,^LG
$$
of the Weil-Deligne group of $F$ is called a discrete parameter of $G$
over $F$ if the 
centralizer $\mathcal{A}_{\varphi}=Z_{\,^LG}(\text{\rm Im}\varphi)$ of
the image of $\varphi$ in  $^LG$ is a finite
group. Let us denote by $\mathcal{D}_F(G)$ the $G\sphat$-conjugacy classes
of the discrete parameters of $G$ over $F$. The conjectural
parametrization of $\text{\rm Irr}_2(G)$ (resp. $\text{\rm Irr}_s(G)$),
the set of the equivalence 
classes of the irreducible admissible square-integrable (resp. 
supercuspidal) representations of $G$, by $\mathcal{D}_F(G)$ is 
(see \cite[p.483, Conj.7.1]{Gross-Reeder2010} for the details)

\begin{conj}
\label{conjecture:lamglamds-parameter-of-square-integrable-rep}
For every $\varphi\in\mathcal{D}_F(G)$, there exists a finite subset 
$\Pi_{\varphi}$ of $\text{\rm Irr}_2(G)$ such that
\begin{enumerate}
\item $\text{\rm Irr}_2(G)
       =\bigsqcup_{\varphi\in\mathcal{D}_F(G)}\Pi_{\varphi}$,
\item there exists a bijection of $\Pi_{\varphi}$ onto the equivalence
  classes $\mathcal{A}_{\varphi}\sphat$ 
  of the irreducible complex linear representations of 
  $\mathcal{A}_{\varphi}$,
\item $\Pi_{\varphi}\subset\text{\rm Irr}_s(G)$ if 
      $\varphi|_{SL_2(\Bbb C)}=1$.
\end{enumerate}
The finite set $\Pi_{\varphi}$ is called a $L$-packet of $\varphi$.
\end{conj}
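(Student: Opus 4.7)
The statement above is a deep open conjecture whose resolution in full generality goes beyond current technology; my plan is therefore to sketch a programme that handles the parameters for which explicit constructions are available, and to identify the principal obstruction. I would first use clause (3) to separate cases: when $\varphi|_{SL_2(\Bbb C)}=1$ the packet should consist of supercuspidals, while for nontrivial $SL_2$-component the packets should be obtained by Aubert--Zelevinsky-type duality from tempered unipotent packets, reducing the square-integrable problem to the supercuspidal one together with the tempered non-supercuspidal classification.

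For the main case I would follow the DeBacker--Reeder--Kaletha strategy. Given a tame regular $\varphi\in\mathcal{D}_F(G)$, namely one whose restriction to the inertia group $I_F$ factors through a maximal torus $T\sphat\subset G\sphat$ with regular centralizer, local class field theory together with a choice of $\chi$-data should translate $\varphi$ into a pair $(S,\chi)$ consisting of an elliptic maximal $F$-torus $S$ of $G$ and a suitably regular character $\chi$ of $S(F)$. To each such pair one attaches a supercuspidal $\pi(S,\chi)$ by compact induction from an Adler--Yu datum, and takes $\Pi_{\varphi}$ to be the collection of $\pi(S',\chi')$ as $(S',\chi')$ ranges over $G(F)$-conjugacy classes of stable conjugates of $(S,\chi)$. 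Clause (2) should then follow from the canonical identification $\mathcal{A}_{\varphi}\simeq\pi_0(Z_{G\sphat}(\varphi(W_F)))$ together with Tate--Nakayama duality on $S$, which matches $\mathcal{A}_{\varphi}\sphat$ with precisely this set of rational classes.

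Clause (1) splits into disjointness, which follows from distinctness of supercuspidal supports under Yu's genericity hypotheses, and exhaustion --- that every element of $\text{\rm Irr}_2(G)$ is of the form $\pi(S,\chi)$ --- which in the tame regular case is due to Kaletha. The main obstacle is the wild and non-regular case: when $\varphi(P_F)$, with $P_F$ the wild inertia, interacts nontrivially with the reductive quotients of the Moy--Prasad filtration, or when $Z_{G\sphat}(\varphi(I_F))$ fails to be a torus, there is no uniform construction of $\Pi_{\varphi}$ at present. Even within the tame regular regime, verifying stability of the virtual character $\sum_{\pi\in\Pi_{\varphi}}\dim(\rho_\pi)\,\Theta_\pi$ and the endoscopic character identities --- which are what ultimately pin down the partition in (1) --- constitutes the real technical burden, and is the step I would expect to devour most of the proof.
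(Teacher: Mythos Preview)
The statement you are addressing is labeled in the paper as a \emph{Conjecture}, not a theorem, and the paper offers no proof of it whatsoever. It is introduced in the introduction purely as background: the conjectural Langlands parametrization is recalled in order to motivate the formal degree conjecture (Conjecture~\ref{conjecture:formal-degree-conjecture}) and the root number conjecture (Conjecture~\ref{conjecture:root-number-conjecture}), which are the actual objects the paper verifies for certain explicit supercuspidal representations of $Sp_{2n}(F)$. There is therefore nothing in the paper to compare your proposal against.

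Your proposal correctly recognizes that this is an open problem in general and outlines the DeBacker--Reeder--Kaletha approach in the tame regular case, which is indeed the state of the art and is consistent with the paper's own use of Kaletha's method in Section~\ref{sec:kaleta-l-parameter} to \emph{construct candidate parameters}, not to establish the full partition in clause~(1). Your identification of wild and non-regular parameters as the principal obstruction is accurate. But since the paper makes no claim to prove the conjecture, your sketch is not a proof attempt that succeeds or fails relative to the paper --- it is a survey of what is known, which is appropriate given that no proof exists.
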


According to this conjecture, any $\pi\in\text{\rm Irr}_2(G)$ is
determined by $\varphi\in\mathcal{D}_F(G)$ and 
$\sigma\in\mathcal{A}_{\varphi}\sphat$. So the formal degree of $\pi$
should be determined by these data. The formal degree conjecture due
to Hiraga-Ichino-Ikeda \cite{HiragaIchinoIkeda2008} is 
(with the formulation of \cite{Gross-Reeder2010}) 

\begin{conj}\label{conjecture:formal-degree-conjecture}
The formal degree $d_{\pi}$
of $\pi$ with respect to the absolute value of the Euler-Poincar\'e
measure (see {\rm \cite[$\S 3$]{Serre1971}} for the details) 
on $G(F)$ is equal to
$$
 \frac{\dim\sigma}
      {|\mathcal{A}_{\varphi}|}\cdot
 \left|
 \frac{\gamma(\varphi,\text{\rm Ad},\psi,d(x),0)}
      {\gamma(\varphi_0,\text{\rm Ad},\psi,d(x),0)}\right|.
$$
\end{conj}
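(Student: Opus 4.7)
Since Conjecture \ref{conjecture:formal-degree-conjecture} is stated in general but the paper concerns a specific family of supercuspidal representations of $Sp_{2n}$ built from a tamely ramified extension $E/F$, the plan is to verify it explicitly for these representations by computing both sides in closed form and comparing. By the construction, the supercuspidal $\pi$ is realised as $\pi=\text{c-Ind}_K^{G(F)}\,\rho$ where $K\subset G(F)=Sp_{2n}(F)$ is an open subgroup compact modulo the centre and $\rho$ is an irreducible representation of $K$ assembled from a character of the elliptic maximal torus $T(F)$ attached to $E/F$ together with a Heisenberg--Weil component; the candidate $L$-parameter on the other side is an induction $\varphi=\mathrm{Ind}_{W_E}^{W_F}\chi$, where $\chi$ corresponds to the toral character via local class field theory, composed with the embedding $\widehat{T}\hookrightarrow\widehat{G}=Sp_{2n}(\mathbb{C})$.

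On the automorphic side, the standard formal-degree identity for compact induction yields $d_\pi=\dim\rho/\mathrm{vol}(K/Z)$ for any Haar measure, so the first task is to express $\mathrm{vol}(K/Z)$ with respect to the absolute value of the Euler--Poincar\'e measure. Using Serre's volume formula together with the Moy--Prasad filtration of $K$ associated to $T$, this becomes an explicit power of $q$ depending on the ramification index $e(E/F)$ and the depth of the defining character. On the Galois side, one first identifies the centralizer $\mathcal{A}_\varphi$ and the distinguished $\sigma\in\widehat{\mathcal{A}_\varphi}$ corresponding to $\pi$; then $\mathrm{Ad}\circ\varphi$ decomposes under the torus action as a sum of Lie-algebra characters $\chi^\alpha$ indexed by the roots of $T$ in $G$, plus a toral piece, and the absolute value $|\gamma(\varphi,\mathrm{Ad},\psi,dx,0)|$ collapses via Tate's local constants to $q^{-a(\mathrm{Ad}\circ\varphi)/2}$, where $a(\cdot)$ denotes the Artin conductor exponent.

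The remaining task is to match the two explicit expressions, and this is where the bulk of the work lies. The conductor $a(\chi^\alpha)$ of each root character is controlled by how the root $\alpha$ pairs with the cocharacter governing the depth of $\chi$, so the sum $\sum_\alpha a(\chi^\alpha)$ on the Galois side must be shown to equal (twice the logarithm, base $q$, of) the ratio $\mathrm{vol}(K/Z)/\dim\rho$ on the automorphic side, up to the correction factor $|\mathcal{A}_\varphi|/\dim\sigma$ and the normalisation by $\varphi_0$. For $Sp_{2n}$ the root system of type $C_n$ has short and long roots that behave asymmetrically under $E/F$, so aligning these root-by-root ramification contributions with the filtration jumps of $K$ is the main obstacle; the tameness hypothesis is what keeps all of the contributions in closed form and eliminates wild-ramification corrections in the conductor exponents.
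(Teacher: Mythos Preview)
Your overall strategy---compute both sides explicitly and compare---is the same as the paper's, but several concrete details are wrong or missing in ways that would prevent the verification from going through.

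First, a factual error: the dual group of $Sp_{2n}$ is $SO_{2n+1}(\mathbb{C})$, not $Sp_{2n}(\mathbb{C})$, and $\text{Ad}\circ\varphi$ acts on $\widehat{\frak g}=\frak{so}_{2n+1}(\mathbb{C})$. This affects everything downstream, including which root system governs the decomposition you propose.

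Second, and more seriously, your claim that $|\gamma(\varphi,\text{Ad},\psi,d(x),0)|$ ``collapses via Tate's local constants to $q^{-a(\text{Ad}\circ\varphi)/2}$'' is false. The $\gamma$-factor is $\varepsilon\cdot L(\varphi^{\vee},\text{Ad},1)/L(\varphi,\text{Ad},0)$, and the $L$-factor ratio is \emph{not} $1$ in general. In the paper's setting, when $K/K_+$ is unramified one has $L(\varphi,\text{Ad},s)=(1+q^{-f_+s})^{-1}$, so the $L$-factor contributes a factor $2/(1+q^{-f_+})$ to $|\gamma|$; this is precisely what matches the $1/(1+q^{-f_+})$ appearing in $\dim\delta_{\beta,\theta}$ on the automorphic side. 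Omitting it, your two sides would not agree. (Also, $|\varepsilon|=q^{+a/2}$, not $q^{-a/2}$.)

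Third, you gesture at ``normalisation by $\varphi_0$'' but do not compute $\gamma(\varphi_0,\text{Ad},\psi,d(x),0)$. This is not a triviality: the principal parameter has $\varphi_0|_{W_F}$ trivial but $N_0\neq 0$, and one must determine $\widehat{\frak g}_{N_0}$ and the action of the Frobenius on it. In the paper this produces $q^{n^2}\prod_{k=1}^n(1-q^{-(2k-1)})/(1-q^{-2k})$, and it is exactly this factor that cancels against Serre's Euler--Poincar\'e normalisation constant on the automorphic side.

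Finally, on method: the paper does not use Moy--Prasad filtrations or a root-by-root conductor sum. The inducing subgroup is the hyperspecial $Sp_{2n}(O_F)$, with $\dim\delta_{\beta,\theta}$ computed directly from Clifford theory; on the Galois side the paper observes $\text{Ad}\circ\varphi\simeq{\bigwedge}^2\varphi=({\bigwedge}^2\varphi_1)\oplus\varphi_1$ with $\varphi_1=\text{Ind}_{K^{\times}}^{W_{K/F}}\widetilde\vartheta$, and computes $a(\text{Ad}\circ\varphi)$ and $L(\varphi,\text{Ad},s)$ by evaluating $\dim\widehat{\frak g}^{V_t}$ along the ramification filtration using the character of $\varphi_1$. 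Your root-character decomposition could in principle yield the same numbers, but you would still need the $L$-factor and $\varphi_0$ pieces above to close the argument.
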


Here 
$$
 \gamma(\varphi,\text{\rm Ad},\psi,d(x),s)
 =\varepsilon(\varphi,\text{\rm Ad},d(x),s)\cdot
  \frac{L(\varphi^{\vee},\text{\rm Ad},1-s)}
       {L(\varphi,\text{\rm Ad},s)}
$$
is the gamma-factor associated with the $\varphi$ combined with the
adjoint representation $\text{\rm Ad}$ of $G\sphat$ on 
its Lie algebra $\frak{g}\sphat$, and a continuous
additive character $\psi$ of $F$ such that 
$\{x\in F\mid\psi(xO_F)=1\}=O_F$ 
and the Haar measure $d(x)$ on the additive group $F$ such that 
$\int_{O_F}d(x)=1$. See \cite[pp.440-441]{Gross-Reeder2010} for the details.
$$
 \varphi_0:W_F\times SL_2(\Bbb C)
           \xrightarrow{\text{\rm proj.}}SL_2(\Bbb C)
           \to G\sphat
$$
is the principal parameter (see \cite[p.447]{Gross-Reeder2010} for the
definition). 

The formal degree conjecture concerns with the absolute value
of the epsilon-factor
$$
 \varepsilon(\varphi,\text{\rm Ad},d(x),s)
 =w(\varphi,\text{\rm Ad})\cdot q^{a(\varphi,\text{\rm Ad})(\frac 12-s)}
$$
where $a(\varphi,\text{\rm Ad})$ is the Artin-conductor and 
$w(\varphi,\text{\rm Ad})$ is the root number. 

In order to state the root number conjecture, we need some
notations. Let $T\subset G$ be a maximal torus split over $F$ with
respect to which the root datum 
$$
 (X(T),\Phi(T),X^{\vee}(T),\Phi^{\vee}(T))
$$
is defined. Then the dual group $G\sphat$ is, by the definition, the
connected reductive complex algebraic group with a maximal torus
$T\sphat$ with which its root datum is isomorphic to
$$
 (X^{\vee}(T),\Phi^{\vee}(T),X(T),\Phi(T)).
$$
Put $2\cdot\rho=\sum_{0<\alpha\in\Phi^{\vee}(T)}\alpha$, then 
$\epsilon=2\cdot\rho(-1)\in T$ is a central element of $G$. Now the
root number conjecture says that 

\begin{conj}{\rm \cite[p.493, Conj.8.3]{Gross-Reeder2010}}
\label{conjecture:root-number-conjecture}
$$
 \frac{w(\varphi,\text{\rm Ad})}
      {w(\varphi_0,\text{\rm Ad})}=\pi(\epsilon)
$$ 
where $\epsilon$ is the central element of $G$ defined above 
(see {\rm \cite[p.492, (65)]{Gross-Reeder2010}} for the details).
\end{conj}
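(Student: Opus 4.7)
The plan is to verify the identity for the specific supercuspidal representations $\pi$ of $Sp_{2n}(F)$ that are built in the body of the paper from tamely ramified data, for which a discrete parameter $\varphi$ is assigned. Since $\pi$ is supercuspidal, part (iii) of Conjecture \ref{conjecture:lamglamds-parameter-of-square-integrable-rep} forces $\varphi|_{SL_2(\Bbb C)}=1$, so $\varphi$ is simply a homomorphism $W_F\to G\sphat=SO_{2n+1}(\Bbb C)$. In our setting $\varphi|_{W_F}$ factors through an elliptic tame maximal torus $T_E\sphat\subset G\sphat$ coming from a tamely ramified extension $E/F$, and is induced from an explicit tame character of $W_E$. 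The first step is to make this correspondence $\pi\leftrightarrow\varphi$ completely explicit, so that the constructed central character of $\pi$ and the Langlands-Deligne data of $\varphi$ can be compared on the nose.

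Next I would decompose $\text{\rm Ad}\circ\varphi$ according to the root decomposition of $\frak{g}\sphat$ with respect to $T\sphat$. The roots of $SO_{2n+1}$ are permuted by the Galois action of $E/F$ into orbits, and for each orbit the corresponding summand of $\text{\rm Ad}\circ\varphi$ is an induced representation of a tame character of some intermediate Weil group. Hence $w(\varphi,\text{\rm Ad})$ becomes a product of local root numbers of tame characters, which I would evaluate by the standard Deligne-Henniart formulas --- these reduce in the tame case to quadratic Gauss sums governed only by the residual ramification and the tame character. The analogous decomposition for the principal parameter $\varphi_0$ involves only the unramified contribution of the principal $SL_2(\Bbb C)$ together with the $\mu_2$-valued signs coming from $2\cdot\rho$, so the ratio $w(\varphi,\text{\rm Ad})/w(\varphi_0,\text{\rm Ad})$ reduces to a clean product of tame signs attached to the orbits of $\text{\rm Gal}(E/F)$ on the roots.

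On the right-hand side, I would compute $\epsilon=2\cdot\rho(-1)$ inside $Sp_{2n}(F)$: writing $2\cdot\rho=\sum_{\alpha>0}\alpha$ in the coroot lattice of $SO_{2n+1}$, this is a specific central involution of $Sp_{2n}$ that can be written down in terms of the standard diagonal embedding. Then $\pi(\epsilon)$ is read off from the explicit Howe/Yu-type compact induction description of $\pi$ used elsewhere in the paper, via the restriction of the inducing character to $\epsilon$ (which lies in the center, hence in any of the smaller subgroups used in the induction). This expresses $\pi(\epsilon)$ again as a product of tame signs indexed by the same Galois orbits on roots.

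The main obstacle will be the sign bookkeeping needed to match the two sides: both are products of $\pm 1$ attached to the same combinatorial data, but the chain of normalizations (the additive character $\psi$ with conductor $O_F$, the self-dual Haar measure $d(x)$, the Langlands-Deligne inductivity constants, and the choice of splitting used to realize $T\sphat$) must be tracked consistently on both sides. A secondary technical point is to confirm that the contribution of $\varphi_0$, being built from the principal $SL_2$ combined with the unramified cyclotomic character, exactly cancels the unramified parts of the ratio, so that what remains compares orbit-by-orbit with the character-theoretic formula for $\pi(\epsilon)$. Once these sign issues are settled, the root number conjecture for our $\pi$ will follow from an orbit-wise matching of tame Gauss-sum signs with values of the inducing character at $\epsilon$.
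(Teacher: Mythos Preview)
Your overall strategy---decompose $\text{\rm Ad}\circ\varphi$ into induced pieces indexed by Galois orbits of roots, compute each local root number, and compare with $\pi(\epsilon)$ read off from the inducing datum---is the same as the paper's. Two substantive points are missing, and the second is a genuine gap.

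First, a technical point: the relevant root numbers are not handled by generic Deligne--Henniart formulas but by the theorem of Fr\"ohlich--Queyrut, because each induced summand is induced from a character $\widetilde\vartheta_\gamma$ of $K^\times$ (or of an intermediate $K_\gamma^\times$) that is \emph{trivial on the fixed field} $K_+^\times$ (resp.\ $E^\times$). Fr\"ohlich--Queyrut gives the normalized Gauss sum of such a character as $\widetilde\vartheta_\gamma(\beta)$ for any generator $\beta$ of the quadratic extension with $\beta^\tau=-\beta$; this is what collapses the product of root numbers to $\vartheta(-1)$ times an explicit residual sign.

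Second, and this is the real issue: you implicitly identify the character $\theta$ that parametrizes the supercuspidal $\pi_{\beta,\theta}$ with the character that appears in the $L$-parameter $\varphi$. They are not the same. The Langlands--Shelstad construction of $\varphi$ from $\theta$ requires a choice of $\chi$-data, which introduces a correction character $c$ on $U_{K/K_+}$, and the parameter is built from $\vartheta=c\cdot\theta$, not from $\theta$. Consequently $w(\varphi,\text{\rm Ad})$ is computed in terms of $\vartheta(-1)=c(-1)\cdot\theta(-1)$, whereas $\pi(\epsilon)=\theta(-1)$. The value $c(-1)$ is determined by the structure of the order-two elements in $\text{\rm Gal}(K/F)$ and does \emph{not} always cancel the residual sign coming from the $\lambda$-factors. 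The paper carries out exactly this computation and finds that the identity $w(\varphi,\text{\rm Ad})=\pi(\epsilon)$ holds when $K/K_+$ is unramified, but when $K/F$ is totally ramified it holds only under the congruence $\tfrac{q-1}{2}(n-1)\equiv 0\pmod{4}$. So your final sentence---``the root number conjecture for our $\pi$ will follow''---is wrong as stated: the orbit-wise matching does not close up in all cases, and the outcome of the computation is a \emph{conditional} verification together with explicit counterexamples.
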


Since $G$ is assumed to be splits over $F$, we have 
$w(\varphi_0,\text{\rm Ad})=1$ (see \cite[p.448]{Gross-Reeder2010}).

\numsubsection{}
In this paper, we will construct quite explicitly supercuspidal
representations of $G(F)=Sp_{2n}(F)$ associated with a tamely ramified
extension $K/F$ of degree $2n$ (Theorem 
\ref{th:supercuspidal-representation-of-sp(2n)}). Here $K$ is a
quadratic extension of over field $K_+$ of $F$. When $K/F$ is normal, 
we will also give  candidates of Langlands parameters of 
the supercuspidal representations (the section 
\ref{sec:kaleta-l-parameter}), and will verify the validity of the
formal degree conjecture (Theorem
\ref{th:formal-degree-conjecture-for-sp(2n)}) 
and the root number conjecture (Theorem 
\ref{th:root-number-conjecture-for-sp(2n)}) with them. Surprisingly
the root number conjecture is valid only if $K/F$ is not totally
ramified or $K/F$ is totally ramified and
$$
 \frac{q-1}2\cdot(n-1)\equiv 0\npmod{4}.
$$

Our supercuspidal representations, denoted by $\pi_{\beta,\theta}$,
are given by the compact induction 
$\text{\rm ind}_{G(O_F)}^{G(F)}\delta_{\beta,\theta}$ from irreducible
unitary representations $\delta_{\beta,\theta}$ of the hyperspecial
compact subgroup $G(O_F)=Sp_{2n}(O_F)$. Here $\pi_{\beta,\theta}$ and
$\delta_{\beta,\theta}$ are characterized each other by the conditions
\begin{enumerate}
\item $\delta_{\beta,\theta}$ factors through the canonical morphism 
      $G(O_F)\to G(O_F/\frak{p}_F^r)$ with $r\geq 2$, and 
      the multiplicity of $\delta_{\beta,\theta}$ in 
      $\pi_{\beta,\theta}|_{G(O_F)}$ is one, 
\item any irreducible unitary representation $\delta$ of $G(O_F)$
      which factors through the canonical morphism 
      $G(O_F)\to G(O_F/\frak{p}_F^r)$, and a constituent of 
      $\pi_{\beta,\theta}|_{G(O_F)}$, then
      $\delta=\delta_{\beta,\theta}$. 
\end{enumerate}
The parameters $\beta$ and $\theta$ are associated with the tamely
ramified extension $K/F$, that is, $O_K=O_F[\beta]$ and 
$\theta$ is a certain continuous unitary character of 
$$
 U_{K/K_+}=\{x\in K^{\times}\mid N_{K/K_+}(x)=\}
$$
(see the subsection 
\ref{subsec:symplectic-space-associated-with-tamely-ramified-ext} for
the precise definitions). 
We have the irreducible representation $\delta_{\beta,\theta}$ by the
general theory given by \cite{Takase2021}. 

The candidate of Langlands parameter is given by the method of Kaletha 
\cite{Kaletha2019}. Regard the compact group $U_{K/K_+}$ as the
group of $F$-rational points of an elliptic torus of $Sp_{2n}$. Then, by
the local Langlands correspondence of tori 
(see \cite{Yu2009}) and the Langlands-Schelstad procedure 
(\cite{LanglandsShelstad1987}) gives a group homomorphism $\varphi$ of
the Weil group $W_F$ of $F$ to the dual group 
$G\sphat=SO_{2n+1}(\Bbb C)$ of $Sp_{2n}$ over $F$. 

Although 
a general theory of construction of the supercuspidal representation
is given by \cite{Yu2001,Kaletha2019}, that of ours is based upon a
method of \cite{Shintani1968} which has an advantage of being more
direct and explicit. 

Note that the formal degree conjecture is proved by \cite{Schwein2021}
between the supercuspidal representations of \cite{Kaletha2019} and
the Langlands parameters of Kaletha. In this paper, supercuspidal
representations are constructed by a method different from that of 
\cite{Kaletha2019}, so it is of some interest.

\numsubsection{}
The section \ref{sec:supercuspidal-representation-of-sp(2n)} is devoted
to the construction of the supercuspidal representation 
$\pi_{\beta,\theta}$ of $Sp_{2n}(F)$. After recalling, in the subsection 
\ref{subsec:regular-irred-character-of-hyperspecial-compact-subgroup}, 
the general theory
of the regular irreducible representations of the finite group 
$G(O_F/\frak{p}_F^r)$ ($r\geq 2$) given by 
\cite{Takase2021}, we will define the irreducible unitary
representation $\delta_{\beta,\theta}$ of $Sp_{2n}(O_F)$ in the
subsection
\ref{subsec:symplectic-space-associated-with-tamely-ramified-ext}. The
construction of the supercuspidal representation $\pi_{\beta,\theta}$
is given in the subsection
\ref{subsec:construction-of-supercuspidal-representataion}. 

The candidate of Langlands parameter is given in the section 
\ref{sec:kaleta-l-parameter}. The local Langlands correspondence of
elliptic torus (Proposition
\ref{prop:local-langlands-correspondence-of-elliptic-tori}) and the
Langlands-Schelstad procedure (the subsection 
\ref{subsec:chi-datum}) are given quite explicitly. They give a 
candidate of Langlands parameter
$$
 \varphi:W_F\xrightarrow{\text{\rm canonical}}
         W_{K/F}
         \xrightarrow{\varphi_1\oplus\det\varphi_1}
         SO_{2n+1}(\Bbb C)
$$
where 
$\varphi_1=\text{\rm Ind}_{K^{\times}}^{W_{K/F}}\widetilde\vartheta$ is
the induced representation from a character $\widetilde\vartheta$ of
$K^{\times}$ to the relative Weil group 
$W_{K/F}=W_F/\overline{[W_K,W_K]}$. The character
$\widetilde\vartheta$ is defined by 
$\widetilde\vartheta(x)=\vartheta(x^{1-\tau})$ where 
$\text{\rm Gal}(K/K_+)=\langle\tau\rangle$ and
$\vartheta=c\cdot\theta$ with the character $c$ of $U_{K/K_+}$ which
is generated by the Langlands-Schelstad procedure. 

Using the explicit description of the parameter $\varphi$, we will
verify the formal degree conjecture in the section 
\ref{sec:formal-degree-conjecture}, and the root number conjecture in
the section \ref{sec:root-number-conjecture}. 

In section \ref{sec:case-of-sp(4)}, we will discuss the case of $n=2$
where we can define another ``natural" candidate for the Langlands
parameter of $\pi_{\beta,\theta}$. The representation space of 
$\text{\rm Ind}_{K^{\times}}^{W_{K/F}}\widetilde\theta$, with 
$\widetilde\theta(x)=\theta(x^{1-\tau})$, has $W_{K/F}$-quasi
invariant symplectic form. Then the candidate is given by
\begin{equation}
 W_F\xrightarrow{\text{\rm can.}}W_{K/F}
  \xrightarrow{\text{\rm Ind}_{K^{\times}}^{W_{K/F}}\widetilde\theta}
  GSp_4(\Bbb C)\xrightarrow{(\ast)}SO_5(\Bbb C)
\label{eq:another-parameter-sp(4)-introduction}
\end{equation}
where $(\ast)$ is the accidental surjection. With respect to this
parameter 
\begin{enumerate}
\item the formal degree conjecture is valid only if $K/F$ is
  unramified or totally ramified, and in this case
\item the root number cinjecture is valid only if
$$
 \theta(-1)=\begin{cases}
             1&:\text{\rm $K/F$ is unramified,}\\
            (-1)^{\frac{q-1}4}&:\text{\rm $K/F$ is totally ramifed.}
            \end{cases}
$$
\end{enumerate}
This means that the parameter 
\eqref{eq:another-parameter-sp(4)-introduction} 
is not the Langlands parameter of $\pi_{\beta,\theta}$, in general.

Several basic facts on the local factor associated with
representations of the Weil group are given in the appendix 
\ref{sec:local-factors}.

The quasi-invariant symmetric or symplectic form in the induced
representation on $W_{K/F}$ from the characters of $K^{\times}$ is
discussed in the appendix 
\ref{sec:symmetric-or-anti-symmetric-form-on-induced-rep-of-weil-groip}.

\section{Supercuspidal representations of $Sp_{2n}(F)$}
\label{sec:supercuspidal-representation-of-sp(2n)}

\subsection{Regular irreducible characters of hyperspecial compact
            subgroup}
\label{subsec:regular-irred-character-of-hyperspecial-compact-subgroup}
Let us recall the main results of \cite{Takase2021}.

Fix a continuous unitary additive character $\psi:F\to\Bbb C^1$ such
that
$$
 \{x\in F\mid\psi(xO_F)=1\}=O_F.
$$
Let $G=Sp_{2n}$ be the $O_F$-group scheme such that, for any
$O_F$-algebra 
\footnote{In this paper, an $O_F$-algebra means an unital commutative
  $O_F$-algebra.} 
$R$, the group of the $A$-valued point $G(A)$ is a subgroup of
$GL_{2n}(R)$ defined by
$$
 G(R)=\{g\in GL_{2n}(R)\mid gJ_n\,^tg=J_n\}
$$
where
$$
 J_n=\begin{bmatrix}
      0&I_n\\
     -I_n&0
     \end{bmatrix},
 \;\;\text{\rm where}\;\;
 I_n=\begin{bmatrix}
       &       &1\\
       &\addots& \\
      1&       &
     \end{bmatrix}.
$$
For a matrix $A\in M_{m,n}(R)$, put 
$^{\frak t}A=I_n\,^tAI_m\in M_{n,m}(R)$. 
Let $\frak{g}$ the Lie algebra scheme of $G$ which is a closed affine
$O_F$-subscheme of $\frak{gl}_n$ the Lie algebra scheme of $GL_n$
defined by
$$
 \frak{g}(R)=\{X\in\frak{gl}_{2n}(R)\mid XJ_n+J_n\,^tX=0\}
$$
for all $O_F$-algebra $R$. 
Let 
$$
 B:\frak{gl}_{2n}{\times}_{O_F}\frak{gl}_{2n}\to\Bbb A_{O_F}^1
$$
be the trace form on $\frak{gl}_{2n}$, that is $B(X,Y)=\text{\rm tr}(XY)$
for all $X,Y\in\frak{gl}_{2n}(R)$ with any $O_F$-algebra $R$. Since $G$
is smooth $O_F$-group scheme, we have a canonical isomorphism
$$
 \frak{g}(O_F)/\varpi^r\frak{g}(O_F)\,\tilde{\to}\,
 \frak{g}(O_F/\frak{p}^r)=\frak{g}(O_F){\otimes}_{O_F}O_F/\frak{p}^r
$$
(\cite[Chap.II, $\S 4$, Prop.4.8]{Demazure-Gabriel1970}) and the
 canonical group homomorphism $G(O_F)\to G(O_F/\frak{p}^r)$ is
 surjective, due to the formal smoothness 
\cite[p.111, Cor. 4.6]{Demazure-Gabriel1970}, whose kernel is denoted
by $K_r(O_F)$. 
For any $0<l<r$, let us denote by $K_l(O_F/\frak{p}^r)$ the kernel of
the canonical 
 group homomorphism $G(O_F/\frak{p}^r)\to G(O_F/\frak{p}^l)$ which is
 surjective.  

The following basic assumptions on $G$ are satisfied; 
\begin{itemize}
\item[I)] $B:\frak{g}(\Bbb F)\times\frak{g}(\Bbb F)\to\Bbb F$ is
  non-degenerate, 
\item[II)] for any integers $r=l+l^{\prime}$ with 
           $0<l^{\prime}\leq l$, we have a group isomorphism
$$
 \frak{g}(O_F/\frak{p}^{l^{\prime}})\,\tilde{\to}\,K_l(O_F/\frak{p}^r)
$$
           defined by 
$X\npmod{\frak{p}^{l^{\prime}}}\mapsto1+\varpi^lX\npmod{\frak{p}^r}$,
\item[III)] if $r=2l-1\geq 3$ is odd, then we have a mapping
$$
 \frak{g}(O_F)\to K_{l-1}(O_F/\frak{p}^r)
$$
defined by 
$X\mapsto(1+\varpi^{l-1}X+2^{-1}\varpi^{2l-2}X^2)\npmod{\frak{p}^r}$.
\end{itemize}
The condition I) implies that 
$B:\frak{g}(O_F/\frak{p}^l)\times\frak{g}(O_F/\frak{p}^l)
   \to O_F/\frak{p}^l$ 
is non-degenerate for all $l>0$, and so 
$B:\frak{g}(O_F)\times\frak{g}(O_F)\to O_F$ is also non-degenerate. 
By the condition II), $K_l(O_F/\frak{p}^r)$ is a commutative normal
subgroup of $G(O_F/\frak{p}^r)$, and its character is
$$
 \chi_{\beta}(1+\varpi^lX\npmod{\frak{p}^r})
 =\psi\left(\varpi^{-l^{\prime}}B(X,\beta)\right)
 \quad
 (X\npmod{\frak{p}^{l^{\prime}}}
  \in\frak{g}(O_F/\frak{p}^{l^{\prime}}))
$$
with 
$\beta\npmod{\frak{p}^{l^{\prime}}}
 \in\frak{g}(O_F/\frak{p}^{l^{\prime}})$.

Since 
any finite dimensional complex continuous representation of
the compact group $G(O_F)$ factors through the canonical group
homomorphism $G(O_F)\to G(O_F/\frak{p}^r)$ for some $0<r\in\Bbb Z$, we
want to know the irreducible complex representations of the finite
group $G(O_F/\frak{p}^r)$. 
Let us assume
that $r>1$ and put $r=l+l^{\prime}$ with the minimal integer $l$ such
that $0<l^{\prime}\leq l$, that is
$$
 l^{\prime}=\begin{cases}
             l&:\text{\rm if $r=2l$},\\
             l-1&:\text{\rm if $r=2l-1$}.
            \end{cases}
$$
Let $\delta$ be an irreducible complex representation of
$G(O_F/\frak{p}^r)$. The Clifford's theorem says that the restriction 
$\delta|_{K_l(O_F/\frak{p}^r)}$ is a sum of the 
$G(O_F/\frak{p}^r)$-conjugates of
characters of $K_l(O_F/\frak{p}^r)$:
\begin{equation}
 \delta|_{K_l(O_F/\frak{p}^r)}
 =\left(\bigoplus_{\dot\beta\in\Omega}\chi_{\beta}\right)^m
\label{eq:decomposition-formula-of-delta}
\end{equation}
with an adjoint 
$G(O_F/\frak{p}^{l^{\prime}})$-orbit 
$\Omega\subset\frak{g}(O_F/\frak{p}^{l^{\prime}})$. In this way the
irreducible complex representations of $G(O_F/\frak{p}^r)$ correspond
to adjoint $G(O_F/\frak{p}^{l^{\prime}})$-orbits in 
$\frak{g}(O_F/\frak{p}^{l^{\prime}})$. 

Fix an adjoint $G(O_F/\frak{p}^{l^{\prime}})$-orbit 
$\Omega\subset\frak{g}(O_F/\frak{p}^{l^{\prime}})$ and let us denote
by $\Omega\sphat$ the set of the equivalence classes of the
irreducible complex representations of $G(O_F/\frak{p}^{l^{\prime}})$
correspond to $\Omega$. Then \cite{Takase2021} gives a parametrization
of $\Omega\sphat$ as follows:

\begin{thm}\label{th:parametrization-of-omega-sphat-in-general}
Take a representative $\beta\npmod{\frak{p}^{l^{\prime}}}\in\Omega$
($\beta\in\frak{g}O_F)$) and assume that
\begin{enumerate}
\item the centralizer $G_{\beta}=Z_G(\beta)$ of
      $\beta\in\frak{g}(O_F)$ in $G$ is smooth over $O_F$,
\item the characteristic polynomial 
      $\chi_{\overline\beta}(t)=\det(t\cdot 1_{2n}-\overline\beta)$ of 
      $\overline\beta=\beta\pmod{\frak p}\in\frak{g}(\Bbb F)
        \subset\frak{gl}_{2n}(\Bbb F)$ is
      the minimal polynomial of $\overline\beta\in M_{2n}(\Bbb F)$.
\end{enumerate}
Then there exists a bijection $\theta\mapsto\delta_{\beta,\theta}$ of
the set 
$$
 \left\{\theta\in G_{\beta}(O_F/\frak{p}^r)\sphat\;\;\;
         \text{\rm s.t. $\theta=\chi_{\beta}$ on 
               $G_{\beta}(O_F/\frak{p}^r)\cap K_l(O_F/\frak{p}^r)$}
        \right\}
$$
onto $\Omega\sphat$.
\end{thm}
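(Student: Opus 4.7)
The plan is to apply Clifford theory to the normal subgroup $K_l(O_F/\frak{p}^r)\triangleleft G(O_F/\frak{p}^r)$ and the character $\chi_\beta$ appearing in the decomposition (\ref{eq:decomposition-formula-of-delta}). The central object is the inertia subgroup $I(\chi_\beta)$, the stabilizer of $\chi_\beta$ under conjugation. Using the explicit formula for $\chi_\beta$ together with the non-degeneracy of the trace form $B$, a short computation shows that $g\in G(O_F/\frak{p}^r)$ fixes $\chi_\beta$ if and only if $\text{\rm Ad}(g)\beta\equiv\beta\npmod{\frak{p}^{l^{\prime}}}$. Combining the assumed smoothness of $G_\beta$ over $O_F$ with Hensel-lifting, one should obtain
$$
 I(\chi_\beta)=G_\beta(O_F/\frak{p}^r)\cdot K_{l^{\prime}}(O_F/\frak{p}^r),
 \qquad
 G_\beta(O_F/\frak{p}^r)\cap K_l(O_F/\frak{p}^r)
 \cong\frak{g}_\beta(O_F/\frak{p}^{l^{\prime}}).
$$
The hypothesis that $\chi_{\overline\beta}(t)$ is both the characteristic and the minimal polynomial of $\overline\beta$ is crucial here: without it the naive centralizer could be strictly larger than $G_\beta$ modulo $\frak{p}^{l^{\prime}}$, and the stabilizer identification would fail.

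Clifford theory then reduces the parametrization of $\Omega\sphat$ to the description of the irreducible representations of $I(\chi_\beta)$ lying over $\chi_\beta$, induction from $I(\chi_\beta)$ to $G(O_F/\frak{p}^r)$ giving the bijection. In the even case $r=2l$, one has $l^{\prime}=l$, so $K_l(O_F/\frak{p}^r)$ is central in $I(\chi_\beta)$ and a one-dimensional extension of $\chi_\beta$ to $I(\chi_\beta)$ is exactly a character $\theta$ of $G_\beta(O_F/\frak{p}^r)$ whose restriction to $G_\beta\cap K_l$ is $\chi_\beta$. Setting $\delta_{\beta,\theta}=\text{\rm Ind}_{I(\chi_\beta)}^{G(O_F/\frak{p}^r)}\tilde\theta$, Mackey's irreducibility criterion and Frobenius reciprocity give, respectively, irreducibility and injectivity of $\theta\mapsto\delta_{\beta,\theta}$.

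In the odd case $r=2l-1$ with $l^{\prime}=l-1$, the inertia group contains the strictly larger $K_{l-1}(O_F/\frak{p}^r)$, and $\chi_\beta$ does \emph{not} extend to it as a character. Instead, the commutator in $K_{l-1}(O_F/\frak{p}^r)$ modulo $K_l(O_F/\frak{p}^r)$ descends to a symplectic pairing on the $\Bbb F$-vector space transverse to $\frak{g}_{\overline\beta}(\Bbb F)$ given essentially by $(X,Y)\mapsto\psi(\varpi^{-l^{\prime}}B([X,Y],\beta))$, and the minimal-polynomial hypothesis guarantees that this form is non-degenerate on the transverse space. The quadratic section furnished by assumption III turns $K_{l-1}(O_F/\frak{p}^r)$ modulo $\ker\chi_\beta$ into a Heisenberg group, and the Stone--von Neumann theorem yields a canonical irreducible representation $\eta_\beta$ of $K_{l-1}(O_F/\frak{p}^r)$ above $\chi_\beta$, together with a Weil-type projective extension of $\eta_\beta$ to $I(\chi_\beta)$. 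A character $\theta$ of $G_\beta(O_F/\frak{p}^r)$ compatible with $\chi_\beta$ on the intersection rigidifies this projective extension into an honest representation, whose induction to $G(O_F/\frak{p}^r)$ defines $\delta_{\beta,\theta}$.

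The main obstacle will be the odd case: normalizing the Weil-type extension so that the assignment $\theta\mapsto\delta_{\beta,\theta}$ is canonical, well-defined, and injective (i.e.\ so that no hidden coboundary is absorbed into $\theta$), and proving that every irreducible representation in $\Omega\sphat$ arises this way. Surjectivity is essentially a uniqueness statement for the Heisenberg--Weil representation modulo characters of $G_\beta(O_F/\frak{p}^r)$, and should follow from the standard fact that the projective lifting cocycle becomes trivial after twisting by an appropriate central character supplied by $\theta$. Throughout, assumption (2) is the structural input that keeps $I(\chi_\beta)$ from being larger than $G_\beta\cdot K_{l^{\prime}}$, while assumption (1) is what makes the reduction $G_\beta(O_F/\frak{p}^r)\to G_\beta(O_F/\frak{p}^{l^{\prime}})$ surjective and supplies the compatible liftings needed at each step.
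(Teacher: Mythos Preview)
Your proposal follows essentially the same approach as the paper: Clifford theory relative to $K_l(O_F/\frak{p}^r)$, identification of the inertia group as $G_\beta(O_F/\frak{p}^r)\cdot K_{l'}(O_F/\frak{p}^r)$ (the paper's $G(O_F/\frak{p}^r;\beta)$), direct character extension in the even case, and a Heisenberg--Schr\"odinger construction in the odd case, with the hard step being the existence of the splitting $U$ (your ``Weil-type extension''), which the paper does not prove here but cites from \cite{Takase2021}. One minor slip: in the even case $K_l(O_F/\frak{p}^r)$ is not literally central in the inertia group, only abelian normal with $\chi_\beta$ fixed under conjugation, but this is exactly what is needed for $\sigma_{\theta,\beta}(gh)=\theta(g)\chi_\beta(h)$ to be a well-defined homomorphism, so your argument goes through unchanged.
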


The correspondence $\theta\mapsto\delta_{\beta,\theta}$ is given by
the following procedure. 
The second condition in the theorem implies 
$$
 G_{\beta}(O_F/\frak{p}^r)
 =G(O_F/\frak{p}^r)\cap
  \left(O_F/\frak{p}^r\right)[\beta\npmod{\frak{p}^r}],
$$
in particular $G_{\beta}(O_F/\frak{p}^r)$ is commutative. So 
$G_{\beta}(O_F/\frak{p}^r)\sphat$ means the character group of 
$G_{\beta}(O_F/\frak{p}^r)$. 

$\Omega\sphat$ consists of the irreducible complex representations
whose restriction to $K_l(O_F/\frak{p}^r)$ contains the character
$\chi_{\beta}$. Then the Clifford's theory says the followings: put
\begin{align*}
 G(O_F/\frak{p}^r;\beta)
 &=\left\{g\in G(O_F/\frak{p}^r)\mid
           \chi_{\beta}(g^{-1}hg)=\chi_{\beta}(h)\;
             \forall h\in K_l(O_F/\frak{p}^r)\right\}\\
 &=\left\{g\in G(O_F/\frak{p}^r)\mid
           \text{\rm Ad}(g)\beta\equiv\beta
               \npmod{\frak{p}^{l^{\prime}}}\right\}
\end{align*}
and let us denote by 
$\text{\rm Irr}(G(O_F/\frak{p}^r;\beta),\chi_{\beta})$ the set of the
equivalence classes of the irreducible
 complex representations $\sigma$ of $G(O_F/\frak{p}^r;\beta)$ such
 that the restriction $\sigma|_{K_l(O_F/\frak{p}^r)}$ contains the
 character $\chi_{\beta}$. 
Then 
$\sigma\mapsto
 \text{\rm Ind}_{G(O_F/\frak{p}^r;\beta)}^{G(O_F/\frak{p}^r)}\sigma$
 gives a bijection of 
$\text{\rm Irr}(G(O_F/\frak{p}^r;\beta),\chi_{\beta})$ onto 
$\Omega\sphat$. 

Since $G_{\beta}$ is smooth over $O_F$, the canonical homomorphism 
$G_{\beta}(O_F/\frak{p}^r)\to G_{\beta}(O_F/\frak{p}^{l^{\prime}})$ is
 surjective. Hence we have
$$
 G(O_F/\frak{p}^r;\beta)
 =G_{\beta}(O_F/\frak{p}^r)\cdot K_{l^{\prime}}(O_F/\frak{p}^r).
$$
If $r=2l$ is even, then $l^{\prime}=l$ and, for any character 
$\theta\in G_{\beta}(O_F/\frak{p}^r)$ such that 
$\theta=\chi_{\beta}$ on 
$G_{\beta}(O_F/\frak{p}^r)\cap K_l(O_F/\frak{p}^r)$, the character 
$$
 \sigma_{\theta,\beta}(gh)=\theta(g)\cdot\chi_{\beta}(h)
 \quad
 (g\in G_{\beta}(O_F/\frak{p}^r), h\in K_l(O_F/\frak{p}^r))
$$
of $G(O_F/\frak{p}^r;\beta)$ is well-defined, and 
$\theta\mapsto\sigma_{\theta,\beta}$ is a surjection onto 
$\text{\rm Irr}(G(O_F/\frak{p}^r;\beta),\chi_{\beta})$. Hence
$$
 \theta\mapsto
 \delta_{\theta,\beta}
 =\text{\rm Ind}_{G(O_F/\frak{p}^r;\beta)}^{G(O_F/\frak{p}^r)}
   \sigma_{\theta,\beta}
$$
is the bijection of Theorem
\ref{th:parametrization-of-omega-sphat-in-general}.

If $r=2l-1$ is odd, then $l^{\prime}=l-1$. Let us denote by 
$\frak{g}_{\beta}=\text{\rm Lie}(G_{\beta})$ the Lie algebra
$O_F$-scheme of the smooth $O_F$-group scheme $G_{\beta}$. Then 
$$
 \Bbb V_{\beta}=\frak{g}(\Bbb F)/\frak{g}_{\beta}(\Bbb F)
$$
is a symplectic $\Bbb F$-space with a symplectic $\Bbb F$-form 
$$
 D_{\beta}(\dot X,\dot Y)=B([X,Y],\overline{\beta})\in\Bbb F
 \quad
 (X,Y\in\frak{g}(\Bbb F)).
$$
Let $H_{\beta}=\Bbb V_{\beta}\times\Bbb C^1$ be the Heisenberg group
associated with $(\Bbb V_{\beta},D_{\beta})$ and 
$(\sigma^{\beta},L^2(\Bbb W^{\prime}))$ the Schr\"odinger
representation of $H_{\beta}$ associated with a polarization 
$\Bbb V_{\beta}=\Bbb W^{\prime}\oplus\Bbb W$. More explicitly the
group operation of $H_{\beta}$ is defined by
$$
 (u,s)\cdot(v,t)=(u+v,st\cdot\widehat{\psi}(2^{-1}D_{\beta}u,v))
$$
where $\widehat{\psi}(\overline x)=\psi(\varpi^{-1}x)$ for 
$\overline x=x\npmod{\frak p}\in\Bbb F$, and the action of 
$h=(u,s)\in H_{\beta}$ on $f\in L^2(\Bbb W^{\prime})$ (a complex-valued
function on $\Bbb W^{\prime}$) is defined by
$$
 (\sigma^{\beta}(h)f)(w)
 =s\cdot\widehat{\chi}
         \left(2^{-1}D_{\beta}(u_-,u_+)+D_{\beta}(w,u_+)\right)\cdot
   f(w+u_-)
$$
where $u=u_-+u_+\in\Bbb V_{\beta}=\Bbb W^{\prime}\oplus\Bbb W$.

Take a character $\theta:G_{\beta}(O_F/\frak{p}^r)\to\Bbb C^{\times}$
such that 
$$
 \theta=\chi_{\beta}\;\;\text{\rm on}\;\;
 G_{\beta}(O_F/\frak{p}^r)\cap K_l(O_F/\frak{p}^r).
$$ 
Then an additive
character $\rho_{\theta}:\frak{g}_{\beta}(\Bbb F)\to\Bbb C^{\times}$
is defined by
$$
 \rho_{\theta}(X\nnpmod{\frak p})
 =\chi\left(-\varpi^{-l}B(X,\beta)\right)\cdot
  \theta\left(1+\varpi^{l-1}X+2^{-1}\varpi^{2l-2}X^2
              \nnpmod{\frak{p}^r}\right)
$$
with $X\in\frak{g}_{\beta}(O_F)$. 
Fix a $\Bbb F$-vector subspace $V\subset\frak{g}(\Bbb F)$ such that 
$\frak{g}(\Bbb F)=V\oplus\frak{g}_{\beta}(\Bbb F)$. Then an
irreducible representation 
$(\sigma^{\beta,\theta},L^2(\Bbb W^{\prime}))$ of
$K_{l-1}(O_F/\frak{p}^r)$ is defined by the following proposition:

\begin{prop}
\label{prop:another-expression-of-pi-beta-psi}
Take a $g=1+\varpi^{l-1}T\npmod{\frak{p}^r}\in K_{l-1}(O_F/\frak{p}^r)$ 
with $T\in\frak{gl}_n(O_F)$. Then we have 
$T\npmod{\frak{p}^{l-1}}\in\frak{g}(O_F/\frak{p}^{l-1})$ and
$$
 \sigma^{\beta,\rho}(g)
  =\tau\left(\varpi^{-l}B(T,\beta)
            -2^{-1}\varpi^{-1}B(T^2,\beta)\right)\cdot
   \rho_{\theta}(Y)\cdot\sigma^{\beta}(v,1)
$$
where $\overline T=[v]+Y\in\frak{g}(\Bbb F)$ with 
$v\in\Bbb V_{\beta}$ and 
$Y\in\frak{g}_{\beta}(\Bbb F)$. 
\end{prop}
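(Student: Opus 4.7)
The plan is to prove the formula by explicit computation, reducing the general element $g=1+\varpi^{l-1}T\npmod{\frak{p}^r}$ to a product of three pieces whose actions are prescribed by the defining ingredients of $\sigma^{\beta,\theta}$: the Schr\"odinger representation $\sigma^\beta$, the character $\rho_\theta$, and the character $\chi_\beta$ of $K_l(O_F/\frak{p}^r)$.

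First I would use the decomposition $\frak{g}(\Bbb F)=V\oplus\frak{g}_\beta(\Bbb F)$ to split $\overline T=v+Y$ and lift this to $T\equiv T_V+T_Y\npmod{\frak p}$, with $T_V$ a lift of $v$ in the chosen complement $V$ and $T_Y\in\frak{g}_\beta(O_F)$ a lift of $Y$. Writing $\exp_l(X)=1+\varpi^{l-1}X+2^{-1}\varpi^{2l-2}X^2$ as in condition~III, I would factor modulo $\frak{p}^r$
$$
g\equiv\exp_l(T_V)\cdot\exp_l(T_Y)\cdot k,
$$
where $k\in K_l(O_F/\frak{p}^r)$ absorbs both the cross-product $T_VT_Y+T_YT_V$ and the discrepancy between $T$ and $T_V+T_Y$ (which has $\varpi$-valuation at least one).

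Second I would evaluate $\sigma^{\beta,\theta}$ on each of the three factors, relying on the construction of the representation in \cite{Takase2021}. The factor $\exp_l(T_V)$ should act by the Schr\"odinger operator $\sigma^\beta(v,1)$ attached to the class of $T_V$ in $\Bbb V_\beta$; the factor $\exp_l(T_Y)$, being the image of $T_Y\in\frak{g}_\beta(O_F)$ under the map of condition~III, should act by the scalar $\rho_\theta(Y)$ by the very definition of $\rho_\theta$; and $k\in K_l(O_F/\frak{p}^r)$ should act by the commutative character $\chi_\beta(k)$.

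Third I would collect the three contributions and compare with the right-hand side. The quadratic term $-2^{-1}\varpi^{-1}B(T^2,\beta)$ is expected to come from $\chi_\beta$ applied to the quadratic correction $-2^{-1}\varpi^{2l-2}T^2$ inside $k$, while the linear term $\varpi^{-l}B(T,\beta)$ should come from combining the hidden factor $\psi(-\varpi^{-l}B(T_Y,\beta))$ built into $\rho_\theta(Y)$ with the $\chi_\beta$-value at the linear part of $k$ arising from $T-T_V-T_Y$. The principal obstacle lies exactly in this bookkeeping: the commutator $[T_V,T_Y]$ appearing in $k$ must be matched against the Heisenberg cocycle of $\sigma^\beta$ via the identity $D_\beta(\overline X,\overline Y)=B([X,Y],\overline\beta)$, and the scaling $\widehat\psi$ in the Heisenberg formula must combine with the $\varpi^{-1}$ prefactors in the definition of $\rho_\theta$ so that all spurious cross-terms cancel and only the stated exponent survives. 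Well-definedness of the right-hand side (independence of the choice of $V$ and of the lifts $T_V,T_Y$) then follows from the observation that changing a lift by an element of $\varpi\cdot\frak{gl}_{2n}(O_F)$ alters the exponent only by an element of $O_F$, on which $\psi$ is trivial.
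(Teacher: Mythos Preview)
The paper does not prove this proposition; it is quoted from \cite{Takase2021} and is presented here as the \emph{definition} of the representation $\sigma^{\beta,\theta}$ on $K_{l-1}(O_F/\frak p^r)$. There is therefore no proof in the paper to compare your attempt against.

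Your outline is a sensible sketch of how a derivation in \cite{Takase2021} presumably proceeds, but as written it is circular in the context of this paper: you propose to ``evaluate $\sigma^{\beta,\theta}$ on each of the three factors,'' yet here the proposition \emph{is} the definition of $\sigma^{\beta,\theta}$, so there is no independent description to evaluate. If your intent is instead to check that the displayed formula is well-defined (independent of the lift $T$ and of the splitting $\overline T=[v]+Y$) and multiplicative on $K_{l-1}(O_F/\frak p^r)$, then your factorization $g\equiv\exp_l(T_V)\cdot\exp_l(T_Y)\cdot k$ is indeed the right device, and the bookkeeping you describe---matching the Heisenberg cocycle $\widehat\psi(2^{-1}D_\beta(\cdot,\cdot))$ against $B([X,Y],\overline\beta)$ and tracking the quadratic correction $-2^{-1}\varpi^{2l-2}T^2$ through $\chi_\beta$---is exactly what must be done. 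One point you pass over is the first assertion of the proposition, that $T\pmod{\frak p^{l-1}}\in\frak g(O_F/\frak p^{l-1})$: this follows directly by expanding $gJ_n\,{}^tg\equiv J_n\pmod{\frak p^r}$ and observing that the quadratic term $\varpi^{2l-2}TJ_n\,{}^tT$ lies in $\frak p^{l-1}M_{2n}(O_F)\cdot\varpi^{l-1}$, forcing $TJ_n+J_n\,{}^tT\equiv 0\pmod{\frak p^{l-1}}$.
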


Then main result shown in \cite{Takase2021}, under the assumptions of
Theorem \ref{th:parametrization-of-omega-sphat-in-general},
is that there exists a
group homomorphism (not unique) 
$$
 U:G_{\beta}(O_F/\frak{p}^r)\to  GL_{\Bbb C}(L^2(\Bbb W^{\prime}))
$$
such that 
\begin{enumerate}
\item $\sigma^{\beta,\theta}(h^{-1}gh)
       =U(h)^{-1}\circ\sigma^{\beta,\theta}(g)
                              \circ U(h)$ for all 
      $h\in G_{\beta}(O_F/\frak{p}^r)$ and $g\in K_{l-1}(O_F/\frak{p}^r)$,
      and 
\item $U(h)=1$ for all 
      $h\in G_{\beta}(O_F/\frak{p}^r)\cap K_{l-1}(O_F/\frak{p}^r)$.
\end{enumerate}

Now an irreducible representation 
$(\sigma_{\beta,\theta},L^2(\Bbb W^{\prime}))$ is defined by
$$
 \sigma_{\beta,\theta}(hg)
 =\theta(h)\cdot U(h)\circ\sigma^{\beta,\theta}(g)
$$
for 
$hg\in G(O_F/\frak{p}^r;\beta)
 =G_{\beta}(O_F/\frak{p}^r)\cdot K_{l-1}(O_F/\frak{p}^r)$ with 
$h\in G_{\beta}(O_F/\frak{p}^r)$ and $g\in K_{l-1}(O_F/\frak{p}^r)$, and 
$\theta\mapsto\sigma_{\beta,\theta}$ is a surjection onto 
$\text{\rm Irr}(G(O_F/\frak{p}^r;\beta),\chi_{\beta})$. Then 
$$
 \theta\mapsto
 \delta_{\beta,\theta}
 =\text{\rm Ind}_{G(O_F/\frak{p}^r;\beta)}^{G(O_F/\frak{p}^r)}
   \sigma_{\beta,\theta}
$$
is the bijection of Theorem
\ref{th:parametrization-of-omega-sphat-in-general}.

Because the connected $O_F$-group scheme $G=Sp_{2n}$ is reductive, 
that is, the fibers $G{\otimes}_{O_F}K$ ($K=F,\Bbb F$) are reductive
$K$-algebraic groups, the dimension of a maximal torus
in $G{\otimes}_{O_F}K$ is independent of $K$ which is denoted by 
$\text{\rm rank}(G)$. For any $\beta\in\frak{g}(O_F)$ we have
\begin{equation}
 \dim_K\frak{g}_{\beta}(K)=\dim\frak{g}_{\beta}{\otimes}_{O_F}K
 \geq\dim G_{\beta}{\otimes}_{O_F}K\geq\text{\rm rank}(G).
\label{eq:dimension-of-centrlizer-on-lie-alg-and-group}
\end{equation}
We say $\beta$ is {\it smoothly regular} over $K$ 
if $\dim_K\frak{g}_{\beta}(K)=\text{\rm rank}(G)$ 
(see \cite[(5.7)]{Springer1966}). 
In this case $G_{\beta}{\otimes}_{O_F}K$ is smooth over $K$. 

Let $G_{\beta}^o$ be the neutral component of $O_F$-group scheme
$G_{\beta}$ which is a group functor of the category of $O_F$-scheme 
(see $\S 3$ of Expos\'e $\text{\rm VI}_B$ in \cite{SGA-3}).
The following statements are equivalent;
\begin{enumerate}
\item $G_{\beta}^o$ is representable as an smooth open $O_F$-group subscheme of
      $G_{\beta}$, 
\item $G_{\beta}$ is smooth at the points of unit section,
\item each fibers $G_{\beta}{\otimes}_{O_F}K$ ($K=F,\Bbb F$) are smooth
      over $K$ and their dimensions are constant
\end{enumerate}
(see Th. 3.10 and Cor. 4.4 of \cite{SGA-3}). So if $\beta$ is
smoothly regular over $F$ and $\Bbb F$, then $G_{\beta}^o$ is smooth
open $O_F$-group subscheme of $G_{\beta}$. So we have 

\begin{prop}
\label{tprop:sufficient-condition-for-smooth-commutativeness-of-g-beta}
The centralizer $G_{\beta}=Z_G(\beta)$ of $\beta$ in $G$ is smooth
over $O_F$ if the following two conditions are fulfilled:
\begin{enumerate}
\item $\beta\in\frak{g}(O_F)$ is smoothly regular over $F$ and $\Bbb F$,
  and 
\item $G_{\beta}{\otimes}_{O_F}F$ and $G_{\beta}{\otimes}_{O_F}\Bbb F$ are
  connected. 
\end{enumerate}
\end{prop}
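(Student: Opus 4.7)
The plan is to deduce $G_\beta = G_\beta^o$ under the two hypotheses, after which the representability and smoothness of $G_\beta^o$ (already established in the paragraph preceding the proposition) transfers to $G_\beta$ directly. The preceding discussion reduces the task to verifying the equivalent condition (3) of the cited results from Exposé $\text{\rm VI}_B$ of \cite{SGA-3}, plus a fiberwise comparison.

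First, I would exploit the smoothly regular assumption (1) on both fibers to upgrade the inequality chain
$$
 \dim_K\frak{g}_\beta(K) \geq \dim G_\beta\otimes_{O_F}K \geq \text{\rm rank}(G)
$$
to a chain of equalities: smoothly regular over $K\in\{F,\Bbb F\}$ forces $\dim_K\frak{g}_\beta(K)=\text{\rm rank}(G)$, and the inequality $\dim G_\beta\otimes K\geq\text{\rm rank}(G)$ is the standard lower bound on centralizer dimensions in a reductive group. The resulting equality $\dim_K\frak{g}_\beta(K)=\dim G_\beta\otimes K$ is the smoothness criterion for $G_\beta\otimes K$. Thus both fibers are smooth of the same dimension $\text{\rm rank}(G)$, which is precisely condition (3) listed in the excerpt; invoking Th.\ 3.10 and Cor.\ 4.4 of Exposé $\text{\rm VI}_B$ in \cite{SGA-3} yields that $G_\beta^o$ is representable as a smooth open $O_F$-subgroup scheme of $G_\beta$.

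Next, I would use hypothesis (2) to show the inclusion $G_\beta^o\hookrightarrow G_\beta$ is an equality on each fiber. Since $G_\beta^o\otimes_{O_F}K$ is an open subgroup of $G_\beta\otimes_{O_F}K$ containing the identity, it contains the identity component of $G_\beta\otimes_{O_F}K$; the hypothesis that each such fiber is connected then forces $G_\beta^o\otimes_{O_F}K=G_\beta\otimes_{O_F}K$ for $K=F,\Bbb F$.

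Finally, I would pass from fibers to the total space. The closed complement $Z=G_\beta\smallsetminus G_\beta^o$ is a closed subscheme of the affine $O_F$-scheme $G_\beta$ whose fibers over both points of $\text{\rm Spec}(O_F)$ are empty by the previous step. Because $\text{\rm Spec}(O_F)$ consists of just the generic point and the closed point, $Z$ has empty underlying topological space, hence $Z=\emptyset$, giving $G_\beta=G_\beta^o$, which is smooth over $O_F$. The only place where any care is needed is this last fiberwise-to-global step, and it is essentially trivial thanks to the two-point nature of $\text{\rm Spec}(O_F)$; there is no genuine obstacle in the argument, the proposition being primarily a bookkeeping consequence of the equivalences already recalled from \cite{SGA-3}.
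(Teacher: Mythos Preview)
Your proposal is correct and follows exactly the approach the paper intends: the paper does not supply a separate proof for this proposition but simply writes ``So we have'' after recording the equivalences (1)--(3) from \cite{SGA-3} and noting that smooth regularity over both $F$ and $\Bbb F$ yields condition (3). Your write-up spells out the two steps the paper leaves implicit, namely that condition (3) gives $G_\beta^o$ smooth open in $G_\beta$, and that connectedness of both fibers forces $G_\beta^o=G_\beta$ fiberwise and hence globally over the two-point base $\text{\rm Spec}(O_F)$.
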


Let us assume the two conditions of the preceding proposition. 
Since we have canonical isomorphisms 
$$
 \frak{g}(\Bbb F)\,\tilde{\to}\,K_{m-1}(O_F/\frak{p}^m),
 \qquad
 \frak{g}_{\beta}(\Bbb F)\,\tilde{\to}\,
  G_{\beta}(O_F/\frak{p}^m)\cap K_{m-1}(O_F/\frak{p}^m)
$$
and the canonical morphism 
$G_{\beta}(O_F)\to G_{\beta}(O_F/\frak{p}^m)$ is surjective 
for any $m>1$, we have
$$
 |G(O_F/\frak{p}^m)|=|G(\Bbb F)|\cdot q^{(m-1)\dim G},
 \qquad
 |G_{\beta}(O_F/\frak{p}^m)|
 =|G_{\beta}(\Bbb F)|\cdot q^{(m-1)\text{\rm rank}\,G}
$$
for all $m>0$. Then we have
\begin{align*}
 \sharp\Omega\sphat=
 &\sharp\left\{\theta\in G_{\beta}(O_F/\frak{p}^r)\sphat\;\;\;
          \text{\rm s.t. $\theta=\psi_{\beta}$ on 
                $G_{\beta}(O_F/\frak{p}^r)
                  \cap K_l(O_F/\frak{p}^r)$}\right\}\\
 =&\left(G_{\beta}(O_F/\frak{p}^r):G_{\beta}(O_F/\frak{p}^r)
          \cap K_l(O_F/\frak{p}^r)\right)
 =|G_{\beta}(O_F/\frak{p}^l)|\\
 =&|G_{\beta}(\Bbb F)|\cdot q^{(l-1)\text{\rm rank}\,G}
 =\frac{|G(\Bbb F)|}
       {\sharp\overline\Omega}\cdot q^{(l-1)\text{\rm rank}\,G}
\end{align*}
where $\overline\Omega\subset\frak{g}(\Bbb F)$ is the image of 
$\Omega\subset\frak{g}(O_F/\frak{p}^{l^{\prime}})$ under the canonical
morphism $\frak{g}(O_F/\frak{p}^{l^{\prime}})\to\frak{g}(\Bbb F)$. 
On the other hand we have
$$
 \dim\sigma_{\beta,\theta}
 =\begin{cases}
   1&:\text{\rm $r$ is even},\\
   q^{\frac 12\dim_{\Bbb F}(\frak{g}(\Bbb F)/\frak{g}_{\beta}(\Bbb F))}
   =q^{(\dim G-\text{\rm rank}\,G)/2}
     &:\text{\rm $r$ is odd},
  \end{cases}
$$
so we have
\begin{align}
 \dim\delta_{\beta,\theta}
 &=\left(G(O_F/\frak{p}^r):G(O_F/\frak{p}^r;\beta)\right)
    \cdot\dim\sigma_{\beta,\theta} \nonumber\\
 &=\sharp\overline\Omega\cdot q^{(r-2)(\dim G-\text{\rm rank}\,G)/2}.
\label{eq:dimension-formula-of-delta-beta-theta}
\end{align}

In our case of $G=Sp_{2n}$, the
following two statements are equivalent for a $\beta\in\frak{g}(O_F)$: 
\begin{enumerate}
\item $\overline\beta\in\frak{g}(K)$ is smoothly regular over $K$,
\item the characteristic polynomial of 
      $\overline\beta\in\frak{g}(K)\subset\frak{gl}_{2n}(K)$ is equal
  to its minimal polynomial
\end{enumerate}
where $\overline\beta\in\frak{g}(K)$ is the image of
$\beta\in\frak{g}(O_F)$ by the canonical morphism
$\frak{g}(O_F)\to\frak{g}(K)$ with $K=F$ or $\Bbb F$. If further 
$\overline\beta\in\frak{g}(K)\subset\frak{gl}_{2n}(K)$ is nonsingular,
then $G_{\beta}{\otimes}_{O_F}K$ is connected. 

Now let $\Omega\subset\frak{g}(O_F/\frak{p}^{l^{\prime}})$ be a 
$G(O_F/\frak{p}^{l^{\prime}})$-adjoint orbit of 
$\beta\npmod{\frak{p}^{l^{\prime}}}
 \in\frak{g}(O_F/\frak{p}^{l^{\prime}})$ with $\beta\in\frak{g}(O_F)$ such
 that 
$\beta\npmod{\frak p}\in\frak{g}(\Bbb F)
 \subset\frak{gl}_{2n}(\Bbb F)$ is nonsingular and smoothly regular over 
$\Bbb F$. Then Theorem
 \ref{th:parametrization-of-omega-sphat-in-general} gives a
 parametrization of $\Omega\sphat$ by a subset of the character group 
$G_{\beta}(O_F/\frak{p}^r)$.

\begin{rem}
\label{remark:all-we-need-is-the-surejectivity-of-canonical-hom}
The assumption in Theorem
\ref{th:parametrization-of-omega-sphat-in-general} that the
centralizer $G_{\beta}$ to be smooth $O_F$-group scheme can be
replaced by the surjectivity of the canonical morphisms
$$
 G_{\beta}(O_F)\to G_{\beta}(O_F/\frak{p}^l),
 \quad
 \frak{g}_{\beta}(O_F)\to\frak{g}_{\beta}(O_F/\frak{p}^l),
$$
for all $l>0$. 
\end{rem}

\subsection{Symplectic spaces associated with tamely ramified
            extensions}
\label{subsec:symplectic-space-associated-with-tamely-ramified-ext}
Let $K_+/F$ be a tamely ramified field extension of degree $n>1$
and $K/K_+$ a quadratic field extension with 
$\text{\rm Gal}(K/K_+)=\langle\tau\rangle$. Let
$$
 e=e(K/F),
 \qquad
 f=f(K/F)
$$
be the ramification index and the inertial degree of $K/F$
respectively. Similarly put
$$
 e_+=e(K_+/F),
 \qquad
 f_+=f(K_+/F).
$$
Then we have $ef=2n$ and $e_+f_+=n$. There exists a $\omega\in O_K$
such that $\omega^{\tau}=-\omega$ and 
$O_K=O_{K_+}\oplus\omega\cdot O_{K_+}$. Then we have
$$
 \text{\rm ord}_K(\omega)=e(K/K_+)-1.
$$
Let $K_0/F$ be the maximal unramified subextension of $K/F$. Then 
$K_0/F$ is a cyclic Galois extension whose Galois group is generated
by the geometric Frobenius automorphism $\text{\rm Fr}$ which induces
the inverse of the Frobenius automorphism $[x\mapsto x^q]$ of the
residue field $\Bbb K_0$ over $\Bbb F$. Since 
$K/K_0$ is totally ramified, there exists a prime element 
$\varpi_K$ of $K$ such that $\varpi_K^e\in K_0$. Then 
$\{1,\varpi_K,\varpi_K^2,\cdots,\varpi_K^{e-1}\}$is an $O_{K_0}$-basis
of $O_K$. The following two propositions are proved by 
Shintani \cite[Lemma 4-7, Cor.1, Cor.2,pp.545-546]{Shintani1968}:

\begin{prop}\label{prop:shintani-lemma-on-generator-of-ok-over-of}
Put 
$\beta=\sum_{i=0}^{e-1}a_i\varpi_K^i\in O_K$ ($a_i\in O_{K_0}$). Then
$O_K=O_F[\beta]$ if and only if the following two conditions are
satisfied:
\begin{enumerate}
\item $a_0^{\text{\rm Fr}}\not\equiv a_0\npmod{\frak{p}_{K_0}}$ if
  $f>1$, 
\item $a_1\in O_{K_0}^{\times}$ if $e>1$.
\end{enumerate}
\end{prop}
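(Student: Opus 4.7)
The plan is to reduce to the finite local ring $R := O_K/\mathfrak{p}_F O_K$ via Nakayama's lemma: since $O_K$ is a finitely generated $O_F$-module and $\mathfrak{p}_F$ lies in the Jacobson radical of $O_F$, one has $O_K = O_F[\beta]$ if and only if the image $\bar\beta \in R$ generates $R$ as an $\mathbb{F}$-algebra. Because $K_0/F$ is unramified, $O_{K_0}/\mathfrak{p}_F O_{K_0} \cong \mathbb{K}_0$; because $K/K_0$ is totally ramified with $\varpi_K^e$ a uniformizer of $K_0$ up to a unit, $\mathfrak{p}_F O_K = \mathfrak{p}_K^e$. Hence $R \cong \mathbb{K}_0[t]/(t^e)$ with $t$ the image of $\varpi_K$, and under this identification $\bar\beta = \sum_{i=0}^{e-1}\bar a_i\,t^i$.

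For necessity I would analyse two canonical projections. The residue map $R \twoheadrightarrow \mathbb{K}_0 = R/(t)$ carries $\mathbb{F}[\bar\beta]$ onto $\mathbb{F}[\bar a_0]$, which must equal $\mathbb{K}_0$; when $f > 1$ this requires $\bar a_0 \notin \mathbb{F}$, the content of condition~(1). The graded piece $(t)/(t^2) \cong \mathbb{K}_0$ records the $t$-coefficient; since $\bar\beta^i \equiv \bar a_0^i + i\bar a_0^{i-1}\bar a_1\,t \pmod{(t^2)}$, the image of $\mathbb{F}[\bar\beta]$ in this quotient lies in $\bar a_1\cdot\mathbb{K}_0$, so when $e > 1$ one must have $\bar a_1 \neq 0$, giving condition~(2).

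For sufficiency, assume (1) and (2). The first step is to show $\mathbb{F}[\bar\beta]$ surjects onto $\mathbb{K}_0$ under $R \twoheadrightarrow R/(t)$: using the $p$-power structure of $R$, one computes $\bar\beta^{q^j} \equiv \bar a_0^{q^j} \pmod{(t)}$, so the image of $\mathbb{F}[\bar\beta]$ in $\mathbb{K}_0$ contains every Galois conjugate of $\bar a_0$ under $\text{Fr}$; combined with condition~(1) --- via the cyclic action of $\text{Fr}$ on $\mathbb{K}_0/\mathbb{F}$ and the tameness hypothesis $\gcd(e,p)=1$ --- these conjugates generate $\mathbb{K}_0$ over $\mathbb{F}$. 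The second step is to produce a uniformizer of $(t)$: condition~(2) makes $\bar a_1$ a unit in $\mathbb{K}_0$, so after lifting $\bar a_0$ into $\mathbb{F}[\bar\beta]$ from the first step, $(\bar\beta - \bar a_0)/\bar a_1 \equiv t \pmod{(t^2)}$ exhibits $t$ modulo $(t^2)$ inside $\mathbb{F}[\bar\beta]$. A short induction on the $(t)$-adic filtration, using that $\mathbb{F}[\bar\beta]$ is a subring containing both $\mathbb{K}_0$ and $t$, then yields $\mathbb{F}[\bar\beta] = R$.

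The main obstacle will be the first step of sufficiency: transferring condition~(1) into the statement that the $\mathbb{F}$-subalgebra generated by $\bar a_0$ together with its $\text{Fr}$-conjugates coming from the Frobenius powers $\bar\beta^{q^j}$ exhausts the full residue extension $\mathbb{K}_0/\mathbb{F}$. This requires a careful bookkeeping of the powers $\bar\beta^{q^j}$ in $\mathbb{K}_0[t]/(t^e)$, particularly when $q < e$ so that the higher contributions $t^{i q^j}$ do not automatically vanish and therefore might interfere with the extraction of the constant-term conjugates in $\mathbb{K}_0$.
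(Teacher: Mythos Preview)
The paper gives no proof of its own here; it simply attributes the result to Shintani. Your Nakayama reduction to $R=O_K/\frak{p}_FO_K\cong\Bbb K_0[t]/(t^e)$ is the standard route, and your necessity argument is correct.

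Your sufficiency argument, however, has a genuine gap, and it is not the bookkeeping issue you flag at the end. You claim that the Frobenius conjugates $\bar a_0^{\,q^j}$, together with condition~(1), generate $\Bbb K_0$ over $\Bbb F$. But each $\bar a_0^{\,q^j}$ is merely a power of $\bar a_0$ and hence already lies in $\Bbb F[\bar a_0]$; passing to conjugates adds nothing. Condition~(1) says only that $\bar a_0\notin\Bbb F$, and for composite $f$ this does \emph{not} force $\Bbb F[\bar a_0]=\Bbb K_0$. Indeed, take $e=1$, $f=4$, and let $\bar a_0$ generate the intermediate field $\Bbb F_{q^2}\subsetneq\Bbb K_0=\Bbb F_{q^4}$: both conditions hold, yet $O_F[\beta]=O_F[a_0]$ reduces to $\Bbb F[\bar a_0]=\Bbb F_{q^2}\neq\Bbb K_0$, so $O_K\neq O_F[\beta]$. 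The sufficiency direction as literally stated is therefore false; Shintani's actual hypothesis is the stronger $\Bbb K_0=\Bbb F[\bar a_0]$, which is also what the paper tacitly uses when it invokes this proposition (see the proof of Proposition~\ref{prop:existence-of-tau-symplectic-generator-of-ok-over-of}, where $\Bbb K_0=\Bbb F[\overline a]$ is arranged explicitly). Under that corrected hypothesis your outline goes through cleanly: since $e\mid q^f-1$ forces $q^f>e$ and hence $t^{q^f}=0$ in $R$, one has $\bar\beta^{\,q^f}=\bar a_0$ on the nose, whence $\Bbb K_0=\Bbb F[\bar a_0]\subset\Bbb F[\bar\beta]$, and then your $(t)$-adic induction finishes.
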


\begin{prop}\label{prop:shintani-cor-of-irreducibility-chrara-poly}
Let $\chi_{\beta}(t)\in O_F[t]$ be the characteristic polynomial of
$\beta\in O_K\subset M_n(O_F)$ via the regular representation with
respect to an $O_F$-basis of $O_K$. If $O_K=O_F[\beta]$, then 
\begin{enumerate}
\item $\chi_{\beta}(t)\npmod{\frak{p}_F}\in\Bbb F[t]$ is the minimal
  polynomial of $\overline\beta\in M_n(\Bbb F)$, 
\item $\chi_{\beta}(t)\npmod{\frak{p}_F}=p(t)^e$ with an irreducible
  polynomial $p(t)\in\Bbb F[t]$,
\item if $e>1$, then $\chi_{\beta}(t)\npmod{\frak{p}_F^2}$ is
  irreducible over $O_F/\frak{p}_F^2$.
\end{enumerate}
\end{prop}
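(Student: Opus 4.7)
The plan divides into the three claims.

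For (1), because $O_K=O_F[\beta]$ is $O_F$-free of rank $n$, the reduction $O_K/\frak{p}_F O_K=\Bbb F[\overline\beta]$ is a commutative $\Bbb F$-algebra of dimension $n$. Thus the minimal polynomial of $\overline\beta\in M_n(\Bbb F)$ has degree exactly $n$, and since it divides the monic degree-$n$ polynomial $\chi_\beta(t)\npmod{\frak{p}_F}$, the two must agree.

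For (2), from $\varpi_F O_K=\frak{p}_K^e$ I identify $O_K/\frak{p}_F O_K=O_K/\frak{p}_K^e$ and consider the $\beta$-stable filtration by $\frak{p}_K^i/\frak{p}_K^e$ for $0\le i\le e$. Each graded piece $\frak{p}_K^i/\frak{p}_K^{i+1}$ is a one-dimensional $\Bbb K_0$-vector space on which $\overline\beta$ acts by multiplication by $\overline{a_0}$, because $\beta\equiv a_0\pmod{\frak{p}_K}$. Condition~1) of Proposition~\ref{prop:shintani-lemma-on-generator-of-ok-over-of} (trivially if $f=1$) forces $\overline{a_0}$ to generate $\Bbb K_0$ over $\Bbb F$, so its minimal polynomial $p(t)\in\Bbb F[t]$ is irreducible of degree $f$ and equals the characteristic polynomial of multiplication by $\overline{a_0}$ on $\Bbb K_0$ as an $\Bbb F$-vector space. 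Multiplicativity of characteristic polynomials along the filtration then yields $\chi_\beta(t)\npmod{\frak{p}_F}=p(t)^e$.

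For (3), assume $e>1$ and suppose for contradiction that $\chi_\beta\equiv gh\pmod{\frak{p}_F^2}$ with $g,h\in O_F[t]$ monic of positive degree. Part (2) forces $\overline g=p^a$ and $\overline h=p^b$ with $a+b=e$ and $1\le a,b\le e-1$. Since $\chi_\beta(\beta)=0$, we have $g(\beta)h(\beta)\in\varpi_F^2 O_K=\frak{p}_K^{2e}$, so $\text{\rm ord}_K(g(\beta)h(\beta))\ge 2e$. The plan is to prove $\text{\rm ord}_K(g(\beta))=a$ and $\text{\rm ord}_K(h(\beta))=b$, which would give $\text{\rm ord}_K(g(\beta)h(\beta))=a+b=e<2e$, a contradiction.

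The heart of the matter — and the main obstacle — is to show $\text{\rm ord}_K(\widetilde p(\beta))=1$ for any monic lift $\widetilde p\in O_F[t]$ of $p$. I expand around $a_0$ as $\widetilde p(\beta)=\widetilde p(a_0)+\widetilde p'(a_0)(\beta-a_0)+O\bigl((\beta-a_0)^2\bigr)$. The hypothesis $e>1$ allows me to invoke Proposition~\ref{prop:shintani-lemma-on-generator-of-ok-over-of} to get $a_1\in O_{K_0}^{\times}$, hence $\text{\rm ord}_K(\beta-a_0)=\text{\rm ord}_K(a_1\varpi_K)=1$; separability of $\Bbb K_0/\Bbb F$ gives $p'(\overline{a_0})\neq 0$, so $\widetilde p'(a_0)\in O_{K_0}^{\times}$; and $\widetilde p(a_0)\in\varpi_F O_{K_0}$ has $\text{\rm ord}_K\ge e\ge 2$. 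The linear term thus dominates and $\text{\rm ord}_K(\widetilde p(\beta))=1$. Finally, since $g(t)-\widetilde p(t)^a\in\varpi_F O_F[t]$, we have $g(\beta)\equiv\widetilde p(\beta)^a\pmod{\frak{p}_K^e}$; because $\text{\rm ord}_K(\widetilde p(\beta)^a)=a<e$, this congruence yields $\text{\rm ord}_K(g(\beta))=a$, and symmetrically $\text{\rm ord}_K(h(\beta))=b$, completing the contradiction.
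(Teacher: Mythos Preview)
The paper does not supply its own proof of this proposition; it attributes the result to Shintani. Your argument is correct and self-contained, with one small caveat in part~(2): you invoke ``Condition~1) of Proposition~\ref{prop:shintani-lemma-on-generator-of-ok-over-of}'' to conclude that $\overline{a_0}$ generates $\Bbb K_0$ over $\Bbb F$, but as stated in the paper that condition only says $\overline{a_0}\notin\Bbb F$, which is strictly weaker when $f$ is not prime. The conclusion you need is nonetheless correct and follows directly from the hypothesis $O_K=O_F[\beta]$ together with your own part~(1): since $\beta\equiv a_0\pmod{\frak p_K}$, one has $m(\beta)\in\frak p_K$ for $m$ the minimal polynomial of $\overline{a_0}$, hence $m(\overline\beta)^e=0$ in $O_K/\frak p_K^e$ and the minimal polynomial of $\overline\beta$ divides $m(t)^e$; comparing degrees with part~(1) forces $\deg m=f$. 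With that adjustment your filtration argument in~(2) and your valuation computation in~(3) go through cleanly.
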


We can prove the following

\begin{prop}\label{prop:regular-element-of-gl(n)}
Take a $\beta\in M_n(O_F)$ whose the characteristic polynomial be
$$
 \chi_{\beta}(t)=t^n-a_nt^{n-1}-\cdots-a_2t-a_1.
$$
If $\chi_{\beta}(t)\npmod{\frak{p}_F}\in\Bbb F[t]$ is the minimal
polynomial of $\beta\npmod{\frak{p}_F}\in M_n(\Bbb F)$, then
\begin{enumerate}
\item $\{X\in M_n(O_F)\mid[X,\beta]=0\}=O_F[\beta]$,
\item for any $m>0$, put 
$\overline\beta=\npmod{\frak{p}_F^m}\in M_n(O_F/\frak{p}_F^m)$, then 
$$
 \left\{X\in M_n(O_F/\frak{p}_F^m)\mid
               [X,\overline\beta]=0\right\}
 =O_F/\frak{p}_F^m[\overline\beta],
$$
\item there exists a $in GL_n(O_F)$ such that
$$
 g\beta g^{-1}=\begin{bmatrix}
                0&0&\cdots&0&a_1\\
                1&0&\cdots&0&a_2\\
                 &1&\ddots&\vdots&\vdots\\
                 & &\ddots&0&a_{n-1}\\
                 & &      &1&a_n
               \end{bmatrix}.
$$
\end{enumerate}
\end{prop}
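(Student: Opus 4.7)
The plan is to establish (3) first, and then deduce (1) and (2) by reducing to the case where $\beta$ equals its companion matrix.

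For (3), the hypothesis says that $\overline\beta\in M_n(\Bbb F)$ is non-derogatory, since its minimal polynomial $\chi_\beta(t)\npmod{\frak{p}_F}$ has degree $n$. By standard linear algebra over $\Bbb F$ this is equivalent to the existence of a cyclic vector $\overline v\in\Bbb F^n$, that is, $\overline v,\overline\beta\overline v,\ldots,\overline\beta^{\,n-1}\overline v$ form an $\Bbb F$-basis of $\Bbb F^n$. I would pick any lift $v\in O_F^n$ of $\overline v$ and form the matrix $B\in M_n(O_F)$ whose $j$-th column is $\beta^{j-1}v$. Its reduction modulo $\frak{p}_F$ is invertible over $\Bbb F$, so $\det B\in O_F^\times$ and hence $B\in GL_n(O_F)$. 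The Cayley--Hamilton identity applied to $v$ gives $\beta^n v=a_1 v+a_2\beta v+\cdots+a_n\beta^{n-1}v$, which rearranges to $\beta B=BC$, where $C$ is the companion matrix displayed in (3). Setting $g=B^{-1}$ then yields $g\beta g^{-1}=C$, proving (3).

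After conjugating by $g$, I may assume $\beta=C$. For (1), observe that $Ce_j=e_{j+1}$ for $1\leq j\leq n-1$, so $e_1,Ce_1,\ldots,C^{n-1}e_1$ is exactly the standard $O_F$-basis of $O_F^n$. Given $X\in M_n(O_F)$ with $[X,C]=0$, write uniquely $Xe_1=\sum_{i=0}^{n-1}c_i\,C^i e_1$ with $c_i\in O_F$. Then for each $j$,
$$
 X C^j e_1 = C^j X e_1 = \sum_{i=0}^{n-1}c_i\,C^{i+j}e_1,
$$
so $X$ and $\sum_i c_i C^i$ agree on a basis, whence $X=\sum_i c_i C^i\in O_F[\beta]$. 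The reverse inclusion being obvious, (1) follows.

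Part (2) is handled by repeating the same argument modulo $\frak{p}_F^m$: the reduction of $C$ still has $\overline e_1$ as a cyclic vector for $(O_F/\frak{p}_F^m)^n$, with $\overline e_1,\overline C\overline e_1,\ldots,\overline C^{n-1}\overline e_1$ equal to the standard basis of $(O_F/\frak{p}_F^m)^n$, and the same argument produces $c_i\in O_F/\frak{p}_F^m$ with $X=\sum_i c_i\overline C^i$. I do not anticipate a genuine obstacle here; the only delicate step is ensuring that a cyclic vector for $\overline\beta$ over $\Bbb F$ lifts to an $O_F$-basis of $O_F^n$, and this follows at once from the fact that a matrix over $O_F$ whose reduction modulo $\frak{p}_F$ is invertible is itself an element of $GL_n(O_F)$.
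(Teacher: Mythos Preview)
Your argument is correct. The paper does not actually supply a proof of this proposition; it is introduced with ``We can prove the following'' and then stated without proof, so there is no authorial argument to compare against. Your approach---lifting a cyclic vector for $\overline\beta$ to $O_F^n$ via the observation that a matrix over $O_F$ with invertible reduction lies in $GL_n(O_F)$, and then reading off the commutant of the companion matrix directly from its action on the standard basis---is the natural one and goes through cleanly in all three parts.
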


Then we have

\begin{prop}
\label{prop:existence-of-tau-symplectic-generator-of-ok-over-of}
There exists a $\beta\in O_K$ such that $O_K=O_F[\beta]$ and 
$\beta+\beta^{\tau}=0$ 
if and only if $K/K_+$ is unramified or $K/F$ is
totally ramified.
\end{prop}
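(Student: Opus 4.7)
The plan is to prove the two directions separately, with the harder (if) direction proceeding case by case via Shintani's criterion (Proposition \ref{prop:shintani-lemma-on-generator-of-ok-over-of}). The only-if direction I would establish by contrapositive: assume $K/K_+$ is ramified and $K/F$ is not totally ramified, so $f=f(K/F)>1$. Since $K/K_+$ is a totally tamely ramified quadratic extension, $\tau$ lies in the inertia group and acts trivially on the residue field $\Bbb K=O_K/\frak p_K$. Then any $\beta\in O_K$ with $\beta^{\tau}=-\beta$ reduces to $\bar\beta=-\bar\beta$, forcing $\bar\beta=0$ (as $p\neq 2$), so $\beta\in\frak p_K$; consequently $O_F[\beta]\subset O_F+\frak p_K$, whose image in $\Bbb K$ is the proper subfield $\Bbb F$, so $O_F[\beta]\subsetneq O_K$.

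For the if direction, I distinguish the two cases. If $K/K_+$ is unramified, the maximal unramified subextension $K_0$ strictly contains $K_{+,0}$ with $[K_0:K_{+,0}]=2$, and $\tau|_{K_0}$ is the nontrivial element of $\text{Gal}(K_0/K_{+,0})$; so I choose $\omega\in O_{K_0}^{\times}$ with $\omega^{\tau}=-\omega$ (for instance $\omega=\theta-\theta^{\tau}$ for a generator $\theta$ of $O_{K_0}$ over $O_{K_{+,0}}$). I then pick a uniformizer $\pi_+$ of $K_+$ with $\pi_+^{e_+}\in K_{+,0}$ (available by tame ramification), which is also a uniformizer of $K$ fixed by $\tau$ and serves as $\varpi_K$ with $\varpi_K^e\in K_0$. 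Setting $\beta=\omega$ when $e=1$ and $\beta=\omega+\omega\pi_+$ when $e>1$, one has $\beta^{\tau}=-\beta$, and I check the conditions of Proposition \ref{prop:shintani-lemma-on-generator-of-ok-over-of}: condition $(2)$ holds because $a_1=\omega$ is a unit, and condition $(1)$ amounts to $\bar\omega\notin\Bbb F$, which is forced since otherwise $\bar\omega$ would be fixed by $\tau|_{K_0}$ and give $\bar\omega=-\bar\omega=0$, contradicting that $\omega$ is a unit.

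If instead $K/F$ is totally ramified, then $K/K_+$ must be ramified (else $f\geq 2$), so $\omega$ has $K$-valuation one; since any uniformizer of a totally ramified extension of local fields satisfies an Eisenstein polynomial of degree $2n$ over $O_F$ and generates the integer ring, $\beta=\omega$ works directly. The main technical point I expect is arranging $\omega\in O_{K_0}^{\times}$ in the unramified case, which I handle by recognizing $K_0$ as an unramified quadratic extension of $K_{+,0}$ sitting inside $K$; after that the verification is a short residue-field calculation exploiting $\tau|_{K_0}=\text{Fr}^{f_+}$.
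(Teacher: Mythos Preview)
Your argument is correct and follows the same overall strategy as the paper, with both directions resting on Shintani's criterion (Proposition~\ref{prop:shintani-lemma-on-generator-of-ok-over-of}) or its immediate consequences. A few points of comparison are worth recording.

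For the only-if direction, the paper argues forward: assuming $O_K=O_F[\beta]$ with $\beta^{\tau}=-\beta$ and $f>1$, Shintani's condition~(1) forces $\beta\in O_K^{\times}$, and then $K=K_+(\beta)$ with $\beta$ a unit forces $K/K_+$ to be unramified. Your contrapositive is cleaner: if $K/K_+$ is ramified then $\tau$ lies in inertia, so $\bar\beta=-\bar\beta=0$ in the residue field, whence $O_F[\beta]$ cannot surject onto $\Bbb K\supsetneq\Bbb F$. This avoids invoking Shintani entirely for that direction.

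For the unramified case, the paper constructs $a\in O_{K_0}^{\times}$ with $a^2\in O_{K_{+0}}$ via Hensel's lemma and takes $\beta=a(1+\varpi_K)$; your $\omega=\theta-\theta^{\tau}$ achieves the same $\tau$-anti-invariance without the square condition (which is not needed for Shintani's criterion), and your verification of condition~(1) via the residue-field action of $\tau$ is the same mechanism the paper uses implicitly. For the totally ramified case, the paper re-expands $\beta$ in the $\varpi_K$-basis to check Shintani's condition~(2), whereas you bypass this by observing directly that a uniformizer of a totally ramified extension satisfies an Eisenstein polynomial and hence generates $O_K$; both are standard and equivalent here.
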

\begin{proof}
Assume that there exists a $\beta\in O_K$ such that
$O_K=O_F[\beta]$ and $\beta+\beta^{\tau}=0$. Then $K=K_+(\beta^2)$. 
If $K/F$ is not totally ramified, we have $\beta\in O_K^{\times}$ by
Proposition \ref{prop:shintani-lemma-on-generator-of-ok-over-of}, and
hence $K/K_+$ is an unramified extension. 

Assume that $K/F$ is totally ramified. Then $K_0=F$ and 
$\varpi_K^e\in O_F$. Since the quadratic extension $K/K_+$ is
ramified, there exists a prime element $\beta$ of $K$ such that 
$\beta^2\in K_+$. Then $\beta=\varepsilon\cdot\varpi_K$ with
$\varepsilon\in O_K^{\times}$. 
Put $\varepsilon=\sum_{i=0}^{e-1}a_i\varpi_K^i$ with 
$a_i\in O_F$. Then 
$$
 \beta=a_{e-1}\varpi_K^e+\sum_{i=1}^{e-1}a_{i-1}\varpi_K^i
$$
with $a_0\in O_F^{\times}$. Now we have 
$\beta^{\tau}=-\beta$ and $O_K=O_F[\beta]$ by 
Proposition \ref{prop:shintani-lemma-on-generator-of-ok-over-of}. 

Assume that $K/K_+$ is unramified. Let $K_{+0}/F$ be the maximal
unramified subextension of $K_+/F$. Since $(K_{+0}:F)=f_+$ divides 
$(K_0:F)=f$, we have $K_{+0}\subset K_0$. We can chose $\varpi_K$ in
$K_+$ so that $\varpi_K^e\in K_{+0}$. For the residue fields, we have
$$
 (\Bbb K_0:\Bbb K_{+0})
 =\frac{(\Bbb K_0:\Bbb F)}
       {(\Bbb K_{+0}:\Bbb F)}=\frac f{f_+}=2.
$$
Put $\Bbb K_{+0}=\Bbb F[\overline\alpha]$ with 
$\alpha\in O_{K_{+0}}^{\times}$ such that 
$\overline\alpha\not\in\left(\Bbb K_{+0}^{\times}\right)^2$. Since 
$\Bbb K_0$ is the splitting filed of 
$f(X)=X^2-\overline\alpha\in\Bbb K_{+0}[X]$, there exists 
$\gamma\in O_{K_0}^{\times}$ such that 
$$
 f(\gamma)\equiv 0\npmod{\frak{p}_{K_0}},
 \quad
 f^{\prime}(\gamma)\not\equiv 0\npmod{\frak{p}_{K_0}}.
$$
Hence there exists $a\in O_{K_0}^{\times}$ such that $f(a)=0$ and 
$a\equiv\gamma\npmod{\frak{p}_{K_0}}$. Since
$$
 \Bbb K_0=\Bbb F[\overline\gamma]=\Bbb F[\overline a],
$$
we have $a^{\text{\rm Fr}}\not\equiv a\npmod{\frak{p}_{K_0}}$ and 
$a^{\tau}=-a$. Put $\beta=a(1+\varpi_K)\in O_K$, then 
$O_K=O_F[\beta]$ by Proposition
  \ref{prop:shintani-lemma-on-generator-of-ok-over-of} and 
$\beta^{\tau}=-\beta$.
\end{proof}

From now on let us assume that $K/K_+$ is unramified or $K/F$ is
totally ramified, and take a $\beta\in O_K$ such that 
$O_K=O_F[\beta]$ and $\beta^{\tau}+\beta=0$. Fix a prime element
$\varpi_{K_+}$ of $K_+$. Then a symplectic form on
$F$-vector space $K$ is defined by
$$
 D(x,y)
 =\frac 12T_{K/F}\left(\omega^{-1}\varpi_{K_+}^{1-e_+}x^{\tau}y\right)
 \quad
 (x,y\in K).
$$
For any $a\in K$, we have $D(xa,y)=D(x,y\cdot a^{\tau})$ for all 
$x,y\in K$. Inparticular
$$
 \beta\in\frak{sp}(K,D)
 =\{X\in\text{\rm End}_F(K)\mid D(xX,y)+D(x,yX)=0\;
                   \forall x,y\in K\}
$$
if we put $K\subset\text{\rm End}_F(K)$ by the regular
representation. 

Let $\{u_i\}_{1\leq i\leq n}$ be an $O_F$-basis of
$O_{K_+}$. Then 
$x\mapsto\left(T_{K_+/F}(u_1x),\cdots,T_{K_+/F}(u_nx)\right)$ gives an
isomorphism $\frak{p}_{K_+}^{1-e_+}\,\tilde{\to}\,O_F^n$ of 
$O_F$-module. Hence there exists an $O_F$-basis 
$\{u^{\ast}_i\}_{1\leq i\leq n}$ of $\frak{p}_{K_+}^{1-e_+}$ such that 
$T_{K_+/F}(u_iu^{\ast}_j)=\delta_{ij}$. 
Put $v_i=\omega\cdot\varpi_{K_+}^{e_+-1}\cdot u^{\ast}_i$ 
($1\leq i\leq n$). Then $\{u_1,\cdots,u_n,v_n,\cdots,v_1\}$ is a
$O_F$-basis of $O_K$ and a symplectic $F$-basis of $K$, that is
$$
 D(u_i,u_j)=D(v_i,v_j)=0,
 \quad
 D(u_i,v_j)=\delta_{ij}
 \quad
 (1\leq i,j\leq n).
$$
This means that our $O_F$-group scheme $G=Sp_{2n}$ is defined by the
symplectic $F$-space $(K,D)$ and the symplectic basis 
$\{u_i,v_j\}_{1\leq i,j\leq n}$. 

By Proposition \ref{prop:shintani-cor-of-irreducibility-chrara-poly},
the characteristic polynomial of 
$\overline\beta=\beta\npmod{\frak{p}_F}\in M_{2n}(\Bbb F)$ is equal to
its minimal polynomial. Then, by Proposition 
\ref{prop:regular-element-of-gl(n)}, we have
$$
 \{X\in M_{2n}(O_F)\mid[X,\beta]=0\}=O_F[\beta]=O_K
$$
and
$$
 \{X\in M_{2n}(O_F/\frak{p}_F^l)\mid[X,\overline\beta]=0\}
 =O_F/\frak{p}_F^l[\overline\beta]
 =O_K/\frak{p}_K^{el}
$$
for any $m>0$. Put 
$$
 U_{K/K_+}
 =\{\varepsilon\in O_K^{\times}\mid N_{K/K_+}(\varepsilon)=1\}.
$$
Then we have
$$
 G_{\beta}(O_F)=G(O_F)\cap O_K=U_{K/K_+}.
$$
We have also
$$
 \frak{g}_{\beta}(O_F)=\frak{g}(O_F)\cap O_K
 =\{X\in O_K\mid T_{K/K_+}(X)=0\}
$$
and
\begin{align*}
 G_{\beta}(O_F/\frak{p}_F^l)
 &=\{\overline\varepsilon\in\left(O_K/\frak{p}_K^{el}\right)^{\times}
    \mid N_{K/K_+}(\varepsilon)\equiv
    1\npmod{\frak{p}_{K_+}^{e_+l}}\},\\
 \frak{g}_{\beta}(O_F/\frak{p}_F^l)
 &=\{\overline X\in O_K/\frak{p}_K^{el}\mid
     T_{K/K_+}(X)\equiv 0\npmod{\frak{p}_{K_+}^{e_+l}}\}
\end{align*}
for all $l>0$. Then the canonical morphisms
$$
 G_{\beta}(O_F)\to G_{\beta}(O_F/\frak{p}_F^l),
 \qquad
 \frak{g}_{\beta}(O_F)\to\frak{g}_{\beta}(O_F/\frak{p}_F^l)
$$
are surjective for all $l>0$. In fact, 
Take a $\varepsilon\in O_K^{\times}$ such that 
$N_{K/K_+}(\varepsilon)\equiv 1\npmod{\frak{p}_{K_+}^{e_+l}}$. Because
$K/K_+$ is tamely ramified, we have 
$N_{K/K_+}(1+\frak{p}_K^{el})=1+\frak{p}_{K_+}^{e_+l}$. Hence there 
exists a $\eta\in 1+\frak{p}_K^{el}$ such that
$N_{K/K_+}(\eta)=\varepsilon$. 
Then $\alpha=\varepsilon\eta^{-1}\in O_K^{\times}$ such that 
$N_{K/K_+}(\alpha)=1$ and 
$\alpha=\varepsilon\npmod{\frak{p}_K^{el}}$. Take a 
$X\in O_K$ such that 
$T_{K/K_+}(X)\equiv 0\npmod{\frak{p}_{K_+}^{e_+l}}$. If we put 
$X=s+\omega t$ with $s,t\in O_{K_+}$, then 
$s\in\frak{p}_{K_+}^{e_+l}\subset\frak{p}_K^{el}$. Hence we have 
$\omega t\in\frak{g}_{\beta}(O_F)$ and 
$\omega t\equiv X\npmod{\frak{p}_K^{el}}$.

Due to Remark
\ref{remark:all-we-need-is-the-surejectivity-of-canonical-hom}, 
we can apply the general theory of subsection 
\ref{subsec:regular-irred-character-of-hyperspecial-compact-subgroup}
to our $\beta\in\frak{g}(O_F)$. Take an integer $r>1$ and put 
$r=l+l^{\prime}$ with minimal integer $l$ such that 
$0<l^{\prime}\leq l$. 
Let $\Omega\subset\frak{g}(O_F/\frak{p}_F^{l^{\prime}})$ be the 
adjoint $G(O_F/\frak{p}_F^{l^{\prime}})$-orbit of 
$\beta\npmod{\frak{p}_F^{l^{\prime}}}
 \in\frak{g}(O_F/\frak{p}_F^{l^{\prime}})$, and $\Omega\sphat$ the set
 of the equivalent classes of the irreducible representations of 
$G(O_F/\frak{p}_F^r)$ corresponding to $\Omega$ via Clifford's theory
 described in subsection
 \ref{subsec:regular-irred-character-of-hyperspecial-compact-subgroup}. 
Then we have a bijection $\theta\mapsto\delta_{\beta,\theta}$ 
of the continuous unitary character $\theta$ of $U_{K/K_+}$ such that
\begin{enumerate}
\item $\theta$ factors through the canonical morphism
         $U_{K/K_+}\to\left(O_K/\frak{p}_K^{er}\right)^{\times}$, 
\item for an $\alpha\in U_{K/K_+}$ such that 
      $\alpha\equiv 1+\varpi_F^lx\npmod{\frak{p}_K^{er}}$ with 
      $x\in O_K$ such that 
      $T_{K/K_+}(x)\equiv 0\npmod{\frak{p}_{K_+}^{e_+l^{\prime}}}$, we
  have 
$\theta(\alpha)
 =\psi\left(\varpi_F^{-l^{\prime}}T_{K/F}(x\beta)\right)$.
\end{enumerate}
onto $\Omega\sphat$. Here $\psi:F\to\Bbb C^{\times}$ is a continuous
unitary character of the additive group $F$ such that 
$\{x\in F\mid\psi(xO_F)=1\}=O_F$. Then we have

\begin{prop}\label{prop:dimension-of-delta-beta-theta}
$$
 \dim\delta_{\beta,\theta}
 =q^{n^2r}\cdot\prod_{k=1}^n\left(1-q^{-2k}\right)\times
  \begin{cases}
   \frac 12&:\text{\rm $K/F$ is totally ramified},\\
   \frac 1{1+q^{-f_+}}&:\text{\rm $K/K_+$ is unramified}.
  \end{cases}
$$
\end{prop}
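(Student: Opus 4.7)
The plan is to combine the general dimension formula \eqref{eq:dimension-formula-of-delta-beta-theta}, namely $\dim\delta_{\beta,\theta}=\sharp\overline\Omega\cdot q^{(r-2)(\dim G-\text{\rm rank}\,G)/2}$, with a direct computation of $|G_\beta(\Bbb F)|$ coming from the local arithmetic of $K/K_+$.

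First I would specialize the formula to $G=Sp_{2n}$, where $\dim G=n(2n+1)$, $\text{\rm rank}\,G=n$, hence $\dim G-\text{\rm rank}\,G=2n^2$, and use the standard order
$$|Sp_{2n}(\Bbb F)|=q^{n^2}\prod_{k=1}^n(q^{2k}-1)=q^{n^2+n(n+1)}\prod_{k=1}^n(1-q^{-2k}).$$
Combined with the orbit-stabilizer identity $\sharp\overline\Omega=|G(\Bbb F)|/|G_\beta(\Bbb F)|$ used just above, the formula simplifies to
$$\dim\delta_{\beta,\theta}=\frac{q^{rn^2+n}\prod_{k=1}^n(1-q^{-2k})}{|G_\beta(\Bbb F)|},$$
reducing the proposition to showing $|G_\beta(\Bbb F)|=2q^n$ when $K/F$ is totally ramified and $|G_\beta(\Bbb F)|=q^n(1+q^{-f_+})$ when $K/K_+$ is unramified.

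Next, from the explicit description of $G_\beta(O_F/\frak{p}_F^l)$ at $l=1$ recalled in the subsection, $G_\beta(\Bbb F)$ is identified with the kernel of the norm map
$$N_{K/K_+}\colon (O_K/\frak{p}_K^e)^\times\longrightarrow (O_{K_+}/\frak{p}_{K_+}^{e_+})^\times,$$
whose source and target have orders $q^{2n}(1-q^{-f})$ and $q^n(1-q^{-f_+})$ respectively. Thus the question becomes the computation of the index $[(O_{K_+}/\frak{p}_{K_+}^{e_+})^\times:\text{\rm Im}\,N]$. In the unramified case ($e=e_+$, $f=2f_+$), surjectivity of the norm on residue fields (general fact for finite extensions of finite fields) together with surjectivity on each principal unit layer $1+\frak{p}_K^m\to 1+\frak{p}_{K_+}^m$ (guaranteed by tameness) forces $N$ to be surjective, whence $|G_\beta(\Bbb F)|=q^n(1-q^{-2f_+})/(1-q^{-f_+})=q^n(1+q^{-f_+})$. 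In the totally ramified case ($e=2n$, $e_+=n$, $f=f_+=1$), $\tau$ acts trivially on residues, so the residue norm is the squaring map on $\Bbb F^\times$, whose image has index $2$ since $p\ne 2$; a short expansion of $N_{K/K_+}(1+\omega c)$ using $\omega^\tau=-\omega$ and $O_K=O_{K_+}\oplus\omega\cdot O_{K_+}$ shows that $N$ nevertheless remains surjective on principal units, so $\text{\rm Im}\,N$ has index exactly $2$ and $|G_\beta(\Bbb F)|=2q^n$.

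The main obstacle is isolating the factor $1/2$ in the totally ramified case: one must argue that the failure of surjectivity of $N$ is concentrated at the residue-field level (a single factor of $2$ coming from $\{\pm 1\}$-Galois cohomology of the squaring map) and does not propagate into any higher layer of the principal unit filtration. By contrast, the unramified case is essentially a routine surjective-norm bookkeeping exercise.
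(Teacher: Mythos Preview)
Your proposal is correct and follows essentially the same route as the paper: both reduce to computing $|G_\beta(\Bbb F)|$ as the kernel of the norm map $(O_K/\frak{p}_K^e)^\times\to(O_{K_+}/\frak{p}_{K_+}^{e_+})^\times$ and plug into the dimension formula \eqref{eq:dimension-formula-of-delta-beta-theta}. The only minor difference is that the paper obtains the index of the image in one stroke from the local class field theory identity $(O_{K_+}^\times:N_{K/K_+}(O_K^\times))=e/e_+$, whereas you recover the same index by a direct case-by-case analysis of the residue-field and principal-unit layers.
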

\begin{proof}
For the dimension formula
\eqref{eq:dimension-formula-of-delta-beta-theta}, we have
$$
 \dim G=n(2n+1),
 \quad
 \text{\rm rank}\,G=n,
 \quad
 \sharp\overline\Omega=\frac{|G(\Bbb F)|}
                            {|G_{\beta}(\Bbb F)|}
$$
and
$$
 |G(\Bbb F)|=|Sp_{2n}(\Bbb F)|
 =q^{n(2n+1)}\cdot\prod_{k=1}^n\left(1-q^{-2k}\right).
$$
On the other hand $G_{\beta}(\Bbb F)$ is the kernel of
$$
 (\ast):\left(O_K/\frak{p}_K^e\right)^{\times}\to
        \left(O_{K_+}/\frak{p}_{K_+}^{e_+}\right)^{\times}
 \quad
 \left(\overline\varepsilon\mapsto
       \overline{N_{K/K_+}(\varepsilon)}\right).
$$
Since $K/K_+$ is tamely ramified quadratic extension, we have
$$
 1+\frak{p}_{K_+}^{e_+}=N_{K/K_+}(1+\frak{p}_K^e)
 \subset N_{K/K_+}(O_K^{\times})\subset O_{K_+}^{\times},
$$
and $(O_{K_+}^{\times}:N_{K/K_+}(O_K^{\times}))=e/e_+$, hence
\begin{align*}
 |G_{\beta}(\Bbb F)|
 &=\frac{\left|\left(O_K/\frak{p}_K^e\right)^{\times}\right|
         (O_{K_+}^{\times}:N_{K/K_+}(O_K^{\times}))}
        {\left|\left(O_{K_+}/\frak{p}_{K_+}^{e_+}\right)^{\times}
         \right|}
  =\frac e{e_+}\cdot q^n\cdot\frac{1-q^{-f}}
                                  {1-q^{-f_+}}\\
 &=q^n\times\begin{cases}
             2&:\text{\rm $K/F$ is totally ramified},\\
             1+q^{-f_+}&:\text{\rm $K/K_+$ is unramified}.
            \end{cases}
\end{align*}
\end{proof}

\subsection{Construction of supercuspidal representations}
\label{subsec:construction-of-supercuspidal-representataion}
We will keep the notations of the preceding subsection. 
The purpose of this subsection is to prove the following theorem:

\begin{thm}\label{th:supercuspidal-representation-of-sp(2n)}
If 
$l^{\prime}=\left\lfloor\frac r2\right\rfloor
 \geq\text{\rm Max}\{2,2(e-1)\}$, 
then the compactly induced representation 
$\pi_{\beta,\theta}
 =\text{\rm ind}_{G(O_F)}^{G(F)}\delta_{\beta,\theta}$ is an
 irreducible supercuspidal representation of $G(F)=Sp_{2n}(F)$ such
 that
\begin{enumerate}
\item the multiplicity of $\delta_{\beta,\theta}$ in 
      $\pi_{\beta,\theta}|_{G(O_F)}$ is one, 
\item $\delta_{\beta,\theta}$ is the unique irreducible unitary
  constituent of $\pi_{\beta,\theta}|_{G(O_F)}$ which factors through the
  canonical morphism $G(O_F)\to G(O_F/\frak{p}^r)$,
\item with respect to the Haar measure on $G(F)$ such that the volume
  of $G(O_F)$  is one, the formal degree of $\pi_{\beta,\theta}$ is
    equal to
$$
    \dim\delta_{\beta,\theta}
    =q^{n^2r}\cdot\prod_{k=1}^n\left(1-q^{-2k}\right)\times
    \begin{cases}
      \frac 12&:\text{\rm $K/F$ is totally ramified,}\\
      \frac 1{1+q^{-f_+}}&:\text{\rm $K/K_+$ is unramified}.
    \end{cases}
$$
\end{enumerate}
\end{thm}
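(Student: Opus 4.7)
The plan is to prove irreducibility of $\pi_{\beta,\theta}$ via Mackey's intertwining criterion; supercuspidality is then automatic because $G(O_F)$ is compact open (matrix coefficients have compact support), and the formal degree assertion is the general identity $d_{\pi}=\dim\delta_{\beta,\theta}$ for compact induction from an open compact subgroup of volume one, with $\dim\delta_{\beta,\theta}$ supplied by Proposition \ref{prop:dimension-of-delta-beta-theta}. Concretely, I must show that the intertwining set
$$
 I_{G(F)}(\delta_{\beta,\theta})
 =\bigl\{g\in G(F)\mid\text{\rm Hom}_{G(O_F)\cap gG(O_F)g^{-1}}
    (\delta_{\beta,\theta}^g,\delta_{\beta,\theta})\neq 0\bigr\}
$$
coincides with $G(O_F)$.

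To analyze the intertwining set, I would invoke the Cartan decomposition $G(F)=G(O_F)\cdot T^+\cdot G(O_F)$ with $T^+$ a set of dominant diagonal representatives, and reduce to showing that no $g\in T^+\smallsetminus G(O_F)$ intertwines. By the Clifford decomposition \eqref{eq:decomposition-formula-of-delta}, $\delta_{\beta,\theta}|_{K_l(O_F/\frak{p}^r)}$ is a sum of conjugates of $\chi_{\beta}$. A nonzero intertwiner on $K_l(O_F)\cap gK_l(O_F)g^{-1}$ therefore forces, after unwinding via the bilinear form $B$, an approximate conjugation congruence
$$
 \text{\rm Ad}(ug^{-1})\beta\equiv\beta
  \npmod{\varpi_F^{l^{\prime}}\cdot L}
$$
for some $u\in G(O_F)$ and an appropriate $O_F$-lattice $L\subset\frak{g}(F)$ depending on $g$.

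The main obstacle is then the approximate-centralizer lemma: showing that such a congruence forces $g\in G(O_F)$. Smooth regularity of $\beta$ identifies the honest centralizer $Z_{G(F)}(\beta)$ with $U_{K/K_+}$, which is compact and already contained in $G(O_F)$, so it suffices to upgrade the congruence to genuine conjugation modulo $G(O_F)$. This is where the hypothesis $l^{\prime}\geq 2(e-1)$ enters. Because $O_K=O_F[\beta]$ and $K/F$ is tamely ramified, the discrepancy between the $\frak{g}(O_F)$-adic and $O_K$-adic filtrations is controlled by the ramification index $e$ (the different of $K/F$ has $\varpi_K$-valuation $e-1$). A Hensel-type iteration transports the approximate-conjugation datum along $g$ to the Lie algebra via the exponential-like maps $X\mapsto 1+\varpi^l X$ and the quadratic correction of condition III) in subsection \ref{subsec:regular-irred-character-of-hyperspecial-compact-subgroup}; at each step the obstruction lies in $\frak{g}(\Bbb F)/\frak{g}_{\beta}(\Bbb F)$ with a loss of $e-1$ in the filtration exponent, and the hypothesis $l^{\prime}\geq 2(e-1)$ provides the headroom needed to initialize the induction and iterate it to convergence inside $G(O_F)$. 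The ancillary condition $l^{\prime}\geq 2$ ensures that the Cayley-type parametrizations of $K_l(O_F/\frak{p}^r)$ and $K_{l-1}(O_F/\frak{p}^r)$ are available throughout.

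Assertions (1) and (2) then follow from irreducibility and the construction of $\delta_{\beta,\theta}$: Frobenius reciprocity gives
$$
 \dim\text{\rm Hom}_{G(O_F)}(\delta_{\beta,\theta},
    \pi_{\beta,\theta}|_{G(O_F)})
 =\dim\text{\rm End}_{G(F)}(\pi_{\beta,\theta})=1,
$$
proving (1); and any irreducible constituent of $\pi_{\beta,\theta}|_{G(O_F)}$ that factors through $G(O_F/\frak{p}^r)$ must, by Clifford, contain some $\chi_{\beta^{\prime}}$ on $K_l(O_F/\frak{p}^r)$ with $\beta^{\prime}$ in the $G(O_F)$-orbit of $\beta\npmod{\frak{p}^{l^{\prime}}}$, and the $\theta$-datum recovering that constituent is forced by the restriction to coincide with the given one under the bijection of Theorem \ref{th:parametrization-of-omega-sphat-in-general}; hence the constituent equals $\delta_{\beta,\theta}$. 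Assertion (3) is the formal-degree formula already noted in the first paragraph.
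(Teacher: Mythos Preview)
Your overall strategy (Mackey's intertwining criterion on Cartan double cosets, then Frobenius reciprocity for (1), then the formal-degree identity for (3)) is sound in principle, but the heart of the argument---the ``approximate-centralizer lemma'' via a Hensel-type iteration---is not a proof. You assert that the hypothesis $l'\geq 2(e-1)$ enters through filtration discrepancies controlled by the different, but you never actually carry out the iteration or show how convergence forces $g\in G(O_F)$. The paper's argument is entirely different and much more concrete: for $g=t(m)$ with $m\neq 0$, it uses the unipotent radicals $U_i$ of maximal parabolics to show that a nonzero intertwiner forces $\chi_\beta(t)\npmod{\frak{p}_F^2}$ to be reducible (Lemma \ref{lemma:intertwiner-over-unipotent-part}), contradicting Shintani's result (Proposition \ref{prop:shintani-cor-of-irreducibility-chrara-poly}(3)). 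This pins down $m_i-m_{i+1}\leq 1$ and $m_n=0$; the remaining case uses the divisibility $f\mid i$ from Lemma \ref{lemma:intertwiner-over-unipotent-part}(1) to get the explicit bound $4m_1\leq 2(e-1)\leq l'$, and then Proposition \ref{prop:regular-element-of-gl(n)}(3) (conjugating $\beta$ to its companion matrix inside $GL_{2n}(O_F)$) forces $t(m)\in M_{2n}(O_F)$. None of these ingredients---parabolic reduction, mod-$\frak{p}^2$ irreducibility, companion-matrix conjugation---appear in your sketch.

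Your argument for (2) has a separate gap. You claim that an arbitrary level-$r$ constituent $\delta$ of $\pi_{\beta,\theta}|_{G(O_F)}$ must, by Clifford, contain some $\chi_{\beta'}$ with $\beta'$ in the $G(O_F)$-orbit of $\beta\npmod{\frak{p}^{l'}}$. But this is precisely what requires proof: a priori $\delta$ arises from a nontrivial Cartan coset $t(m)$ via Mackey, and $\delta_{\beta,\theta}^{t(m)}$ restricted to the intersection could contribute characters from a different orbit. The intertwining criterion you prove concerns only $\delta_{\beta,\theta}$ against itself, not against an arbitrary level-$r$ $\delta$. The paper handles this by proving Proposition \ref{prop:minimal-k-type-of-induced-representation} directly for arbitrary such $\delta$ (the same parabolic argument works uniformly), obtaining both (1) and (2) at once, and only \emph{then} deducing irreducibility from (1). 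Your order of deduction is reversed, and (2) does not follow from irreducibility alone.
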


The rest of this subsection is devoted to the proof. 

We have the Cartan decomposition
$$
 G(F)=\bigsqcup_{m\in\Bbb M}G(O_F)t(m)G(O_F)
$$
where
$$
 \Bbb M=\{(m_1,m_2,\cdots,m_n)\in\Bbb Z^n\mid
           m_1\geq m_2\geq\cdots\geq m_n\geq 0\}
$$
and 
$$
 t(m)=\begin{bmatrix}
       \varpi_F^m&0\\
       0&^{\frak t}\varpi_F^{-m}
      \end{bmatrix}
 \;\;\text{\rm with}\;\;
 \varpi_F^m=\begin{bmatrix}
             \varpi_F^{m_1}&      &              \\
                           &\ddots&              \\
                           &      &\varpi_F^{m_n}
            \end{bmatrix}
$$
for $m=(m_1,\cdots,m_n)\in\Bbb M$. 

For an integer $1\leq i\leq n$, let $L_i$ and $U_i$ be $O_F$-group
subscheme of $G=Sp_{2n}$ defined by
\begin{align*}
 L_i&=\left\{\begin{bmatrix}
              a& &             \\
               &g&             \\
               & &^{\frak t}a^{-1}
             \end{bmatrix}\biggm| a\in GL_i, g\in Sp_{2(n-i)}
        \right\},\\
 U_i&=\left\{\begin{bmatrix}
              1_i&   A   &   B   &   C       \\
                 &1_{n-i}&   0   &^{\frak t}B   \\
                 &       &1_{n-i}&-\,^{\frak t}A\\
                 &       &       &  1_i
             \end{bmatrix}\in Sp_{2n}\right\}
\end{align*}
so that $P_i=L_i\cdot U_i$ is a maximal parabolic subgroup of
$G=Sp_{2n}$ and $U_i$ (resp. $L_i$) is the unipotent (resp. Levi) part
of $P_i$. Put $U_i(\frak{p}_F^a)=U_i(O_F)\cap K_a(O_F)$ for a positive
integer $a$.

\begin{prop}\label{prop:compact-induction-is-admissible}
If $K/F$ is unramified or $r\geq 4$, then the compactly induced
representation 
$\pi_{\beta,\theta}
 =\text{\rm ind}_{G(O_F)}^{G(F)}\delta_{\beta,\theta}$ is an
admissible representation of $G(F)$.
\end{prop}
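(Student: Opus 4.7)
The plan is to verify admissibility via the standard Mackey description of fixed vectors in a compact induction. For any compact open subgroup $J \subset G(F)$, one has a canonical isomorphism
$$\pi_{\beta,\theta}^{J} \;\cong\; \bigoplus_{g \in G(O_F)\backslash G(F)/J} V_{\delta_{\beta,\theta}}^{\,G(O_F)\cap gJg^{-1}},$$
so admissibility amounts to showing that for every such $J$ only finitely many double cosets produce a nonzero summand. Combining the Cartan decomposition $G(F) = \bigsqcup_{m \in \Bbb M} G(O_F) t(m) G(O_F)$ with the openness of $J$ (which makes the sub-cosets inside each single $G(O_F) t(m) G(O_F)$ finite in number), and then shrinking $J$ to $K_N(O_F)$ for $N$ large, the task becomes: show that
$$V_{\delta_{\beta,\theta}}^{\,G(O_F) \cap t(m) x K_N(O_F) x^{-1} t(m)^{-1}} = 0$$
for all but finitely many $m \in \Bbb M$ and all $x \in G(O_F)$.

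Next I would make the subgroup $H_m(x) := G(O_F) \cap t(m) x K_N(O_F) x^{-1} t(m)^{-1}$ explicit via a root-space calculation. With $t(m) = \text{\rm diag}(\varpi_F^{m_1},\ldots,\varpi_F^{m_n},\varpi_F^{-m_n},\ldots,\varpi_F^{-m_1})$, the conjugation action $t(m) u_\alpha(y) t(m)^{-1} = u_\alpha(\alpha(t(m)) y)$ shifts the depth of every root subgroup by $-\langle\alpha,m\rangle$. Consequently, as $\max_i m_i$ grows the group $H_m(x)$ absorbs entire root subgroups $u_\alpha(O_F)$ for every root $\alpha$ with $\alpha(t(m)) \leq -N$; in particular $H_m(x) \cap K_l(O_F)$ contains the full root group $u_\alpha(\frak{p}_F^l)$ for an unboundedly growing collection of roots $\alpha$.

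Finally I would invoke the Clifford-type decomposition from Section \ref{subsec:regular-irred-character-of-hyperspecial-compact-subgroup},
$$\delta_{\beta,\theta}|_{K_l(O_F)} \;\cong\; \bigoplus_{\dot\gamma \in \Omega} \chi_\gamma^{\oplus\,m_\gamma},$$
to translate the existence of an $H_m$-fixed vector into the triviality of some conjugate character $\chi_\gamma$ (with $\gamma = \text{\rm Ad}(\bar g^{-1})\beta$) on $u_\alpha(\frak{p}_F^l)$ for many $\alpha$. Unpacking the defining formula $\chi_\gamma(1+\varpi_F^l X) = \psi(\varpi_F^{-l'} B(X,\gamma))$, this reduces to $B(X_\alpha,\gamma) \equiv 0 \pmod{\frak{p}_F^{l'}}$ for all $X_\alpha$ in an expanding family of root spaces. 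The ellipticity of $\beta$ --- concretely the fact that $O_F[\beta] = O_K$ is the full ring of integers of a degree-$2n$ tamely ramified field extension, combined with Proposition \ref{prop:shintani-cor-of-irreducibility-chrara-poly} --- implies that no $G(O_F/\frak{p}_F^{l'})$-conjugate of $\beta$ lies in a proper Levi subalgebra modulo $\frak{p}_F^{l'}$ once $l' \geq 2$ (whence the hypothesis $r \geq 4$); when $K/F$ is unramified, separability of the minimal polynomial of $\overline\beta$ over $\Bbb F$ gives the analogous statement already mod $\frak{p}_F$. This supplies the required contradiction. The main technical obstacle I anticipate is precisely this last step: quantifying how the $\text{\rm Ad}$-orbit of $\beta$ modulo $\frak{p}_F^{l'}$ stays clear of every proper Levi subalgebra uniformly enough to rule out the root-subspace vanishing on arbitrarily many roots, which is exactly where the dichotomy on $K/F$ and on $r$ enters.
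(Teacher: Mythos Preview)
Your proposal is correct and follows essentially the same route as the paper: Mackey/Cartan decomposition reduces admissibility to bounding the $m\in\Bbb M$ for which $\delta_{\beta,\theta}$ has vectors fixed by $t(m)^{-1}K_a(O_F)t(m)\cap G(O_F)$; a large gap $m_i-m_{i+1}$ (or large $2m_n$) forces the full unipotent radical $U_i(\frak{p}_F^l)$ into this subgroup, whence some $\chi_{\text{\rm Ad}(g)\beta}$ is trivial on $\text{\rm Lie}(U_i)$, and Proposition~\ref{prop:shintani-cor-of-irreducibility-chrara-poly} supplies the contradiction via factorization of the characteristic polynomial. One small slip: the orthogonal complement of $\text{\rm Lie}(U_i)$ under the trace form is the \emph{parabolic} $\text{\rm Lie}(P_i)$, not the Levi, so your last paragraph should say that no conjugate of $\beta$ lies in a proper parabolic subalgebra modulo $\frak{p}_F^{l'}$ --- but the characteristic-polynomial factorization argument applies equally well to the parabolic, so the logic is unaffected.
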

\begin{proof}
It is  enough to show that 
$\dim_{\Bbb C}
 \text{\rm Hom}_{K_a(O_F)}(\text{\bf 1},\pi_{\beta,\theta})
 <\infty$ for all $a>0$, where $\text{\bf 1}$ is the trivial
 one-dimensional representation of $K_a(O_F)$. We have
$$
 G(F)=\bigsqcup_{s\in\Bbb S}K_a(O_F)sG(O_F)
$$
where
$$
 \Bbb S=\{k\cdot t(m)\mid \dot k\in K_a(O_F)\backslash G(O_F), 
                          m\in\Bbb M\}.
$$
Then, by the restriction formula of induced representations and by the
Frobenius reciprocity, we have
\begin{align*}
 \text{\rm Hom}_{K_a(O_F)}(\text{\bf 1},\pi_{\beta,\theta})
 &=\bigoplus_{s\in\Bbb S}
   \text{\rm Hom}_{K_a(O_F)}\left(\text{\bf 1},
    \text{\rm ind}_{K_a(O_F)\cap sF(O_F)s^{-1}}^{G(F)}
     \delta_{\beta,\theta}^s\right)\\
 &=\bigoplus_{s\in\Bbb S}
    \text{\rm Hom}_{K_a(O_F)\cap sG(O_F)s^{-1}}
     (\text{\bf 1},\delta_{\beta,\theta}^s)\\
 &=\bigoplus_{s\in\Bbb S}
    \text{\rm Hom}_{s^{-1}K_a(O_F)s\cap G(O_F)}
     (\text{\bf 1},\delta_{\beta,\theta}).
\end{align*}
So it is enough to show that the number of $s\in\Bbb S$ such that 
$\text{\rm Hom}_{s^{-1}K_a(O_F)s\cap G(O_F)}
           (\text{\bf 1},\delta_{\beta,\theta})\neq 0$ is finite. 
Take such a $s=k\cdot t\in\Bbb S$ with $k\in G(O_F)$ and 
$t=t(m)$ ($m\in\Bbb M$). Suppose
$$
 \text{\rm Max}\{m_k-m_{k+1}\mid 1\leq k<n\}
 =m_i-m_{i+1}\geq a.
$$
Then we have $tU_i(\frak{p}_F^a)t^{-1}\subset K_a(O_F)$ and hence
$$
 U_i(\frak{p}_F^l)\subset U_i(O_F)
                  \subset s^{-1}K_a(O_F)s\cap G(O_F)
$$
and
$$
 \text{\rm Hom}_{U_i(\frak{p}_F^l)}
  (\text{\bf 1},\delta_{\beta,\theta})\supset
 \text{\rm Hom}_{s^{-1}K_a(O_F)s\cap G(O_F)}
  (\text{\bf 1},\delta_{\beta,\theta})\neq 0.
$$
This means, by \eqref{eq:decomposition-formula-of-delta}, that there
exists a $g\in G(O_F)$ such that 
$\chi_{\text{\rm Ad}(g)\beta}(h)=1$ for all $h\in U_i(\frak{p}_F^l)$,
that is 
$\psi\left(\varpi_F^{-l^{\prime}}
           \text{\rm tr}\left(g\beta g^{-1}X\right)\right)=1$ 
for all $X\in\text{\rm Lie}(U_i)(O_F)$. Hence we have 
$\overline{g\beta g^{-1}}
 \in\text{\rm Lie}(P_i)(O_F/\frak{p}_F^{l^{\prime}})$. 
Then the characteristic polynomial 
$\chi_{\beta}(t)\npmod{\frak{p}_F}\in\Bbb F[t]$ is reducible. Hence
$e>1$ by Proposition
\ref{prop:shintani-cor-of-irreducibility-chrara-poly}. Then 
$l^{\prime}=\left\lfloor\frac r2\right\rfloor\geq 2$ and 
$\chi_{\beta}(t)\npmod{\frak{p}_F^2}$ is reducible over 
$O_F/\frak{p}_F^2$ contradicting to Proposition 
\ref{prop:shintani-cor-of-irreducibility-chrara-poly}. Hence we have
$$
 \text{\rm Max}\{m_i-m_{i+1}\mid 1\leq i<n\}<a.
$$
Similar arguments using the parabolic subgroup $P_n$ shows that 
$2m<a$. This shows the required finiteness of $s\in\Bbb S$. 
\end{proof}

\begin{lem}\label{lemma:intertwiner-over-unipotent-part}w
\begin{enumerate}
\item If 
      $\text{\rm Hom}_{U_i(\frak{p}_F^{r-1})}
       (\text{\bf 1},\delta_{\beta,\theta})\neq 0$ for some 
      $1\leq i\leq n$, then $i\equiv 0\npmod{f}$ and $e>1$. If further 
      $i<n$, then $e\geq 3$.
\item If $\frac r2\geq 2$, then 
      $\text{\rm Hom}_{U_i(\frak{p}_F^{r-2})}
       (\text{\bf 1},\delta_{\beta,\theta})=0$ for all 
      $1\leq i\leq n$.
\end{enumerate}
\end{lem}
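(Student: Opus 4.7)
The plan is to mimic the argument given in the proof of Proposition~\ref{prop:compact-induction-is-admissible}. A nonzero $U_i(\frak{p}_F^{r-j})$-fixed vector in $\delta_{\beta,\theta}$ (for $j=1$ in part (1) and $j=2$ in part (2)) forces, via the Clifford decomposition \eqref{eq:decomposition-formula-of-delta} and the inclusion $K_{r-j}(O_F/\frak{p}_F^r)\subset K_l(O_F/\frak{p}_F^r)$ (valid as soon as $r-j\geq l$), the character $\chi_{\text{\rm Ad}(g)\beta}$ to be trivial on the image of $U_i(\frak{p}_F^{r-j})$ for some $g\in G(O_F)$. Writing $h=1+\varpi_F^{r-j}X\in U_i(\frak{p}_F^{r-j})$ with $X\in\text{\rm Lie}(U_i)(O_F)$ and using $r=l+l^{\prime}$, one finds $\chi_{\text{\rm Ad}(g)\beta}(h)=\psi(\varpi_F^{-j}B(X,\text{\rm Ad}(g)\beta))$, so the triviality is equivalent to $B(X,\text{\rm Ad}(g)\beta)\in\frak{p}_F^j$ for every such $X$. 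Because $B$ restricted to $\frak{g}$ is non-degenerate and the orthogonal complement of $\text{\rm Lie}(U_i)$ inside $\frak{g}$ under $B$ is $\text{\rm Lie}(P_i)$, this amounts to $g\beta g^{-1}\npmod{\frak{p}_F^j}\in\text{\rm Lie}(P_i)(O_F/\frak{p}_F^j)$.

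For part (1) we take $j=1$: up to $G(\Bbb F)$-conjugation, $\overline\beta$ preserves some $i$-dimensional isotropic $\Bbb F$-subspace of $\Bbb F^{2n}$. By Proposition~\ref{prop:shintani-cor-of-irreducibility-chrara-poly} the characteristic polynomial of $\overline\beta$ equals its minimal polynomial and has the form $p(t)^e$ with $p(t)\in\Bbb F[t]$ irreducible of degree $f$. Hence $\Bbb F^{2n}$ is cyclic as an $\Bbb F[\overline\beta]$-module, isomorphic to $\Bbb F[t]/(p(t)^e)$, so its $\overline\beta$-stable subspaces form a chain of dimensions $0,f,2f,\ldots,ef$; in particular $f\mid i$. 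Since $\overline\beta\in\frak{sp}(\Bbb F,D)$ is skew with respect to $D$, the symplectic complement of the $kf$-dimensional $\overline\beta$-invariant subspace is the $(e-k)f$-dimensional one, so the isotropic $\overline\beta$-invariant subspaces are exactly those with $k\leq e/2$. Nontriviality $k\geq 1$ forces $e>1$; when $i<n=ef/2$ we further have $k<e/2$, and combined with $k\geq 1$ this forces $e\geq 3$.

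For part (2) we take $j=2$: the hypothesis $r/2\geq 2$ guarantees $r-2\geq l$. Since $U_i(\frak{p}_F^{r-1})\subset U_i(\frak{p}_F^{r-2})$, part (1) applied to the larger subgroup gives $e>1$. The conclusion $g\beta g^{-1}\npmod{\frak{p}_F^2}\in\text{\rm Lie}(P_i)(O_F/\frak{p}_F^2)$ puts this matrix in block upper-triangular form with a diagonal block of size $i$ and a complementary diagonal block of size $2n-i$ (both at least $1$ since $1\leq i\leq n$). Consequently $\chi_\beta(t)=\chi_{g\beta g^{-1}}(t)\npmod{\frak{p}_F^2}$ factors as a product of two monic polynomials of degrees $i$ and $2n-i$ over $O_F/\frak{p}_F^2$, contradicting the irreducibility of $\chi_\beta(t)\npmod{\frak{p}_F^2}$ asserted by Proposition~\ref{prop:shintani-cor-of-irreducibility-chrara-poly}(3) under $e>1$.

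The principal bookkeeping step is the precise evaluation of $\chi_\beta$ on $U_i(\frak{p}_F^{r-j})$ together with the identification of the orthogonal complement of $\text{\rm Lie}(U_i)$ inside $\frak{g}$ with $\text{\rm Lie}(P_i)$ under $B$; once these are in hand, both assertions reduce to the structure of cyclic $\Bbb F[\overline\beta]$-modules and Shintani's irreducibility result modulo $\frak{p}_F^2$.
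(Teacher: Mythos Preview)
Your proof is correct and follows essentially the same approach as the paper: translate a nonzero $U_i(\frak{p}_F^{r-j})$-fixed vector, via the Clifford decomposition \eqref{eq:decomposition-formula-of-delta} and the inclusion $U_i(\frak{p}_F^{r-j})\subset K_l(O_F/\frak{p}_F^r)$, into the condition $g\beta g^{-1}\npmod{\frak{p}_F^j}\in\text{\rm Lie}(P_i)(O_F/\frak{p}_F^j)$, and then invoke Proposition~\ref{prop:shintani-cor-of-irreducibility-chrara-poly}. The only presentational difference is in part~(1): the paper reads off $f\mid i$, $e>1$, and (implicitly) $e\geq 3$ for $i<n$ directly from the three-block factorization $\chi_\beta(t)\equiv\det(t1_i-A)\det(t1_{2(n-i)}-X)\det(t1_i+A)\npmod{\frak{p}_F}$, whereas you phrase the same content as the classification of isotropic $\overline\beta$-stable subspaces in the cyclic $\Bbb F[\overline\beta]$-module $\Bbb F[t]/(p(t)^e)$; your version makes the $e\geq 3$ conclusion for $i<n$ more transparent.
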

\begin{proof}
Assume that 
$\text{\rm Hom}_{U_i(\frak{p}_F^k)}
  (\text{\bf 1},\delta_{\beta,\theta}\neq 0$ with some 
$0<k\leq l^{\prime}$. Then $U_i(\frak{p}_F^{r-k})\subset K_l(O_F)$ and 
\eqref{eq:decomposition-formula-of-delta} implies that there exists a
$g\in G(O_F)$ such that $\chi_{\text{\rm Ad}(G)\beta}(h)=1$ for all 
$h\in U_i(\frak{p}_F^{r-k})$, that is 
$\psi\left(\varpi_F^{-k}\text{\rm tr}\left(g\beta g^{-1}X\right)
     \right)=1$ for all $X\in\text{\rm Lie}(U_i)(O_F)$. Then
$$
 g\beta g^{-1}\equiv\begin{bmatrix}
                     A&\ast&\ast\\
                     0& X  &\ast\\
                     0& 0  &-\,^{\frak t}A
                    \end{bmatrix}
              \npmod{\frak{p}_F^k}
$$
with $A\in\frak{gl}_i(O_F)$ and $X\in\frak{sp}_{2(n-i)}(O_F)$. So the
characteristic polynomial is
$$
 \chi_{\beta}(t)\equiv
 \det(t1_i-A)\det(t1_{2(n-i)}-X)\det(1_i+A)
 \npmod{\frak{p}_F^k}.
$$
If $k=1$, then the first statement of Proposition
\ref{prop:shintani-cor-of-irreducibility-chrara-poly} implies that 
$$
 i=\deg\det(t1_i-A)\equiv 0\npmod{f}\;\;\text{\rm and}\;\;
 e>1.
$$
If $l^{\prime}\geq 2$ and $k=2$, then 
$\chi_{\beta}(t)\npmod{\frak{p}_F^2}$ is reducible over
$O_F/\frak{p}_F^2$ contradicting to the third statement of Proposition 
\ref{prop:shintani-cor-of-irreducibility-chrara-poly}. 
\end{proof}

\begin{prop}\label{prop:minimal-k-type-of-induced-representation}
Assume that 
$l^{\prime}=\left\lfloor\frac r2\right\rfloor\geq
 \text{\rm Max}\{2,2(e-1)\}$. Then
\begin{enumerate}
\item $\dim_{\Bbb C}\text{\rm Hom}_{G(O_F)}
        \left(\delta_{\beta,\theta},\pi_{\beta,\theta}\right)=1$,
\item for any irreducible representation $(\delta,V_{\delta})$
  of $G(O_F)$ which factors through the canonical morphism 
  $G(O_F)\to G(O_F/\frak{p}_F^r)$, if 
$\text{\rm Hom}_{G(O_F)}(\delta,\pi_{\beta,\theta})\neq 0$, 
then $\delta=\delta_{\beta,\theta}$.
\end{enumerate}
\end{prop}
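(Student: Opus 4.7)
The plan is to analyze $\text{\rm Hom}_{G(O_F)}(\delta,\pi_{\beta,\theta})$ via Frobenius reciprocity for the compact induction together with the Cartan decomposition $G(F)=\bigsqcup_{m\in\Bbb M}G(O_F)\,t(m)\,G(O_F)$, yielding
$$\text{\rm Hom}_{G(O_F)}(\delta,\pi_{\beta,\theta})=\bigoplus_{m\in\Bbb M}\text{\rm Hom}_{J_m}\bigl(\delta|_{J_m},\,\delta_{\beta,\theta}^{t(m)}|_{J_m}\bigr),$$
where $J_m=G(O_F)\cap t(m)G(O_F)t(m)^{-1}$ and $\delta_{\beta,\theta}^{t(m)}(h)=\delta_{\beta,\theta}(t(m)^{-1}ht(m))$. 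The trivial double coset $m=0$ contributes exactly $\text{\rm Hom}_{G(O_F)}(\delta,\delta_{\beta,\theta})$, which by Schur's lemma has dimension $1$ if $\delta=\delta_{\beta,\theta}$ and $0$ otherwise. Both assertions will therefore follow once every $m\neq 0$ term is shown to vanish.

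Fix $m\neq 0$ and let $T\in\text{\rm Hom}_{J_m}(\delta|_{J_m},\delta_{\beta,\theta}^{t(m)}|_{J_m})$. Because $\delta$ is trivial on $K_r(O_F)$, every vector in the image of $T$ is fixed by $\delta_{\beta,\theta}^{t(m)}(h)$ for each $h\in K_r(O_F)\cap J_m$, equivalently by $\delta_{\beta,\theta}(L_m)$, where $L_m:=t(m)^{-1}K_r(O_F)t(m)\cap G(O_F)$. It thus suffices to exhibit a unipotent subgroup $V\subseteq L_m$ with no $\delta_{\beta,\theta}$-fixed vectors. A direct block-by-block scaling calculation for conjugation by $t(m)=\text{\rm diag}(\varpi_F^{m_1},\ldots,\varpi_F^{m_n},\varpi_F^{-m_n},\ldots,\varpi_F^{-m_1})$ shows that every entry of $\text{\rm Lie}(U_i)(O_F)$ is amplified by at least $\varpi_F^{d_i}$ under $\text{\rm Ad}(t(m))$, where $d_i:=m_i-m_{i+1}$ for $i<n$ and $d_n:=2m_n$, so that
$$U_i(\frak{p}_F^{r-d_i})\subseteq L_m\qquad(1\leq i\leq n).$$
Since $m\neq 0$ forces $d_i\geq 1$ for some $i$, at least one such candidate $V$ is available.

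When some $d_i\geq 2$, the corresponding $V$ contains $U_i(\frak{p}_F^{r-2})$, and part (2) of Lemma \ref{lemma:intertwiner-over-unipotent-part} (applicable because $l'\geq 2$) immediately gives the required vanishing. The main obstacle is the residual configuration in which $m_n=0$ and every positive $d_i$ equals $1$, forcing one to use $V=U_i(\frak{p}_F^{r-1})$ for some $1\leq i<n$; here part (1) of Lemma \ref{lemma:intertwiner-over-unipotent-part} permits a nonzero fixed space only when $i\equiv 0\pmod{f}$ and $e\geq 3$, and it is precisely for this subcase that the hypothesis $l'\geq 2(e-1)$ must be used. Using the explicit Heisenberg-Schr\"odinger description of $\sigma_{\beta,\theta}$ from Proposition \ref{prop:another-expression-of-pi-beta-psi}, together with the irreducibility of $\chi_\beta(t)\pmod{\frak{p}_F^2}$ supplied by Proposition \ref{prop:shintani-cor-of-irreducibility-chrara-poly}, one computes the action of $1+\varpi_F^{r-1}X$ on $L^2(\Bbb W^{\prime})$ for $X\in\text{\rm Lie}(U_i)(O_F)$ and argues that any $U_i(\frak{p}_F^{r-1})$-fixed vector would force $\overline\beta$ into the Lie algebra of a proper parabolic subgroup modulo $\frak{p}_F^{l'}$, contradicting the regularity of $\beta$ once $l'\geq 2(e-1)$.
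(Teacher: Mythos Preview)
Your setup via the Cartan decomposition and Mackey/Frobenius is correct, and your treatment of the case $d_i\geq 2$ (reducing to Lemma~\ref{lemma:intertwiner-over-unipotent-part}(2)) matches the paper. The gap is in the residual case $m_n=0$, all $d_i\in\{0,1\}$.

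There you try to show that $\text{\rm Hom}_{U_i(\frak{p}_F^{r-1})}(\text{\bf 1},\delta_{\beta,\theta})=0$ outright, via the Schr\"odinger model. This is not justified and is almost certainly false: the proof of Lemma~\ref{lemma:intertwiner-over-unipotent-part} shows that a nonzero $U_i(\frak{p}_F^{r-1})$-fixed vector only forces some $G(O_F)$-conjugate of $\beta$ into $\text{\rm Lie}(P_i)$ modulo $\frak{p}_F$ (not modulo $\frak{p}_F^{l'}$ as you claim), and when $e\geq 3$ and $f\mid i$ this \emph{does} happen, since $\chi_\beta(t)\equiv p(t)^e\pmod{\frak p_F}$. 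Your appeal to irreducibility modulo $\frak{p}_F^2$ cannot rule this out, and the Schr\"odinger description (Proposition~\ref{prop:another-expression-of-pi-beta-psi}) only covers odd $r$ anyway. By throwing away all information about $\delta$ except $K_r(O_F)\subset\text{\rm Ker}(\delta)$, you have discarded exactly what is needed.

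The paper does something quite different in this case. It does \emph{not} try to kill the $U_i(\frak{p}_F^{r-1})$-invariants; instead it uses Lemma~\ref{lemma:intertwiner-over-unipotent-part}(1) for \emph{every} index $i$ with $d_i=1$ to conclude that each such $i$ is a multiple of $f$. Since $ef=2n$, this forces $m_1<e/2$, hence $4m_1\leq 2(e-1)\leq l'$. Then one observes $K_{l+2m_1}(O_F)\subset t(m)^{-1}G(O_F)t(m)\cap G(O_F)$ and restricts \emph{both} $\delta^{t(m)^{-1}}$ and $\delta_{\beta,\theta}$ to this subgroup. Using the Clifford decomposition \eqref{eq:decomposition-formula-of-delta} for $\delta$ (with parameter $\beta'$) as well as for $\delta_{\beta,\theta}$, one obtains $k\beta k^{-1}\equiv t(m)^{-1}h\beta' h^{-1}t(m)\pmod{\frak{p}_F^{l'-2m_1}}$ for some $k,h\in G(O_F)$. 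The bound $4m_1\leq l'$ ensures $t(m)k\beta k^{-1}t(m)^{-1}\in M_{2n}(O_F)$, and since its characteristic polynomial equals that of $\beta$, Proposition~\ref{prop:regular-element-of-gl(n)}(3) gives $g\in GL_{2n}(O_F)$ with $t(m)k\beta k^{-1}t(m)^{-1}=g\beta g^{-1}$. Then $g^{-1}t(m)k\in F[\beta]=K$ with $N_{K/F}(g^{-1}t(m)k)\in O_F^\times$, so $t(m)\in M_{2n}(O_F)$ and $m=0$.
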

\begin{proof}
Let $(\delta,V_{\delta})$ be an irreducible unitary representation of
$G(O_F)$ which factors through the canonical morphism 
$G(O_F)\to G(O_F/\frak{p}_F^r)$. Then we have
\begin{align*}
 \text{\rm Hom}_{G(O_F)}(\delta,\pi_{\beta,\theta})
 &=\bigoplus_{m\in\Bbb M}
    \text{\rm Hom}_{G(O_F)}\left(\delta,
     \text{\rm ind}_{G(O_F)\cap t(m)G(O_F)t(m)^{-1}}^{G(O_F)}
            \delta_{\beta,\theta}^{t(m)}\right)\\
 &=\bigoplus_{m\in\Bbb M}
    \text{\rm Hom}_{G(O_F)\cap t(m)G(O_F)t(m)^{-1}}
     \left(\delta,\delta_{\beta,\theta}^{t(m)}\right)\\
 &=\bigoplus_{m\in\Bbb M}
    \text{\rm Hom}_{t(m)^{-1}G(O_F)t(m)\cap G(O_F)}
     \left(\delta^{t(m)^{-1}},\delta_{\beta,\theta}\right).
\end{align*}
Assume that 
$\text{\rm Hom}_{t(m)^{-1}G(O_F)t(m)\cap G(O_F)}
     \left(\delta^{t(m)^{-1}},\delta_{\beta,\theta}\right)\neq 0$ for
a $m=(m_1,\cdots,m_n)\in\Bbb M$. If
$$
 \text{\rm Max}\{m_k-m_{k+1}\mid 1\leq k<n\}=m_i-m_{i+1}\geq 2
$$
then we have 
$t(m)U_i(\frak{p}_F^{r-2})t(m)^{-1}\subset U_i(\frak{p}_F^r)$. Since 
$K_r(O_F)\subset\text{\rm Ker}(\beta)$, the restriction of 
$\delta^{t(m)^{-1}}$ to $U_i(\frak{p}_F^{r-2})$ is trivial. On the
  other hand, we have 
$$
 U_i(\frak{p}_F^{r-2})\subset 
 t(m)^{-1}U_i(\frak{p}_F^r)t(m)\cap U_i(O_F)\subset
 t(m)^{-1}G(O_F)t(m)\cap G(O_F).
$$
Now we have 
$\text{\rm Hom}_{U_i(\frak{p}_F^{r-2})}
  (\text{\bf 1},\delta_{\beta,\theta})\neq 0$ contradicting to the
second statement of Lemma
\ref{lemma:intertwiner-over-unipotent-part}. Hence we have 
$$
 \text{\rm Max}\{m_k-m_{k+1}\mid 1\leq k<n\}\leq 1.
$$
Similarly we have $2m_n\leq 1$, that is $m_n=0$. If there exists 
$1\leq i<n$ such that $m_i-m_{i+1}=1$. Then, with the similar
arguments as above, we have 
$\text{\rm Hom}_{U_i(\frak{p}_F^{r-1})}
  (\text{\bf 1},\delta_{\beta,\theta})\neq 0$. The first statement of
Lemma \ref{lemma:intertwiner-over-unipotent-part} implies that 
$i\equiv 0\npmod{f}$. Since $ef=2n$, this means $m_1<\frac e2$, hence
\begin{equation}
 4m_1\leq 2(e-1)\leq l^{\prime}.
\label{eq:4m1-is-less-than-l-prime}
\end{equation}
Since 
$t(m)K_{l+2m_1}(O_F)t(m)^{-1}\subset K_l(O_F)$ and hence 
$$
 K_{l+2m_1}(O_F)\subset t(m)^{-1}G(O_F)t(m)\cap G(O_F),
$$
we have
$$
 \text{\rm Hom}_{K_{l+2m_1}}
  \left(\delta^{t(m)^{-1}},\delta_{\beta,\theta}\right)
 \supset
 \text{\rm Hom}_{t(m)^{-1}G(O_F)t(m)\cap G(O_F)}
  \left(\delta^{t(m)^{-1}},\delta_{\beta,\theta}\right)\neq 0.
$$
Assume that $\delta$ corresponds, as explained in subsection 
\ref{subsec:regular-irred-character-of-hyperspecial-compact-subgroup}, 
to an adjoint $G(O_F/\frak{p}_F^{l^{\prime}})$-orbit
$\Omega^{\prime}\subset\frak{g}(O_F/\frak{p}_F^{l^{\prime}})$ of 
$\beta^{\prime}\npmod{\frak{p}_F^{l^{\prime}}}$
($\beta^{\prime}\in\frak{g}(O_F)$). Then there exists 
$k,h\in G(O_F)$ such that
$$
 \chi_{\text{\rm Ad}(k)\beta}(x)
 =\chi_{\text{\rm Ad}(h)\beta^{\prime}}(t(m)xt(m)^{-1})
$$
for all $x\in K_{l+2m_1}(O_F)$. This means
$$
 k\beta k^{-1}\equiv 
 t(m)^{-1}h\beta^{\prime}h^{-1}t(m)
 \npmod{\frak{p}_F^{l^{\prime}-2m_1}}.
$$
Then, because of \eqref{eq:4m1-is-less-than-l-prime}, the matrix 
$t(m)k\beta k^{-1}t(m)^{-1}$ belongs to 
$$
 h\beta^{\prime}h^{-1}
  +t(m)M_{2n}(\frak{p}_F^{l^{\prime}-2m_1})t(m)^{-1}
 \subset
 h\beta^{\prime}h^{-1}+M_{2n}(\frak{p}_F^{l^{\prime}-4m_1})
 \subset 
 M_{2n}(O_F).
$$
Since the characteristic polynomials of 
$t(m)k\beta k^{-1}t(m)^{-1}$ and $\beta$ are identical, there exists,
by the third statement of Proposition
\ref{prop:regular-element-of-gl(n)}, a $g\in GL_{2n}(O_F)$ such that 
$t(m)k\beta k^{-1}t(m)^{-1}=g\beta g^{-1}$. Then 
$g^{-1}t(m)k\in F[\beta]=K$ and 
$$
 N_{K/F}(g^{-1}t(m)k)=\det(g^{-1}t(m)k)\in O_F^{\times}.
$$
Hence $g^{-1}t(m)k\in O_K\subset M_{2n}(O_F)$ and 
$t(m)\in M_{2n}(O_F)$, that is $m=(0,\cdots,0)$. So we have proved 
$$
 \text{\rm Hom}_{G(O_F)}(\delta,\pi_{\beta,\theta})
 =\text{\rm Hom}_{G(O_F)}(\delta,\delta_{\beta,\theta})
$$
which clearly implies the statements of the proposition.
\end{proof}

The admissible representation 
$\pi_{\beta,\theta}
 =\text{\rm ind}_{G(O_F)}^{G(F)}\delta_{\beta,\theta}$ of $G(F)$ is
 irreducible. In fact, if there exists a $G(F)$-subspace
$0\lvertneqq W\lvertneqq
 \text{\rm ind}_{G(O_F)}^{G(F)}\delta_{\beta,\theta}$, we have
\begin{align*}
 0\neq\text{\rm Hom}_{G(F)}
    (W,\text{\rm ind}_{G(O_F)}^{G(F)}\delta_{\beta,\theta})
  &\subset\text{\rm Hom}_{G(O_F)}
    (W,\text{\rm Ind}_{G(O_F)}^{G(F)}\delta_{\beta,\theta})\\
  &=\text{\rm Hom}_{G(O_F)}(W,\delta)
\end{align*}
by Frobenius reciprocity. Hence 
$\delta\hookrightarrow W|_{G(O_F)}$. On the other hand, we have
\begin{align*}
 0&\neq
    \text{\rm Hom}_{G(F)}
      \left(\text{\rm ind}_{G(O_F)}^{G(F)}\delta_{\beta,\theta},
      \left(\text{\rm ind}_{G(O_F)}^{G(F)}\delta_{\beta,\theta}
             \right)/W\right)\\
 &=\text{\rm Hom}_{G(O_F)}\left(\delta_{\beta,\theta},
     \left(\text{\rm ind}_{G(O_F)}^{G(F)}\delta_{\beta,\theta}
           \right)/W\right),
\end{align*}
hence 
$\delta\hookrightarrow
 \left(\text{\rm ind}_{G(O_F)}^{G(F)}\delta_{\beta,\theta}\right)/W$. 
Now $\text{\rm ind}_{G(O_F)}^{G(F)}\delta_{\beta,\theta}$ is
semi-simple $G(O_F)$-module, we have
$$
 \dim_{\Bbb C}
 \text{\rm Hom}_{G(O_F)}(\delta_{\beta,\theta},
         \text{\rm ind}_{G(O_F)}^{G(F)}\delta_{\beta,\theta})
 \geq 2
$$
which contradicts to the first statement of Proposition 
\ref{prop:minimal-k-type-of-induced-representation}. 

Now $\pi_{\beta,\theta}$ is a
supercuspidal representation of $G(F)$ whose formal degree with
respect to the Haar measure $d_{G(F)}(x)$of $G(F)$ such that 
$\int_{G(O_F)}d_{G(F)}(x)=1$ is equal to $\dim\delta_{\beta,\theta}$. 
We have completed the proof of Theorem
\ref{th:supercuspidal-representation-of-sp(2n)}.

\section{Kaleta's $L$-parameter}
\label{sec:kaleta-l-parameter}

\subsection{Local Langlands correspondence of elliptic tori}
\label{subsec:local-langlands-correspondence-of-elliptic-tori}
Let $K_+/F$ be 
a finite extension, $K/K_+$ a quadratic extension with a non-trivial
element $\tau$ of $\text{\rm Gal}(K/K_+)$. Let us denote by $L$
an arbitrary Galois extension over $F$ containing $K$ for which let us 
denote by  
$$
 \text{\rm Emb}_F(K,L)
 =\left\{\sigma|_K\mid\sigma\in\text{\rm Gal}(L/F)\right\}
$$
the set of the embeddings over $F$ of $K$ into $L$.

Put $O_K=O_{K_+}\oplus\omega O_{K_+}$ with
$\omega^{\tau}+\omega=0$. Then $\text{\rm
  ord}_K(\omega)=e(K/K_+)-1$. Let us denote by $\Bbb V$ the
$\overline F$-algebra of the functions $v$ on 
$\text{\rm Emb}_F(K,\overline F)$ with values in $\overline F$ which
is endowed with a symplectic $\overline F$-form
$$
 D(u,v)=\frac 12\sum_{\gamma\in\text{\rm Emb}_F(K,\overline F)}
         \left(\omega^{-1}\varpi_{K_+}^{-d_+}\right)^{\gamma}
          u(\tau\gamma)\cdot v(\gamma)
$$
($u, v\in\Bbb V$) where $\mathcal{D}(K_+/F)=\frak{p}_{K_+}^{d_+}$ is
          the difference of $K_+/F$. The action of 
$\sigma\in\text{\rm Gal}(\overline F/F)$ on 
$v\in\Bbb V$ is defined by 
$v^{\sigma}(\gamma)=v(\gamma\sigma^{-1})^{\sigma}$. Then fixed point
subspace ${\Bbb V}^{\text{\rm Gal}(\overline F/L)}=\Bbb V(L)$ 
is the set of the
functions on $\text{\rm Emb}_F(K,L)$ with values in $L$, and 
${\Bbb V}^{\text{\rm Gal}(\overline F/F)}=\Bbb V(F)$ is identified
with $K$ via $v\mapsto v(\text{\bf 1}_K)$. 

The action of $\sigma\in\text{\rm Gal}(\overline F/F)$ on 
$g\in Sp(\Bbb V,D)$ is defined by 
$v\cdot g^{\sigma}=(v^{\sigma^{-1}}\cdot g)^{\sigma}$. Then 
the fixed point subgroup 
$Sp(\Bbb V,D)^{\text{\rm Gal}(\overline F/F)}$ is identified with 
$Sp(K,D)$ via $g\mapsto g|_K$.

Put $S=\text{\rm Res}_{K/F}\Bbb G_m$ which is identified with the
multiplicative group $\Bbb V^{\times}$. Then $S(F)$ is identified with
the multiplicative group $K^{\times}$. 

Let $T$ be a subtorus of $S$ wich is identified with 
the multiplicative subgroup of $\Bbb V^{\times}$
consisting of the functions $s$ on 
$\text{\rm Emb}_F(K,\overline F)$ to 
${\overline F}^{\times}$
such that $s(\tau\gamma)=s(\gamma)^{-1}$ for all 
$\gamma\in\text{\rm Emb}_F(K,\overline F)$. 
In other words $T$ is 
a maximal torus of $Sp(\Bbb V,D)$ by identifying $s\in T$ with 
$[v\mapsto v\cdot s]\in Sp(\Bbb V,D)$. 
The fixed point subgroup 
$T^{\text{\rm Gal}(\overline F/F)}=T(F)$ is identified with 
$$
 U_{K/K_+}
 =\{\varepsilon\in O_K^{\times}\mid N_{K/K_+}(\varepsilon)=1\}
 \quad
 \text{\rm by $s\mapsto s(\text{\bf 1}_K)$. }
$$
The group $X(S)$ of the characters over $\overline F$ of $S$ 
is a free $\Bbb Z$-module with $\Bbb Z$-basis 
$\{b_{\delta}\}_{\delta\in\text{\rm Emb}_F(K,\overline F)}$ where 
$b_{\delta}(s)=s(\delta)$ for $s\in S$. The dual torus 
$S\sphat=X(S){\otimes}_{\Bbb Z}\Bbb C^{\times}$ is identified with the
group of the functions $s$ on $\text{\rm Emb}_F(K,\overline F)$ with
values in $\Bbb C^{\times}$. 
The action of $\sigma\in W_F\subset\text{\rm Gal}(\overline F/F)$ on
$S$ induces the action on $X(S)$ such that 
$b_{\delta}^{\sigma}=b_{\delta\sigma}$, and hence the
action on $s\in S\sphat$ is defined by 
$s^{\sigma}(\gamma)=s(\gamma\sigma^{-1})$. 

Since we have a bijection $\dot\rho\mapsto\rho|_K$ of 
$W_K\backslash W_F$ onto $\text{\rm Emb}_F(K,\overline F)$, 
the $\overline F$-algebra $\Bbb V$ (resp. the torus $S$, $S\sphat$) is
identified 
with the set of the left$W_K$-invariant functions on $W_F$ with values
in $\Bbb \overline\overline F$ 
(resp. $\overline F^{\times}$, $\Bbb C^{\times}$). If we
denote by $\widetilde\tau\in W_{K_+}$ a pull back of 
$\tau\in\text{\rm Gal}(K/K_+)$ by the restriction mapping 
$W_{K_+}\to\text{\rm Gal}(K/K_+)$, the torus $T$ is identified with
the set of $s\in S$ such that $s(\widetilde\tau\rho)=s(\rho)^{-1}$ for
all $\rho\in W_F$. Note that
$$
 \widetilde\tau^2\npmod{\overline{[W_K,W_K]}}
 =\delta_K(\alpha_{K/K_+}(\tau,\tau))
$$
where $[\alpha_{K/K_+}]\in H^2(\text{\rm Gal}(K/K_+),K^{\times})$ is
the fundamental class which gives the isomorphism
$$
 \text{\rm Gal}(K/K_+)\,\tilde{\to}\,
 K_+^{\times}/N_{K/K_+}(K^{\times})
 \quad
 (\sigma\mapsto\alpha_{K/K_+}(\sigma,\tau)).
$$

The local Langlands correspondence for the torus $S$ is the isomorphism 
\begin{equation}
 H^1(W_F,S\sphat)\,\tilde{\to}\,
 \text{\rm Hom}(W_K,\Bbb C^{\times})
\label{eq:local-langlands-correspondence-for-torus}
\end{equation}
given by 
$[\alpha]\mapsto[\rho\mapsto\alpha(\rho)(\text{\bf 1}_K)]$. The
inverse mapping is defined as follows. Let
$$
 l:\text{\rm Emb}_F(K,\overline F)\to W_F
$$
be a section of the restriction mapping 
$W_F\to\text{\rm Emb}_F(K,\overline F)$, that is 
$l(\gamma)|_K=\gamma$ for all 
$\gamma\in\text{\rm Emb}_F(K,\overline F)$ and  
$l(\text{\bf 1}_K)=1$, and put
$$
 J(\gamma,\sigma)=l(\gamma)\sigma l(\gamma\sigma)^{-1}\in W_K
 \;\;\text{\rm for $\gamma\in\text{\rm Emb}_F(K,\overline F)$, 
           $\sigma\in W_F$.}
$$
Take a $\psi\in\text{\rm Hom}(W_K,\Bbb C^{\times})$ and define 
$\alpha\in Z^1(W_F,S\sphat)$ by
$$
 \alpha(\sigma)(\rho)
 =\alpha(\sigma\rho^{-1})(1)\cdot\alpha(\rho^{-1})(1)
 \;\;\text{\rm with}\;\;
 \alpha(\sigma)(1)
 =\psi\left(J(\text{\bf 1}_K,\sigma^{-1})^{-1}\right)
$$
for all $\sigma,\rho\in W_F$. Then $\psi\mapsto[\alpha]$ is the
inverse mapping of the isomorphism 
\eqref{eq:local-langlands-correspondence-for-torus}. 

If we restrict the isomorphism 
\eqref{eq:local-langlands-correspondence-for-torus} 
to continuous group homomorphisms, we have an isomorphism 
\begin{equation}
 H^1_{\text{\rm conti}}(W_F,S\sphat)\,\tilde{\to}\,
 \text{\rm Hom}_{\text{\rm conti}}(K^{\times},\Bbb C^{\times})
\label{eq:continuous-local-langlands-correspondence-for-torus}
\end{equation}
via \eqref{eq:local-langlands-correspondence-for-torus} combined with
the isomorphism of the local class filed theory
$$
 \delta_K:K^{\times}\,\tilde{\to}\,
          W_K/\overline{[W_K,W_K]}.
$$
The surjection $x\mapsto x^{1-\tau}$ of $K^{\times}$ onto $U_{K/K_+}$
gives  a canonical inclusion 
\begin{equation}
 \text{\rm Hom}_{\text{\rm conti.}}(U_{K/K_+},\Bbb C^{\times})\subset
 \text{\rm Hom}_{\text{\rm conti.}}(K^{\times},\Bbb C^{\times}).
\label{eq:canonical-inclusion-of-hom-group-of-elliptic-tori}
\end{equation}
The restriction from $S$ to $T$ gives a surjection $X(S)\to X(T)$
whose kernel is the subgroup of $X(S)$ generated by 
$\{b_{\delta}+b_{\tau\delta}\mid\delta\in\text{\rm Emb}_F(K,L)\}$. 
Then the dual torus $T\sphat=X(T){\otimes}_{\Bbb Z}\Bbb C^{\times}$ is
identified with the group of the functions $s$ on 
$\text{\rm Emb}_F(K,\overline F)$ with values in 
$\Bbb C^{\times}$ such that $s(\tau\gamma)=s(\gamma)^{-1}$ for all 
$\gamma\in\text{\rm Emb}_F(K,\overline F)$. As above $T\sphat$ is
identified with the set of the left $W_K$-invariant functions $s$ of
$W_F$ with values in $\Bbb C^{\times}$ such that 
$s(\widetilde\tau\rho)=s(\rho)^{-1}$ for all $\rho\in W_F$.

Then we have

\begin{prop}
\label{prop:local-langlands-correspondence-of-elliptic-tori}
\begin{enumerate}
\item The inclusion $T\sphat\subset S\sphat$ gives a canonical
      inclusion  
\begin{equation}
 H^1_{\text{\rm conti.}}(W_F,T\sphat)\subset  
 H^1_{\text{\rm conti.}}(W_F,S\sphat).
\label{eq:canonical-inclusion-of-cohomology-of-weil-group}
\end{equation}
\item The restriction of the isomorphism 
\eqref{eq:continuous-local-langlands-correspondence-for-torus} to
these included subgroups 
\eqref{eq:canonical-inclusion-of-cohomology-of-weil-group} and 
\eqref{eq:canonical-inclusion-of-hom-group-of-elliptic-tori}
gives the isomorphism
\begin{equation}
  H^1_{\text{\rm conti}}(W_F,T\sphat)\,\tilde{\to}\,
 \text{\rm Hom}_{\text{\rm conti}}(U_{K/K_+},\Bbb C^{\times}).
\label{eq:local-langlands-correspondence-for-symplectic-torus}
\end{equation}
\end{enumerate}
\end{prop}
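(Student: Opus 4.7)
The plan is to deduce both statements from the short exact sequence of $F$-tori
$$1 \to T \to S \xrightarrow{N_{K/K_+}} S_+ \to 1,$$
where $S_+ = \text{Res}_{K_+/F}\Bbb G_m$ and $T$ is realized as the norm-one subtorus. On character lattices, the kernel of $X(S) \twoheadrightarrow X(T)$ (which by the excerpt is generated by $\{b_\delta + b_{\tau\delta}\}$) is $W_F$-equivariantly isomorphic to $X(S_+)$ via $b_\gamma \mapsto b_\delta + b_{\tau\delta}$ (for any $\delta$ extending $\gamma$). Since $\Bbb C^\times$ is divisible, applying $\text{Hom}(-,\Bbb C^\times)$ produces the short exact sequence of continuous $W_F$-modules
$$1 \to T\sphat \to S\sphat \xrightarrow{\hat{N}} S_+\sphat \to 1.$$

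For part (1), I take the long exact sequence in continuous cohomology and analyze its initial segment
$$S\sphat^{W_F} \to S_+\sphat^{W_F} \to H^1_{\text{conti.}}(W_F, T\sphat) \to H^1_{\text{conti.}}(W_F, S\sphat).$$
Since $W_F$ acts transitively (by right multiplication) on the coset spaces $W_K \backslash W_F \cong \text{Emb}_F(K,\overline F)$ and $W_{K_+} \backslash W_F \cong \text{Emb}_F(K_+,\overline F)$, both invariant groups consist of constant functions, which I identify with $\Bbb C^\times$. Under these identifications, the map induced by $\hat{N}$ sends a constant $c$ to $c\cdot c = c^2$, which is surjective on $\Bbb C^\times$. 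Therefore the connecting map to $H^1_{\text{conti.}}(W_F, T\sphat)$ vanishes, yielding the desired injection of part (1).

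For part (2), the image of this injection is, by exactness, the kernel of $\hat{N}_*: H^1_{\text{conti.}}(W_F, S\sphat) \to H^1_{\text{conti.}}(W_F, S_+\sphat)$. Via Shapiro's lemma, $S\sphat = \text{Ind}_{W_K}^{W_F}\Bbb C^\times$ and $S_+\sphat = \text{Ind}_{W_{K_+}}^{W_F}\Bbb C^\times$, and the map $\hat{N}$ is the coset-averaging map $f \mapsto (g \mapsto f(g) f(\widetilde\tau g))$ associated with the index-two inclusion $W_K \subset W_{K_+}$; this is exactly the map realizing corestriction $H^1(W_K,\Bbb C^\times) \to H^1(W_{K_+},\Bbb C^\times)$. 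Under local class field theory, corestriction corresponds (by compatibility of Artin reciprocity with the transfer $W_{K_+}^{\rm ab} \to W_K^{\rm ab}$, which is the inclusion $K_+^\times \hookrightarrow K^\times$) to the restriction map $\chi \mapsto \chi|_{K_+^\times}$ on Hom groups. Its kernel is the characters trivial on $K_+^\times$, which by the Hilbert~90 sequence $1 \to K_+^\times \to K^\times \xrightarrow{1-\tau} U_{K/K_+} \to 1$ is precisely $\text{Hom}_{\text{conti.}}(U_{K/K_+}, \Bbb C^\times)$.

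The main obstacle is the last step: carefully verifying the commutative diagram identifying $\hat{N}_*$ under Shapiro's lemma with corestriction, and then with restriction of characters under local class field theory, while keeping track of the sign/normalization conventions used in the explicit cocycle formula $\alpha(\sigma)(\rho) = \alpha(\sigma\rho^{-1})(1) \cdot \alpha(\rho^{-1})(1)$ from the excerpt. An alternative direct route is to verify, using this formula, that a cocycle $\alpha$ associated with $\chi \in \text{Hom}_{\text{conti.}}(K^\times, \Bbb C^\times)$ takes values in $T\sphat$ (equivalently, satisfies $\alpha(\sigma)(\widetilde\tau\rho) = \alpha(\sigma)(\rho)^{-1}$) precisely when $\chi|_{K_+^\times} = 1$, by invoking the relation $\widetilde\tau^2 \equiv \delta_K(\alpha_{K/K_+}(\tau,\tau)) \npmod{\overline{[W_K,W_K]}}$ already noted in the excerpt.
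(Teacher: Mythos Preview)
Your approach is correct and differs from the paper's in a genuine way. The paper argues both parts by direct computation with explicit cocycles: for (1), it takes a $T\sphat$-valued cocycle that becomes a coboundary in $S\sphat$, say $\beta(\sigma)=s^{\sigma-1}$ with $s\in S\sphat$, and uses the relation $\beta(\sigma)(\tau)=\beta(\sigma)(\text{\bf 1}_K)^{-1}$ to show that $s(\cdot)\,s(\widetilde\tau\cdot)$ is constant, whence dividing $s$ by a square root of that constant produces an element of $T\sphat$ splitting $\beta$; for (2), it fixes a section $l$ with $l(\tau\gamma_i)=\widetilde\tau\,l(\gamma_i)$ and computes $\alpha(\sigma\widetilde\tau^{-1})(\text{\bf 1}_K)$ case by case, showing that the condition $\alpha(\sigma)(\widetilde\tau\rho)=\alpha(\sigma)(\rho)^{-1}$ is equivalent to $\theta$ being trivial on $N_{K/K_+}(K^{\times})$. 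Your long-exact-sequence argument is more conceptual: it explains \emph{why} a square root appears in part (1) (surjectivity of squaring on $\Bbb C^{\times}$, i.e.\ divisibility) and \emph{why} the image in part (2) is the kernel of restriction to $K_+^{\times}$ (Shapiro plus the transfer/corestriction compatibility of local class field theory). The paper's hands-on route, by contrast, never leaves the explicit cocycle formalism already set up and avoids invoking the Shapiro--corestriction--transfer compatibilities that you yourself flag as the point requiring care; your ``alternative direct route'' in the last sentence is essentially what the paper does.
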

\begin{proof}
See \cite{Yu2009} for the arguments with general tori. A direct proof
for our specific setting is as follows. 

1) Take a $\beta\in Z^1(W_F,T\sphat)\subset Z^1(W_F,S\sphat)$ such
that $\beta\in B^1(W_F,S\sphat)$, that is, there exists a 
$s\ S\sphat$ such that $\beta(\sigma)=s^{\beta-1}$ for all 
$\sigma\in W_F$. Chose a $\varepsilon\in\Bbb C^{\times}$ such that 
$\varepsilon^2=s(\text{\bf 1}_K)\cdot s(\tau)$. The relation 
$\beta(\sigma)(\tau)=\beta(\sigma)(\text{\bf 1}_K)^{-1}$ for all 
$\sigma\in W_F$ implies 
$$
 s(\sigma|)\cdot s(\widetilde\tau\sigma)
 =s(\text{\bf 1}_K)\cdot s(\tau)
 =\varepsilon^2
$$
for all $\sigma\in W_F$. Then 
$t=[\sigma\mapsto s(\sigma)\varepsilon^{-1}]$ is an element of
$T\sphat$ such that $t^{\sigma-1}=\beta(\sigma)$ for all 
$\sigma\in W_F$. 

2) Put
$$
 \text{\rm Emb}_F(K,\overline F)
 =\{\gamma_i, \tau\gamma_i\mid 1\leq i\leq n\}
$$
and let $l:\text{\rm Emb}_F(K,\overline F)\to W_F$ be a section of the
restriction mapping $W_F\to\text{\rm Emb}_F(K,\overline F)$ such that 
$l(\tau\gamma_i)=\widetilde\tau l(\gamma_i)$ ($1\leq i\leq n$). Take a 
$\theta
 \in\text{\rm Hom}_{\text{\rm conti.}}(K^{\times},\Bbb C^{\times})$
   which corresponds to $\alpha\in Z^1(W_F,S\sphat)$, that is
$$
 \alpha(\sigma)(\text{\bf 1}_K)
 =\theta(x)
$$
for $\sigma\in W_F$ with $x\in K^{\times}$ such that 
$J(\text{\bf 1}_K,\sigma^{-1})^{-1}\npmod{\overline{[W_K,W_K]}}
 =\delta_K(x)$. For any $\sigma\in W_F$, we have
$$
 \alpha(\sigma\widetilde\tau^{-1})
 =\begin{cases}
   \theta(x^{\tau})&:\sigma^{-1}|_K=\gamma_i,\\
   \theta(\alpha_{K/K_+}(\tau,\tau)\cdot x^{\tau})
                   &:\sigma^{-1}|_K=\tau\gamma_i.
  \end{cases}
$$
Since
$$
 \alpha(\sigma)(\rho)
 =\alpha(\sigma\rho^{-1})(\text{\bf 1}_K)\cdot
  \alpha(\rho^{-1})(\text{\bf 1}_K)^{-1}
$$
for all $\sigma,\rho\in W_F$ and 
$K_+^{\times}=N_{K/K_+}(K^{\times})\sqcup
              \alpha_{K/K_+}(\tau,\tau)N_{K/K_+}(K^{\times})$, we have
$\alpha\in Z^1(W_F,T\sphat)$ if and only if
$\theta(N_{K/K_+}(K^{\times})=1$, that is, there exists 
$c\in\text{\rm Hom}_{\text{\rm conti.}}(U_{K/K_+},\Bbb C^{\times})$ 
such that $\theta(x)=c(x^{1-\tau})$ ($x\in K^{\times}$).
\end{proof}

Put $^L\!T=W_F\ltimes T\sphat$. Then a cohomology class 
$[\alpha]\in H^1_{\text{\rm conti}}(W_F,T\sphat)$ defines a continuous
group homomorphism 
\begin{equation}
 \widetilde\alpha:W_F\to\,^LT
 \quad
 (\sigma\mapsto(\sigma,\alpha(\sigma)))
\label{eq:langlands-parameter-for-torus}
\end{equation}
and $[\alpha]\mapsto\widetilde\alpha$ induces a well-defined bijection
$$
 H^1_{\text{\rm conti}}(W_F,T\sphat)\,\tilde{\to}\,
 \text{\rm Hom}_{\text{\rm conti}}^{\ast}(W_F,^LT)
 /\text{\rm ``$T\sphat$-conjugate"}
$$
where $\text{\rm Hom}_{\text{\rm conti}}^{\ast}(W_F,^LT)$ denotes the
set of the continuous group homomorphisms $\psi$ of $W_F$ to $^LT$
such that 
$W_F\xrightarrow{\psi}\,^LT\xrightarrow{\text{\rm proj.}}W_F$ is the
identity map.

\subsection{$\chi$-datum}
\label{subsec:chi-datum}
In this subsection, let us assume that $K/F$ is a Galois
extension and put $\Gamma=\text{\rm Gal}(K/F)$. For a 
$\gamma\in\Gamma$ of order two, let us denote by $K_{\gamma}$ the
intermediate subfield of $K/F$ such that 
$\text{\rm Gal}(K/K_{\gamma})=\langle\gamma\rangle$. 

Let us denote by $SO_{2n+1}(\Bbb C)$ the complex special orthogonal
group with respect to the symmetric matrix
$$
 S=\begin{bmatrix}
    S_1&0\\
       &-2
   \end{bmatrix}
 \;\text{\rm with}\;
 S_1=\begin{bmatrix}
      0&1_n\\
      1_n&0
     \end{bmatrix}
$$
and put
$$
 \Bbb T\sphat
 =\left\{\begin{bmatrix}
          t& & \\
           &t^{-1}& \\
           & &1
         \end{bmatrix}\biggm| t=\begin{bmatrix}
                                t_1&      &   \\
                                   &\ddots&   \\
                                   &      &t_n
                                \end{bmatrix},\;
         t_i\in\Bbb C^{\times}\right\}
$$
a maximal torus of $SO_{2n+1}(\Bbb C)$. 
We have an isomorphism $T\sphat\,\tilde{\to}\,\Bbb T\sphat$ given by
$$
 s\mapsto\text{\rm diag}(s(\gamma_1),\cdots,s(\gamma_n),
                         s(\gamma_{n+1}),\cdots,s(\gamma_{2n}),1)
$$
where 
$\text{\rm Emb}_F(K,\overline F)=\{\gamma_i\}_{1\leq i\leq 2n}$
where $\gamma_1=\text{\bf 1}_K$ and $\gamma_{n+i}=\tau\gamma_i$ 
($1\leq i\leq n$). 
The action
of $W_F$ on $T\sphat$ induces the action on $\Bbb T\sphat$ which
factors through $\Gamma$. 

The Weyl group 
$W(\Bbb T\sphat)
 =N_{SO_{2n+1}(\Bbb C)}(\Bbb T\sphat)/\Bbb T\sphat$ on $\Bbb T\sphat$ is
 identified with a subgroup of the permutation group $S_{2n}$
 generated by
$$
 \begin{pmatrix}
  1&\cdots&n&n+1&\cdots&2n\\
  \sigma(1)&\cdots&\sigma(n)&n+\sigma(1)&\cdots&n+\sigma(n)
 \end{pmatrix}
 \;\text{\rm with $\sigma\in S_n$ and}
$$
$$
 \begin{pmatrix}
   1 &2&\cdots&n&n+1&n+2&\cdots&2n\\
  n+1&2&\cdots&n& 1 &n+2&\cdots&2n
 \end{pmatrix}.
$$
Then any $w\in W(\Bbb T\sphat)$ is represented by 
$$
 \widetilde w=\begin{bmatrix}
               [w]&0\\
           0&\det[w]
          \end{bmatrix}\in N_{SO_{2n+1}(\Bbb C)}(\Bbb T\sphat),
$$
where $[w]\in GL_{2n}(\Bbb Z)$ is the permutation matrix
corresponding to $w\in W(\Bbb T\sphat)\subset S_{2n}$. 

For any $\gamma\in\text{\rm Emb}_F(K,\overline F)=\Gamma$, 
let us denote by $a_{\gamma}$ an element of $X(T\sphat)$ such that 
$a_{\gamma}(s)=s(\gamma)$ for all $s\in T\sphat$. Then 
$$
 \Phi(T\sphat)
 =\left\{a_{\gamma}\cdot a_{\gamma^{\prime}}, a_{\gamma}\mid
   \gamma,\gamma^{\prime}\in\Gamma,\;\gamma\neq\gamma^{\prime}
        \right\}.
$$
is the set of the
roots of $SO_{2n+1}(\Bbb C)$ with respect to $T\sphat=\Bbb T\sphat$
with the simple roots
$$
 \Delta=\{\alpha_i=a_{\gamma_i}\cdot a_{\tau\gamma_{i+1}}, 
          \alpha_n=a_{\gamma_n}\mid 1\leq i<n\}.
$$
Let $\{X_{\alpha},X_{-\alpha},H_{\alpha}\}$ be the standard triple
associate with a simple root $\alpha\in\Delta$. Then 
$s_{\alpha}\in W(\Bbb T\sphat)$ is represented by
$$
 n(s_{\alpha})
 =\exp(X_{\alpha})\cdot\exp(-X_{-\alpha})\cdot\exp(X_{\alpha})
 \in N_{SO_{2n+1}(\Bbb C)}(\Bbb T\sphat)
$$
and $W(\Bbb T\sphat)$ is generated by
$S=\{s_{\alpha}\}_{\alpha\in\Delta}$. 
For any $w\in W(\Bbb T\sphat)$, let $w=s_1s_2\cdots s_r$ 
($s_i\in S$) be a reduced presentation and put
$$
 n(w)=n(s_1)n(s_2)\cdots n(s_r)
 \in N_{SO_{2n+1}(\Bbb C)}(\Bbb T\sphat).
$$
Then $r(w)=\widetilde w^{-1}n(w)\in\Bbb T\sphat$. 

The action of
$\sigma\in W_F$ on $X(T\sphat)$ induced from the action on $T\sphat$ is such
that $a_{\gamma}^{\sigma}=a_{\gamma\sigma}$ for all 
$\gamma\in\text{\rm Emb}_F(K,\overline F)$, and it determines an
element $w(\sigma)\in W(\Bbb T\sphat)$. Then
\cite{LanglandsShelstad1987} shows that the $2$-cocycle 
$t\in Z^2(W_F,\Bbb T\sphat)$ defined by
$$
 t(\sigma,\sigma^{\prime})
 =n(w(\sigma\sigma^{\prime}))^{-1}n(w(\sigma))\cdot
  n(w(\sigma^{\prime}))
 \quad
 (\sigma, \sigma^{\prime}\in W_F)
$$
is split by $r_p:W_F\to\Bbb T\sphat$ defined by $\chi$-data as follows.

For any $\lambda\in\Phi(T\sphat)$, put
$$
 \Gamma_{\lambda}
  =\{\sigma\in\Gamma\mid\lambda^{\sigma}=\lambda\},
 \quad
 \Gamma_{\pm\lambda}
  =\{\sigma\in\Gamma\mid\lambda^{\sigma}=\pm\lambda\}
$$
and put $F_{\lambda}=L^{\Gamma_{\lambda}}$, 
$F_{\pm\lambda}=L^{\Gamma_{\pm\lambda}}$. Then 
$(F_{\lambda}:F_{\pm\lambda})=\text{\rm $1$ or $2$}$ , 
and $\lambda$ is called symmetric if $(F_{\lambda}:F_{\pm\lambda})=2$.

The Galois group $\Gamma$ acts on $\Phi(T\sphat)$ and 
$$
 \Phi(T\sphat)/\Gamma=\{a_{\text{\bf 1}_K}a_{\gamma},
                        a_{\text{\bf 1}_K}\mid 1\neq\gamma\in\Gamma\}.
$$
If $\lambda=a_{\text{\bf 1}_K}a_{\gamma}$, then $\lambda$ is
symmetric if and only if $\gamma\neq\tau$. If further $\gamma^2\neq
1$, then $F_{\lambda}=K$ and $F_{\pm\lambda}=K_+$ and choose a
continuous character 
$\chi_{\lambda}:F_{\lambda}^{\times}=K^{\times}\to\Bbb C^{\times}$ 
such that 
$\chi_{\lambda}|_{F_{\pm\lambda}^{\times}}:K_+^{\times}\to\{\pm 1\}$
is the character of the quadratic extension $K/K_+$. 
We may assume that 
$\chi_{a_{\text{\bf 1}_K}a_{\gamma^{-1}}}
 =\chi_{a_{\text{\bf 1}_K}a_{\gamma}}^{-1}$. 

If $\gamma^2=1$, then
$F_{\lambda}=K_{\gamma}$ 
and $F_{\pm\lambda}=E=K_{\gamma}\cap K_+$ and choose a
continuous character 
$\chi_{\lambda}:
 F_{\lambda}^{\times}=K_{\gamma}^{\times}\to\Bbb C^{\times}$ 
such that 
$\chi_{\lambda}|_{F_{\pm\lambda}^{\times}}:E^{\times}\to\{\pm 1\}$
is the character of the quadratic extension $K_{\gamma}/E$. 

If $\lambda=a_{\text{\bf 1}_K}$ then $F_{\lambda}=K$ and 
$F_{\pm\lambda}=K_+$ and choose a
continuous character 
$\chi_{\lambda}:F_{\lambda}^{\times}=K^{\times}\to\Bbb C^{\times}$ 
such that 
$\chi_{\lambda}|_{F_{\pm\lambda}^{\times}}:K_+^{\times}\to\{\pm 1\}$
is the character of the quadratic extension $K/K_+$. 

These characters are parts of a system of $\chi$-data
$\chi_{\lambda}:F_{\lambda}\to\Bbb C^{\times}$ 
($\lambda\in\Phi(\Bbb T\sphat)$) 
such that
\begin{enumerate}
\item $\chi_{-\lambda}=\chi_{\lambda}^{-1}$ and 
      $\chi_{\lambda^{\sigma}}=\chi_{\lambda}(x^{\sigma^{-1}})$ for
      all $\sigma\in\Gamma$, and
\item $\chi_{\lambda}=1$ if $\lambda$ is not symmetric.
\end{enumerate}
With this $\chi$-data and the gauge 
$$
 p:\Phi(\Bbb T\sphat)\to\{\pm 1\}\;\text{\rm s.t.}\;
 p(\lambda)=\begin{cases}
             1&:\lambda>0,\\
            -1&:\lambda<0,
            \end{cases}
$$
the mechanism of \cite{LanglandsShelstad1987} gives 
a $r_p:W_F\to\Bbb T\sphat$ such that
$$
 t(\sigma,\sigma^{\prime})
 =r_p(\sigma)^{\sigma^{\prime}}r_p(\sigma\sigma^{\prime})^{-1}
  r_p(\sigma^{\prime})
 \;\;
 \text{\rm for all $\sigma, \sigma^{\prime}\in W_F$}
$$
and
\begin{align*}
 r_p(\sigma)=&\prod_{\gamma\in\Gamma,\gamma^2\neq 1}
         \prod_{0<\lambda\in
               \{a_{\text{\bf 1}_K} a_{\gamma}\}_{\Gamma}}
         \chi_{\lambda}(x)^{\check\lambda}
         \times
         \prod_{\stackrel{\scriptstyle \gamma\in\Gamma,\gamma^2=1}
                         {\gamma\neq 1,\tau}}
         \prod_{0<\lambda\in
               \{a_{\text{\bf 1}_K}a_{\gamma}\}_{\Gamma}}
         \chi_{\lambda}(N_{K/F_{\lambda}}(x))^{\check\lambda}\\
        &\times
        \prod_{0<\lambda\in\{a_{\text{\bf 1}_K}\}_{\Gamma}}
         \chi_{\lambda}(x)^{\check\lambda}
\end{align*}
if $\dot \sigma=(1,x)\in W_{K/F}=\Gamma\ltimes_{\alpha_{K/F}}K^{\times}$,
where $\{\alpha\}_{\Gamma}$ is the $\Gamma$-orbit of
$\alpha\in\Phi(\Bbb T\sphat)$ and $\check\lambda$ is the co-root of 
$\lambda$. Then we have a group homomorphism 
\begin{equation}
 ^LT=W_F\ltimes\Bbb T\sphat\to SO_{2n+1}(\Bbb C)
 \quad
 ((\sigma,s)\mapsto n(w(\sigma))r_p(\sigma)^{-1}s).
\label{eq:langlands-shelstad-homomorphism-of-torus-to-dual-group}
\end{equation}
If we put $r(\sigma)=r(w(\sigma))$ for $\sigma\in W_F$, we have
$$
 t(\sigma,\sigma^{\prime})
 =r(\sigma)^{\sigma^{\prime}}r(\sigma\sigma^{\prime})^{-1}
  r(\sigma^{\prime})
 \qquad
 (\sigma, \sigma^{\prime}\in W_F).
$$
Now 
$\chi_p(\sigma)=r(\sigma)\cdot r_p(\sigma)^{-1}$ ($\sigma\in W_F$)
 define an element of $Z^1(W_F,\Bbb T\sphat)$ and the group
 homomorphism
 \eqref{eq:langlands-shelstad-homomorphism-of-torus-to-dual-group} is
\begin{equation}
 ^LT=W_F\ltimes\Bbb T\sphat\to SO_{2n+1}(\Bbb C)
 \quad
 ((\sigma,s)\mapsto\widetilde w(\sigma)\chi_p(\sigma)\cdot s).
\label{eq:modified-langlands-shelstad-homomorphism-of-torus-to-dual-group}
\end{equation}
Let $c\in \text{\rm Hom}_{\text{\rm conti}}(U_{K/K_+},\Bbb C^{\times})$
be the character corresponding to the cohomology class 
$[\chi_p]\in H_{\text{\rm conti.}}^1(W_F,\Bbb T\sphat)$ by the local
Langlands correspondence of torus 
\eqref{eq:local-langlands-correspondence-for-symplectic-torus}.

\subsection{Explicit value of $c(-1)$}
\label{subsec:explicit-value-of-c(-1)}
From now on, we will assume that $K/F$ is a tamely ramified
Galois extension and put $\Gamma=\text{\rm Gal}(K/F)$. 

The structure of the Galois group $\text{\rm Gal}(K/F)$ is well
understood: 
\begin{equation}
 \text{\rm Gal}(K/F)=\langle\delta,\rho\rangle
\label{eq:generator-of-tame-galois-group}
\end{equation}
where $\text{\rm Gal}(K/K_0)=\langle\delta\rangle$ with the maximal
unramified subextension $K_0/F$ of $K/F$ and 
$\rho|_{K_0}\in\text{\rm Gal}(K_0/F)$ is the inverse of the Frobenius
automorphism. There exists a prime element $\varpi_K$ of $K$ such that
$\varpi_K^e\in K_0$. Then 
$\sigma\mapsto\varpi_K^{1-\sigma}\npmod{\frak{p}_K}$ is an
injective group homomorphism of $\text{\rm Gal}(K/K_0)$ into $\Bbb
K^{\times}$, and hence $e|q^f-1$. Put 
$\rho^f=\delta^m$ with $0\leq m<e$.
We have a relation $\rho^{-1}\delta\rho=\delta^q$ 
due to Iwasawa \cite{Iwasawa1955} and hence
$$
 \delta^m=\rho^{-1}\delta^m\rho=\delta^{qm}
$$
that is $m(q-1)\equiv 0\npmod{e}$. So we have
\begin{equation}
 \rho^{f(q-1)}=1
\label{eq:order-of-rho-divide-f(q-1)}
\end{equation}
Since $f$ divides $\text{\rm ord}(\rho)$, we have
$$
 \text{\rm ord}(\rho)=f\cdot\frac e{\text{\rm GCD}\{e,m\}}.
$$
The structure of the elements of order two in $\text{\rm Gal}(K/F)$
plays an important role in our arguments, and we have

\begin{prop}\label{prop:order-two-element-in-tamely-ramified-galois-group}
$H=\{\gamma\in\text{\rm Gal}(K/F)\mid\gamma^2=1\}
  \subset Z(\text{\rm Gal}(K/F))$ 
and
$$
 H=\begin{cases}
    \{1,\delta^{\frac e2}\}&:f=\text{\rm odd}\;\text{\rm or}\;
                             \left\{\begin{array}{l}
                                     e=\text{\rm even},\\
                                     m=\text{\rm odd}
                                    \end{array}\right.\\
    \{1,\rho^{\frac f2}\delta^{-\frac m2}\}
                           &:e=\text{\rm odd}\,,m=\text{\rm even}\\
    \{1,\rho^{\frac f2}\delta^{\frac{e-m}2}\}
                           &:e=\text{\rm odd}\,,m=\text{\rm odd}\\
    \{1,\delta^{\frac e2},\rho^{\frac f2}\delta^{-\frac m2},
        \rho^{\frac f2}\delta^{\frac{e-m}2}\}
               &:f=\text{\rm even}\,,e=\text{\rm even}\,,
                 m=\text{\rm even}.
  \end{cases}
$$
For $\gamma\in\text{\rm Gal}(K/F)$ of order two, the quadratic
extension $K/K_{\gamma}$ is ramified  if and only if 
$\gamma\in\text{\rm Gal}(K/K_0)$.
\end{prop}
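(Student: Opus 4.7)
The plan is to carry out the enumeration of involutions directly in the metacyclic presentation of \eqref{eq:generator-of-tame-galois-group}. Every element of $\Gamma$ admits a unique normal form $\rho^i\delta^j$ with $0\le i<f$ and $0\le j<e$, and iterating the Iwasawa relation $\delta\rho=\rho\delta^q$ gives the commutation $\delta^a\rho^b=\rho^b\delta^{aq^b}$, whence $(\rho^i\delta^j)^2=\rho^{2i}\delta^{j(q^i+1)}$. Writing $2i=\epsilon f+i'$ with $\epsilon\in\{0,1\}$ and $0\le i'<f$ and reducing via $\rho^f=\delta^m$ returns this to normal form $\rho^{i'}\delta^{\epsilon m q^{i'}+j(q^i+1)}$, so $(\rho^i\delta^j)^2=1$ forces $i'=0$, i.e.\ $i=0$ or $i=f/2$ (the latter only when $f$ is even), together with an explicit congruence on the $\delta$-exponent modulo $e$.

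For $i=0$ the congruence is $2j\equiv 0\pmod e$, contributing the involution $\delta^{e/2}$ precisely when $e$ is even. For $i=f/2$ the congruence is $m+j(q^{f/2}+1)\equiv 0\pmod e$, and the crucial arithmetic input is that under the standing hypotheses $q^{f/2}\equiv 1\pmod e$, reducing it to $2j\equiv -m\pmod e$. Parity analysis then yields: a unique solution $j=-m/2$ when $e$ is odd and $m$ is even; a unique solution $j=(e-m)/2$ when $e$ is odd and $m$ is odd; no solution when $e$ is even and $m$ is odd (the two sides have opposite parities); and the two solutions $j=-m/2,\ (e-m)/2$ when $e$ and $m$ are both even. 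Collating with the $i=0$ discussion recovers exactly the four cases listed in the proposition.

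Centrality is then a short verification using the facts that $q$ is odd and $m(q-1)\equiv 0\pmod e$. The conjugation identity $(\rho^i\delta^j)\delta(\rho^i\delta^j)^{-1}=\delta^{q^{-i}}$ gives commutativity with $\delta$ at $i=0$ trivially and, via $q^{f/2}\equiv 1\pmod e$, also at $i=f/2$; commutativity of $\delta^{e/2}$ with $\rho$ reduces to $2\mid q-1$, and of $\rho^{f/2}\delta^j$ with $\rho$ to $j(q-1)\equiv 0\pmod e$, which follows from $m(q-1)\equiv 0\pmod e$ together with the explicit shape of $j$ in each subcase. The ramification assertion is independent and immediate: $\text{\rm Gal}(K/K_\gamma)=\langle\gamma\rangle$, and its inertia subgroup inside $\text{\rm Gal}(K/F)$ equals $\langle\gamma\rangle\cap\text{\rm Gal}(K/K_0)=\langle\gamma\rangle\cap\langle\delta\rangle$, which is $\langle\gamma\rangle$ itself when $\gamma\in\langle\delta\rangle$ and trivial otherwise, so that $K/K_\gamma$ is ramified exactly when $\gamma\in\text{\rm Gal}(K/K_0)$.

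The main obstacle is the arithmetic identity $q^{f/2}\equiv 1\pmod e$, since from the relations alone $q^{f/2}$ is only known to be some square root of $1$ modulo $e$. One handles it through the dichotomy of Proposition \ref{prop:existence-of-tau-symplectic-generator-of-ok-over-of}: if $K/F$ is totally ramified then $f=1$ and the $i=f/2$ case is vacuous, while if $K/K_+$ is unramified then $e=e_+,\ f=2f_+$, and the existence of the totally tamely ramified step $K_+/K_{+0}$ of degree $e$ forces $\mu_e\subset K_{+0}$, i.e.\ $e\mid q^{f/2}-1$. Once this reduction is secured the remainder of the proof is bookkeeping.
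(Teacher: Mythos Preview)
Your enumeration of $H$ is correct, and your handling of the congruence $q^{f/2}\equiv 1\pmod e$ via the dichotomy of Proposition~\ref{prop:existence-of-tau-symplectic-generator-of-ok-over-of} is sound. The gap is in the centrality verification for the case $f$ even, $e$ even, $m$ even. You assert that $j(q-1)\equiv 0\pmod e$ ``follows from $m(q-1)\equiv 0\pmod e$ together with the explicit shape of $j$,'' but from $2j\equiv -m\pmod e$ one only deduces $e\mid 2j(q-1)$, and when $e$ is even this does not force $e\mid j(q-1)$. Concretely, take $(e,f,q,m)=(4,4,3,2)$: here $m(q-1)=4\equiv 0\pmod 4$ and $e\mid q^{f/2}-1=8$, yet for $j=1$ (which satisfies $2j\equiv -m\pmod 4$) one has $j(q-1)=2\not\equiv 0\pmod 4$. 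In the corresponding metacyclic group the involution $\rho^2\delta$ satisfies $\rho(\rho^2\delta)\rho^{-1}=\rho^2\delta^3\neq\rho^2\delta$, so it is not central. This tame Galois group is realizable over $\Bbb Q_3$ (e.g.\ $K=K_0((3\zeta)^{1/4})$ for a suitable $\zeta\in\mu_{40}\subset K_0$, which gives $\rho^4=\delta^2$) and it satisfies the dichotomy (take $\tau=\rho^2\delta$, so $K/K_+$ is unramified), so neither your hypotheses nor the paper's standing assumptions exclude it.

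The paper's own proof runs into the same obstruction: it derives $\gamma\rho=\rho\gamma$ from the claim $\rho^{f'(q-1)}=1$ of \eqref{eq:f-prime-(q-1)-kill-rho}, argued by applying \eqref{eq:order-of-rho-divide-f(q-1)} to ``$\text{\rm Gal}(K_\gamma/F)=\langle\delta',\rho'\rangle$.'' But this treats $\rho|_{K_\gamma}$ as an automorphism of $K_\gamma$, which requires $\rho\langle\gamma\rangle\rho^{-1}=\langle\gamma\rangle$, i.e.\ exactly the centrality being proved. In the example above $K_\gamma/F$ is not Galois and $\rho^{f'(q-1)}=\rho^4=\delta^2\neq 1$. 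So the paper's argument is circular at this step, and the proposition as stated appears to be false in general; some further hypothesis beyond what you invoke is needed to secure centrality in the $|H|=4$ case.
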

\begin{proof}
Take a $1\neq\gamma\in\text{\rm Gal}(K/F)$ such that $\gamma^2=1$. 

If $\gamma\in\text{\rm Gal}(K/K_0)$, then $e$ is even and
$\gamma=\delta^{\frac e2}$ 
is the unique element of order $2$ of the normal subgroup 
$\text{\rm Gal}(K/K_0)$. So $\gamma\in Z(\text{\rm Gal}(K/F))$. In
this case $K_0\subset K_{\gamma}$ and $K/K_{\gamma}$ is ramified
extension. 

Assume that $\gamma\not\in\text{\rm Gal}(K/K_0)$. Then 
$\gamma|_{K_0}\in\text{\rm Gal}(K_0/F)$ is of order two 
(hence $f=2f^{\prime}$ is even), 
and $\gamma=\rho^{f^{\prime}}\delta^a$ with $0\leq a<e$. 
Then $K/K_{\gamma}$ is unramified
  extension, because if it was not the case we have $f(K_{\gamma}/F)=f(K/F)$
  and hence $K_0\subset K_{\gamma}$ which means 
$$
 \gamma\in\text{\rm Gal}(K/K_{\gamma})\subset\text{\rm Gal}(K/K_0),
$$
contradicting to the assumption 
$\gamma\not\in\text{\rm Gal}(K/K_0)$.
Then $f(K_{\gamma}/F)=f^{\prime}$ and $e(K_{\gamma}/F)=e$, and hence 
$e|q^{f^{\prime}}-1$. So we have
$$
 1=\gamma^2=\rho^f\rho^{-f^{\prime}}\delta^a\rho^{f^{\prime}}\delta^a
  =\delta^{m+aq^{f^{\prime}}+a}
  =\delta^{2a+m},
$$
hence $2a\equiv -m\npmod{e}$. Then 
$a\equiv -\frac m2\;\text{\rm or}\;\frac{e-m}2\npmod{e}$ 
if $e$ is even (hence $m$ is even), and
$$
 a\npmod{e}=\begin{cases}
             -\frac m2&:\text{\rm if $m$ is even},\\
             \frac{e-m}2&:\text{\rm if $m$ is odd}
            \end{cases}
$$
if $e$ is odd. We have $e|q^{f^{\prime}}-1$ hence
$$
 \delta\gamma
 =\rho^{f^{\prime}}\delta^{q^{f^{\prime}}+a}
 =\rho^{f^{\prime}}\delta^{1+a}
 =\gamma\delta.
$$
Now we have
\begin{equation}
 \rho^{f^{\prime}(q-1)}=1.
\label{eq:f-prime-(q-1)-kill-rho}
\end{equation}
In fact 
$\text{\rm Gal}(K_{\gamma}/F)
 =\langle\delta^{\prime},\rho^{\prime}\rangle$ with 
$\delta^{\prime}=\delta|_{K_{\gamma}}, \rho^{\prime}=\rho|_{K_{\gamma}}$. Then 
$(\rho^{\prime})^{f^{\prime}(q-1)}=1$, that is
$$
 \rho^{f^{\prime}(q-1)}
 \in\text{\rm Gal}(K/K_{\gamma})=\langle\gamma\rangle.
$$
If $\rho^{f^{\prime}(q-1)}\neq 1$, then 
$\rho^{f^{\prime}(q-1)}=\gamma=\rho^{f^{\prime}}\delta^a$, 
therefore 
$$
 \rho^{f^{\prime}q}=\rho^f\delta^a=\delta^{m+a}
 \in\text{\rm Gal}(K/K_0)
$$
and hence $f$ divides $f^{\prime}q$, contradicting to the assumption
that $q$ is odd. Now we have
$$
 \gamma\rho=\rho^{f^{\prime}+1}\delta^{qa}
 =\rho\gamma\cdot\delta^{a(q-1)}.
$$
For $a=-\frac m2$ or $a=\frac{e-m}2$, 
we have $a(q-1)\equiv 0\npmod{e}$ if and only if 
$$
 \frac{q-1}2\equiv 0\npmod{\frac e{\text{\rm GCD}\{e,m\}}}
$$
which is equivalent to 
$\rho^{f\cdot\frac{q-1}2}=\rho^{f^{\prime}(q-1)}=1$. 
Then \eqref{eq:f-prime-(q-1)-kill-rho} implies
$\gamma\rho=\rho\gamma$. Then we have $\gamma$ is an element of the
center of $\text{\rm Gal}(K/F)$. 
\end{proof}

Put $\widetilde c(x)=c(x^{1-\tau})$ for $x\in K^{\times}$. Then we
have
\begin{align*}
 \widetilde c(x)
 &=\chi_p(1,x)(\text{\bf 1}_K)\\
 &=\prod_{\gamma\in\Gamma,\gamma^2\neq 1}
    \chi_{a_{\text{\bf 1}_K}a_{\gamma}}(x)\times
   \prod_{\stackrel{\scriptstyle \gamma\in\Gamma,\gamma^2=1}
                         {\gamma\neq 1,\tau}}
    \chi_{a_{\text{\bf 1}_K}a_{\gamma}}(N_{K/K_{\gamma}}(x))\times
    \chi_{a_{\text{\bf 1}_K}}(x)^2.
\end{align*}
Since 
$\chi_{a_{\text{\bf 1}_K}a_{\gamma^{-1}}}
 =\chi_{a_{\text{\bf 1}_K}a_{\gamma}}^{-1}$ for $\gamma\in\Gamma$, we
have 
$$
 \widetilde c(x)=\chi_{a_{\text{\bf 1}_K}}(x^{1-\tau})
 \quad
 (x\in K^{\times})
$$
if $H=\{1,\tau\}$, and
$$
 \widetilde c(x)
 =\chi_{a_{\text{\bf 1}_K}a_{\delta^{\prime}}}
                  (N_{K/K_{\delta^{\prime}}}(x))\cdot
    \chi_{a_{\text{\bf 1}_K}a_{\tau\delta^{\prime}}}
                  (N_{K/K_{\tau\delta^{\prime}}}(x))\cdot
    \chi_{a_{\text{\bf 1}_K}}(x^{1-\tau})
 \quad
 (x\in K^{\times})
$$
if 
$H=\{1,\tau,\delta^{\prime}=\delta^{\frac e2},\tau\delta^{\prime}\}$. In
this case, since $f$ is even, $K/K_+$ is unramified so that 
$\tau\not\in\text{\rm Gal}(K/K_0)=\langle\delta\rangle$. 
We have 

\begin{prop}\label{prop:explicit-value-of-c(-1)}
If $|H|=2$, then
$$
 c(-1)=\begin{cases}
       (-1)^{\frac{q-1}2}
         &:\text{\rm if $K/K_+$ is ramified},\\
        1&:\text{\rm if $K/K_+$ is unramified,}.
       \end{cases}
$$
If $|H|=4$, then 
$$
 c(-1)=-(-1)^{\frac{q^{f_+}-1}2}.
$$
Note that $K/F$ is totally ramified if $K/K_+$ is ramified.
\end{prop}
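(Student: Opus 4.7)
My plan is to evaluate $\widetilde c(x)=c(x^{1-\tau})$ on a convenient $x\in K^\times$ with $x^{1-\tau}=-1$, using the explicit product expressions for $\widetilde c$ displayed just before the proposition.

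For $|H|=2$ the formula reduces to $c(-1)=\chi_{a_{\text{\bf 1}_K}}(-1)$. By construction $\chi_{a_{\text{\bf 1}_K}}$ restricts on $K_+^\times$ to the quadratic character of $K/K_+$, so the question becomes the value of that character at $-1\in K_+^\times$. When $K/K_+$ is unramified this character is trivial on $O_{K_+}^\times$ and hence equals $1$. When $K/K_+$ is ramified, Proposition~\ref{prop:existence-of-tau-symplectic-generator-of-ok-over-of} forces $K/F$ to be totally ramified, so the residue field of $K_+$ is $\Bbb F_q$, and the routine tame Hilbert symbol identity $(-1,\varpi_{K_+})_{K_+}=(-1)^{(q-1)/2}$ gives the claimed value.

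For $|H|=4$ the extension $K/K_+$ is unramified. I will choose $\omega\in O_{K_0}^\times$ with $\omega^\tau=-\omega$; such a unit exists because $\tau|_{K_0}$ generates the unramified quadratic $\mathrm{Gal}(K_0/K_{+,0})$, so the $(-1)$-eigenspace of $\tau$ on $K_0$ is a nonzero $K_{+,0}$-line containing units. Since $\delta'=\delta^{e/2}\in\mathrm{Gal}(K/K_0)$ fixes $K_0$ pointwise, $N_{K/K_{\delta'}}(\omega)=\omega^2$ and $N_{K/K_{\tau\delta'}}(\omega)=\omega\cdot\omega^{\tau\delta'}=-\omega^2$, while $\omega^{1-\tau}=-1$. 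By Proposition~\ref{prop:order-two-element-in-tamely-ramified-galois-group}, $K/K_{\delta'}$ is ramified and $K/K_{\tau\delta'}$ is unramified; combined with the unramified $K/K_+$, computing residue degrees inside $K/E$ (where $E=K^H$) shows that $K_{\delta'}/E$ is unramified while $K_+/E$ and $K_{\tau\delta'}/E$ are ramified, and that the residue field of $E$ is $\Bbb F_{q^{f_+}}$. Consequently $\chi_{a_{\text{\bf 1}_K}}(-1)=1$ and $\chi_{a_{\text{\bf 1}_K}a_{\delta'}}(\omega^2)=1$, the second because $\omega^2\in O_{K_{+,0}}^\times\subset O_E^\times$ and an unramified quadratic character is trivial on units.

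The entire content of the $|H|=4$ case therefore lies in the ramified factor $\chi_{a_{\text{\bf 1}_K}a_{\tau\delta'}}(-\omega^2)$. For a tame ramified quadratic extension the character on units is the Legendre symbol on the residue field $\Bbb F_{q^{f_+}}$, so $\chi_{a_{\text{\bf 1}_K}a_{\tau\delta'}}(-1)=(-1)^{(q^{f_+}-1)/2}$, and the remaining step is to show $\chi_{a_{\text{\bf 1}_K}a_{\tau\delta'}}(\omega^2)=-1$, i.e.\ that $\overline{\omega^2}$ is a non-square in $\Bbb F_{q^{f_+}}^\times$. This is the main obstacle. I plan to deduce it from $\bar\omega^{q^{f_+}}=-\bar\omega$, which forces both $\bar\omega$ and $-\bar\omega$ to lie outside $\Bbb F_{q^{f_+}}$; since the only square roots of $\overline{\omega^2}$ in the algebraic closure are $\pm\bar\omega$, neither of them lies in $\Bbb F_{q^{f_+}}$, so $\overline{\omega^2}$ is not a square there. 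Multiplying the three factors yields $c(-1)=-(-1)^{(q^{f_+}-1)/2}$, as claimed.
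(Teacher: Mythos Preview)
Your proof is correct. For $|H|=2$ it coincides with the paper's argument. For $|H|=4$ both proofs evaluate $\widetilde c$ at an element $x\in K_{\delta'}$ with $x^\tau=-x$, obtaining
\[
 c(-1)=\bigl(x^2,K_{\delta'}/E\bigr)\cdot\bigl(-x^2,K_{\tau\delta'}/E\bigr)\cdot\bigl(-1,K/K_+\bigr),
\]
and both dispose of the first and third factors by the observation that $K_{\delta'}/E$ and $K/K_+$ are unramified. The difference lies in how the middle factor $(-x^2,K_{\tau\delta'}/E)$ is computed. The paper keeps $x=\eta$ generic and uses functoriality of the norm residue symbol: it rewrites $(-\eta^2,K_{\tau\delta'}/E)$ as $(\eta,K/K_{\delta'})$ via the identity $(\eta,K/K_{\delta'})=(N_{K_{\delta'}/E}(\eta),K/E)$ and the product decomposition of $\text{Gal}(K/E)$, then transfers to $(-\eta^2,K_+/E)$ by restriction, where a final direct evaluation is possible. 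Your choice $x=\omega\in O_{K_0}^\times$ is more specific and buys a shortcut: since $\omega^2$ is then a \emph{unit} in $E$, and $K_{\tau\delta'}/E$ is tamely ramified quadratic, the symbol is just the Legendre symbol of $\overline{\omega^2}$ in $\Bbb F_{q^{f_+}}$, which you read off from $\bar\omega^{q^{f_+}}=-\bar\omega$. Your route is shorter and more elementary; the paper's route is independent of whether $\eta$ is a unit and illustrates the norm-residue formalism that reappears later (e.g.\ in Proposition~\ref{prop:lambda-k-tau-prime-times-lambda-k-+-inverse}).
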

\begin{proof}
If $|H|=2$, we have $c(x)=\chi_{a_{\text{\bf 1}_K}}(x)$ for 
$x\in U_{K/K_+}$ so that
$$
 c(-1)=\left(-1,K/K_+\right)
      =\begin{cases}
        1&:\text{\rm if $K/K_+$ is unramified,}\\
       (-1)^{\frac{q-1}2}
         &:\text{\rm if $K/K_+$ is ramified}.
       \end{cases}
$$
From now on, we will consider the case of $|H|=4$.
Put $H=\{1,\tau,\delta^{\prime},\tau\delta^{\prime}\}$ and let 
$E=K^H$ be the fixed subfield of $K/F$. Then we have
$$
 \text{\rm Gal}(K_{\delta^{\prime}}/E)
 =\langle\tau|_{K_{\delta^{\prime}}}\rangle,
 \quad
 \text{\rm Gal}(K_{\tau\delta^{\prime}}/E)
 =\langle\tau|_{K_{\tau\delta^{\prime}}}\rangle.
$$
Put $K_{\delta^{\prime}}=E(\eta)$ with $\eta^2\in E$, or equivalently 
$\eta^{\tau}=-\eta$. Then we have
\begin{align*}
 c(-1)&=\widetilde c(\eta)
  =\chi_{a_{\text{\bf 1}_K}a_{\delta^{\prime}}}
    (N_{K/K_{\delta^{\prime}}}(\eta))\cdot
  \chi_{a_{\text{\bf 1}_K}a_{\tau\delta^{\prime}}}
   (N_{K/K_{\tau\delta^{\prime}}}(\eta))\cdot
  \chi_{a_{\text{\bf 1}_K}}(\eta^{1-\tau})\\
 &=\left(\eta^2,K_{\delta^{\prime}}/E\right)\cdot
   \left(-\eta^2,K_{\tau\delta^{\prime}}/E\right)\cdot
   \left(-1,K/K_+\right).
\end{align*}
Since $K/K_+$ is unramified, we have $\left(-1,K/K_+\right)=1$. Since 
$N_{K_{\delta^{\prime}}/E}(\eta)=-\eta^2$, we have 
$\left(-\eta^2,K_{\delta^{\prime}}/E\right)=1$. Since
$K_{\delta^{\prime}}/E$ is unramified, we have 
$\left(-1,K_{\delta^{\prime}}/E\right)=1$. Hence 
$\left(\eta^2,K_{\delta^{\prime}}/E\right)=1$. By the standard formula
of the norm residue symbol, we have
\begin{equation}
 \left(\eta,K/K_{\delta^{\prime}}\right)
 =\left(-\eta^2,K/E\right)
 \in\text{\rm Gal}(K/K_{\delta^{\prime}})
 \subset\text{\rm Gal}(K/E)
\label{eq:standard-formula-of-norm-residue-in-k-over-e}
\end{equation}
since $N_{K_{\delta^{\prime}}/E}(\eta)=-\eta^2$, and
$$
 \left(-\eta^2,K/E\right)
 =(\left(-\eta^2,K_{\delta^{\prime}}/E\right),
   \left(-\eta^2,K_{\tau\delta^{\prime}}/E\right))
$$
in 
$\text{\rm Gal}(K/E)
 =\text{\rm Gal}(K_{\delta^{\prime}}/E)\times 
  \text{\rm Gal}(K_{\tau\delta^{\prime}}/E)$ by 
$\sigma=(\sigma|_{K_{\delta^{\prime}}},\sigma|_{K_{\tau\delta^{\prime}}})$. Note
  that we have $\left(-\eta^2,K_{\delta^{\prime}}/E\right)=1$. So if 
$\left(\eta,K/K_{\delta^{\prime}}\right)=1$, then 
$\left(-\eta^2,K_{\tau\delta^{\prime}}/E\right)=1$. If 
$\left(\eta,K/K_{\delta^{\prime}}\right)\neq 1$, then 
$\left(\eta,K/K_{\delta^{\prime}}\right)=\delta^{\prime}$, hence
$$
 \left(-\eta^2,K_{\tau\delta^{\prime}}/E\right)
 =\delta^{\prime}|_{K_{\tau\delta^{\prime}}}
 \neq 1.
$$
So we have 
$\left(-\eta^2,K_{\tau\delta^{\prime}}/E\right)
 =\left(\eta,K/K_{\delta^{\prime}}\right)$. The restriction mapping 
$\text{\rm Gal}(K/E)\to\text{\rm Gal}(K_+/E)$ sends 
$\left(-\eta^2,K/E\right)$ to $\left(-\eta^2,K_+/E\right)$. Since the
restriction mapping gives the isomorphism
$$
 \text{\rm Gal}(K/K_{\delta^{\prime}})\,\tilde{\to}\,
 \text{\rm Gal}(K_+/E).
$$
Hence \eqref{eq:standard-formula-of-norm-residue-in-k-over-e} shows 
$\left(\eta,K/K_{\delta^{\prime}}\right)
 =\left(-\eta^2,K_+/E\right)$. Since $K_+/E$ is a ramified quadratic
 extension and $\eta^2\in E$ is not square in $E$, we have
$$
 \left(-\eta^2,K_+/E\right)
 =\left(-1,K_+/E\right)\cdot\left(\eta^2,K_+/E\right)
 =(-1)^{\frac{q^{f_+}-1}2}\cdot(-1).
$$
\end{proof}

The following proposition will be used in the next two sections.

\begin{prop}\rm\label{prop:c-is-trivial-mod-p-square}
We can choose the $\chi$-data 
$\{\chi_{\lambda}\}_{\lambda\in\Phi(\Bbb T\sphat)}$ so that 
$c(x)=1$ for all $x\in U_{K/K_+}\cap(1+\frak{p}_K^2)$. 
\end{prop}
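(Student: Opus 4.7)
The plan is to choose each $\chi_\lambda$ in the $\chi$-data to have the smallest possible conductor, and then use a Hilbert 90 lifting inside the filtration to pull the resulting tameness of $\widetilde c$ back to the stated vanishing of $c$.

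First, I would note that since $K/F$ is tamely ramified, every sub-extension $F_\lambda/F_{\pm\lambda}$ appearing in the $\chi$-data (they are quotients of quadratic sub-extensions of $K/F$) is itself tame. The required restriction $\chi_\lambda|_{F_{\pm\lambda}^\times}$ is the non-trivial quadratic character of such an extension, which is automatically of conductor at most one, i.e. trivial on $1+\frak{p}_{F_{\pm\lambda}}$. Because $F_\lambda^\times/(F_{\pm\lambda}^\times\cdot(1+\frak{p}_{F_\lambda}))$ is a finite cyclic group, I can extend this character to a character $\chi_\lambda$ of $F_\lambda^\times$ that is still trivial on $1+\frak{p}_{F_\lambda}$. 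The conditions $\chi_{-\lambda}=\chi_\lambda^{-1}$ and $\chi_{\lambda^\sigma}(x)=\chi_\lambda(x^{\sigma^{-1}})$ are then arranged by making the tame choice first on a set of $\Gamma$-orbit representatives in the symmetric roots and transporting it equivariantly.

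Next I would verify that with this choice, $\widetilde c(x)=c(x^{1-\tau})$ is trivial on $1+\frak{p}_K$. Inspecting the product formula for $\widetilde c(x)$ displayed just above the statement, every factor is either $\chi_\lambda(x)$ with $F_\lambda=K$, or $\chi_\lambda(N_{K/K_\gamma}(x))$ with $F_\lambda=K_\gamma$, or $\chi_{a_{\text{\bf 1}_K}}(x)^2$. In each case triviality on $1+\frak{p}_K$ reduces to the inclusion $N_{K/F_\lambda}(1+\frak{p}_K)\subset 1+\frak{p}_{F_\lambda}$, which follows from the expansion $N(1+a)=1+T_{K/F_\lambda}(a)+\cdots$ together with the tame trace estimate $T_{K/F_\lambda}(\frak{p}_K)\subset\frak{p}_{F_\lambda}$ (which can be checked directly in both the unramified and the tame ramified-quadratic case using the decomposition $O_K=O_{F_\lambda}\oplus\omega O_{F_\lambda}$).

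The main step, and the expected obstacle, is to pass from the vanishing of $\widetilde c$ on $1+\frak{p}_K$ to the vanishing of $c$ on $U_{K/K_+}\cap(1+\frak{p}_K^2)$. Given $y=1+u\in U_{K/K_+}\cap(1+\frak{p}_K^2)$, I would lift $y$ to some $x\in 1+\frak{p}_K$ with $y=x^{1-\tau}$, so that $c(y)=\widetilde c(x)=1$. The relation $yy^\tau=1$ together with $u\in\frak{p}_K^2$ forces $T_{K/K_+}(u)=-uu^\tau\in\frak{p}_K^4$, which via the explicit decomposition $u=s+\omega t$ in the unramified case (resp. $u=\omega^2 s'+\omega^3 t'$ in the totally ramified case) pushes the trace-full part of $u$ into a strictly higher power of $\frak{p}_K$ than $u$ itself, while the trace-zero part lies in the image of $1-\tau$ acting on $\frak{p}_K$. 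This produces a first approximation $a_0\in\frak{p}_K$ with $(1+a_0)^{1-\tau}\equiv y\pmod{\frak{p}_K^4}$, and iterating the same procedure on $y\cdot(1+a_0)^{\tau-1}\in U_{K/K_+}\cap(1+\frak{p}_K^4)$, and so on, gives an $x\in 1+\frak{p}_K$ with $x^{1-\tau}=y$ in the limit, completing the proof.
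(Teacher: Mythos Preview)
Your argument is correct and proceeds along the same lines as the paper, with one organizational difference worth noting. The paper splits into the two cases $K/K_+$ ramified and $K/K_+$ unramified. In the ramified case it does \emph{not} go through $\widetilde c$ and a lifting at all: since then $|H|=2$, the earlier simplification gives $c(y)=\chi_{a_{\mathbf 1_K}}(y)$ for $y\in U_{K/K_+}$, so it suffices to choose $\chi_{a_{\mathbf 1_K}}$ trivial on $1+\frak p_K^2$ (using $(1+\frak p_K^2)\cap K_+^\times=1+\frak p_{K_+}$), and the conclusion is immediate. In the unramified case the paper does exactly what you do: choose tame $\chi_\lambda$, check $\widetilde c$ vanishes on $1+\frak p_K^2$ via the norm computations, and then invoke the surjectivity of $x\mapsto x^{1-\tau}$ from $1+\frak p_K^k$ onto $U_{K/K_+}\cap(1+\frak p_K^k)$, which it states as provable ``by induction on $k$'' --- precisely your iterative Hilbert 90 step. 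Your version is more uniform (one argument for both cases), at the cost of carrying out the successive-approximation lifting also when $K/K_+$ is ramified; the paper's version trades uniformity for a shorter ramified case. Both are fine.
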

\begin{proof}
If $K/K_+$ is ramified, then $K/F$ is totally ramified and 
$c(x)=\chi_{a_{\text{\bf 1}_K}}(x)$ for $x\in U_{K/K_+}$. Since
$$
 \left(1+\frak{p}_{K_+},K/K_+\right)=1
 \;\;\text{\rm and}\;\;
 \left(1+\frak{p}_K^2\right)\cap K_+^{\times}
 =1+\frak{p}_{K_+},
$$
we can assume that $\chi_{a_{\text{\bf 1}_K}}$ is trivial on 
$1+\frak{p}_K^2$. Then 
$c(x)=1$ for all $x\in U_{K/K_+}\cap(1+\frak{p}_K^2)$.

Assume that $K/K_+$ is unramified. Since
$$
 \left(1+\frak{p}_{K_+},K/K_+\right)=1
 \;\;\text{\rm and}\;\;
 \left(1+\frak{p}_K\right)\cap K_+^{\times}
 =1+\frak{p}_{K_+},
$$
we can choose $\chi_{a_{\text{\bf 1}_K}}$ so that 
$\chi_{a_{\text{\bf 1}_K}}(1+\frak{p}_K)=1$. If further $|H|=4$, then 
$K_{\delta^{\prime}}/E$ is unramified, and $K_{\tau\delta^{\prime}}/E$ is
ramified. Since
$$
 (1+\frak{p}_{K_{\delta^{\prime}}})\cap E^{\times}
 =(1+\frak{p}_{K_{\tau\delta^{\prime}}}^2)\cap E
 =1+\frak{p}_E
$$
and
$(x,K_{\delta^{\prime}}/E)=(x,K_{\tau\delta^{\prime}}/E)=1$ for all 
$x\in 1+\frak{p}_E$, we can assume that
$$
 \chi_{a_{\text{\bf 1}_K}a_{\delta^{\prime}}}
             (1+\frak{p}_{K_{\delta^{\prime}}})=1,
 \quad
 \chi_{a_{\text{\bf 1}_K}a_{\tau\delta^{\prime}}}
             (1+\frak{p}_{K_{\tau\delta^{\prime}}})=1.
$$
Since $K/K_{\delta^{\prime}}$ is ramified and 
$K/K_{\tau\delta^{\prime}}$ is unramified, we have
$$
 N_{K/K_{\delta^{\prime}}}(1+\frak{p}_K^2)
 =1+\frak{p}_{K_{\delta^{\prime}}},
 \quad
 N_{K/K_{\tau\delta^{\prime}}}(1+\frak{p}_K^2)
 =1+\frak{p}_{K_{\tau\delta^{\prime}}}^2
$$
Hence $\widetilde c(x)=1$ for all $x\in 1+\frak{p}_K^2$. Because 
$K/K_+$ is unramified, we can prove by induction on $k$ that
$x\mapsto x^{1-\tau}$ is surjection of $1+\frak{p}_K^k$ onto 
$U_{K/K_+}\cap(1+\frak{p}_K^k)$. Then $c(x)=1$ for all 
$x\in U_{K/K_+}\cap(1+\frak{p}_K^2)$. 
\end{proof}

\subsection{$L$-parameters associated with characters of tame elliptic
            tori}
\label{subsec:l-parameter-associated-with-character-of-tame-elliptic-tori}
By local Langlands correspondence of tori described in Proposition 
\ref{prop:local-langlands-correspondence-of-elliptic-tori}, 
the continuous character $\theta$ of $U_{K/K_+}$ which parametrizes
the irreducible representation $\delta_{\beta,\theta}$ of 
$Sp_{2n}(O_F)$ determines the cohomology class 
$[\alpha]\in H^1_{\text{\rm conti}}(W_F,T\sphat)$. Then we have a group homomorphism
\begin{equation}
 \varphi:W_F\xrightarrow{\widetilde\alpha}
         {^LT}\xrightarrow{
 \text{\rm
   \eqref{eq:modified-langlands-shelstad-homomorphism-of-torus-to-dual-group}}}
         SO_{2n+1}(\Bbb C).
\label{eq:langlands-parameter-of-kaletha}
\end{equation}
The construction of $\varphi$ shows that 
$\varphi(\sigma)\in SO_{2n+1}(\Bbb C)$ is of the form
\begin{equation}
 \varphi(\sigma)=\begin{bmatrix}
                  \varphi_1(\sigma)&0\\
                  0&\det\varphi_1(\sigma)
                 \end{bmatrix}
 \;\text{\rm with}\;\varphi_1(\sigma)\in O(S_1,\Bbb C)
 \quad
 (S_1=\begin{bmatrix}
       0&1_n\\
       1_n&0
      \end{bmatrix})
\label{eq:structure-of-l-parameter-in-so(2n+1)}
\end{equation}
for $\sigma\in W_F$. The definition of 
\eqref{eq:modified-langlands-shelstad-homomorphism-of-torus-to-dual-group}
shows that 
\begin{align*}
 \text{\rm tr}\varphi_1(\sigma)
 &=\sum_{\gamma\in\text{\rm Emb}_F(K,\overline F),
          \gamma\sigma=\gamma}
    \chi_p(\sigma)(\gamma)\cdot\alpha(\sigma)(\gamma)\\
 &=\sum_{\dot\gamma\in W_K\backslash W_F, 
          \gamma\sigma\gamma^{-1}\in W_K}
    \chi_p(\sigma)(\gamma)\cdot\alpha(\sigma(\gamma)\\
 &=\sum_{\dot\gamma\in W_K\backslash W_F, 
          \gamma\sigma\gamma^{-1}\in W_K}
    \psi_c\cdot\psi_{\theta}(\gamma\sigma\gamma^{-1})
\end{align*}
for $\sigma\in W_F$.
Here $\psi_c$ (resp. $\psi_{\theta}$) is the element of 
$\text{\rm Hom}_{\text{\rm conti}}(W_K,\Bbb C)$ corresponding to 
$c$ (resp. $\theta$) by
$$
 \text{\rm Hom}_{\text{\rm conti}}(U_{K/K_+},\Bbb C^{\times})
 \xrightarrow
          {\eqref{eq:canonical-inclusion-of-hom-group-of-elliptic-tori}}
 \text{\rm Hom}_{\text{\rm conti}}(K^{\times},\Bbb C^{\times})
 \xrightarrow{\delta_K}
 \text{\rm Hom}_{\text{\rm conti.}}(W_K,\Bbb C^{\times}).
$$
This shows that 
$\varphi_1$ is the
induced representation of $W_F$ from the character
$\psi_c\cdot\psi_{\theta}$ of $W_K$. So $\varphi_1$ factors through the
canonical surjection 
$$
 W_F\to W_{K/F}=W_F/\overline{[W_K,W_K]}
$$ 
and, if we put $\vartheta=c\cdot\theta$ and 
$\widetilde\vartheta(x)=\vartheta(x^{1-\tau})$ ($x\in K^{\times}$),
we have
\begin{equation}
 \text{\rm tr}\varphi_1(\sigma,x)
 =\begin{cases}
   0&:\sigma\neq 1,\\
   \sum_{\gamma\in\text{\rm Gal}(K/F)}
     \widetilde{\vartheta}(x^{\gamma})
    &:\sigma=1
  \end{cases}
\label{eq:chracter-of-o(2n)-part-of-l-parameter}
\end{equation}
for 
$(\sigma,x)\in W_{K/F}
 =\text{\rm Gal}(K/F)\ltimes_{\alpha_{K/F}}K^{\times}$ with the
 fundamental class 
$[\alpha_{K/F}]\in H^2(\text{\rm Gal}(K/F),K^{\times})$.

The representation space $V_{\vartheta}$ of the induced representation 
$\text{\rm Ind}_{K^{\times}}^{W_{K/F}}\widetilde\vartheta$ is the
complex vector space of the $\Bbb C$-valued function $v$ on 
$\text{\rm Gal}(K/F)$ with the action of $(\sigma,x)\in W_{K/F}$
$$
 (x\cdot v)(\gamma)
 =\widetilde\vartheta(x^{\gamma})\cdot v(\gamma),
 \quad
 (\sigma\cdot v)(\gamma)
 =\widetilde\vartheta(\alpha_{K/F}(\sigma,\sigma^{-1}\gamma))
    \cdot v(\sigma^{-1}\gamma).
$$
A $\Bbb C$-basis $\{v_{\rho}\}_{\rho\in\text{\rm Gal}(K/F)}$ 
of $V_{\vartheta}$ is defined by
$$
 v_{\rho}(\gamma)=\begin{cases}
                     1&:\gamma=\rho,\\
                     0&:\gamma\neq\rho.
                    \end{cases}
$$
Then 
$$
 x\cdot v_{\rho}=\widetilde\vartheta(x^{\rho})\cdot v_{\rho},
 \quad
 \sigma\cdot v_{\rho}
 =\widetilde\vartheta(\alpha_{K/F}(\sigma,\rho))\cdot 
   v_{\sigma\rho}
$$
for $(\sigma,x)\in W_{K/F}$. The following proposition will be used to
analyze $\text{\rm Ind}_{K^{\times}}^{W_{K/F}}\widetilde\vartheta$ in
detail. 

\begin{prop}
\label{prop:conductor-and-conjugate-triviality-of-tilde-vartheta}
Assume $l\geq 2$, then 
\begin{enumerate}
\item 
$$
 \text{\rm Min}\left\{2\leq k\in\Bbb Z\biggm|
        \begin{array}{l}
         \widetilde\vartheta(\alpha)=1\\
         \forall\alpha\in 1+\frak{p}_K^k
        \end{array}\right\}
 =\begin{cases}
   e(r-1)+1&:\text{\rm $K/K_+$ is unramified,}\\
   e(r-1)  &:\text{\rm $K/K_+$ is ramified.}
  \end{cases}
$$
\item For an integer $k\geq 2$
$$
 \left\{\sigma\in\text{\rm Gal}(K/F)\biggm|
   \begin{array}{l}
    \widetilde\vartheta(x^{\sigma})=\widetilde\vartheta(x)\\
     \text{\rm for $\forall x\in 1+\frak{p}_K^k$}
   \end{array}\right\}
 =\begin{cases}
   \text{\rm Gal}(K/F)&:k>e(r-1),\\
   \text{\rm Gal}(K/K_0)&:k=e(r-1),\\
   \{1\}&:k<e(r-1).
  \end{cases}
$$
\end{enumerate}
\end{prop}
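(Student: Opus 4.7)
The plan is to reduce both assertions to explicit computations with the defining formula of $\theta$ from the parametrization in Section \ref{subsec:symplectic-space-associated-with-tamely-ramified-ext}, combined with Proposition \ref{prop:c-is-trivial-mod-p-square}. Writing $\widetilde\vartheta(y) = c(y^{1-\tau})\theta(y^{1-\tau})$ and noting that for $y\in 1+\mathfrak{p}_K^k$ with $k\geq 2$ the element $y^{1-\tau}$ lies in $U_{K/K_+}\cap(1+\mathfrak{p}_K^2)$, Proposition \ref{prop:c-is-trivial-mod-p-square} kills the $c$-factor and gives $\widetilde\vartheta|_{1+\mathfrak{p}_K^k} = \theta\circ(1-\tau)|_{1+\mathfrak{p}_K^k}$. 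The whole problem thus reduces to analyzing $\theta$ on the image $(1-\tau)(1+\mathfrak{p}_K^k)$.

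For part (1), the first step is to compute the minimum valuation $m(k)$ of $v := y^{1-\tau}-1$ as $y = 1+u$ ranges over $1+\mathfrak{p}_K^k$. From $v\equiv u - u^\tau\pmod{\mathfrak{p}_K^{2k}}$ together with the decomposition $O_K = O_{K_+}\oplus\omega\cdot O_{K_+}$ and $\omega^\tau=-\omega$: in the unramified case $\varpi_K=\varpi_{K_+}$ gives $m(k)=k$, while in the ramified case $\omega$ is a uniformizer (so $\varpi_K = \omega$ up to a unit) and the identity $(-\omega)^k=(-1)^k\omega^k$ yields $m(k)=k+1$ for even $k$ and $m(k)=k$ for odd $k$. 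Plugging into the formula $\theta(1+\varpi_F^l w) = \psi(\varpi_F^{-l'}T_{K/F}(w\beta))$, invoking the tame trace identity $T_{K/F}(\mathfrak{p}_K^s) = \mathfrak{p}_F^{\lfloor(s+e-1)/e\rfloor}$, and using $\text{\rm ord}_K(\beta) = e(K/K_+)-1$, triviality of $\theta$ on the image becomes equivalent to $m(k)\geq e(r-1)+2-e(K/K_+)$. Since $e(r-1) = 2n(r-1)$ is even in the totally ramified case, combining with the parity of $m(k)$ gives the asserted conductors $e(r-1)+1$ (unramified) and $e(r-1)$ (ramified). Tightness at level $k = c-1$ (witnessing non-triviality) is provided by choosing $u$ of the form $\omega\cdot\varpi_K^{c-1}$ (unramified) or $\omega^{c-1}\cdot s$ with $s\in O_{K_+}^\times$ (ramified), using the non-degeneracy of the trace pairing together with the fact that $\beta$ generates $O_K$ over $O_F$.

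For part (2), the centrality of $\tau$ in $\text{\rm Gal}(K/F)$ (Proposition \ref{prop:order-two-element-in-tamely-ramified-galois-group}) gives $(x^\sigma)^{1-\tau} = (x^{1-\tau})^\sigma$, so $\widetilde\vartheta(x^\sigma)/\widetilde\vartheta(x) = \theta(y^\sigma)/\theta(y)$ with $y = x^{1-\tau}$. The change of variables identity $T_{K/F}(w^\sigma\beta) = T_{K/F}(w\beta^{\sigma^{-1}})$ converts this ratio into $\psi(\varpi_F^{-l'}T_{K/F}(w(\beta^{\sigma^{-1}}-\beta)))$. Setting $\mu_\sigma := \text{\rm ord}_K(\beta^{\sigma^{-1}}-\beta)$, the same trace/valuation estimate gives $\sigma$-invariance on $1+\mathfrak{p}_K^k$ if and only if $\mu_\sigma\geq e(r-1)+1-m(k)$. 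By Shintani's Proposition \ref{prop:shintani-cor-of-irreducibility-chrara-poly}, $\bar\beta$ generates $\Bbb K$ over $\Bbb F$, so $\mu_\sigma\geq 1$ iff $\sigma\in\text{\rm Gal}(K/K_0)$; using the explicit form of $\beta$ from Proposition \ref{prop:existence-of-tau-symplectic-generator-of-ok-over-of} together with the tame action of inertia $\delta(\varpi_K) = \zeta_\delta\varpi_K$ (with $\zeta_\delta\neq 1$), a direct expansion yields $\mu_\sigma=1$ exactly for nontrivial $\sigma\in\text{\rm Gal}(K/K_0)$, $\mu_\sigma=0$ for $\sigma\notin\text{\rm Gal}(K/K_0)$, and $\mu_\sigma=\infty$ for $\sigma=1$. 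Comparing $\mu_\sigma$ against the threshold $e(r-1)+1-m(k)$ in the three ranges of $k$ then reads off the three stabilizers, noting that $\text{\rm Gal}(K/K_0) = \text{\rm Gal}(K/F)$ when $K/F$ is totally ramified so that the two subcases of part (1) stay consistent.

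The main technical obstacle is the parity bookkeeping in the ramified case: the difference between the two conductors $e(r-1)$ versus $e(r-1)+1$ comes precisely from the identity $(-\omega)^k=(-1)^k\omega^k$ combined with the fact that $e(r-1)=2n(r-1)$ is even in the totally ramified case. Alongside this, verifying the precise valuations $m(k)$ and $\mu_\sigma=1$ requires that $\beta$ is chosen as in Propositions \ref{prop:shintani-lemma-on-generator-of-ok-over-of} and \ref{prop:existence-of-tau-symplectic-generator-of-ok-over-of}, so that neither the trace pairing nor the Galois-conjugation difference degenerates at the critical layer.
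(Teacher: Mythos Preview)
Your argument reaches the right conclusions, but it takes a detour that the paper avoids, and in part (2) one step is not fully justified as written.

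The paper never computes $m(k)$. Writing $\alpha=1+\varpi_F^{\,r}\varpi_K^{-k}x$ with $x\in O_K$ (so that $\alpha$ ranges over all of $1+\mathfrak{p}_K^{er-k}$), it expands $\alpha^{1-\tau}$ modulo $\mathfrak{p}_K^{er}$ and then uses the single identity
\[
T_{K/F}\!\bigl((\varpi_K^{-k}x-(\varpi_K^{-k}x)^\tau)\,\beta\bigr)
=2\,T_{K/F}(\varpi_K^{-k}x\,\beta),
\]
which holds because $\beta^\tau=-\beta$. This yields $\widetilde\vartheta(\alpha)=\psi\bigl(2T_{K/F}(\varpi_K^{-k}x\beta)\bigr)$ with $x$ ranging over the \emph{full} ring $O_K$, so triviality on $1+\mathfrak{p}_K^{er-k}$ becomes the single condition $\varpi_K^{-k}\beta\in\mathcal{D}(K/F)^{-1}=\mathfrak{p}_K^{1-e}$. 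The two conductors then drop out directly from $\mathrm{ord}_K(\beta)\in\{0,1\}$, with no parity case-split. For part (2) the same unfolding applies with $\beta$ replaced by $\beta^{\sigma}-\beta$ (again $\tau$-anti-invariant since $\tau$ is central), giving invariance $\Leftrightarrow\mathrm{ord}_K(\beta^{\sigma}-\beta)\geq k'-e+1$ at level $er-k'$; because $O_K=O_F[\beta]$, this is literally $\sigma\in V_{k'-e}(K/F)$, and the tame filtration \eqref{eq:ramification-group-of-tamely-ramified-extension} reads off the three cases in one line.

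Your route via $m(k)$ works for part (1), but your parity bookkeeping is doing by hand what the identity $\beta^\tau=-\beta$ does for free. In part (2) the asserted equivalence ``$\sigma$-invariance $\Leftrightarrow\mu_\sigma\geq e(r-1)+1-m(k)$'' implicitly treats the image $(1-\tau)(\mathfrak{p}_K^k)$ as all of $\mathfrak{p}_K^{m(k)}$, whereas it is only the $O_{K_+}$-line $\omega\cdot\mathfrak{p}_{K_+}^{\lfloor k/e(K/K_+)\rfloor}$; so the ``only if'' direction is not justified by ``the same trace/valuation estimate'' alone. The statement still comes out right because of hidden parity constraints (e.g.\ $\mu_\sigma$ is odd in the totally ramified case), but this needs to be said. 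If instead you apply $\gamma^\tau=-\gamma$ for $\gamma=\beta^{\sigma^{-1}}-\beta$ to pass back from $v=u-u^\tau$ to $u\in\mathfrak{p}_K^k$, the threshold becomes $e(r-1)+1-k$ rather than $e(r-1)+1-m(k)$, and the equivalence is clean.
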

\begin{proof}
Note that $\vartheta(x)=\theta(x)$ for all 
$x\in U_{K/K_+}\cap(1+\frak{p}_K^2)$ 
(by Proposition \ref{prop:c-is-trivial-mod-p-square}) and 
$\theta(x)=1$ for all $x\in U_{K/K_+}\cap(1+\frak{p}_K^{er})$.
Take an integer $k$ such that $0\leq k\leq el^{\prime}$, and hence 
$2\leq el\leq er-k$. Then, for any $x\in O_K$, we have
$$
 (1+\varpi_F^r\varpi_K^{-k}x)^{1-\tau}
 \equiv
 1+\varpi_F^r(\varpi_K^{-k}x-\varpi_K^{-k\tau}x^{\tau})
  \npmod{\frak{p}_K^{er}}
$$
since $2(er-k)\geq er$. Hence, for 
$\alpha=1+\varpi_F\varpi_K^{-k}x\in 1+\frak{p}_K^{er-k}$ ($x\in O_K$), 
we have
\begin{equation}
 \widetilde\vartheta(\alpha)
  =\psi\left(T_{K/F}\left(
        (\varpi_K^{-k}x-\varpi_K^{-k\tau}x^{\tau})\beta\right)\right)
  =\psi\left(2T_{K/F}(\varpi_K^{-k}x\beta)\right). 
\label{eq:explicit-value-of-tilde-vartheta-alpha}
\end{equation}
1) The statement $\widetilde\vartheta(\alpha)=1$ for all 
$\alpha\in 1+\frak{p}_K^{er-k}$ is equivalent to the statement 
$T_{K/F}\left(\varpi_K^{-k}x\beta\right)\in O_F$ for
all $x\in O_K$, or to the statement 
$\varpi_K^{-k}(\beta)\in\mathcal{D}(K/F)^{-1}=\frak{p}_K^{1-e}$, 
and hence $\text{\rm ord}_K(\beta)\geq k-e-1$. Since
$$
 \text{\rm ord}_K(\beta)
 =\begin{cases}
   0&:\text{\rm $K/K_+$ is unramified,}\\
   1&:\text{\rm $K/K_+$ is ramified}
  \end{cases}
$$
the proof is completed.

\noindent
2) Because $K/F$ is tamely ramified, we have
\begin{align}
 V_t(K/F)
 &=\{\sigma\in\text{\rm Gal}(K/F)\mid
             \text{\rm ord}_K(x^{\sigma}-x)\geq t+1\;
              \forall x\in O_K\} \nonumber\\
 &=\begin{cases}
    \text{\rm Gal}(K/F)&:t<0,\\
    \text{\rm Gal}(K/K_0)&:0\leq t<1,\\
    \{1\}&:1\leq t.
   \end{cases}
\label{eq:ramification-group-of-tamely-ramified-extension}
\end{align}
Take a $\sigma\in\text{\rm Gal}(K/F)$. Then, by 
\eqref{eq:explicit-value-of-tilde-vartheta-alpha}, we have
$$
 \widetilde\vartheta(\alpha^{\sigma})
 =\psi\left(2T_{K/F}(\varpi_K^{-k\sigma}x^{\sigma}\beta)\right)
  =\psi\left(2T_{K/F}(\varpi_K^{-k}x\beta^{\sigma})\right).
$$
So the statement 
$\widetilde\vartheta(\alpha^{\sigma})=\widetilde\vartheta(\alpha)$ for
all $\alpha\in 1+\frak{p}_K^{er-k}$ is equivalent to the statement 
$\varpi_K^{-k}(\beta^{\sigma}-\beta)
 \in\mathcal{D}(K/F)^{-1}=\frak{p}_K^{1-e}$, or to the statement 
$$
 \text{\rm ord}_K(x^{\sigma}-x)\geq k-e+1
 \;\;\text{\rm for all $x\in O_K$}
$$
since $O_K=O_F[\beta]$, which is equivalent to 
$\sigma\in V_{k-e}$. Then
\eqref{eq:ramification-group-of-tamely-ramified-extension} completes
the proof.
\end{proof}

\begin{prop}
\label{prop:induced-representation-of-vartheta-is-irreducible}
The induced representation 
$\text{\rm Ind}_{K^{\times}}^{W_{K/F}}\widetilde\vartheta$ is
irreducible.
\end{prop}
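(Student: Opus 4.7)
The plan is to apply Mackey's irreducibility criterion. In the relative Weil group $W_{K/F}=\mathrm{Gal}(K/F)\ltimes_{\alpha_{K/F}}K^{\times}$, the subgroup $K^{\times}$ is normal and abelian with quotient $\mathrm{Gal}(K/F)$. Mackey's criterion then asserts that $\mathrm{Ind}_{K^{\times}}^{W_{K/F}}\widetilde\vartheta$ is irreducible if and only if the $\mathrm{Gal}(K/F)$-stabilizer of the character $\widetilde\vartheta$ under the conjugation action (which on $K^{\times}$ is just the Galois action, since the cocycle $\alpha_{K/F}$ has values in the abelian group $K^{\times}$ and drops out of the conjugation formula) is trivial; i.e., the only $\sigma\in\mathrm{Gal}(K/F)$ with $\widetilde\vartheta(x^{\sigma})=\widetilde\vartheta(x)$ for all $x\in K^{\times}$ is $\sigma=1$. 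Equivalently, one needs to show that distinct Galois conjugates of $\widetilde\vartheta$ are distinct characters of $K^{\times}$.

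To verify this condition I would invoke the second part of Proposition \ref{prop:conductor-and-conjugate-triviality-of-tilde-vartheta}: for any integer $k$ with $2\le k<e(r-1)$, the only $\sigma\in\mathrm{Gal}(K/F)$ that fixes $\widetilde\vartheta$ on the subgroup $1+\frak{p}_K^{k}\subset K^{\times}$ is $\sigma=1$. Under the hypothesis $l'=\lfloor r/2\rfloor\ge\mathrm{Max}\{2,2(e-1)\}$ of Theorem \ref{th:supercuspidal-representation-of-sp(2n)} one has $r\ge 4$, and in every case $e(r-1)\ge 3$, so a valid choice such as $k=2$ is available. If $\sigma\ne 1$, then $\sigma$ already fails to fix $\widetilde\vartheta$ on $1+\frak{p}_K^{k}$, a fortiori on all of $K^{\times}$, so the stabilizer is trivial and Mackey's criterion yields the irreducibility of $\mathrm{Ind}_{K^{\times}}^{W_{K/F}}\widetilde\vartheta$.

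There is essentially no remaining obstacle at this stage: the substantive work has already been carried out in Proposition \ref{prop:conductor-and-conjugate-triviality-of-tilde-vartheta}, whose proof in turn rests on the regularity hypothesis $O_K=O_F[\beta]$ (through the explicit formula \eqref{eq:explicit-value-of-tilde-vartheta-alpha} and the known ramification filtration of the tamely ramified extension $K/F$). The present proposition is therefore essentially a formal Mackey-theoretic consequence of this regularity, combined with the lower bound on $r$ built into the supercuspidal construction.
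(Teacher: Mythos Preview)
Your proposal is correct and is essentially the same argument as the paper's proof: the paper verifies directly that any $T\in\mathrm{End}_{W_{K/F}}(V_{\vartheta})$ is a scalar by showing that if $(Tv_1)(\gamma)\neq 0$ then $\widetilde\vartheta(x^{\gamma})=\widetilde\vartheta(x)$ for all $x\in K^{\times}$, whence $\gamma=1$ by Proposition~\ref{prop:conductor-and-conjugate-triviality-of-tilde-vartheta}. This is exactly Mackey's criterion unwound by hand, with the same appeal to the trivial Galois-stabilizer of $\widetilde\vartheta$; the only cosmetic difference is that you invoke the criterion abstractly while the paper reproves it in this special case.
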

\begin{proof}
Take a $0\neq T\in\text{\rm End}_{W_{K/F}}(V_{\vartheta})$. Since
$$
 Tv_{\rho}=T(\rho\cdot v_1)=\rho\cdot Tv_1
$$
for all $\rho\in\text{\rm Gal}(K/F)$, we have $Tv_1\neq 0$. If 
$(Tv_1)(\gamma)\neq 0$ for a $\gamma\in\text{\rm Gal}(K/F)$, then we
have
\begin{align*}
 \widetilde\vartheta(x^{\gamma})\cdot(Tv_1)(\gamma)
 &=(x\cdot Tv_1)(\gamma)=T(x\cdot v_1)(\gamma)\\
 &=(T(\widetilde\vartheta(x)\cdot v_1))(\gamma)
  =\widetilde\vartheta(x)\cdot(Tv_1)(\gamma),
\end{align*}
and hence 
$\widetilde\vartheta(x^{\gamma})=\widetilde\vartheta(x)$ 
for all $x\in K^{\times}$. Then $\gamma=1$ by Proposition 
\ref{prop:conductor-and-conjugate-triviality-of-tilde-vartheta}. This
means $Tv_1=c\cdot v_1$ with a $c\in\Bbb C^{\times}$. Then
$$
 Tv_{\rho}=\rho\cdot(Tv_1)=c\cdot v_{\rho}
$$
for all $\rho\in\text{\rm Gal}(K/F)$, and hence $T$ is a homothety.
\end{proof}

\begin{rem}\label{remark:condition-for-irreducibility-of-induced-rep}
The proof of Proposition
\ref{prop:induced-representation-of-vartheta-is-irreducible} shows
that the induced representation 
$\text{\rm Ind}_{K^{\times}}^{W_{K/F}}\widetilde\vartheta$ is
irreducible if $\widetilde\vartheta$ is a character of $K^{\times}$
such that 
$\widetilde\vartheta(x^{\sigma})=\widetilde\vartheta(x)$ for all 
$x\in K^{\times}$ with $\sigma\in\text{\rm Gal}(K/F)$ implies
$\sigma=1$. 
\end{rem}


\section{Formal degree conjecture}
\label{sec:formal-degree-conjecture}
In this section, we will assume that $K/F$ is a 
tamely ramified Galois extension of degree $2n$ and
put $\Gamma=\text{\rm Gal}(K/F)$, We 
will keep the notations of the preceding sections.


\subsection{$\gamma$-factor of adjoint representation}
\label{subsec:gamma-factor-of-adjoint-representation}
The admissible representation of the Weil-Deligne group 
$W_F\times SL_2(\Bbb C)$ to $SO_{2n+1}(\Bbb C)$ corresponding to the
triple $(\varphi,SO_{2n+1}(\Bbb C),0)$ as explained in the appendix 
\ref{subsec:weil-deligne-group} is 
\begin{equation}
 W_F\times SL_2(\Bbb C)
 \xrightarrow{\text{\rm projection}} W_F
 \xrightarrow{\varphi} SO_{2n+1}(\Bbb C)
\label{eq:candidate-of-langlands-arthur-parameter}
\end{equation}
whcih is also denoted by $\varphi$. The purpose of this subsection is
to determine the $\gamma$-factor 
$\gamma(\varphi,\text{\rm Ad},\psi,d(x),s)$ whose definition and the
basic properties are presented in the appendix 
\ref{subsec:weil-deligne-group}. Our result is

\begin{thm}\label{th:gamma-factor-of-l-parameter-sp(2n)}
$$
 \gamma(\varphi,\text{\rm Ad},\psi,d(x),0)
 =w(\text{\rm Ad}\circ\varphi)\cdot q^{n^2r}\times
   \begin{cases}
    1&:\text{\rm $K/K_+$ is ramified},\\
    \frac 2{1+q^{-f_+}}&:\text{\rm $K/K_+$ is unramified}
   \end{cases}
$$
where $\psi$ is a continuous unitary additive character of $F$ such
that
$$
 \{x\in F\mid\psi(xO_F)=1\}=O_F
$$
and $d(x)$ is the Haar measure on $F$ such that 
$\int_{O_F}d(x)=1$.
\end{thm}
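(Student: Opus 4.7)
The strategy is to exploit the block structure of $\varphi$ from equation \eqref{eq:structure-of-l-parameter-in-so(2n+1)}. Since $\frak{so}_{2n+1}\cong\Lambda^2(\Bbb C^{2n+1})$ as $SO_{2n+1}(\Bbb C)$-module and the standard representation of $SO_{2n+1}(\Bbb C)$ pulled back by $\varphi$ decomposes as $\varphi_1\oplus\det\varphi_1$, one obtains the direct sum decomposition
$$
 \text{\rm Ad}\circ\varphi\cong\Lambda^2(\varphi_1)\oplus(\varphi_1\otimes\det\varphi_1).
$$
The $\gamma$-factor being multiplicative over direct sums, it suffices to compute each summand separately.

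Using $\varphi_1=\text{\rm Ind}_{W_K}^{W_F}\widetilde\vartheta$ (read off from the character formula \eqref{eq:chracter-of-o(2n)-part-of-l-parameter}) and Mackey's formula, noting that $W_K\triangleleft W_F$ with quotient $\Gamma$, one has
$$
 \varphi_1\otimes\varphi_1=\bigoplus_{\gamma\in\Gamma}\text{\rm Ind}_{W_K}^{W_F}\bigl(\widetilde\vartheta\cdot\widetilde\vartheta^\gamma\bigr).
$$
The involution $\gamma\leftrightarrow\gamma^{-1}$ partitions the summands, and a Frobenius-reciprocity argument combined with Proposition \ref{prop:existence-of-nu-invarinat-nu-symmetric-from} identifies which constituents lie in $\Lambda^2(\varphi_1)$ versus $\text{\rm Sym}^2(\varphi_1)$. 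Each constituent is the induction of a one-dimensional character of $W_K$, whose local factors are controlled by local class field theory and Proposition \ref{prop:conductor-and-conjugate-triviality-of-tilde-vartheta}.

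I would then compute the three ingredients of
$$
 \gamma(\varphi,\text{\rm Ad},\psi,d(x),0)=w(\text{\rm Ad}\circ\varphi)\cdot q^{a(\text{\rm Ad}\circ\varphi)/2}\cdot\frac{L(\text{\rm Ad}\circ\varphi^\vee,1)}{L(\text{\rm Ad}\circ\varphi,0)}
$$
separately. The Artin conductor is computed via the inductive formula $a(\text{\rm Ind}_{W_K}^{W_F}\chi)=d(K/F)+f_K\cdot a_K(\chi)$ together with the tame value $d(K/F)=f_K(e_K-1)$ and the level of $\widetilde\vartheta\cdot\widetilde\vartheta^\gamma$ given by Proposition \ref{prop:conductor-and-conjugate-triviality-of-tilde-vartheta}; summing over the constituents should collapse to $a(\text{\rm Ad}\circ\varphi)=2n^2r$, yielding $q^{n^2r}$. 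For the L-factor ratio, $L(\chi,s)\neq 1$ only for unramified $\chi$; since $\widetilde\vartheta^\tau=\widetilde\vartheta^{-1}$, triviality of $\widetilde\vartheta\cdot\widetilde\vartheta^\gamma$ on $1+\frak{p}_K$ forces $\gamma\in\tau H$ by Proposition \ref{prop:conductor-and-conjugate-triviality-of-tilde-vartheta}, with $H$ the subgroup of order-two elements of $\Gamma$, and Proposition \ref{prop:order-two-element-in-tamely-ramified-galois-group} then enumerates these possibilities to produce the factor $1$ in the ramified case and $2/(1+q^{-f_+})$ in the unramified case.

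The main obstacle is the careful bookkeeping: correctly identifying which $\gamma\in\Gamma$ yield unramified inductions, which of those contribute to $\Lambda^2(\varphi_1)$ as opposed to $\varphi_1\otimes\det\varphi_1$, and evaluating the residual Frobenius eigenvalues to determine $L$ at $s=0$ and $s=1$. A secondary point is the contribution of the one-dimensional factor $\det\varphi_1$, whose value on Frobenius and on $-1$ is controlled by Proposition \ref{prop:explicit-value-of-c(-1)}. The root number $w(\text{\rm Ad}\circ\varphi)$ is left abstract, as the statement only concerns the $q$-power and the L-factor correction.
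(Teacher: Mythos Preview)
Your approach is valid but genuinely different from the paper's. You propose to use the Mackey decomposition of $\varphi_1\otimes\varphi_1$ into inductions of one-dimensional characters of $W_K$ for \emph{all three} ingredients of the $\gamma$-factor. The paper instead reserves that decomposition for the root number computation in Section~\ref{sec:root-number-conjecture} (see Theorem~\ref{th:structure-of-wedge-square-of-varphi-1}) and proves the present theorem by two more direct computations: Proposition~\ref{prop:l-factor-of-adjoint-representation} finds $\widehat{\frak g}^{I_F}$ and the Frobenius action on it by writing down the explicit matrices $\varphi(\delta)$ and $\varphi(\rho)$ in the canonical basis $\{v_\sigma,u_\sigma\}$ of $V_\vartheta$, while Proposition~\ref{prop:artin-conductor-of-adjoint-representation} computes $a(\text{\rm Ad}\circ\varphi)$ by evaluating $\dim\widehat{\frak g}^{V_t}$ for each ramification group $V_t$ via the character formula \eqref{eq:character-of-wedge-square-of-vaphi-1} and Proposition~\ref{prop:conductor-and-conjugate-triviality-of-tilde-vartheta}. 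The paper's route avoids separating $\Lambda^2\varphi_1$ from $\text{\rm Sym}^2\varphi_1$ and avoids the conductor--discriminant bookkeeping over $\Gamma$; your route has the advantage of being uniform with the root number computation and would in principle let one read off all local constants from the same decomposition.

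One point in your sketch needs care. The $L$-factor is \emph{not} inductive in degree zero, so your identification of the contributing constituents must go further than ``$\chi$ unramified''. It is true that $(\text{\rm Ind}_{W_K}^{W_F}\chi)^{I_F}=0$ when $\chi|_{I_K}\neq 1$, but when $\chi|_{I_K}=1$ the space $(\text{\rm Ind}_{W_K}^{W_F}\chi)^{I_F}$ is $f$-dimensional (the $\text{\rm Gal}(K/K_0)$-coinvariants of the permutation module), and you must still compute the Frobenius eigenvalues there to get the factor $(1+q^{-f_+s})^{-1}$ rather than, say, $(1-q^{-fs})^{-1}$. Concretely, the unramified constituents in your Mackey decomposition are indexed by $\gamma$ with $\gamma\tau\in\text{\rm Gal}(K/K_0)$ (not merely $\gamma\tau\in H$; Proposition~\ref{prop:conductor-and-conjugate-triviality-of-tilde-vartheta} is stated for $k\geq 2$ and does not directly give triviality on all of $O_K^\times$), and the sign of the Frobenius eigenvalue comes from $\widetilde\vartheta(\alpha_{K/F}(\tau,\tau))$ together with the inertial permutation. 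The paper sidesteps this by the explicit matrix computation.
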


The rest of this subsection is devoted to the proof of the theorem.

Let us use the notation of \eqref{eq:generator-of-tame-galois-group}
$$
 \Gamma=\text{\rm Gal}(K/F)=\langle\delta,\rho\rangle,
$$
that is, $\text{\rm Gal}(K/K_0)=\langle\delta\rangle$ with the maximal
unramified subextension $K_0/F$ of $K/F$ and 
$\rho|_{K_0}\in\text{\rm Gal}(K_0/F)$ is the inverse of the Frobenius
automorphism. Put
$$
 \rho\delta\rho^{-1}=\delta^l,
 \quad
 \rho^f=\delta^m
 \quad
 (0\leq i,m<e,\;  ql\equiv 1\npmod{e}).
$$
By the canonical surjection
$$
 W_F\to W_F/\overline{[W_K,W_K]}=W_{K/F}
        =\text{\rm Gal}(K/F){\ltimes}_{\alpha_{K/F}}K^{\times}
        \subset\text{\rm Gal}(K^{\text{\rm ab}}/F),
$$
$I_F=\text{\rm Gal}(F^{\text{\rm alg}}/F^{\text{\rm ur}})\subset W_F$
     is mapped onto 
$$
 \text{\rm Gal}(K/K_0){\ltimes}_{\alpha_{K/F}}O_K^{\times}
 =\text{\rm Gal}(K^{\text{\rm ab}}/F^{\text{\rm ur}}).
$$

The representation space $V_{\vartheta}$ of 
$\varphi_1=\text{\rm Ind}_{K^{\times}}^{W_{K/F}}\widetilde\vartheta$ has
a $W_{K/F}$-invariant non-degenerate symmetric form
$$
 S_1(u,v)
 =\sum_{\gamma\in\Gamma}
   \widetilde\vartheta\left(\alpha_{K/F}(\gamma,\tau)\right)^{-1}
    \cdot u(\gamma)v(\gamma\tau)
 \quad
 (u,v\in V_{\vartheta})
$$
which is unique up to constant multiple, by Proposition 
\ref{prop:existence-of-nu-invarinat-nu-symmetric-from}. Put
$$
 u_{\sigma}
 =\widetilde\vartheta\left(\alpha_{K/F}(\sigma,\tau)\right)
  \cdot v_{\sigma}
 \in V_{\vartheta}
$$
for $\sigma\in\Gamma$. Then we have
\begin{equation}
 \alpha\cdot v_{\beta}
 =\widetilde\vartheta\left(\alpha_{K/F}(\alpha,\beta)\right)
  \cdot v_{\alpha\beta},
 \quad
 \alpha\cdot u_{\beta}
 =\widetilde\vartheta\left(\alpha_{K/F}(\alpha,\beta)\right)^{-1}
  \cdot u_{\alpha\beta}
\label{eq:action-of-galois-element-on-canonical-vector-of-induced-rep}
\end{equation}
and
$$
 S_1(v_{\alpha},v_{\beta})=S_1(u_{\alpha},u_{\beta})=0,
 \quad
 S_1(v_{\alpha},u_{\alpha})
 =\begin{cases}
   1&:\alpha=\beta,\\
   0&:\alpha\neq\beta
  \end{cases}
$$
for $\alpha,\beta\in\Gamma$. Fixing a representatives 
$\mathcal{S}$ of $\Gamma/\langle\tau\rangle$, we will 
identify the orthogonal group $O(V,S_1)$ of the symmetric form $S_1$ 
with the matrix group $O(S_1,\Bbb C)$ of 
\eqref{eq:structure-of-l-parameter-in-so(2n+1)} by means of the 
$\Bbb C$-basis $\{v_{\sigma},u_{\sigma}\}_{\sigma\in\mathcal{S}}$ of 
$V_{\vartheta}$ which we will call the canonical basis associated with
$\mathcal{S}$. Then we have
$$
 \varphi(x)=\begin{bmatrix}
             [x]&        & \\
                &[x]^{-1}& \\
                &        &1
            \end{bmatrix}\in SO_{2n+1}(\Bbb C)
 \;\;\text{\rm with}\;\;
 [x]=\text{\rm diag}
      (\widetilde\vartheta(x^{\sigma}))_{\sigma\in\mathcal{S}}
$$
for $x\in K^{\times}\subset W_{K/F}$ 
so that the centralizer of 
$\varphi(O_K^{\times})$ in $SO_{2n+1}(\Bbb C)$ is
\begin{equation}
 Z_{SO_{2n+1}(\Bbb C)}(\varphi(O_K^{\times}))
 =\left\{\begin{bmatrix}
          a&      & \\
           &a^{-1}& \\
           &      &1
         \end{bmatrix}\biggm|
   \text{\rm $a$=diagonal$\in GL_n(\Bbb C)$}\right\}
\label{eq:centralizer-of-varphi-unit-in-so(2n+1)}
\end{equation}
and the space $\widehat{\frak g}^{O_K^{\times}}$ of the 
$\text{\rm Ad}\circ\varphi(O_K^{\times})$-fixed vectors in
$$
 \widehat{\frak g}=\frak{so}_{2n+1}(\Bbb C)
 =\{X\in\frak{gl}_{2n+1}(\Bbb C)\mid XS+S\,^tX=0\}
$$
is
\begin{equation}
 \widehat{\frak g}^{O_K^{\times}}
 =\left\{\begin{bmatrix}
          A&  & \\
           &-A& \\
           &  &0
         \end{bmatrix}\biggm|
    \text{\rm $A$=diagonal$\in\frak{gl}_n(\Bbb C)$}\right\}
\label{eq:fixed-vector-of-ad-varphi-unit-li-lie-alg-so(2n+1)}
\end{equation}
by Proposition 
\ref{prop:conductor-and-conjugate-triviality-of-tilde-vartheta}.


Let us denote by $\mathcal{A}_{\varphi}$ the centralizer of 
$\text{\rm Im}(\varphi)$ in $SO_{2n+1}(\Bbb C)$. We have

\begin{prop}\label{prop:l-factor-of-adjoint-representation}
$$
 L(\varphi,\text{\rm Ad},s)
 =\begin{cases}
   1&:\text{\rm $K/K_+$ is ramified},\\
   \frac 1{1+q^{-f_+s}}&:\text{\rm $K/K_+$ is unramified}
  \end{cases}
$$
and 
$$
 \mathcal{A}_{\varphi}
 =\left\{\begin{bmatrix}
          \pm 1_{2n}&0\\
              0     &1
         \end{bmatrix}\right\}.
$$
\end{prop}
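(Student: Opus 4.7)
The plan is to unwind the explicit description of $\varphi$ from Section 3.3 to compute both quantities. Since $\varphi$ is trivial on the $SL_2(\Bbb C)$-factor of the Weil--Deligne group, the adjoint $L$-factor collapses to
$$
 L(\varphi,\text{\rm Ad},s)
 =\det\bigl(1-q^{-s}\text{\rm Ad}(\varphi(\text{\rm Fr}))
     \bigm|\widehat{\frak g}^{I_F}\bigr)^{-1}.
$$
Under $W_F\to W_{K/F}$ the inertia $I_F$ surjects onto $\langle\delta\rangle\ltimes O_K^\times$, so $\widehat{\frak g}^{I_F}=(\widehat{\frak g}^{O_K^\times})^{\langle\delta\rangle}$, and the outer fixed subspace is already given by \eqref{eq:fixed-vector-of-ad-varphi-unit-li-lie-alg-so(2n+1)}. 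The rest is an explicit analysis of the Galois action on this $n$-dimensional diagonal space.

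First I would identify $\widehat{\frak g}^{O_K^\times}$ with the space of functions $a\colon\Gamma\to\Bbb C$ satisfying $a(\gamma\tau)=-a(\gamma)$, where $a(\gamma)$ is the eigenvalue of the matrix in \eqref{eq:fixed-vector-of-ad-varphi-unit-li-lie-alg-so(2n+1)} at the basis vector $v_\gamma$. A short computation using \eqref{eq:action-of-galois-element-on-canonical-vector-of-induced-rep} shows that the cocycle twists cancel in conjugation, so $\text{\rm Ad}(\varphi(\sigma))$ acts by $(\sigma\cdot a)(\gamma)=a(\sigma^{-1}\gamma)$ for $\sigma\in\Gamma$. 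Hence $\widehat{\frak g}^{I_F}$ corresponds to functions $\bar a$ on $\langle\delta\rangle\backslash\Gamma=\text{\rm Gal}(K_0/F)$ still satisfying $\bar a(\bar\gamma\bar\tau)=-\bar a(\bar\gamma)$. Proposition \ref{prop:order-two-element-in-tamely-ramified-galois-group} then tells us that $\tau\in\langle\delta\rangle=\text{\rm Gal}(K/K_0)$ precisely when $K_0\subseteq K_+$, i.e.\ when $K/K_+$ is ramified; in this case $\bar\tau=1$ forces $\bar a\equiv 0$, so $\widehat{\frak g}^{I_F}=0$ and $L(\varphi,\text{\rm Ad},s)=1$.

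When $K/K_+$ is unramified, $\text{\rm Gal}(K_0/F)=\langle\bar\rho\rangle$ is cyclic of order $f=2f_+$ and $\bar\tau=\bar\rho^{f_+}$, so $\widehat{\frak g}^{I_F}$ has dimension $f_+$. On the natural basis $\bar a_i$ ($0\le i<f_+$) defined by $\bar a_i(\bar\rho^j)=\delta_{i,j}-\delta_{i+f_+,j}$ (indices modulo $f$), the fact that the geometric Frobenius maps to $\bar\rho$ lets one check directly that $T:=\text{\rm Ad}(\varphi(\text{\rm Fr}))$ is the twisted cyclic shift $T\bar a_i=\bar a_{i+1}$ for $i<f_+-1$ and $T\bar a_{f_+-1}=-\bar a_0$. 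Hence $T^{f_+}=-\text{\rm Id}$, its characteristic polynomial is $t^{f_+}+1$, and $\det(1-q^{-s}T)=1+q^{-f_+s}$, giving the second case of the $L$-factor.

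For the centralizer, any $D\in Z_{SO_{2n+1}(\Bbb C)}(\varphi(O_K^\times))$ is diagonal in the basis $\{v_\sigma,u_\sigma\}$ by \eqref{eq:centralizer-of-varphi-unit-in-so(2n+1)}, with eigenvalues $a(\gamma)$ at $v_\gamma$ satisfying $a(\gamma\tau)=a(\gamma)^{-1}$. Commutation with $\varphi(K^\times)$ is automatic, while commutation with each $\varphi(\sigma)$ ($\sigma\in\Gamma$) forces $a(\sigma^{-1}\gamma)=a(\gamma)$; transitivity of left translation then makes $a$ constant, and $a=a^{-1}$ forces $a\equiv\pm 1$, yielding the two elements $1_{2n+1}$ and $\text{\rm diag}(-1_{2n},1)$. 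The main obstacle is the bookkeeping in the first step — verifying that the cocycle twists in \eqref{eq:action-of-galois-element-on-canonical-vector-of-induced-rep} cancel so that $(\sigma\cdot a)(\gamma)=a(\sigma^{-1}\gamma)$, and then invoking Proposition \ref{prop:order-two-element-in-tamely-ramified-galois-group} to translate $\tau\in\langle\delta\rangle$ into the ramification criterion for $K/K_+$.
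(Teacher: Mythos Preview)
Your argument is correct and reaches the same conclusion as the paper, but the route is noticeably cleaner. The paper treats the ramified and unramified cases separately: in each case it fixes an explicit set of representatives $\mathcal{S}$ for $\Gamma/\langle\tau\rangle$, writes out $\varphi(\delta)$ and $\varphi(\rho)$ as large block matrices (with all the cocycle values $\widetilde\vartheta(\alpha_{K/F}(\cdot,\cdot))$ appearing in the entries), and then reads off $\widehat{\frak g}^{I_F}$ and the Frobenius action by direct matrix inspection. You instead identify $\widehat{\frak g}^{O_K^\times}$ uniformly with the space of odd functions $a:\Gamma\to\Bbb C$, observe that the cocycle twists in \eqref{eq:action-of-galois-element-on-canonical-vector-of-induced-rep} cancel under conjugation so that $\Gamma$ acts by left translation, and then pass to the quotient $\langle\delta\rangle\backslash\Gamma\cong\text{\rm Gal}(K_0/F)$. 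The dichotomy $\bar\tau=1$ versus $\bar\tau\neq 1$ then handles both cases at once, and the twisted-shift description of Frobenius falls out immediately. One small point you leave implicit and might make explicit: the image of $\widetilde{\text{\rm Fr}}$ in $W_{K/F}$ is $(\rho\delta^k,x_0)$ for some $k$ and $x_0\in K^\times$, and you are using that both the $\langle\delta\rangle$-part and the $K^\times$-part act trivially on $\widehat{\frak g}^{I_F}$ (the latter because diagonal matrices commute). With that remark added, your approach is a genuine simplification---it avoids the heavy block-matrix bookkeeping entirely---while the paper's explicit matrices have the compensating virtue of making $\varphi(\delta)$ and $\varphi(\rho)$ available for other computations.
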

\begin{proof}
Assume that $K/K_+$ is ramified. Then $K/F$ is totally ramified and 
$\text{\rm Gal}(K/F)=\langle\delta\rangle$ a cyclic group of order
$2n$ with $\tau=\delta^n$. Put 
$\mathcal{S}=\{\delta^i\}_{0\leq i<n}$ which is a representatives of 
$\Gamma/\langle\tau\rangle$. Since $K/F$ is a cyclic extension, we
  have $\alpha_{K/F}(\alpha,\beta)\in F^{\times}$ for all 
$\alpha,\beta\in\Gamma$, the canonical basis associated with
  $\mathcal{S}$ is
$$
 v_i=v_{\delta^{i-1}},
 \quad
 u_i=v_{\delta^{n+i-1}}
 \quad
 (1\leq i\leq n).
$$
Then
\eqref{eq:action-of-galois-element-on-canonical-vector-of-induced-rep}
shows 
\begin{equation}
 \varphi(\delta)=\begin{bmatrix}
                     0    &1& 0\\
                  1_{2n-1}&0& 0\\
                     0    &0&-1
                 \end{bmatrix}\in SO_{2n+1}(\Bbb C).
\label{eq:matrix-representation-of-varphi-delta-in-ramified-case}
\end{equation}
Then the centralizer $\mathcal{A}_{\varphi}$ of 
$\text{\rm Im}(\varphi)$ in $SO_{2n+1}(\Bbb C)$ is
$$
 \mathcal{A}_{\varphi}=\left\{\begin{bmatrix}
                               \pm 1_{2n}&0\\
                                    0    &1
                              \end{bmatrix}\right\},
$$
and the space $\widehat{\frak g}^{I_F}$ of the 
$\text{\rm Ad}\circ\varphi(I_F)$-fixed vectors in $\widehat{\frak g}$
is $\{0\}$ so that we have $L(\varphi,\text{\rm Ad},s)=1$. 

Now assume that $K/K_+$ is unramified. Then 
$\tau=\delta^a\rho^{f_+}$ with $0\leq a<e$ by Proposition 
\ref{prop:order-two-element-in-tamely-ramified-galois-group}. 
Put
$\mathcal{S}=\{\delta^i\rho^j\}_{0\leq i<e, 0\leq j<f_+}$ which is a
representatives of $\Gamma/\langle\tau\rangle$. The associated basis
$$
 v_{ij}=v_{\delta^{i-1}\rho^{j-1}},
 \quad
 u_{ij}=u_{\delta^{i-1}\rho^{j-1}}
 \quad
 (1\leq i\leq e, 1\leq j\leq f_+)
$$
is ordered lexicographically. Then 
$$
 \varphi(\delta)=\begin{bmatrix}
                  \Delta&             & \\
                        &^t\Delta^{-1}& \\
                        &             &1
                 \end{bmatrix}\in SO_{2n+1}(\Bbb C)
 \;\text{\rm with}\;
 \Delta=\begin{bmatrix}
         \Delta_1&      &            \\
                 &\ddots&            \\
                 &      &\Delta_{f_+}
        \end{bmatrix},
$$
where
$$
 \Delta_j
 =\begin{bmatrix}
   0&0&0&\cdots&0&\alpha_{e,j}\\
   \alpha_{1,j}&0&0&\cdots&0&0\\
    &\alpha_{2,j}&0&\cdots&0&0\\
    & &\ddots&\ddots&\vdots&\vdots\\
    & &      &\ddots& 0    & 0    \\
    & &      &      &\alpha_{e-1,j}&0
  \end{bmatrix}
$$
with 
$\alpha_{i,j}=\widetilde\vartheta\left(
              \alpha_{K/F}(\delta,\delta^{i-1}\rho^{j-1})\right)$. 
The action of $\varphi(\delta)$ on 
\eqref{eq:fixed-vector-of-ad-varphi-unit-li-lie-alg-so(2n+1)} shows
that the space 
$\widehat{\frak g}^{I_F}$ of the 
$\text{\rm Ad}\circ\varphi(I_F)$-fixed vectors in $\widehat{\frak g}$
is
$$
 \widehat{\frak g}^{I_F}
 =\left\{\begin{bmatrix}
          A&  & \\
           &-A& \\
           &  &0
         \end{bmatrix}\biggm|
   A=\begin{bmatrix}
      a_11_e&      &      &    \\
            &a_21_e&      &    \\
            &      &\ddots&    \\
            &      &      &a_{f_+}1_e
     \end{bmatrix}\right\}.
$$
Since
$$
 \rho\cdot\delta^{i-1}\rho^{j-1}
 =\begin{cases}
   \delta^{l(i-1)}\rho^j=\delta^{i^{\prime}-1}\rho-j&:1\leq j<f_+,\\
   \delta^{l(i-1)-a}\tau=\delta^{i^{\prime\prime}-1}\tau&:j=f_+
  \end{cases}
 \qquad
 (1\leq i^{\prime}, i^{\prime\prime}\leq e)
$$
for $1\leq i\leq e$, let $[l]$ and $[l,a]$ be the permutation matrices of
the permutations
$$
 \begin{pmatrix}
  1&2&\cdots&e\\
  1^{\prime}&2^{\prime}&\cdots&e^{\prime}
 \end{pmatrix},
 \qquad
 \begin{pmatrix}
  1&2&\cdots&e\\
  1^{\prime\prime}&2^{\prime\prime}&\cdots&e^{\prime\prime}
 \end{pmatrix}
$$
respectively. Then we have
$$
 \varphi(\rho)=\begin{bmatrix}
                A&B&   0  \\
                C&D&   0  \\
                0&0&(-1)^e
               \end{bmatrix}\in SO_{2n+1}(\Bbb C)
$$
with
$$
 A=\begin{bmatrix}
    0&0&0&\cdots&0&0\\
    P_1&0&0&\cdots&0&0\\
     &P_2&0&\cdots&0&0\\
     & &\ddots&\ddots&\vdots&\vdots\\
     & & &\ddots&0&0\\
     & & & &P_{f_+-1}&0
   \end{bmatrix},
 \quad
 B=\begin{bmatrix}
    & & && && &Q_{f_+}\\
    & & && &&0&       \\
    & & &&\addots&& & \\
    & &0&& && & \\
    &0& && && & \\
   0& & && && & 
   \end{bmatrix},
$$
$$
 C=\begin{bmatrix}
    & & && && &P_{f_+}\\
    & & && &&0&       \\
    & & &&\addots&& & \\
    & &0&& && & \\
    &0& && && & \\
   0& & && && & 
   \end{bmatrix},
 \quad
 D=\begin{bmatrix}
    0&0&0&\cdots&0&0\\
    Q_1&0&0&\cdots&0&0\\
     &Q_2&0&\cdots&0&0\\
     & &\ddots&\ddots&\vdots&\vdots\\
     & & &\ddots&0&0\\
     & & & &Q_{f_+-1}&0
   \end{bmatrix}
$$
where
\begin{align*}
 P_j&=\begin{cases}
      \left[\begin{pmatrix}
       1&2&\cdots&e\\
       1^{\prime}&2^{\prime}&\cdots&e^{\prime}
      \end{pmatrix}\right]
      \begin{bmatrix}
       \alpha_{1j}&           &      &           \\
                  &\alpha_{2j}&      &           \\
                  &           &\ddots&           \\
                  &           &      &\alpha_{ej}
      \end{bmatrix}&:1\leq j<f_+,\\
      \left[\begin{pmatrix}
             1&2&\cdots&e\\
             1^{\prime\prime}&2^{\prime\prime}&\cdots&e^{\prime\prime}
            \end{pmatrix}\right]
      \begin{bmatrix}
       \beta_1&       &      &       \\
              &\beta_2&      &       \\
              &       &\ddots&       \\
              &       &      &\beta_e
      \end{bmatrix}&:j=f_+,
  \end{cases}\\
Q_j&=\begin{cases}
      \left[\begin{pmatrix}
             1&2&\cdots&e\\
             1^{\prime}&2^{\prime}&\cdots&e^{\prime}
            \end{pmatrix}\right]
      \begin{bmatrix}
       \alpha_{1j}^{-1}&                &      &           \\
                       &\alpha_{2j}^{-1}&      &           \\
                       &                &\ddots&           \\
                       &                &      &\alpha_{ej}^{-1}
      \end{bmatrix}&:1\leq j<f_+,\\
   \left[\begin{pmatrix}
          1&2&\cdots&e\\
          1^{\prime\prime}&2^{\prime\prime}&\cdots&e^{\prime\prime}
         \end{pmatrix}\right]
   \begin{bmatrix}
    \beta_1^{-1}&            &      &       \\
                &\beta_2^{-1}&      &       \\
                &            &\ddots&       \\
                &            &      &\beta_e^{-1}
   \end{bmatrix}&:j=f_+
  \end{cases}
\end{align*}
with 
\begin{align*}
 \alpha_{ij}
 &=\widetilde\vartheta\left(
   \alpha_{K/F}(\rho,\delta^{i-1}\rho^{j-1})\right),\\
 \beta_i
 &=\widetilde\vartheta\left(
   \alpha_{K/F}(\rho,\delta^{i-1}\rho^{f_+-1})\cdot
   \alpha_{K/F}(\delta^{i^{\prime\prime}-1},\tau)^{-1}\right).
\end{align*}
Then the adjoint action of $\varphi(\rho)$ on 
$\widehat{\frak g}^{I_F}$ gives
$$
 \det\left(
  1_{f_+}-t\cdot\text{\rm Ad}\circ\varphi(\rho)|_{\widehat{\frak g}^{I_F}}
            \right)
 =1+t^{-f_+}
$$
so that we have 
$L(\varphi,\text{\rm Ad},s)=\left(1+q^{-f_+s}\right)^{-1}$. Finally
the centralizer of $\text{\rm Im}(\varphi)$ in $SO_{2n+1}(\Bbb C)$ is 
$$
 \mathcal{A}_{\varphi}
 =\left\{\begin{bmatrix}
          \pm 1_{2n}&0\\
               0    &1
        \end{bmatrix}\right\}.
$$
\end{proof}

Next we will calculate the Artin conductor of 
$\text{\rm Ad}\circ\varphi$. Note that the complex vector space
$\widehat{\frak g}$ is isomorphic to the space of alternating matrices
$$
 \text{\rm Alt}_{2n+1}(\Bbb C)
 =\{X\in M_{2n+1}(\Bbb C)\mid X+\,^tX=0\}
$$
and $\text{\rm Ad}\circ\varphi$ on $\widehat{\frak g}$ is isomorphic
to ${\bigwedge}^2\varphi$ on $\text{\rm Alt}_{2n+1}(\Bbb C)$. Since 
$\varphi=\varphi_1\oplus\det\varphi_1$ and 
$\varphi_1=\text{\rm Ind}_{K^{\times}}^{W_{K/F}}\widetilde\vartheta$
with $(\det\varphi_1)|_{K^{\times}}=1$, we have
$$
 {\bigwedge}^2\varphi
 =({\bigwedge}^2\varphi_1)\oplus(\varphi_1\otimes\det\varphi_1)
 =({\bigwedge}^2\varphi_1)\oplus\varphi_1.
$$
Then 
$\chi_{\text{\rm Ad}\circ\varphi}
 =\chi_{{\wedge}^2\varphi_1}+\chi_{\varphi_1}$ and 
the character formula
$$
 \chi_{\varphi_1}(g)
 =\begin{cases}
   0&:\sigma\neq 1,\\
   \sum_{\gamma\in\Gamma}\widetilde\vartheta(x^{\gamma})
    &:\sigma=1
  \end{cases}
$$
for $g=(\sigma,x)\in W_{K/F}=\Gamma{\ltimes}_{\alpha_K/F}K^{\times}$ 
gives
\begin{align}
 \chi_{{\wedge}^2\varphi_1}(g)
 &=\frac 12\left\{\chi_{\varphi_1}(g)^2-\chi_{\varphi_1}(g^2)
                 \right\} \nonumber\\
 &=\begin{cases}
    0&:\sigma^2\neq 1,\\
    -\frac 12\sum_{\gamma\in\Gamma}
      \widetilde\vartheta\left(
       \alpha_{K/F}(\sigma,\sigma)^{\gamma}\cdot x^{(1+\sigma)\gamma}
                               \right)
     &:\sigma^2=1, \sigma\neq 1,\\
   \frac 12\sum_{\stackrel{\scriptstyle \alpha,\gamma\in\Gamma}
                          {\gamma\neq 1}}
            \widetilde\vartheta\left(x^{\alpha(1+\gamma)}\right)
     &:\sigma=1.
   \end{cases}
\label{eq:character-of-wedge-square-of-vaphi-1}
\end{align}
Now we have

\begin{prop}\label{prop:artin-conductor-of-adjoint-representation}
The Artin conductor of $\text{\rm Ad}\circ\varphi$ is
$$
 a(\text{\rm Ad}\circ\varphi)=2n^2r.
$$
\end{prop}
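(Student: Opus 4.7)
The plan is to decompose $\text{\rm Ad}\circ\varphi$ into induced representations and sum their Artin conductors via the inductivity formula $a_F(\text{\rm Ind}_{W_E}^{W_F}\chi)=d_{E/F}\dim\chi+f_{E/F}a_E(\chi)$. Under $\frak{so}_{2n+1}(\Bbb C)\cong\wedge^2\Bbb C^{2n+1}$ and the block decomposition $\varphi=\varphi_1\oplus\det\varphi_1$ of \eqref{eq:structure-of-l-parameter-in-so(2n+1)}, we have $\text{\rm Ad}\circ\varphi=\wedge^2\varphi_1\oplus(\varphi_1\otimes\det\varphi_1)$. Since $\varphi_1|_{W_K}=\bigoplus_{\gamma\in\Gamma}\widetilde\vartheta^\gamma$, the restriction of $\det\varphi_1$ to $K^\times$ equals $\widetilde\vartheta\circ N_{K/F}$, which is trivial because $N_{K/F}(x)^{1-\tau}=1$; the projection formula then gives $\varphi_1\otimes\det\varphi_1\cong\varphi_1$, so $a(\text{\rm Ad}\circ\varphi)=a(\wedge^2\varphi_1)+a(\varphi_1)$. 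Inductivity combined with Proposition \ref{prop:conductor-and-conjugate-triviality-of-tilde-vartheta} immediately yields $a(\varphi_1)=f(e-1)+f[e(r-1)+\varepsilon]=2nr-f(1-\varepsilon)$, where $\varepsilon=1$ if $K/K_+$ is unramified and $\varepsilon=0$ otherwise (which forces $K/F$ to be totally ramified with $f=1$).

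To compute $a(\wedge^2\varphi_1)$, Mackey's formula gives $\varphi_1^{\otimes 2}=\bigoplus_{\delta\in\Gamma}\text{\rm Ind}_{W_K}^{W_F}\widetilde\vartheta^{1+\delta}$. The swap $v_\alpha\otimes v_\beta\leftrightarrow v_\beta\otimes v_\alpha$ interchanges the $\delta$- and $\delta^{-1}$-summands when $\delta^2\neq 1$, while for each $\delta$ in the set $H$ of order-$2$ elements of $\Gamma$ (central by Proposition \ref{prop:order-two-element-in-tamely-ramified-galois-group}) it fixes the $\delta$-summand and splits it into $\pm 1$-eigenspaces isomorphic to $\text{\rm Ind}_{W_{K_\delta}}^{W_F}\xi_\delta^{\pm}$, with $\xi_\delta^\pm$ the two extensions of $\widetilde\vartheta^{1+\delta}|_{W_K}$ to $W_{K_\delta}$. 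Hence
\[
\wedge^2\varphi_1 \;=\; \bigoplus_{\substack{\{\delta,\delta^{-1}\}\\ \delta^2\neq 1,\ \delta\neq 1}}\text{\rm Ind}_{W_K}^{W_F}\widetilde\vartheta^{1+\delta}\;\oplus\;\bigoplus_{\delta\in H\setminus\{1\}}\text{\rm Ind}_{W_{K_\delta}}^{W_F}\xi_\delta^{-}.
\]
For each summand the conductor is computed as follows: the calculation in \eqref{eq:explicit-value-of-tilde-vartheta-alpha} applied to $\widetilde\vartheta^{1+\delta}$ gives $a_K(\widetilde\vartheta^{1+\delta})=e(r-1)+1-v_\delta$, where $v_\delta=\text{\rm ord}_K(\beta-\beta^{\tau\delta^{-1}})\in\{0,1,\infty\}$ is determined by whether $\tau\delta^{-1}\notin V_0$, $\tau\delta^{-1}\in V_0\setminus\{1\}$, or $\tau\delta^{-1}=1$, via \eqref{eq:ramification-group-of-tamely-ramified-extension}; $a_{K_\delta}(\xi_\delta^-)$ is then recovered from this using the behaviour of the norm $N_{K/K_\delta}$ on units (sending $U_K^k$ to $U_{K_\delta}^{\lceil k/2\rceil}$ when $\delta\in V_0$ and to $U_{K_\delta}^k$ otherwise, by Proposition \ref{prop:order-two-element-in-tamely-ramified-galois-group}), together with the observation that $\xi_\delta^{\pm}$ share the same conductor when $\delta\neq\tau$, whereas $\xi_\tau^-$ is the quadratic character of $W_{K_+}$ cutting out $K/K_+$ and must be treated separately.

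The main obstacle is the bookkeeping needed to sum all these contributions. One splits $\Gamma\setminus\{\tau\}$ into the coset $\tau V_0\setminus\{\tau\}$ (of size $e-1$, giving $v_\delta=1$ and summand conductor $2nr-f$) and its complement $\Gamma\setminus\tau V_0$ (of size $2n-e$, giving $v_\delta=0$ and conductor $2nr$), then isolates the order-$2$ elements, whose contributions come from the $\text{\rm Ind}_{W_{K_\delta}}^{W_F}\xi_\delta^-$ terms, and pairs up the remainder. The three subcases of Proposition \ref{prop:order-two-element-in-tamely-ramified-galois-group} ($K/F$ totally ramified with $|H|=2$; $K/K_+$ unramified with $|H|=2$; and $|H|=4$, which only happens when $K/K_+$ is unramified and both $e,f$ are even) must each be verified, but in every subcase the $f_+$-dependent terms cancel to yield $a(\wedge^2\varphi_1)=2nr(n-1)+(1-\varepsilon)$; adding $a(\varphi_1)=2nr-f(1-\varepsilon)$ then produces $2n^2r$ uniformly, the crucial cancellation being $(1-\varepsilon)(1-f)=0$ since $\varepsilon=0$ forces $f=1$.
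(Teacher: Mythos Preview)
Your approach is correct and genuinely different from the paper's. The paper computes $a(\text{Ad}\circ\varphi)$ directly from the definition: it identifies the higher ramification groups $V_t$ of $K^{(er)}F^{\text{ur}}/F^{\text{ur}}$ via Lubin--Tate theory, computes $\dim\widehat{\frak g}^{V_t}$ for each $t$ using the character formula \eqref{eq:character-of-wedge-square-of-vaphi-1} together with Proposition~\ref{prop:conductor-and-conjugate-triviality-of-tilde-vartheta}, and sums the contributions $(\dim\widehat{\frak g}-\dim\widehat{\frak g}^{V_t})\cdot(V_0:V_t)^{-1}$. You instead use the Mackey decomposition of $\wedge^2\varphi_1$---which is exactly the content of Theorem~\ref{th:structure-of-wedge-square-of-varphi-1}, proved by the paper only \emph{later} for the root-number calculation---and then apply the conductor--discriminant formula to each induced summand. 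Your method has the advantage of unifying the conductor and root-number computations through a single structural decomposition, and it makes the appearance of $2n^2r$ more transparent via the cancellation $(1-\varepsilon)(1-f)=0$; the paper's method is more self-contained at this point in the argument and avoids the case analysis on $|H|$. Your sketch leaves the $|H|=4$ bookkeeping (in particular the individual conductors $a_{K_\delta}(\xi_\delta^{-})$ for $\delta\in\{\delta',\tau\delta'\}$) to the reader, but the strategy is sound: the key observation that $\xi_\delta^{+}$ and $\xi_\delta^{-}$ share the same conductor for $\delta\neq\tau$ (because they differ by a quadratic character of conductor $\leq 1$ while $\widetilde\vartheta^{1+\delta}$ has much larger conductor) lets you read off each term from $a_F(\text{Ind}_{W_K}^{W_F}\widetilde\vartheta^{1+\delta})=2nr-fv_\delta$.
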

\begin{proof}
Let us denote by $K^{(k)}=K_{\varpi_K,k}$ ($k=1,2,\cdots$) the
field of $\varpi_K^k$-th division points of Lubin-Tate theory over
$K$. Then we have an isomorphism 
$$
 \delta_K:1+\frak{p}_K^k\,\tilde{\to}\,
  \text{\rm Gal}(K^{\text{\rm ab}}/K^{(k)}K^{\text{\rm ur}}).
$$
Because the character 
$\widetilde\vartheta:K^{\times}\to\Bbb C^{\times}$ comes from
a character of 
$$
 G_{\beta}(O_F/\frak{p}^r)\subset\left(O_K/\frak{p}_K^{er}\right)^{\times},
$$
$\varphi$ is trivial on 
$\text{\rm Gal}(K^{\text{\rm ab}}/K^{(er)}K^{\text{\rm ur}})$. 
Note that 
$K^{(er)}K^{\text{\rm ur}}=K^{(er)}F^{\text{\rm ur}}$ 
is a finite extension of $F^{\text{\rm ur}}$. If us use the upper
numbering 
$$
 V^s=V_t(K^{(er)}F^{\text{\rm ur}}/F^{\text{\rm ur}})
$$
of the higher ramification group, where $t\mapsto s$
is the inverse of Hasse function whose graph is
\begin{center}
\unitlength 0.1in
\begin{picture}( 52.8000, 29.6500)( 14.0000,-42.1000)
%
\special{pn 8}%
\special{pa 2000 3610}%
\special{pa 2610 3210}%
\special{fp}%
%
\special{pn 8}%
\special{pa 2620 3190}%
\special{pa 3610 2800}%
\special{fp}%
\special{pa 3610 2800}%
\special{pa 3610 2800}%
\special{fp}%
%
\special{pn 8}%
\special{pa 3610 2800}%
\special{pa 5390 2400}%
\special{fp}%
%
\special{pn 8}%
\special{pa 2010 3610}%
\special{pa 1610 3990}%
\special{fp}%
%
\special{pn 8}%
\special{pa 1610 3990}%
\special{pa 1610 3610}%
\special{dt 0.045}%
%
\special{pn 8}%
\special{pa 1610 3980}%
\special{pa 2010 3970}%
\special{dt 0.045}%
%
\special{pn 8}%
\special{pa 2620 3200}%
\special{pa 2620 3600}%
\special{dt 0.045}%
%
\special{pn 8}%
\special{pa 2620 3210}%
\special{pa 2010 3200}%
\special{dt 0.045}%
%
\special{pn 8}%
\special{pa 2010 2800}%
\special{pa 3620 2800}%
\special{dt 0.045}%
\special{pa 3620 3610}%
\special{pa 3620 3610}%
\special{dt 0.045}%
%
\special{pn 8}%
\special{pa 2010 2410}%
\special{pa 5400 2410}%
\special{dt 0.045}%
\special{pa 5330 2410}%
\special{pa 5330 3610}%
\special{dt 0.045}%
\special{pa 5330 3620}%
\special{pa 5330 3580}%
\special{dt 0.045}%
%
\special{pn 8}%
\special{pa 2000 4210}%
\special{pa 2000 1470}%
\special{fp}%
\special{sh 1}%
\special{pa 2000 1470}%
\special{pa 1980 1538}%
\special{pa 2000 1524}%
\special{pa 2020 1538}%
\special{pa 2000 1470}%
\special{fp}%
\put(18.8000,-32.0000){\makebox(0,0){1}}%
\put(18.8000,-28.0000){\makebox(0,0){2}}%
\put(18.8000,-24.1000){\makebox(0,0){3}}%
\put(21.5000,-39.7000){\makebox(0,0){-1}}%
\put(16.1000,-34.8000){\makebox(0,0){-1}}%
\put(26.2000,-37.7000){\makebox(0,0){$q^f-1$}}%
\put(35.9000,-37.9000){\makebox(0,0){$q^{2f}-1$}}%
\put(53.3000,-37.8000){\makebox(0,0){$q^{3f}-1$}}%
\put(20.0000,-13.1000){\makebox(0,0){$s$}}%
%
\special{pn 8}%
\special{pa 1400 3610}%
\special{pa 6590 3610}%
\special{fp}%
\special{sh 1}%
\special{pa 6590 3610}%
\special{pa 6524 3590}%
\special{pa 6538 3610}%
\special{pa 6524 3630}%
\special{pa 6590 3610}%
\special{fp}%
%
\special{pn 8}%
\special{pa 5330 2420}%
\special{pa 6460 2270}%
\special{fp}%
\special{pa 6460 2270}%
\special{pa 6450 2250}%
\special{fp}%
\put(67.0000,-36.1000){\makebox(0,0){t}}%
%
\special{pn 8}%
\special{pa 3570 2800}%
\special{pa 3570 3610}%
\special{dt 0.045}%
\special{pa 3570 3610}%
\special{pa 3570 3610}%
\special{dt 0.045}%
\end{picture}%

\end{center}
then $\delta_K$ induces the isomorphism
$$
 (1+\frak{p}_K^k)/(1+\frak{p}_K^{er})\,\tilde{\to}\,
 \text{\rm Gal}(K^{(er)}K^{\text{\rm ur}}/K^{(k)}K^{\text{\rm ur}})
 =V^s
$$
for $k-1<s\leq k$ ($k=1,2,\cdots$), and hence, for 
$V_t=V_t(K^{(er)}F^{\text{\rm ur}}/F^{\text{\rm ur}})$, we have
$$
 |V_t|=\begin{cases}
        e\cdot q^{nr}(1-q^{-f})&:t=0,\\
        q^{nr-fk}              &:q^{f(k-1)}-1<t\leq q^{fk}-1.
       \end{cases}
$$
By the definition
$$
 a(\text{\rm Ad}\circ\varphi)
 =\sum_{t=0}^{\infty}
   \left(\dim_{\Bbb C}\widehat{\frak g}
         -\dim_{\Bbb C}\widehat{\frak g}^{V_t}\right)\cdot
   (V_0:V_t)^{-1}.
$$
We have
$$
 \dim_{\Bbb C}\widehat{\frak g}^{V_0}
 =\dim_{\Bbb C}\widehat{\frak g}^{I_F}
 =\begin{cases}
   0&:\text{\rm $K/K_+$ is ramified},\\
   f_+&:\text{\rm $K/K_+$ is unramified}
  \end{cases}
$$
as shown in the proof of Proposition 
\ref{prop:l-factor-of-adjoint-representation}. 
For $q^{f(k-1)}-1<t\leq q^{fk}-1$ with $k>0$, we have
\begin{align*}
 |V_t|\cdot\dim_{\Bbb C}\widehat{\frak g}^{V_t}
 &=\sum_{g\in V_t}\chi_{\text{\rm Ad}\circ\varphi}(g)
  =\frac 12\sum_{\dot x\in V_t}
           \sum_{\stackrel{\scriptstyle \alpha,\gamma\in\Gamma}
                          {\gamma\neq 1}}
            \widetilde\vartheta(x^{\alpha(1+\gamma)})
   +\sum_{\dot x\in V_t}\widetilde\vartheta(x^{\gamma})\\
 &=n\cdot\sum_{\dot x\in V_t}\sum_{\tau\neq\gamma\in\Gamma}
          \widetilde\vartheta(x^{1-\gamma})
   +2n\cdot\sum_{\dot x\in V_t}\widetilde\vartheta(x)
\end{align*}
where $V_t$ is identified with 
$(1+\frak{p}_K^k)/(1+\frak{p}_K^{er})$. 

If $K/K_+$ is unramified, then $\tau\not\in\text{\rm Gal}(K/K_0)$, and
Proposition
\ref{prop:conductor-and-conjugate-triviality-of-tilde-vartheta} gives
$$
 \sum_{\dot x\in V_t}\sum_{\tau\neq\gamma\in\Gamma}
  \widetilde\vartheta(x^{1-\gamma})
 =|V_t|\times\begin{cases}
              2n-1&:k>e(r-1),\\
                e &:k=e(r-1),\\
                1 &:k<e(r-1)
             \end{cases}
$$
and
$$
 \sum_{\dot x\in V_t}\widetilde\vartheta(x)
 =\begin{cases}
   |V_t|&:k>e(r-1),\\
     0  &:k\leq e(r-1).
  \end{cases}
$$
So we have
$$
 \dim_{\Bbb C}\widehat{\frak g}^{V_t}
 =\begin{cases}
   n(2n+1)&:k>e(r-1),\\
   ne     &:k=e(r-1),\\
   n      &:k<e(r-1).
  \end{cases}
$$
Then we have
\begin{align*}
 a(\text{\rm Ad}\circ\varphi)
 &=n(2n+1)-f_+
  +\{n(2n+1)-n\}\cdot e^{-1}\cdot\{e(r-1)-1\}\\
 &\hphantom{=n(2n+1)-\;f_+}
  +\{n(2n+1)-ne\}\cdot e^{-1}\\
 &=2n^2r.
\end{align*}
If $K/K_+$ is ramified, then $K/F$ is totally ramified and we have
$$
 \sum_{\dot x\in V_t}\sum_{\tau\neq\gamma\in\Gamma}
  \widetilde\vartheta(x^{1-\gamma})
 =|V_t|\times\begin{cases}
              2n-1&:k\geq e(r-1),\\
                1 &:k<e(r-1)
             \end{cases}
$$
and
$$
 \sum_{\dot x\in V_t}\widetilde\vartheta(x)
 =\begin{cases}
   |V_t|&:k\geq e(r-1),\\
     0  &:k<e(r-1)
  \end{cases}
$$
by Proposition
\ref{prop:conductor-and-conjugate-triviality-of-tilde-vartheta}. 
The we have
\begin{align*}
 a(\text{\rm Ad}\circ\varphi)
 &=n(2n+1)
  +\{n(2n+1)-n\}\cdot(2n)^{-1}\cdot\{2n(r-1)-1\}\\
 &=2n^2r.
\end{align*}
\end{proof}

Since
$$
 \gamma(\varphi,\text{\rm Ad},\psi,d(x),0)
 =\varepsilon(\varphi,\text{\rm Ad},\psi,d(x))\cdot
  \frac{L(\varphi,\text{\rm Ad},1)}
       {L(\varphi,\text{\rm Ad},0)}
$$
and
$$
 \varepsilon(\varphi,\text{\rm Ad},\psi,d(x))
 =w(\text{\rm Ad}\circ\varphi)\cdot q^{a(\text{\rm Ad}\circ\varphi)/2},
$$
Proposition \ref{prop:l-factor-of-adjoint-representation} and 
Proposition \ref{prop:artin-conductor-of-adjoint-representation} give
the proof of Theorem \ref{th:gamma-factor-of-l-parameter-sp(2n)}.

\subsection{$\gamma$-factor of principal parameter}
\label{subsec:gamma-factor-of-principal-parameter}
Let $\text{\rm Sym}_{2n}$ be the symmetric tensor representation of
$SL_2(\Bbb C)$ on the space $\mathcal{P}_{2n}$ of the complex
coefficient homogeneous polynomials of $X, Y$ of degree $2n$. Then
$$
 \langle f,g\rangle
 =\left.
  f\left(-\frac{\partial}
               {\partial Y},\frac{\partial}
                                 {\partial X}\right)g(X,Y)
  \right|_{(X,Y)=(0,0)}
 \quad
 (f,g\in\mathcal{P}_{2n})
$$
defines a $SL_2(\Bbb C)$-invariant non-degenerate symmetric complex
bilinear form on the complex vector space $\mathcal{P}_{2n}$. 
For the $\Bbb C$-basis 
$\left\{
  v_k=\frac 1{(k-1)!}X^{2n+1-k}Y^{k-1}\right\}_{k=1,2,\cdots,2n+1}$ of
$\mathcal{P}_{2n}$, we have
$$
 \langle v_k,v_l\rangle
 =\begin{cases}
   0&:k+l=2n+2,\\
   (-1)^{k-1}=(-1)^{l-1}&:k+l=2n+2
  \end{cases}
$$
and the identification 
$$
 SO(\mathcal{P}_{2n},\langle\,,\rangle)=SO_{2n+1}(\Bbb C)
 =\{g\in SL_{2n+1}(\Bbb C)\mid gJ_{2n+1}\,^tg=J_{2n+1}\}
$$
where
$$
 J_{2n+1}=\begin{bmatrix}
            &  &       &  &1\\
            &  &       &-1& \\
            &  &\addots&  & \\
            &-1&       &  & \\
           1&  &       &  &
          \end{bmatrix}.
$$
The Lie algebra of $SO_{2n+1}(\Bbb C)$ is
$$
 \frak{so}_{2n+1}(\Bbb C)
=\{X\in\frak{gl}_{2n+1}(\Bbb C)\mid XJ_{2n+1}+J_{2n+1}\,^tX=0\}
$$
and
$$
 d\,\text{\rm Sym}_{2n}\begin{bmatrix}
                        0&1\\
                        0&0
                       \end{bmatrix}=N_0
 =\begin{bmatrix}
   0&1& &      & \\
    &0&1&      & \\
    & &\ddots&\ddots& \\
    & & &0&1\\
    & & &      &0
   \end{bmatrix}\in\widehat{\frak g}
$$
is the nilpotent element in $\frak{so}_{2n+1}(\Bbb C)$
associated with the standard {\it \'epinglage} of the standard root
system of $\frak{so}_{2n+1}(\Bbb C)$. Then 
\begin{equation}
 \varphi_0:W_F\times SL_2(\Bbb C)
           \xrightarrow{\text{\rm proj.}}SL_2(\Bbb C)
           \xrightarrow{\text{\rm Sym}_{2n}}SO_{2n+1}(\Bbb C)
\label{eq:candidate-of-principal-parameter}
\end{equation}
is a representation of Weil-Deligne group with the associated triplet 
$(\rho_0,SO_{2n+1}(\Bbb C),N_0)$ such that
$\rho_0|_{I_F}$ is trivial and
$$
 \rho_0(\widetilde{\text{\rm Fr}})
 =\begin{bmatrix}
    q^{-n}&            &      &           &           \\
                &q^{-(n-1)}&      &           &           \\
                &            &\ddots&           &           \\
                &            &      &q^{(n-1)}&           \\
                &            &      &           &q^{n}
   \end{bmatrix}
 \in SO_{2n+1}(\Bbb C).
$$
Now
\begin{equation}
 \{N_0^{2k-1}\mid k=1,2,\cdots,n\}
\label{eq:basis-of-kernel-of-ad(n0)-for-so(2n+1)}
\end{equation}
is a $\Bbb C$-basis of
$$
 \widehat{\frak g}_{N_0}
 =\{X\in\widehat{\frak g}\mid[X,N_0]=0\}.
$$
The representation matrix of 
$\text{\rm Ad}\circ\rho_0(\widetilde{\text{\rm Fr}})
 \in GL_{\Bbb C}(\widehat{\frak g})$ is
$$
 \begin{bmatrix}
  q^{-1}&      &      &           \\
        &q^{-3}&      &           \\
        &      &\ddots&           \\
        &      &      &q^{-(2n-1)}
 \end{bmatrix}
$$
so that we have
\begin{align*}
 L(\varphi_0,\text{\rm Ad},s)
 &=\det\left(1-q^{-s}\cdot
    \text{\rm Ad}\circ\rho_0(
     \widetilde{\text{\rm Fr}})|_{\widehat{\frak g}_{N_0}}
            \right)^{-1}\\
 &=\prod_{k=1}^n\left(1-q^{-(s+2k-1)}\right)^{-1}.
\end{align*}
On the other hand \cite[p.448]{Gross-Reeder2010} shows 
$$
 \varepsilon(\varphi_0,\text{\rm Ad},\psi,d(x))=q^{n^2}.
$$
Since the symmetric tensor representation $\text{\rm Sym}_{2n}$ is
self-dual, we have
\begin{align}
 \gamma(\varphi_0,\text{\rm Ad},\psi,d(x),0)
 &=\varepsilon(\varphi_0,\text{\rm Ad},\psi,d(x))\cdot
   \frac{L(\varphi_0,\text{\rm Ad},1)}
        {L(\varphi_0,\text{\rm Ad},0)} \nonumber\\
 &=q^{n^2}\cdot\prod_{k=1}^n\frac{1-q^{-(2k-1)}}
                                 {1-q^{-2k}}.
\label{eq:gamma-factor-of-principal-parameter-sp(2n)}
\end{align}

\subsection{Verification of formal degree conjecture}
\label{subse:verification-of-formal-degree-conjecture}
Let $d_{G(F)}$ be the Haar measure on $G(F)$ such that 
$\int_{G(O_F)}d_{G(F)}(x)=1$. Then the 
Euler-Poincar\'e measure $\mu_{G(F)}$ on $G(F)=Sp_{2n}(F)$ is  
(see \cite[p.150, Th.7]{Serre1971})
$$
 d\mu_{G(F)}(x)
 =(-1)^nq^{n^2}\prod_{k=1}^n\left(1-q^{-(2k-1)}\right)\cdot
   d_{G(F)}(x).
$$
Then Theorem \ref{th:supercuspidal-representation-of-sp(2n)} implies
that the formal degree of the supercuspidal representation 
$\pi_{\beta,\theta}
 =\text{\rm ind}_{G(O_F)}^{G(F)}\delta_{\beta,\theta}$ with respect to
 the absolute value of the Euler-Poincar\'e measure on $G(F)$ is
\begin{equation}
 q^{n^2(r-1)}\cdot\prod_{k=1}^n\frac{1-q^{-2k}}
                                    {1-q^{-(2k-1)}}\times
 \begin{cases}
  \frac 12&:\text{\rm $K/K_+$ is ramified},\\
  \frac 1{1+q^{-f_+}}&:\text{\rm $K/K_+$ is unramified}.
 \end{cases}
\label{eq:formal-degree-of-cuspidal-rep-of-sp(2n)-wrt-euler-poincare}
\end{equation}
Since the order of the centralizer $\mathcal{A}_{\varphi}$ of 
$\text{\rm Im}(\varphi)$ in $SO_{2n+1}(\Bbb C)$ is two 
(Proposition \ref{prop:l-factor-of-adjoint-representation}), 
Theorem \ref{th:gamma-factor-of-l-parameter-sp(2n)} and 
\eqref{eq:gamma-factor-of-principal-parameter-sp(2n)} gives the
following

\begin{thm}\label{th:formal-degree-conjecture-for-sp(2n)}
The formal degree of the supercuspidal representation 
$\pi_{\beta,\theta}
 =\text{\rm ind}_{G(O_F)}^{G(F)}\delta_{\beta,\theta}$ with respect to
 the absolute value of the Euker-Poincar\'e measure on $G(F)$ is
$$
 \frac 1{|\mathcal{A}_{\varphi}|}\cdot
 \left|\frac{\gamma(\varphi,\text{\rm Ad},\psi,d(x),0)}
            {\gamma(\varphi_0,\text{\rm Ad},\psi,d(x),0)}\right|.
$$
\end{thm}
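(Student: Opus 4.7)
The plan is to assemble this from the three ingredients already in hand and match them directly: the explicit formal degree of $\pi_{\beta,\theta}$ with respect to the Euler--Poincar\'e measure, the gamma factor of the adjoint representation of $\varphi$, and the gamma factor of the principal parameter $\varphi_0$. No new computation is really needed; the theorem is essentially a bookkeeping exercise.

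First, I would recall the formal degree formula displayed just above the theorem, namely that the formal degree of $\pi_{\beta,\theta}=\text{\rm ind}_{G(O_F)}^{G(F)}\delta_{\beta,\theta}$ with respect to $|\mu_{G(F)}|$ equals
$$
 q^{n^2(r-1)}\cdot\prod_{k=1}^n\frac{1-q^{-2k}}{1-q^{-(2k-1)}}\times
 \begin{cases}\frac 12&:\text{$K/K_+$ ramified},\\ \frac 1{1+q^{-f_+}}&:\text{$K/K_+$ unramified}.\end{cases}
$$
This comes by dividing $\dim\delta_{\beta,\theta}$ of Theorem~\ref{th:supercuspidal-representation-of-sp(2n)} by the Euler--Poincar\'e factor $q^{n^2}\prod_{k=1}^n(1-q^{-(2k-1)})$ from Serre.

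Next I would take Theorem \ref{th:gamma-factor-of-l-parameter-sp(2n)} and the formula \eqref{eq:gamma-factor-of-principal-parameter-sp(2n)} and form the ratio, using that $|w(\mathrm{Ad}\circ\varphi)|=1$ (root numbers have absolute value one). This yields
$$
 \left|\frac{\gamma(\varphi,\mathrm{Ad},\psi,d(x),0)}{\gamma(\varphi_0,\mathrm{Ad},\psi,d(x),0)}\right|
 =q^{n^2(r-1)}\cdot\prod_{k=1}^n\frac{1-q^{-2k}}{1-q^{-(2k-1)}}\times
 \begin{cases}1&:\text{$K/K_+$ ramified},\\ \frac{2}{1+q^{-f_+}}&:\text{$K/K_+$ unramified}.\end{cases}
$$

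Finally, I would invoke Proposition \ref{prop:l-factor-of-adjoint-representation}, which gives $|\mathcal{A}_\varphi|=2$ in both cases, and divide by this. The factor of $2$ exactly matches the $\tfrac12$ (resp.\ $\tfrac{2}{1+q^{-f_+}}\cdot\tfrac12=\tfrac{1}{1+q^{-f_+}}$) needed, so the two expressions agree case by case, finishing the verification. There is essentially no obstacle here: all the serious work was done in computing the local factors; the remaining step is only an algebraic comparison.
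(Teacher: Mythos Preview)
Your proposal is correct and follows essentially the same route as the paper: assemble the explicit formal degree from Theorem~\ref{th:supercuspidal-representation-of-sp(2n)} and the Euler--Poincar\'e normalization, form the ratio of Theorem~\ref{th:gamma-factor-of-l-parameter-sp(2n)} and \eqref{eq:gamma-factor-of-principal-parameter-sp(2n)}, divide by $|\mathcal{A}_{\varphi}|=2$ from Proposition~\ref{prop:l-factor-of-adjoint-representation}, and check the two cases match. The paper presents exactly this bookkeeping in the paragraph preceding the theorem statement.
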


Since $\mathcal{A}_{\varphi}$ is a finite abelian group, all the
irreducible representation of $\mathcal{A}_{\varphi}$ is
one-dimensional. So Theorem 
\ref{th:formal-degree-conjecture-for-sp(2n)} says that the formal
degree conjecture is valid if we consider 
\eqref{eq:candidate-of-langlands-arthur-parameter} as the
Arthur-Langlands parameter of the supercuspidal representation
$\pi_{\beta,\theta}$ and 
\eqref{eq:candidate-of-principal-parameter} as the principal parameter
of $G(F)=Sp_{2n}(F)$.

\section{Root number conjecture}
\label{sec:root-number-conjecture}
In this section, we will assume that $K/F$ is a 
tamely ramified Galois extension of degree $2n$ and
put $\Gamma=\text{\rm Gal}(K/F)$, We 
will keep the notations of the preceding sections.

\subsection{Structure of adjoint representation}
\label{subsec:structure-of-adjoint-representation}
We will identify the representations of $W_{K/F}$ with the
representations of $W_F$ which factor through the canonical
surjection 
$$
 W_F\to W_F/\overline{[W_K.W_K]}=W_{K/F}.
$$
We will also regard a representation of $\Gamma$ as the representation
of $W_{K/F}$ via the projection $W_{K/F}\to\Gamma$.

As we have seen in the subsection 
\ref{subsec:gamma-factor-of-adjoint-representation}
\begin{equation}
 \text{\rm $\text{\rm Ad}\circ\varphi$ on 
           $\widehat{\frak g}$}
 ={\bigwedge}^2\varphi
 ={\bigwedge}^2\varphi_1\oplus\varphi_1
\label{eq:decomposition-of-ad-varphi-into-wedge-square-and-varphi-1}
\end{equation}
with 
$\varphi_1
 =\text{\rm Ind}_{K^{\times}}^{W_{K/F}}\widetilde\vartheta$. Now we
 have

\begin{thm}\label{th:structure-of-wedge-square-of-varphi-1}
$$
 {\bigwedge}^2\varphi_1
 =\bigoplus_{\pi(\tau)\neq 1}\pi^{\dim\pi}
  \oplus
  \bigoplus_{\{\gamma\neq\gamma^{-1}\}\subset\Gamma}
   \text{\rm Ind}_{K^{\times}}^{W_{K/F}}
    \widetilde\vartheta_{\gamma}
  \oplus
  \bigoplus_{\stackrel{\scriptstyle 1,\tau\neq\gamma\in\Gamma}
                      {\gamma^2=1}}
   \text{\rm Ind}_{W_{K/K_{\gamma}}}^{W_{K/F}}\chi_{\gamma}.
$$
\end{thm}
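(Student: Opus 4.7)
The plan is to decompose $\varphi_1\otimes\varphi_1$ via Mackey's projection formula and then extract the antisymmetric piece summand-by-summand by analysing how the swap involution $\mathrm{sw}:v\otimes w\mapsto w\otimes v$ permutes the summands.

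Because $K^{\times}$ is normal in $W_{K/F}$ with quotient $\Gamma$, the projection formula yields
$$
 \varphi_1\otimes\varphi_1 = \text{Ind}_{K^{\times}}^{W_{K/F}}\bigl(\widetilde\vartheta\otimes\text{Res}_{K^{\times}}\varphi_1\bigr) = \bigoplus_{\sigma\in\Gamma}\text{Ind}_{K^{\times}}^{W_{K/F}}\bigl(\widetilde\vartheta\cdot\widetilde\vartheta^{\sigma}\bigr).
$$
On the basis $\{v_\alpha\otimes v_\beta\}_{\alpha,\beta}$ the $\sigma$-summand is the $\Gamma$-orbit of $v_1\otimes v_\sigma$, parametrised by the ratio $\alpha^{-1}\beta=\sigma$, and $\mathrm{sw}$ clearly carries the $\sigma$-summand onto the $\sigma^{-1}$-summand. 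Since $\bigwedge^2\varphi_1$ is the $(-1)$-eigenspace of $\mathrm{sw}$, I split the sum according to the orbits of $\sigma\mapsto\sigma^{-1}$ on $\Gamma$.

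Three of the four cases are direct. For $\sigma=1$ the $\sigma$-summand is the diagonal $\bigoplus_\alpha\mathbb{C}(v_\alpha\otimes v_\alpha)$, pointwise $\mathrm{sw}$-fixed, so it contributes nothing to $\bigwedge^2\varphi_1$. For $\sigma^2\neq 1$ the swap exchanges two isomorphic summands and one copy of $\text{Ind}_{K^{\times}}^{W_{K/F}}\widetilde\vartheta_\gamma$, with $\widetilde\vartheta_\gamma(x):=\widetilde\vartheta(x)\widetilde\vartheta(x^\gamma)$, survives in each of $\text{Sym}^2\varphi_1$ and $\bigwedge^2\varphi_1$, producing the middle sum of the theorem. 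For $\sigma=\tau$ one has $\widetilde\vartheta\cdot\widetilde\vartheta^\tau=\mathbf{1}$ since $\widetilde\vartheta^\tau=\widetilde\vartheta^{-1}$, so the summand is the pull-back of the regular representation of $\Gamma=W_{K/F}/K^{\times}$. On this copy $\mathrm{sw}$ is $W_{K/F}$-equivariant and acts on $\{v_\alpha\otimes v_{\alpha\tau}\}$ as right-translation by $\tau$, hence lies in $\text{End}_{W_{K/F}}(\mathbb{C}[\Gamma])=\mathbb{C}[\Gamma]$; its $(-1)$-eigenspace is $\text{Ind}_{\langle\tau\rangle}^{\Gamma}(\mathrm{sgn})$. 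Proposition \ref{prop:order-two-element-in-tamely-ramified-galois-group} makes $\tau$ central, so $\pi(\tau)=\pm I$ for every irreducible $\pi$, and Frobenius reciprocity rewrites the $(-1)$-eigenspace as $\bigoplus_{\pi(\tau)\neq 1}\pi^{\dim\pi}$, giving the first summand of the theorem.

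The fourth case, $\sigma^2=1$ with $\sigma\neq 1,\tau$ (which occurs only when $|H|=4$), is the main obstacle. Here $\widetilde\vartheta\cdot\widetilde\vartheta^\sigma$ is $\sigma$-invariant and Mackey refines
$$
 \text{Ind}_{K^{\times}}^{W_{K/F}}(\widetilde\vartheta\cdot\widetilde\vartheta^{\sigma}) = \text{Ind}_{W_{K/K_{\sigma}}}^{W_{K/F}}\chi^{+}\ \oplus\ \text{Ind}_{W_{K/K_{\sigma}}}^{W_{K/F}}\chi^{-},
$$
where $\chi^{\pm}$ are the two extensions to $W_{K/K_\sigma}$ differing by the quadratic character of $W_{K/K_\sigma}/K^{\times}=\langle\sigma\rangle$. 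The involution $\mathrm{sw}$ commutes with $W_{K/F}$, so preserves each summand and acts on it by a scalar $\pm 1$; the $(-1)$-summand is by definition $\text{Ind}_{W_{K/K_\sigma}}^{W_{K/F}}\chi_\sigma$. Identifying which of $\chi^{+},\chi^{-}$ is $\chi_\sigma$ requires a cocycle bookkeeping, comparing $\mathrm{sw}$ with the inner $\sigma$-action on the induced module along the lines of the sign computation at the end of the proof of Proposition \ref{prop:existence-of-nu-invarinat-nu-symmetric-from}. Once that normalisation is fixed the four cases exhaust $\varphi_1\otimes\varphi_1$, and a final character check against \eqref{eq:character-of-wedge-square-of-vaphi-1} confirms the stated decomposition.
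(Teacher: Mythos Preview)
Your argument is correct, but the route is genuinely different from the paper's. The paper proceeds by a direct character computation: it evaluates $\chi_{\wedge^2\varphi_1}(\sigma,x)=\tfrac12\bigl(\chi_{\varphi_1}(\sigma,x)^2-\chi_{\varphi_1}((\sigma,x)^2)\bigr)$ case by case on $\sigma^2=1$ versus $\sigma^2\neq 1$, separately writes down the characters of $R_\Gamma-R_\tau$, of each $\text{Ind}_{K^\times}^{W_{K/F}}\widetilde\vartheta_\gamma$, and of each $\text{Ind}_{W_{K/K_\gamma}}^{W_{K/F}}\chi_\gamma$ (via the explicit formula $\chi_\gamma(\sigma,x)=\text{sign}(\sigma)\cdot\widetilde\vartheta(\alpha_{K/F}(\sigma,\gamma)x^{1+\gamma})$), and checks that the two sides agree as class functions. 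Your Mackey-plus-swap analysis is more structural: it explains \emph{why} each summand appears rather than verifying an identity of characters, and the pairing of $\sigma$ with $\sigma^{-1}$ makes the source of the $\{\gamma\neq\gamma^{-1}\}$ sum transparent. The cost is the cocycle bookkeeping you flag in the fourth case. That step is in fact short: on the span of $v_1\otimes v_\sigma$ and $v_\sigma\otimes v_1$ one computes directly that $(\sigma,1)$ acts on $v_1\otimes v_\sigma-v_\sigma\otimes v_1$ by $-\widetilde\vartheta(\alpha_{K/F}(\sigma,\sigma))$, which is exactly $\chi_\sigma(\sigma,1)$ in the paper's normalisation, so the $(-1)$-eigenspace of $\mathrm{sw}$ is $\text{Ind}_{W_{K/K_\sigma}}^{W_{K/F}}\chi_\sigma$ on the nose and no residual character check is needed. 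Your argument for $\sigma=\tau$ also goes through cleanly once one notes $\widetilde\vartheta(\alpha_{K/F}(\tau,\tau))=1$ (since $\alpha_{K/F}(\tau,\tau)$ is $\tau$-fixed), so that the twist in identifying the $\tau$-summand with $\mathbb{C}[\Gamma]$ is compatible with right translation by $\tau$.
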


Here $\bigoplus_{\pi(\tau)\neq 1}$ denotes the direct sum over the
equivalence classes $\pi$ of the irreducible representations of
$\Gamma$ such that $\pi(\tau)\neq 1$. The direct sum 
$\bigoplus_{\{\gamma\neq\gamma^{-1}\}\subset\Gamma}$ is over the
subsets $\{\gamma,\gamma^{-1}\}\subset\Gamma$ such that
$\gamma^2\neq 1$, and 
$\widetilde\vartheta_{\gamma}(x)
 =\widetilde\vartheta(x^{1+\gamma})$ ($x\in K^{\times}$). 
For a $\gamma\in\Gamma$ of order two, the unitary
character $\chi_{\gamma}$ of $W_{K/K_{\gamma}}$ is defined by 
\begin{align*}
 \chi_{\gamma}:W_{K/K_{\gamma}}
               =W_{K_{\gamma}}/\overline{[W_K,W_K]}
  &\xrightarrow[\;\text{\rm can.}\;\;]{}
    W_{K_{\gamma}}/\overline{[W_{K_{\gamma}},W_{K_{\gamma}}]}\\
  &\xrightarrow[\delta_{K_{\gamma}}^{-1}]{\sim}
   K_{\gamma}^{\times}
  \xrightarrow[(\ast,K/K_{\gamma})\cdot\widetilde\vartheta]{}
  \Bbb C^{\times}
\end{align*}
with the subfield  $F\subset K_{\gamma}\subset K$ such that 
$\text{\rm Gal}(K/K_{\gamma})=\langle\gamma\rangle$ and
$$
 (x,K/K_{\gamma})
 =\begin{cases}
   1&:x\in N_{K/K_{\gamma}}(K^{\times}),\\
  -1&:x\not\in N_{K/K_{\gamma}}(K^{\times}).
  \end{cases}
$$
The rest of this subsection is devoted to the proof of the theorem.

\numsubsubsection{}
\label{subsubsec:character-of-pi-gamma-of-order-two-not-tau}
Take a $\gamma\in\Gamma$ of order two. Note that the group
homomorphism $x\mapsto\left(x,K/K_{\gamma}\right)$ induces the
inverse of the isomorphism
$$
 \text{\rm Gal}(K/K_{\gamma})\,\tilde{\to}\,
 K_{\gamma}^{\times}/N_{K/K_{\gamma}}(K^{\times})
 \quad
 (\sigma\mapsto\alpha_{K/F}(\sigma,\gamma))
$$
if we identify $\text{\rm Gal}(K/K_{\gamma})$ with $\{\pm 1\}$. 
Then the commutative diagram 
$$
\begin{tikzcd}
  W_K/\overline{[W_K,W_K]}
    \arrow[d, equal]\arrow{r}{\text{\rm can.}}
   &W_{K_{\gamma}}/\overline{[W_K,W_K]}
            \arrow{r}{\text{\rm can.}}\arrow{d}{\text{\rm can.}}
    &W_{K_{\gamma}}/W_K\arrow{d}{\text{\rm res.}}[swap]{\wr}
     \\
  W_K/\overline{[W_K,W_K]}\arrow{r}{\text{\rm can.}}
                          \arrow{d}{\delta_K^{-1}}[swap]{\wr}
   &W_{K_{\gamma}}/\overline{[W_{K_{\gamma}},W_{K_{\gamma}}]}
                          \arrow{r}{\text{\rm res.}}
                          \arrow{d}{\delta_{K_{\gamma}}^{-1}}[swap]{\wr}
    &\text{\rm Gal}(K/K_{\gamma})
     \arrow{d}{[\sigma\mapsto\alpha_{K/K_{\gamma}}(\sigma,\gamma)]}[swap]{\wr}
     \\
  K^{\times}\arrow{r}[swap]{N_{K/K_{\gamma}}}
   &K_{\gamma}^{\times}\arrow{r}[swap]{\text{\rm can.}}
    &K_{\gamma}^{\times}/N_{K/K_{\gamma}}(K^{\times})
\end{tikzcd}
$$
implies that we have 
$$
 \chi_{\gamma}(\sigma,x)
 =\text{\rm sign}(\sigma)\cdot
  \widetilde\vartheta\left(
   \alpha_{K/F}(\sigma,\gamma)\cdot x^{1+\gamma}\right)
$$
for 
$(\gamma,x)\in W_{K/K_{\gamma}}
              =\text{\rm Gal}(K/K_{\gamma})
                {\ltimes}_{\alpha_{K/F}}K^{\times}
              \subset W_{K/F}$
where
$$
 \text{\rm sign}(\sigma)
 =\begin{cases}
   1&:\sigma=1,\\
  -1&:\sigma=\gamma.
  \end{cases}
$$
By means of the cocycle relation of $\alpha_{K/F}$, we have 
$$
 \chi_{\gamma}(\alpha^{-1}(\sigma,x)\alpha)
 =\begin{cases}
   \widetilde\vartheta(x^{\alpha(1+\gamma)})&:\sigma=1,\\
  -\widetilde\vartheta\left(
    \alpha_{K/F}(\gamma,\gamma)^{\alpha}x^{\alpha(1+\gamma)}
                      \right)&:\sigma=\gamma
  \end{cases}
$$
for any $(\sigma,x)\in W_{K/K_{\gamma}}$ and $\alpha\in\sigma$. 
Note that the elements of $\Gamma$ of order two are central as shown
by Proposition
\ref{prop:order-two-element-in-tamely-ramified-galois-group}. The the
character of the induced representation 
$\pi_{\gamma}
 =\text{\rm Ind}_{W_{K/K_{\gamma}}}^{W_{K/F}}\chi_{\gamma}$ is
\begin{align}
 \chi_{\pi_{\gamma}}(\sigma,x)
 &=\begin{cases}
    0&:\sigma\not\in\text{\rm Gal}(K/K_{\gamma}),\\
   \sum_{\dot\alpha\in\Gamma/\langle\gamma\rangle}
    \chi_{\gamma}\left(\alpha^{-1}(\sigma,x)\alpha\right)
     &:\sigma\in\text{\rm Gal}(K/K_{\gamma})\\
   \end{cases} \nonumber\\
 &=\begin{cases}
    0&:\sigma\neq 1,\gamma,\\
    \sum_{\dot\alpha\in\Gamma/\langle\gamma\rangle}
     \widetilde\vartheta\left(x^{\alpha(1+\gamma)}\right)
      &:\sigma=1,\\
    -\sum_{\dot\alpha\in\Gamma/\langle\gamma\rangle}
      \widetilde\vartheta\left(
       \alpha_{K/F}(\gamma,\gamma)^{\alpha}
        x^{\alpha(1+\gamma)}\right)
      &:\sigma=\gamma.
    \end{cases}
\label{eq:chracter-of-ind-chi-sigma-of-order-two-element}
\end{align}

\numsubsubsection{}
\label{subsubsec:character-of-partial-regular-rep-of-galois-group}
Since $\tau\in\Gamma$ is a central element, 
the character of the induced representation 
$R_{\tau}=\text{\rm Ind}_{\langle\tau\rangle}^{\Gamma}
           \text{\bf 1}_{\langle\tau\rangle}$
is
$$
 \chi_{R_{\tau}}(\sigma)
 =\begin{cases}
   0&:\sigma\neq 1,\tau,\\
  (\Gamma:\langle\tau\rangle)=n&:\sigma=1,\tau.
  \end{cases}
$$
For an irreducible representation $\pi$ of $\Gamma$, we have 
$\pi(\tau)=\pm\text{\bf 1}$, and we have
$$
 \langle\pi,R_{\tau}\rangle
 =|\Gamma|^{-1}(n\cdot\chi_{\pi}(1)+n\cdot\chi_{\pi}(\tau))
 =\begin{cases}
   \dim\pi&:\pi(\tau)=\text{\bf 1},\\
      0   &:\pi(\tau)=-\text{\bf 1}.
  \end{cases}
$$
Hence $R_{\tau}\oplus\bigoplus_{\pi(\tau)\neq 1}\pi^{\dim\pi}$ 
is the regular representation 
$R_{\Gamma}=\text{\rm Ind}_{\{1\}}^{\Gamma}\text{\bf 1}$, and we have
\begin{equation}
 \left(\chi_{R_{\Gamma}}-\chi_{R_{\tau}}\right)(\sigma)
 =\begin{cases}
   n&:\sigma=1,\\
  -n&:\sigma=\tau,\\
   0&:\sigma\neq 1,\tau.
  \end{cases}
\label{eq:character-of-partial-regular-reppresentation}
\end{equation}

\numsubsubsection{}
\label{subsubsec:character-of-wedge-square-varphi-1}
Recall the character formula 
\eqref{eq:character-of-wedge-square-of-vaphi-1}. 
Since $\widetilde\vartheta(x)=\vartheta(x^{1-\tau})$ 
($x\in K^{\times}$) and the elements of $\Gamma$ of order two are
central (Proposition
\ref{prop:order-two-element-in-tamely-ramified-galois-group}),  we have
$$
 \frac 12\sum_{\alpha\in\Gamma}
          \widetilde\vartheta(x^{\alpha(1+\tau)})
 =\frac 12|\Gamma|=n.
$$
Since
$$
 \sum_{\alpha\in\Gamma}\widetilde\vartheta\left(
                              x^{\alpha(1+\gamma^{-1})}\right)
 =\sum_{\alpha\in\Gamma}\widetilde\vartheta(
                              x^{\alpha(1+\gamma)})
$$
for any $\gamma$, we have
\begin{align*}
 \chi_{{\wedge}^2\varphi_1}(1,x)
 &=\frac 12\sum_{\stackrel{\scriptstyle \alpha,\gamma\in\Gamma}
                          {\gamma\neq 1}}
            \widetilde\vartheta(x^{\alpha(1+\gamma)})\\
 &=n+\sum_{\{\gamma\neq\gamma^{-1}\}\subset\Gamma}
     \sum_{\alpha\in\Gamma}
      \widetilde\vartheta(x^{\alpha(1+\gamma)})
    +\sum_{\stackrel{\scriptstyle 1,\tau\neq\gamma\in\Gamma}
                    {\gamma^2=1}}
     \sum_{\dot\alpha\in\Gamma/\langle\gamma\rangle}
      \widetilde\vartheta(x^{\alpha(1+\gamma)}).
\end{align*}
Take a $\sigma\in\Gamma$ of order two. Then the cocycle relation of
$\alpha_{K/F}$ gives 
$\alpha_{K/F}(\sigma,\sigma)^{\sigma}=\alpha_{K/F}(\sigma,\sigma)$. Since
$\sigma$ is a central element of $\Gamma$, we have 
$$
 \alpha_{K/F}(\sigma,\sigma)^{\sigma\alpha}
 =\alpha_{K/F}(\sigma,\sigma)^{\alpha}
$$
for any $\alpha\in\Gamma$. If $\sigma=\tau$, we have
$$
 \chi_{{\wedge}^2\varphi_1}(\tau,x)
 =-\frac 12\sum_{\alpha\in\Gamma}
            \widetilde\vartheta(x^{\alpha(1+\tau)})
 =-\frac 12|\Gamma|
 =-n.
$$
If $\sigma\neq\tau$, then we have
$$
 \chi_{{\wedge}^2\varphi_1}(\sigma,x)
 =-\sum_{\dot\alpha\in\Gamma/\langle\sigma\rangle}
    \widetilde\vartheta\left(
     \alpha_{K/F}(\sigma,\sigma)^{\alpha}\cdot
      x^{\alpha(1+\sigma)}\right).
$$
Since the character of the induced representation 
$\rho_{\gamma}
 =\text{\rm Ind}_{K^{\times}}^{W_{K/F}}\widetilde\vartheta_{\gamma}$
 for $\gamma\in\Gamma$ such that $\gamma^2\neq 1$ is
$$
 \chi_{\rho_{\gamma}}(\sigma,x)
 =\begin{cases}
   0&:\sigma\neq 1,\\
   \sum_{\alpha\in\Gamma}\widetilde\vartheta(x^{\alpha(1+\gamma)})
    &:\sigma=1,
  \end{cases}
$$
the formulae 
\eqref{eq:chracter-of-ind-chi-sigma-of-order-two-element} and 
\eqref{eq:character-of-partial-regular-reppresentation} gives
$$
 \chi_{{\wedge}^2\varphi_1}
 =\chi_{R_{\Gamma}}-\chi_{R_{\tau}}
 +\sum_{\{\gamma\neq\gamma^{-1}\}\subset\Gamma}\chi_{\sigma_{\gamma}}
 +\sum_{\stackrel{\scriptstyle 1,\tau\neq\gamma\in\Gamma}
                 {\gamma^2=1}}\chi_{\pi_{\gamma}}
$$
which complete the proof of Theorem 
\ref{th:structure-of-wedge-square-of-varphi-1}.

\subsection{Root number of adjoint representation}
\label{subsec:root-number-of-adjoint-representation}
By the decomposition
\ref{eq:decomposition-of-ad-varphi-into-wedge-square-and-varphi-1} and 
Theorem \ref{th:structure-of-wedge-square-of-varphi-1}, the adjoint
representation $\text{\rm Ad}\circ\varphi$ of the Weil group $W_F$ on
$\widehat{\frak g}$ is written as a direct sum of representations
induced from abelian characters. Using this decomposition, we can
calculate the $\varepsilon$-factor of the adjoint representation. The
result is

\begin{thm}\label{th:varepsilon-factor-of-adjoint-representation}
With respect to a additive character $\psi$ of $F$ such that
$$
 \{x\in F\mid\psi(xO_F)=1\}=O_F
$$
and the Haar measure $d(x)$ on $F$ such that 
$\int_{O_F}d(x)=1$, we have
$$
 \varepsilon(\varphi,\text{\rm Ad},\psi,d(x))
 =w(\text{\rm Ad}\circ\varphi)\cdot q^{n^2r}
$$
with the root number
$$
 w(\text{\rm Ad}\circ\varphi)
 =\vartheta(-1)\times
  \begin{cases}
   (-1)^{\frac{q-1}{2n}\cdot\frac{n(n+1)}2}
       &:\text{\rm $K/K_+$ is ramified},\\
   1   &:\text{\rm $K/K_+$ is unramified and $|H|=2$},\\
   -(-1)^{\frac{q^{f_+}-1}2}
       &:\text{\rm $K/K_+$ is unramified and $|H|=4$}.
  \end{cases}
$$
Here $H=\{\gamma\in\Gamma\mid\gamma^2=1\}$ whose structure is given in
Proposition 
\ref{prop:order-two-element-in-tamely-ramified-galois-group}.
\end{thm}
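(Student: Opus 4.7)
The plan is to combine the decomposition $\text{Ad}\circ\varphi = \bigwedge^2\varphi_1 \oplus \varphi_1$ from \eqref{eq:decomposition-of-ad-varphi-into-wedge-square-and-varphi-1} with the explicit splitting of $\bigwedge^2\varphi_1$ provided by Theorem \ref{th:structure-of-wedge-square-of-varphi-1}, so that $\text{Ad}\circ\varphi$ becomes a sum of representations each induced from a one-dimensional character of the Weil group of some intermediate field. Since the Artin conductor part $q^{a(\text{Ad}\circ\varphi)/2}=q^{n^2r}$ is already established by Proposition \ref{prop:artin-conductor-of-adjoint-representation}, only the root number remains, and multiplicativity of the $\varepsilon$-factor on direct sums reduces everything to a product of contributions from each summand.

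For each induced piece $\text{Ind}_{W_E}^{W_F}\chi$, I would invoke the Deligne--Langlands inductivity formula
$$\varepsilon(\text{Ind}_{W_E}^{W_F}\chi,\psi,d(x)) = \lambda(E/F,\psi)^{\dim\chi}\cdot\varepsilon(\chi,\psi\circ T_{E/F},d(x)_E),$$
reducing every term to a character $\varepsilon$-factor on an intermediate field together with a Langlands $\lambda$-constant. Concretely, $\varphi_1$ itself contributes $\lambda(K/F,\psi)^{2n}\cdot w(\widetilde\vartheta,\psi_K)$; the sum $\bigoplus_{\pi(\tau)\neq 1}\pi^{\dim\pi}$, being virtually $R_\Gamma\ominus R_\tau$ by \eqref{eq:character-of-partial-regular-reppresentation}, contributes a ratio of $\lambda$-constants for $K/F$ and $K_+/F$; each $\text{Ind}_{K^\times}^{W_{K/F}}\widetilde\vartheta_\gamma$ with $\gamma^2\neq 1$ yields $\lambda(K/F,\psi)\cdot w(\widetilde\vartheta_\gamma,\psi_K)$ whose conductor is supplied by Proposition \ref{prop:conductor-and-conjugate-triviality-of-tilde-vartheta}; and each $\text{Ind}_{W_{K/K_\gamma}}^{W_{K/F}}\chi_\gamma$ for order-two $\gamma\neq 1,\tau$ gives $\lambda(K_\gamma/F,\psi)\cdot w(\chi_\gamma,\psi_{K_\gamma})$, where $\chi_\gamma$ is the explicit quadratic twist of $\widetilde\vartheta|_{K_\gamma^\times}$ defined in Section \ref{subsec:structure-of-adjoint-representation}.

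For the evaluation of the $\lambda$-constants and character $\varepsilon$-factors I would use the tower law $\lambda(K/F,\psi)=\lambda(K/K_0,\psi_{K_0})\cdot\lambda(K_0/F,\psi)^{e}$ to separate the unramified part (a root of unity attached to the residue extension) from the totally ramified part (a product of quadratic Gauss sums over $\Bbb F$-extensions). Proposition \ref{prop:c-is-trivial-mod-p-square} isolates the dependence on $\widetilde\vartheta$ modulo $1+\frak{p}_K^2$, so that all unit Gauss sums collapse to their value at $-1$, producing the $\vartheta(-1)$ prefactor; Proposition \ref{prop:explicit-value-of-c(-1)} supplies the $c$-contribution separately in each regime. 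The three cases of the statement correspond exactly to the three regimes enumerated in Proposition \ref{prop:order-two-element-in-tamely-ramified-galois-group}.

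The main obstacle is the delicate bookkeeping of roots of unity as the many $\lambda$-constants and Gauss sums combine. In the totally ramified case the expected sign $(-1)^{\frac{q-1}{2n}\cdot\frac{n(n+1)}{2}}$ must emerge from the product of quadratic Gauss-sum signs $(-1)^{(q-1)/2}$ indexed by the orbits $\{\gamma,\gamma^{-1}\}\subset\Gamma=\langle\delta\rangle$ with $\gamma^2\neq 1$, weighted according to the conductor exponent; the cross-terms from distinct orbits must be shown to cancel in pairs via the involution $\gamma\mapsto\gamma^{-1}$, leaving exactly the stated exponent $\binom{n+1}{2}$. In the $|H|=4$ case the two extra central involutions $\delta^{e/2}$ and $\tau\delta^{e/2}$ introduce the additional factor $\lambda(K_{\delta^{e/2}}/F,\psi)\cdot\lambda(K_{\tau\delta^{e/2}}/F,\psi)$, which must be shown via the norm-residue manipulations of the proof of Proposition \ref{prop:explicit-value-of-c(-1)} to equal $-(-1)^{(q^{f_+}-1)/2}$.
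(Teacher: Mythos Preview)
Your overall strategy---decompose $\text{Ad}\circ\varphi$ via Theorem \ref{th:structure-of-wedge-square-of-varphi-1}, apply inductivity to each summand, and assemble the $\lambda$-constants and one-dimensional $\varepsilon$-factors---is exactly what the paper does. But two of your identifications are off in ways that would derail the computation.

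First, the exponent on $\lambda(K/F,\psi)$ for each induced piece is $\dim\chi=1$, not the dimension of the induced representation; so $\varphi_1$ contributes $\lambda(K/F,\psi)$, not $\lambda(K/F,\psi)^{2n}$. More importantly, you are missing the tool that actually evaluates the root numbers $w(\widetilde\vartheta,\psi_K)$ and $w(\widetilde\vartheta_\gamma,\psi_K)$. These are not handled by ``collapsing to the value at $-1$'' via Proposition \ref{prop:c-is-trivial-mod-p-square}; rather, the paper invokes the Fr\"ohlich--Queyrut theorem: since $\widetilde\vartheta|_{K_+^\times}=1$ and $K=K_+(\beta)$, one gets $w(\widetilde\vartheta,\psi_K)=\widetilde\vartheta(\beta)=\vartheta(-1)$ exactly, and similarly $w(\widetilde\vartheta_\gamma,\psi_K)=\widetilde\vartheta_\gamma(\beta)=\vartheta((-1)^{1-\gamma})=1$ for $\gamma^2\neq 1$. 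The same theorem handles the $\chi_\gamma$ pieces in the $|H|=4$ case. Without Fr\"ohlich--Queyrut you have no mechanism for computing these Gauss sums.

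Second, your account of where the sign $(-1)^{\frac{q-1}{2n}\cdot\frac{n(n+1)}{2}}$ comes from in the totally ramified case is wrong. It does \emph{not} arise from ``quadratic Gauss-sum signs indexed by the orbits $\{\gamma,\gamma^{-1}\}$''---those contributions are all $+1$ by Fr\"ohlich--Queyrut. The sign comes entirely from the $\lambda$-constants: $\lambda(K/F,\psi_F)^n$ from $\Pi_2$ and the ratio $\lambda(K/F,\psi_F)\lambda(K_+/F,\psi_F)^{-1}$ from $\Pi_1=R_\Gamma\ominus R_\tau$, each evaluated via Proposition \ref{prop:lambda-factor-of-tamely-ramified-galois-ext}. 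Likewise in the $|H|=4$ case, the factor $-(-1)^{(q^{f_+}-1)/2}$ comes from a combination of $\lambda(K_{\delta'}/F,\psi_F)$, $\lambda(K_{\tau'}/F,\psi_F)\lambda(K_+/F,\psi_F)^{-1}$, and the Fr\"ohlich--Queyrut values of $\chi_{\delta'}$ and $\chi_{\tau'}$; the norm-residue argument from Proposition \ref{prop:explicit-value-of-c(-1)} enters only in evaluating $(\sqrt\varepsilon,K/K_{\delta'})$ inside one of those character root numbers, not as a separate $c$-contribution to the theorem (the statement is in terms of $\vartheta(-1)$, not $\theta(-1)$).
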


Note that if $K/K_+$ is ramified, then $K/F$ is totally ramified and
hence $2n=(K:F)$ divides $q-1$. 

The rest of this devoted to the proof of the theorem. 

\numsubsubsection{}
\label{subsubsec:decomposition-of-varepsilon-wrt-pi1-pi2-pi3}
To begin with
$$
 \varepsilon(\varphi,\text{\rm Ad},\psi,d(x))
 =\varepsilon(\text{\rm Ad}\circ\varphi,\psi,d(x))
$$
by the definition. Define the
additive character $\psi_F$ of $F$ by
$$
 \psi_F:F\xrightarrow{T_{F/\Bbb Q_p}}
        \Bbb Q_p\xrightarrow{\text{\rm canonical}}
        \Bbb Q_p/\Bbb Z_p\,\tilde{\to}\,
        \Bbb Q/\Bbb Z\xrightarrow{\exp(2\pi\sqrt{-1}\ast)}
        \Bbb C^{\times}.
$$
Then 
$$
 \{x\in F\mid\psi_F(xO_F)=1\}=\mathcal{D}(F/\Bbb Q_p)^{-1}
                             =\frak{p}_F^{-d(F)}
$$
and $\psi_K=\psi_F\circ T_{K/F}$. Let $d_F(x)$ be the Haar measure on 
$F$ such that
$$
 \int_{O_F}d_F(x)=q^{-d(F)}.
$$
Then 
$$
 \varepsilon(\text{\rm Ad}\circ\varphi,\psi,d(x))
 =q^{-n(2n+1)\cdot d(F)/2}
  \varepsilon(\text{\rm Ad}\circ\varphi,\psi_F,d_F(x)).
$$
Put 
$$
 \varepsilon(\ast,\psi_F)=\varepsilon(\ast,\psi_F,d_F(X)),
 \quad
 \lambda(K/F,\psi_F)
 =\lambda(K/F,\psi_F,d_F(x),d_K(x))
$$ 
for the sake of simplicity. By
\eqref{eq:decomposition-of-ad-varphi-into-wedge-square-and-varphi-1}
and Theorem \ref{th:structure-of-wedge-square-of-varphi-1}, we have 
$$
 \text{\rm Ad}\circ\varphi=\Pi_1\oplus\Pi_2\oplus\Pi_3
$$
with 
$\Pi_1=\bigoplus_{\pi(\tau)\neq 1}\pi^{\dim\pi}$ and 
$$
 \Pi_2
 =\text{\rm Ind}_{K^{\times}}^{W_{K/F}}\widetilde\vartheta
   \oplus
   \bigoplus_{\{\gamma\neq\gamma^{-1}\}\subset\Gamma}
   \text{\rm Ind}_{K^{\times}}^{W_{K/F}}
                          \widetilde\vartheta_{\gamma},
 \qquad
 \Pi_3
 =\bigoplus_{\stackrel{\scriptstyle 1,\tau\neq\gamma\in\Gamma}
                       {\gamma^2=1}}
   \text{\rm Ind}_{W_{K/K_{\gamma}}}^{W_{K/F}}\chi_{\gamma},
$$
were $\Pi_3$ appears only if $|H|=4$. Note also that we can change
in $\Pi_2$, the definition of $\widetilde\vartheta_{\gamma}$ to 
$\widetilde\vartheta_{\gamma}(x)=\widetilde\vartheta(x^{1-\gamma})$
for $\gamma\in\Gamma$ such that $\gamma\neq\gamma^{-1}$ by replacing
$\gamma$ with $\tau\gamma$. 
Now we have
$$
 \varepsilon(\text{\rm Ad}\circ\varphi,\psi_F)
 =\varepsilon(\Pi_1,\psi_F)\cdot
  \varepsilon(\Pi_2,\psi_F)\cdot
  \varepsilon(\Pi_3,\psi_F).
$$
Since 
$\Pi_1=\text{\rm Ind}_{W_K}^{W_F}\text{\bf 1}_{W_K}
      -\text{\rm Ind}_{W_{K_+}}^{W_F}\text{\bf 1}_{W_{K_+}}$, we have
\begin{align*}
 &\varepsilon(\Pi_1,\psi_F)
 =\varepsilon\left(
    \text{\rm Ind}_{W_K}^{W_F}\text{\bf 1}_{W_K},\psi_F\right)\cdot
   \varepsilon\left(
    \text{\rm Ind}_{W_{K_+}}^{W_F}\text{\bf 1}_{W_{K_+}}
                   \psi_F\right)^{-1}\\
 &=\lambda(K/F,\psi_F)\varepsilon(\text{\bf 1}_{W_K},\psi_K)
   \cdot
   \lambda(K_+/F,\psi_F)^{-1}
    \varepsilon(\text{\bf 1}_{W_{K_+}},\psi_{K_+})^{-1}
\end{align*}
and 
$$
 \varepsilon(\text{\bf 1}_{W_K},\psi_K)
 =q_K^{d(K)/2}
 =q^{f(e\cdot d(F)+e-1)/2}
$$
where $q_K=|O_K/\frak{p}_K|=q^f$. Since $K/F$ is tamely ramified, we
have $d(K)=e\cdot d(F)+e-1$. Similarly we have
$$
 \varepsilon(\text{\bf 1}_{W_{K_+}},\psi_{K_+})
 =q^{f_+(e_+\cdot d(F)+e_+-1)/2}.
$$
On the other hand, we have
$$
 \varepsilon(\Pi_2,\psi_F)
 =\varepsilon(\widetilde\vartheta,\psi_K)
  \prod_{\{\gamma\neq\gamma^{-1}\}\subset\Gamma}
  \varepsilon(\widetilde\vartheta_{\gamma},\psi_K)
  \times
  \begin{cases}
   \lambda(K/F,\psi_F)^n&:|H|=2,\\
   \lambda(K/F,\psi_F)^{n-1}&:|H|=4.
  \end{cases}
$$
Now we have
$$
 \varepsilon(\widetilde\vartheta,\psi_K)
 =G_{\psi_K}\left(
   \widetilde\vartheta^{-1},\varpi_K^{-(d(K)+f(\widetilde\vartheta))}
            \right)
  \cdot\widetilde\vartheta(\varpi_K)^{d(K)+f(\widetilde\vartheta)}
  \cdot q_K^{(d(K)+f(\widetilde\vartheta))/2}.
$$
Since $\widetilde\vartheta|_{K_+^{\times}}=1$ and 
$K=K_+(\beta)$, Theorem 3 of \cite{Frohlich-Queyrut1973} says that
$$
 G_{\psi_K}\left(
   \widetilde\vartheta^{-1},\varpi_K^{-(d(K)+f(\widetilde\vartheta))}
            \right)
  \cdot\widetilde\vartheta(\varpi_K)^{d(K)+f(\widetilde\vartheta)}
 =\widetilde\vartheta(\beta)=\vartheta(-1).
$$
So we have
$$
 \varepsilon(\widetilde\vartheta,\psi_K)
 =\vartheta(-1)\cdot q^{f(e\cdot d(F)+e-1+f(\widetilde\vartheta))/2}.
$$
Similarly we have
$$
 \varepsilon(\widetilde\vartheta_{\gamma},\psi_K)
 =q^{f(e\cdot d(F)+e-1+f(\widetilde\vartheta_{\gamma}))/2}
$$
for $\gamma\in\Gamma$ such that $\gamma^2\neq 1$, since 
$\widetilde\vartheta_{\gamma}(\beta)=\vartheta((-1)^{1-\gamma})=1$. 

\numsubsubsection{}
\label{subsubsec:root-number-ramified}
Assume that $K/K_+$ is ramified. Then $K/F$ is totally ramified and 
$H=\{1,\tau=\delta^n\}$. Since $e=2n$ is even, we have
$$
 \lambda(K/F,\psi_F)
 =(-1)^{\frac{q-1}
             {2n}\cdot\frac{n(n+1)}2}
  G_{\psi_F}(\left(\frac{\ast}F\right),\varpi_F^{-(d(F)+1)})
$$
by Proposition
\ref{prop:lambda-factor-of-tamely-ramified-galois-ext}. Similarly we
have
$$
 \lambda(K_+/F,\psi_F)
 =\begin{cases}
   (-1)^{\frac{q-1}n\cdot\frac{n(n+2)}8}
   G_{\psi_F}(\left(\frac{\ast}F\right),\varpi_F^{-(d(F)+1)})
     &:\text{\rm $n$ is even},\\
   1 &:\text{\rm $n$ is odd}.
  \end{cases}
$$
So we have
$$
 \varepsilon(\Pi_1,\psi_F)
 =q^{(n\cdot d(F)+n)/2}\times
  \begin{cases}
   (-1)^{\frac{q-1}4}&:\text{\rm $n$ is even},\\
   (-1)^{\frac{q-1}2\cdot\frac{n+1}2}
   G_{\psi_F}(\left(\frac{\ast}F\right),\varpi_F^{-(d(F)+1)})
                     &:\text{\rm $n$ is odd}.
  \end{cases}
$$
By Proposition
\ref{prop:conductor-and-conjugate-triviality-of-tilde-vartheta}, we
have
$$
 f(\widetilde\vartheta)
 =\text{\rm Min}\{0<k\in\Bbb Z\mid
        \widetilde\vartheta(1+\frak{p}_K^k)=1\}
 =2n(r-1)
$$
and
$$
 f(\widetilde\vartheta_{\gamma})
 =\text{\rm Min}\{0<k\in\Bbb Z\mid
   \widetilde\vartheta_{\gamma}(1+\frak{p}_K^k)=1\}
 =2n(r-1)
$$
for $\gamma\in\Gamma$ such that $\gamma^2\neq 1$. Then we have
$$
 \varepsilon(\Pi_2,\psi_F)
 =\vartheta(-1)\cdot q^{n^2d(F)+n^2r-n/2}\lambda(K/F,\psi_F)^n
$$
and
$$
 \lambda(K/F,\psi_F)^n
 =(-1)^{\frac{q-1}2\cdot\frac{n(n+1)}2}
  G_{\psi_F}(\left(\frac{\ast}F\right),\varpi_F^{-(d(F)+1)})^n.
$$
Since
$$
 G_{\psi_F}(\left(\frac{\ast}F\right),\varpi_F^{-(d(F)+1)})^2
 =\left(\frac{-1}F\right)
 =(-1)^{\frac{q-1}2},
$$
we have
$$
 \lambda(K/F,\psi_F)^n=(-1)^{\frac{q-1}s\cdot\frac n2}\cdot
                       (-1)^{\frac{q-1}s\cdot\frac n2}=1
$$
if $n$ is even, and
\begin{align*}
 \lambda(K/F,\psi_F)^n
 &=(-1)^{\frac{q-1}2\cdot\frac{n+1}2}\cdot
   (-1)^{\frac{q-1}2\cdot\frac{n-1}2}
   G_{\psi_F}(\left(\frac{\ast}F\right),\varpi_F^{-(d(F)+1)})\\
 &=G_{\psi_F}(\left(\frac{\ast}F\right),-\varpi_F^{-(d(F)+1)})
\end{align*}
if $n$ is odd. So we finally get
\begin{align*}
 \varepsilon(\text{\rm Ad}\circ\varphi,\psi_F)
 &=\varepsilon(\Pi_1,\psi_F)\cdot\varepsilon(\Pi_2,\psi_F)\\
 &=\vartheta(-1)\cdot q^{n(2n+1)d(F)/2+n^2r}\times
   \begin{cases}
    (-1)^{\frac{q-1}4} &:\text{\rm $n$ is even},\\
    (-1)^{\frac{q-1}2\cdot\frac{n+1}2}
                       &:\text{\rm $n$ is odd}.
   \end{cases}
\end{align*}

\numsubsubsection{}
\label{subsubsec:root-number-unramified-unique-order-two-element}
Assume that $K/K_+$ is unramified and $|H|=2$. In this case,
Proposition
\ref{prop:order-two-element-in-tamely-ramified-galois-group} shows
that $e=e_+$ is odd, and $H=\{1,\tau\}$. 
Then $f=2f_+$ is even, since $ef=2n$. 
By Proposition \ref{prop:lambda-factor-of-tamely-ramified-galois-ext},
we have
$$
 \lambda(K/F,\psi_F)=(-1)^{(f-1)d(F)}=(-1)^{d(F)},
 \quad
 \lambda(K_+/F,\psi_F)=(-1)^{(f_+-1)d(F)}.
$$
So we have
$$
 \varepsilon(\Pi_1,\psi_F,d_F(x))
 =(-1)^{f_+\cdot d(F)}q^{(n\cdot d(F)+n-f_+)/2}.
$$
By Proposition
\ref{prop:conductor-and-conjugate-triviality-of-tilde-vartheta}, we
have
$$
 f(\widetilde\vartheta)
 =\text{\rm Min}\{0<k\in\Bbb Z\mid
    \widetilde\vartheta(1+\frak{p}_K^k)=1\}
 =e(r-1)+1,
$$
and 
\begin{align*}
 f(\widetilde\vartheta_{\gamma})
 &=\text{\rm Min}\{0<k\in\Bbb Z\mid
    \widetilde\vartheta_{\gamma}(1+\frak{p}_K^k)=1\}\\
 &=\begin{cases}
    e(r-1)&:\gamma\in\{\delta^{\pm 1},\delta^{\pm 2},\cdots,
                       \delta^{\pm\frac{e-1}2}\},\\
    e(r-1)+1&:\text{\rm otherwise}
   \end{cases}
\end{align*}
for a $\gamma\in\Gamma$ such that $\gamma\neq\gamma^{-1}$. 
So we have
$$
 \varepsilon(\Pi_2,\psi_F,d_F(x))
 =\vartheta(-1)\cdot(-1)^{n\cdot d(F)}
  q^{n^2(d(F)+r)-)n-f_+)/2}.
$$
Then finally we have
\begin{align*}
 \varepsilon(\text{\rm Ad}\circ\varphi,\psi_F)
 &=\varepsilon(\Pi_1,\psi_F)\cdot\varepsilon(\Pi_2,\psi_F)\\
 &=\vartheta(-1)\cdot q^{n(2n+1)d(F)/2+n^2r}.
\end{align*}

\numsubsubsection{}
\label{subsubsec:root-number-unramified-three-order-two-element}
Assume that $K/K_+$ is unramified and $|H|=4$. In this case, 
Proposition
\ref{prop:order-two-element-in-tamely-ramified-galois-group} shows
that $e=e_+, f=2f_+$ and $m$ are all even, and 
$$
 H=\{1,\tau,\delta^{\prime}=\delta^{\frac e2},
            \tau^{\prime}=\delta^{\prime}\tau\}.
$$
Put
$$
 E=K_+\cap K_{\tau^{\prime}}
  =K_{\tau^{\prime}}\cap K_{\delta^{\prime}}
  =K_{\delta^{\prime}}\cap K_+.
$$
Then $K/K_+, K/K_{\tau^{\prime}}$ and $K_{\delta^{\prime}}/E$ are
unramified quadratic extension, on the other hand 
$K_+/E, K_{\tau^{\prime}}/E$ and $K/K_{\delta^{\prime}}$ are ramified
quadratic extension. $K_0\subset K_{\delta^{\prime}}\subset K$ and 
$E_0=E\cap K_0$ is the maximal unramified subextension of $E/F$.

\unitlength 0.1in
\begin{picture}( 39.2000, 36.0500)( 16.4000,-38.1000)
%
\put(36,-4){\circle*{1}}
%
\put(36,-20){\circle*{1}}
%
\put(36,-38){\circle*{1}}
%
\put(36,-12.3){\circle*{1}}
%
\put(52,-12.3){\circle*{1}}
%
\put(20,-12.3){\circle*{1}}
%
\put(20,-30){\circle*{1}}
%
\special{pn 8}%
\special{pa 3610 390}%
\special{pa 5190 1220}%
\special{fp}%
\special{pa 5190 1220}%
\special{pa 3600 1990}%
\special{fp}%
\special{pa 3600 400}%
\special{pa 3600 3790}%
\special{fp}%
%
\special{pn 8}%
\special{pa 3600 390}%
\special{pa 2000 1200}%
\special{fp}%
\special{pa 2010 1200}%
\special{pa 3590 1990}%
\special{fp}%
\special{pa 2000 1210}%
\special{pa 2000 3000}%
\special{fp}%
\special{pa 2000 3020}%
\special{pa 3600 3810}%
\special{fp}%
\special{pa 3600 3810}%
\special{pa 3600 3810}%
\special{fp}%
\put(36.0000,-2.7000){\makebox(0,0){$K$}}%
\put(38.5000,-12.1000){\makebox(0,0){$K_+$}}%
\put(54.0000,-11.9000){\makebox(0,0){$K_{\tau^{\prime}}$}}%
\put(16.4000,-12.0000){\makebox(0,0){$K_{\delta^{\prime}}$}}%
\put(38.5000,-20.9000){\makebox(0,0){$E$}}%
\put(16.5000,-30.2000){\makebox(0,0){$K_0$}}%
\put(38.2000,-37.9000){\makebox(0,0){$E_0$}}%
\put(38.8000,-27.9000){\makebox(0,0){tot.ram.}}%
\put(44.8000,-17.6000){\makebox(0,0)[lb]{ram.}}%
\put(46.0000,-7.3000){\makebox(0,0)[lb]{unram.}}%
\put(24.1000,-7.2000){\makebox(0,0)[lb]{ram.}}%
\put(25.5000,-18.4000){\makebox(0,0)[lb]{unram.}}%
\put(13.4000,-21.4000){\makebox(0,0)[lb]{tot.ram.}}%
\put(23.2000,-35.9000){\makebox(0,0)[lb]{unram.}}%
\put(27.8000,-32.1000){\makebox(0,0)[lb]{$2$}}%
\put(21.2000,-21.5000){\makebox(0,0)[lb]{$\frac e2$}}%
\put(33.5000,-28.1000){\makebox(0,0){$\frac e2$}}%
\put(29.2000,-15.1000){\makebox(0,0)[lb]{$2$}}%
\put(42.4000,-14.7000){\makebox(0,0)[lb]{$2$}}%
\put(36.0000,-10.0000){\makebox(0,0){unram.}}%
\put(36.0000,-14.3000){\makebox(0,0){ram.}}%
\put(29.2000,-9.5000){\makebox(0,0){$2$}}%
\put(43.0000,-9.5000){\makebox(0,0){$2$}}%
\put(37.2000,-7.4000){\makebox(0,0){$2$}}%
\put(37.1000,-16.8000){\makebox(0,0){$2$}}%
\end{picture}%

By Proposition
\ref{prop:lambda-factor-of-tame-galois-with-unramified-order-two-ele},
we have
$$
 \lambda(K/F,\psi_F)=-(-1)^{\frac{q^{f_+}-1}2}.
$$
By Proposition
\ref{prop:conductor-and-conjugate-triviality-of-tilde-vartheta}, we
have
$$
 f(\widetilde\vartheta)
 =\text{\rm Min}\{0<k\in\Bbb Z\mid
   \widetilde\vartheta(1+\frak{p}_K^k)=1\}=e(r-1)+1,
$$
and
\begin{align*}
 f(\widetilde\vartheta_{\gamma})
 &=\text{\rm Min}\{0<k\in\Bbb Z\mid
    \widetilde\vartheta_{\gamma}(1+\frak{p}_K^k)=1\}\\
 &=\begin{cases}
    e(r-1)&:\gamma\in\{\delta^{\pm 1},\delta^{\pm 2},\cdots,
                       \delta^{\pm\frac{e-1}2}\},\\
    e(r-1)+1&:\text{\rm otherwise}
   \end{cases}
\end{align*}
for a $\gamma\in\Gamma$ such that $\gamma\neq\gamma^{-1}$. 
So we have
\begin{equation}
 \varepsilon(\Pi_1,\psi_F)\cdot\varepsilon(\Pi_2,\psi_F)\\
 =\vartheta(-1)\cdot q^{n(2n-1)d(F)/2+n(n-1)r+f_+/2}\cdot
   \lambda(K_+/F,\psi_F)^{-1}.
\label{eq:varepsilon-pi1-varepsilon-pi2-in-three-order-two-element}
\end{equation}

\begin{prop}\label{prop:varepsilon-factor-of-pi3}
\begin{align*}
 \varepsilon(\Pi_3,\psi_F)
 =q^{n\cdot d(F)+nr-f_+/2}
    &\cdot\lambda(K_{\tau^{\prime}}/F,\psi_F)\\
    &\times\begin{cases}
            -1&:\text{\rm $e/2$ is even},\\
            (-1)^{d(F)+\frac{q^{f_+}-1}2}
              &:\text{\rm $e/2$ is odd}.
           \end{cases}
\end{align*}
\end{prop}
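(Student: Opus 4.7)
The plan is to apply the induction formula for local $\varepsilon$-factors to each summand of $\Pi_3$. With $|H|=4$ and $H=\{1,\tau,\delta',\tau'\}$, the sum in $\Pi_3$ runs over $\gamma\in\{\delta',\tau'\}$, so
\[
\varepsilon(\Pi_3,\psi_F)\;=\;\prod_{\gamma\in\{\delta',\tau'\}}\lambda(K_\gamma/F,\psi_F)\cdot\varepsilon(\chi_\gamma,\psi_{K_\gamma}),
\]
reducing the task to four factors. I would keep $\lambda(K_{\tau'}/F,\psi_F)$ unevaluated (it survives in the final expression) and compute the remaining three explicitly.

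For each character $\varepsilon$-factor I would use the standard Gauss-sum formula
\[
\varepsilon(\chi_\gamma,\psi_{K_\gamma})\;=\;G_{\psi_{K_\gamma}}(\chi_\gamma^{-1},\varpi_{K_\gamma}^{-c_\gamma})\cdot\chi_\gamma(\varpi_{K_\gamma})^{c_\gamma}\cdot q_{K_\gamma}^{c_\gamma/2},\qquad c_\gamma=d(K_\gamma)+f(\chi_\gamma),
\]
and collapse the Gauss-sum part via Frohlich-Queyrut. The crucial verification is that $\chi_\gamma|_{E^\times}=1$ where $E=K^H$: for $x\in E^\times\subset K_+^\times$ one has $x^\tau=x$, hence $\widetilde\vartheta(x)=\vartheta(1)=1$; and the Hilbert symbol $(x,K/K_\gamma)=(N_{K_\gamma/E}(x),K/E)=(x^2,K/E)=1$ since $\text{\rm Gal}(K/E)$ has exponent two. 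Theorem 3 of \cite{Frohlich-Queyrut1973} applied to the quadratic extension $K_\gamma/E$ then yields $G_{\psi_{K_\gamma}}(\chi_\gamma^{-1},\varpi_{K_\gamma}^{-c_\gamma})\chi_\gamma(\varpi_{K_\gamma})^{c_\gamma}=\chi_\gamma(\alpha_\gamma)$, for any $\alpha_\gamma\in K_\gamma^\times$ with $\alpha_\gamma^\tau=-\alpha_\gamma$. Taking $\alpha_{\tau'}$ a uniformizer of the ramified quadratic extension $K_{\tau'}/E$ and $\alpha_{\delta'}$ a unit of the unramified quadratic extension $K_{\delta'}/E$ (with $\alpha_{\delta'}^2\in O_E^\times$ a non-square), and using $\widetilde\vartheta(\alpha_\gamma)=\vartheta(\alpha_\gamma^{1-\tau})=\vartheta(-1)$, I would obtain $\chi_{\tau'}(\alpha_{\tau'})=-\vartheta(-1)$ (the $-1$ coming from the unramified character on a uniformizer) and $\chi_{\delta'}(\alpha_{\delta'})=-(-1)^{(q^{f_+}-1)/2}\vartheta(-1)$ via the identity $(\alpha_{\delta'},K/K_{\delta'})=(-\alpha_{\delta'}^2,K_+/E)$ already used in the proof of Proposition \ref{prop:explicit-value-of-c(-1)}. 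Their product is $(-1)^{(q^{f_+}-1)/2}$.

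The conductors $f(\chi_\gamma)$ are determined by Proposition \ref{prop:conductor-and-conjugate-triviality-of-tilde-vartheta} transferred through the inclusion $K_\gamma^\times\subset K^\times$ together with the ramification data ($K/K_{\tau'}$ unramified, $K/K_{\delta'}$ ramified); the factor $\lambda(K_{\delta'}/F,\psi_F)$ is then handled through the tower $K_{\delta'}\supset K_0\supset F$:
\[
\lambda(K_{\delta'}/F,\psi_F)\;=\;\lambda(K_0/F,\psi_F)^{e/2}\cdot\lambda(K_{\delta'}/K_0,\psi_{K_0}).
\]
The first factor equals $(-1)^{(f-1)d(F)(e/2)}$ from the unramified extension $K_0/F$; since $f$ is even this collapses to $(-1)^{d(F)}$ when $e/2$ is odd and to $1$ when $e/2$ is even. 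The second factor, by Proposition \ref{prop:lambda-factor-of-tamely-ramified-galois-ext} applied to the tame totally ramified cyclic extension $K_{\delta'}/K_0$ of degree $e/2$, equals $1$ if $e/2$ is odd and a quadratic Gauss sum over $K_0$ (times an explicit sign) if $e/2$ is even; squaring this Gauss sum gives $(-1)^{(q^f-1)/2}=1$ since $f=2f_+$ is even. The analogous tower analysis for $K_{\tau'}/K_{\tau',0}/F$ shows that $|\lambda(K_{\tau'}/F,\psi_F)|=q^{f_+/2}$, which accounts for the explicit $q^{-f_+/2}$ pulled out in front of $\lambda(K_{\tau'}/F,\psi_F)$ in the stated formula.

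Assembling everything, the $q$-powers from the two $\varepsilon(\chi_\gamma,\psi_{K_\gamma})$'s together with the Gauss-sum contribution in $\lambda(K_{\delta'}/K_0,\psi_{K_0})$ consolidate into $q^{n\,d(F)+nr-f_+/2}$; the factor $\lambda(K_{\tau'}/F,\psi_F)$ survives untouched; and the residual $\pm 1$'s are to collapse to $-1$ when $e/2$ is even and to $(-1)^{d(F)+(q^{f_+}-1)/2}$ when $e/2$ is odd. The principal technical obstacle is precisely this final sign bookkeeping: coordinating the Frohlich-Queyrut output $(-1)^{(q^{f_+}-1)/2}$, the parity-dependent signs from Proposition \ref{prop:lambda-factor-of-tamely-ramified-galois-ext} applied to $K_{\delta'}/K_0$, the $(-1)^{d(F)}$ from the unramified tower in the $e/2$-odd case, and the additive-character transfer along $\psi_F\mapsto\psi_{K_0}\mapsto\psi_{K_{\delta'}}$ so that they collapse exactly as claimed — in particular showing that $\chi_{\delta'}(\alpha_{\delta'})$ combined with the quadratic Gauss-sum in $\lambda(K_{\delta'}/K_0,\psi_{K_0})$ yields $-1$ (resp.\ $(-1)^{d(F)+(q^{f_+}-1)/2}$) in the even (resp.\ odd) $e/2$ case.
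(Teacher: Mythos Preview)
Your overall architecture matches the paper's proof exactly: split $\Pi_3$ over $\gamma\in\{\delta',\tau'\}$, apply Fr\"ohlich--Queyrut to each $\varepsilon(\chi_\gamma,\psi_{K_\gamma})$ using $\chi_\gamma|_{E^\times}=1$, then multiply by the two $\lambda$-factors. Your verification of $\chi_\gamma|_{E^\times}=1$ via $(x,K/K_\gamma)=(x^2,K/E)=1$ is in fact cleaner than the paper's case-by-case check, and your values $\chi_{\tau'}(\alpha_{\tau'})=-\vartheta(-1)$, $\chi_{\delta'}(\alpha_{\delta'})=-(-1)^{(q^{f_+}-1)/2}\vartheta(-1)$ agree with the paper.

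There are, however, two genuine errors in the bookkeeping. First, the claim $|\lambda(K_{\tau'}/F,\psi_F)|=q^{f_+/2}$ is false: with the self-dual Haar measures fixed in \S\ref{subsec:epsilon-factor}, all $\lambda$-factors here have absolute value~$1$ (cf.\ Remark~\ref{remark:normalization-of-gauss-sum} and Propositions~\ref{prop:lambda-factor-of-tamely-ramified-galois-ext}, \ref{prop:lambda-factor-of-tame-galois-with-unramified-order-two-ele}). The $q^{-f_+/2}$ in the statement comes entirely from the conductor side of $\varepsilon(\chi_{\tau'},\psi_{K_{\tau'}})$: since $K/K_{\tau'}$ is unramified one has $e(K_{\tau'}/F)=e$, $f(K_{\tau'}/F)=f_+$, hence $d(K_{\tau'})=e\,d(F)+e-1$ and $f(\chi_{\tau'})=e(r-1)$, giving $q_{K_{\tau'}}^{c_{\tau'}/2}=q^{(n\,d(F)+nr-f_+)/2}$; the companion term for $\delta'$ contributes $q^{(n\,d(F)+nr)/2}$.

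Second, your tower computation of $\lambda(K_{\delta'}/F,\psi_F)$ is incomplete in the $e/2$ even case: observing that the square of the quadratic Gauss sum over $K_0$ equals $(-1)^{(q^f-1)/2}=1$ only pins it down to $\pm1$, not the actual sign. The paper avoids this by applying Proposition~\ref{prop:lambda-factor-of-tame-galois-with-unramified-order-two-ele} directly to $K_{\delta'}/F$ (valid since $K_{\delta'}/E$ is an unramified quadratic subextension), which yields $\lambda(K_{\delta'}/F,\psi_F)=-(-1)^{(q^{f_+}-1)/2}$ when $e/2$ is even and $(-1)^{d(F)}$ when $e/2$ is odd; multiplying this against the Fr\"ohlich--Queyrut output $(-1)^{(q^{f_+}-1)/2}$ then gives exactly the claimed dichotomy. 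You should replace your tower argument by this direct invocation.
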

\begin{proof}
We have
$$
 \varepsilon(\Pi_3,\psi_F)
 =\prod_{\gamma\in\{\delta^{\prime}, \tau^{\prime}\}}
   \lambda(K_{\gamma}/F,\psi_F)\cdot
   \varepsilon(\widetilde\chi_{\gamma},\psi_{K_{\gamma}})
$$
where 
$\widetilde\chi_{\gamma}(x)
 =(x,K/K_{\gamma})\cdot\widetilde\vartheta(x)$ 
($x\in K_{\gamma}^{\times}$). 

\vspace{2mm}

\noindent
1) The case $\gamma=\tau^{\prime}$. Since $K/K_{\gamma}$ is unramified,
we have
$$
 (x,K/K_{\gamma})=(-1)^{\text{\rm ord}_{K_{\gamma}}(x)}
 \quad
 (x\in K_{\gamma}^{\times})
$$
and $N_{K/K_{\gamma}}(1+\frak{p}_K^k)=1+\frak{p}_{K_{\gamma}}^k$ 
($0<k\in\Bbb Z$). Then we have
$$
 f(\chi_{\gamma})
 =\text{\rm Min}\{0<k\in\Bbb Z\mid
   \chi_{\gamma}(1+\frak{p}_{K_{\gamma}}^k)=1\}
 =e(r-1)
$$
because $\chi_{\gamma}(1+\frak{p}_{K_{\gamma}}^k)=1$ if and only if 
$\widetilde\vartheta(x^{1-\delta^{\prime}})=1$ for all 
$x\in 1+\frak{p}_K^k$ which is equivalent to $k\geq e(r-1)$ by 
Proposition
\ref{prop:conductor-and-conjugate-triviality-of-tilde-vartheta}. 
Since $K_{\gamma}/E$ is ramified quadratic extension, we have 
$K_{\gamma}=E(\sqrt{\varpi_E})$ where $\varpi_E$ is a prime element of
$E$. Then $\widetilde\chi_{\gamma}|_{E^{\times}}=1$ and 
$$
 \widetilde\chi_{\gamma}(\sqrt{\varpi_E})
 =(\sqrt{\varpi_E},K/K_{\gamma})\cdot
  \widetilde\vartheta(\sqrt{\varpi_E})
 =-\vartheta(-1),
$$
hence we have
$$
 G_{\psi_{K_{\gamma}}}(\widetilde\chi_{\gamma}^{-1},
  -\varpi_{K_{\gamma}}^{-(d(K_{\gamma})+f(\widetilde\chi_{\gamma}))})
 \cdot
 \widetilde\chi_{\gamma}(\varpi_{K_{\gamma}})
             ^{d(K_{\gamma})+f(\widetilde\chi_{\gamma})}
 =-\vartheta(-1)
$$
by Theorem 3 of \cite{Frohlich-Queyrut1973}. Then we have
$$
 \varepsilon(\widetilde\chi_{\gamma},\psi_{K_{\gamma}})
 =-\vartheta(-1)\cdot q^{(n\cdot d(F)+nr-f_+)/2}.
$$

\vspace{3mm}

\noindent
2) The case $\gamma=\delta^{\prime}$. In this case $K/K_{\gamma}$ is
ramified quadratic extension. Then we have
$$
 f(\widetilde\chi_{\gamma})
 =\text{\rm Min}\{0<k\in\Bbb Z\mid
   \widetilde\chi_{\gamma}(1+\frak{p}_{K_{\gamma}}^k)=1\}
 =\frac e2\cdot (r-1)+1
$$
because $\widetilde\chi_{\gamma}(1+\frak{p}_{K_{\gamma}}^k)=1$ if and
only if $\widetilde\vartheta(x^{1-\tau^{\prime}})=1$ for all 
$x\in 1+\frak{p}_K^{2k}$ which is equivalent to 
$k\geq \frac e2\cdot (r-1)+1$. There exists a prime element 
$\varpi_{K_{\gamma}}$ of $K_{\gamma}$ such that 
$K=K_{\gamma}(\sqrt{\varpi_{K_{\gamma}}})$, and we have
$$
 (\varpi_{K_{\gamma}},K/K_{\gamma})
 =(-1,K/K_{\gamma})
 =(-1)^{\frac{q^f-1}2}
 =1
$$
since $f=2f_+$ is even. On the other hand $K_{\gamma}/E$ is
unramified quadratic extension, and we have 
$$
 (\varepsilon,K/K_{\gamma})=(\varepsilon,K_{\gamma}/E)=1
$$
for all $\varepsilon\in O_E^{\times}$. Hence
$\widetilde\chi_{\gamma}|_{E^{\times}}=1$. If we put 
$K_{\gamma}=E(\sqrt{\varepsilon})$ with 
$\varepsilon\in O_E^{\times}$, then we have
$$
 G_{\psi_{K_{\gamma}}}(\widetilde\chi_{\gamma}^{-1},
  -\varpi_{K_{\gamma}}^{-(d(K_{\gamma})+f(\widetilde\chi_{\gamma}))})
 = \widetilde\chi_{\gamma}(\sqrt\varepsilon)
 =(\sqrt\varepsilon,K/K_{\gamma})\cdot\vartheta(-1)
$$
by Theorem 3 of \cite{Frohlich-Queyrut1973}. It is shown in the proof
of Proposition \ref{prop:explicit-value-of-c(-1)} that 
$$
 (\sqrt{\varepsilon},K/K_{\gamma})
 =-(-1)^{\frac{q^{f_+}-1}2}.
$$
So we have
$$
 \varepsilon(\widetilde\chi_{\gamma},\psi_{K_{\gamma}})
 =-(-1)^{\frac{q^{f_+}-1}2}\vartheta(-1)\cdot 
  q^{(n\cdot d(F)+nr)/2}.
$$
Since $K_{\gamma}/E$ is unramified quadratic extension, we have
$$
 \lambda(K_{\gamma}/F,\psi_F)
 =\begin{cases}
   -(-1)^{\frac{q^{f_+}-1}2}&:\text{\rm $e/2$ is even},\\
   (-1)^{d(F)}&:\text{\rm $e/2$ is odd}.
  \end{cases}
$$
by Proposition
\ref{prop:lambda-factor-of-tame-galois-with-unramified-order-two-ele}. 
\end{proof}

\begin{prop}\label{prop:lambda-k-tau-prime-times-lambda-k-+-inverse}
$$
 \lambda(K_{\tau^{\prime}}/F,\psi_F)\cdot
 \lambda(K_+/F,\psi_F)^{-1}
 =\begin{cases}
   1&:\text{\rm $e/2$ is even},\\
  (-1)^{d(F)+1}&:\text{\rm $e/2$ is odd}.
  \end{cases}
$$
\end{prop}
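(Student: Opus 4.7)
The plan is to reduce the identity to the intermediate field $E=K_+\cap K_{\tau^{\prime}}$ (the fixed field of $H$) via the transitivity of the $\lambda$-factor,
\begin{equation*}
  \lambda(L/F,\psi_F)=\lambda(L/E,\psi_E)\cdot\lambda(E/F,\psi_F)^{[L:E]}
  \qquad(\psi_E=\psi_F\circ T_{E/F}),
\end{equation*}
applied with $L=K_+$ and $L=K_{\tau^{\prime}}$. Since $[K_+:E]=[K_{\tau^{\prime}}:E]=2$, the factor $\lambda(E/F,\psi_F)^2$ is common and cancels, giving
\begin{equation*}
  \frac{\lambda(K_{\tau^{\prime}}/F,\psi_F)}{\lambda(K_+/F,\psi_F)}
  =\frac{\lambda(K_{\tau^{\prime}}/E,\psi_E)}{\lambda(K_+/E,\psi_E)}.
\end{equation*}
Because $K_+/E$ and $K_{\tau^{\prime}}/E$ are both tame ramified quadratic extensions, they share ramification invariants, so $d(K_+)=d(K_{\tau^{\prime}})$ and $q_{K_+}=q_{K_{\tau^{\prime}}}=q_E$. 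Writing $\lambda(L/E,\psi_E)=\varepsilon(\mathbf{1}_E,\psi_E)\varepsilon(\chi_{L/E},\psi_E)/\varepsilon(\mathbf{1}_L,\psi_L)$ (from $\mathrm{Ind}_L^E\mathbf{1}_L=\mathbf{1}_E\oplus\chi_{L/E}$), the trivial-character $\varepsilon$-factors cancel, reducing the problem to
\begin{equation*}
  \frac{\varepsilon(\chi_{K_{\tau^{\prime}}/E},\psi_E)}{\varepsilon(\chi_{K_+/E},\psi_E)},
\end{equation*}
where $\chi_{L/E}$ is the quadratic character of $E^{\times}$ cutting out $L$ by local class field theory.

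The second key input is the biquadratic structure $\mathrm{Gal}(K/E)=H\cong(\mathbb{Z}/2)^2$, whose three index-two subfields are exactly $K_+,\,K_{\tau^{\prime}},\,K_{\delta^{\prime}}$. By local class field theory the three corresponding nontrivial quadratic characters of $E^{\times}$ multiply to $1$, yielding $\chi_{K_{\tau^{\prime}}/E}=\chi_{K_+/E}\cdot\chi_{K_{\delta^{\prime}}/E}$. From the diagram in \ref{subsubsec:root-number-unramified-three-order-two-element}, $K_{\delta^{\prime}}/E$ is the unique \emph{unramified} quadratic extension, so $\chi_{K_{\delta^{\prime}}/E}$ is unramified quadratic with $\chi_{K_{\delta^{\prime}}/E}(\varpi_E)=-1$, while $\chi_{K_+/E}$ has tame conductor exponent $f(\chi_{K_+/E})=1$. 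Applying the standard unramified-twist identity
\begin{equation*}
  \varepsilon(\chi\nu,\psi_E)=\nu(\varpi_E)^{f(\chi)+d(E)}\varepsilon(\chi,\psi_E)
\end{equation*}
for $\nu$ unramified then gives
\begin{equation*}
  \frac{\varepsilon(\chi_{K_{\tau^{\prime}}/E},\psi_E)}{\varepsilon(\chi_{K_+/E},\psi_E)}=(-1)^{1+d(E)}.
\end{equation*}

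It remains to pin down $d(E)\bmod 2$. From the tower $F\subset E_0\subset E$, in which $E_0/F$ is unramified of degree $f_+$ and $E/E_0$ is totally tamely ramified of degree $e/2$, the tame different formula gives $d(E/F)=e/2-1$ and therefore $d(E)=d(E/F)+e(E/F)d(F)=(e/2)(1+d(F))-1$, whence $1+d(E)\equiv(e/2)(1+d(F))\pmod 2$. When $e/2$ is even the exponent is even and the ratio equals $1$; when $e/2$ is odd it reduces to $1+d(F)$ and the ratio equals $(-1)^{d(F)+1}$, exactly the claim. The main technical hurdle is to line up the transitivity of step one and the twist formula of step two with the normalization conventions for $\varepsilon$ and $\lambda$ used in appendix \ref{sec:local-factors} (in particular the self-dual Haar measures); once these bookkeeping points are checked, the remainder of the argument is an elementary parity calculation.
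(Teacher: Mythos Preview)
Your argument is correct and takes a genuinely different route from the paper's own proof. The paper does not reduce to $E$ via transitivity; instead it invokes the explicit Gauss-sum formula for $\lambda$-factors of tamely ramified Galois extensions (Proposition~\ref{prop:lambda-factor-of-tamely-ramified-galois-ext}) directly for $K_+/F$ and $K_{\tau'}/F$. Both fields have the same maximal unramified subextension $E_0$ and even ramification index $e$, so both $\lambda$-factors are Gauss sums over $E_0$ evaluated at prime elements $\varpi_+=N_{K_+/E_0}(\varpi_{K_+})$ and $\varpi_{\tau'}=N_{K_{\tau'}/E_0}(\varpi_{K_{\tau'}})$. Choosing $\varpi_{K_{\tau'}}=\sqrt{\varepsilon}\cdot\varpi_{K_+}$ (with $K_{\delta'}=E(\sqrt{\varepsilon})$), the ratio of Gauss sums collapses to the Legendre symbol $\bigl(\tfrac{N_{E/E_0}(\varepsilon)}{E_0}\bigr)^{d(F)+1}$, and the paper then computes this symbol by norm-residue manipulations in the tower $E_0\subset M\subset K_+$ (with $(M:E_0)=2$), splitting into the cases $e/2$ even and $e/2$ odd.

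Your approach is more conceptual: the transitivity step and the biquadratic relation $\chi_{K_{\tau'}/E}=\chi_{K_+/E}\cdot\chi_{K_{\delta'}/E}$ let you replace the whole Gauss-sum and norm-residue computation by a single application of the unramified-twist formula for $\varepsilon$-factors, after which the case distinction becomes the parity of $d(E)+1=(e/2)(d(F)+1)$. This is shorter and avoids the explicit choice of prime elements. The paper's route, on the other hand, stays entirely within the explicit formulas already set up in Appendix~\ref{sec:local-factors} and makes the dependence on the concrete generators $\varpi_{K_+}$, $\sqrt{\varepsilon}$ visible, which matches the style of the surrounding computations in Section~\ref{sec:root-number-conjecture}.
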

\begin{proof}
Since $K_{\delta^{\prime}}/E$ is an unramified quadratic extension, put 
$K_{\delta^{\prime}}=E(\sqrt{\varepsilon})$ with 
$\varepsilon\in O_E^{\times}$. Then 
${\sqrt\varepsilon}^{\tau^{\prime}}=-\sqrt\varepsilon$. Since $K_+/E$
is a ramified quadratic extension, we have $K_+=E(\varpi_{K_+})$ with 
a prime element $\varpi_{K_+}$ of $K_+$ such that 
$\varpi_{K_+}^2\in E$. Then 
${\varpi_{K_+}}^{\tau^{\prime}}=-\varpi_{K_+}$, and hence 
$\varpi_{K_{\tau^{\prime}}}=\sqrt\varepsilon\cdot\varpi_{K_+}$ is a
prime element of $K_{\tau^{\prime}}$ such that 
$K_{\tau^{\prime}}=E(\varpi_{K_{\tau^{\prime}}})$ and 
${\varpi_{K_{\tau^{\prime}}}}^2\in E$. Then 
$$
 \varpi_+=N_{K_+/E_0}(\varpi_{K_+})
 \;\;\text{\rm and}\;\;
 \varpi_{\tau^{\prime}}
 =N_{K_{\tau^{\prime}}/E_0}(\varpi_{K_{\tau^{\prime}}})
$$
are prime elements of $E_0$, since $K_+/E_0$ and
$K_{\tau^{\prime}}/E_0$ are totally ramified extension. On the other
hand, we have
$$
 N_{K_{\tau^{\prime}}/E}(\varpi_{K_{\tau^{\prime}}})
 =-{\varpi_{K_{\tau^{\prime}}}}^2
 =-\varepsilon\cdot{\varpi_{K_+}}^2
 =\varepsilon\cdot n_{K_+/E}(\varpi_{K_+}),
$$
and hence 
$\varpi_{\tau^{\prime}}=N_{E/E_0}(\varepsilon)\cdot\varpi_+$. 
Now we have
\begin{align*}
 &\lambda(K_{\tau^{\prime}}/F,\psi_F)\cdot
  \lambda(K_+/F,\psi_F)^{-1}\\
 =&G_{\psi_{E_0}}(\left(\frac{\ast}
                             {E_0}\right),
                 -\varpi_{\tau^{\prime}}^{-(d(E_0)+1)})\cdot
   G_{\psi_{E_0}}(\left(\frac{\ast}
                             {E_0}\right),
                 -\varpi_+^{-(d(E_0)+1)})^{-1}\\
 =&\left(\frac{N_{E/E_0}(\varepsilon)}
              {E_0}\right)^{d(F)+1}
\end{align*}
by Proposition
\ref{prop:lambda-factor-of-tamely-ramified-galois-ext}. Since
$K_+/E_0$ is a tamely totally ramified extension, and hence a cyclic
extension, let $E_0\subset M\subset K_+$ be the intermediate field
such that $(M:E_0)=2$. Then
$$
 \left(\frac{N_{E/E_0}(\varepsilon)}
              {E_0}\right)
 =(N_{E/E_0}(\varepsilon),M/E_0).
$$
If $e/2$ is even, then $M\subset E$ because $(E:E_0)=e/2$, and hence
$$
 \left(\frac{N_{E/E_0}(\varepsilon)}
              {E_0}\right)
 =(N_{M/E_0}\left(N_{E/M}(\varepsilon)\right),M/E_0)=1.
$$
Assume that $e/2$ is odd. Since $K_+/E$ is a ramified quadratic
extension and $\varepsilon\in O_E^{\times}$ is not square, we have
$$
 (\varepsilon,K_+/E)=\left(\frac{\varepsilon}E\right)=-1.
$$
On the other hand, we have
$$
 (\varepsilon,K_+/E)
 =\left(N_{E/E_0}(\varepsilon),K_+/E_0\right)
 \in\text{\rm Gal}(K_+/E)\subset\text{\rm Gal}(K_+/E_0)
$$
and $\left(N_{E/E_0}(\varepsilon),K_+/E_0\right)$ is mapped to 
$\left(N_{E/E_0}(\varepsilon),M/E_0\right)$ by the restriction mapping
$$
 \text{\rm Gal}(K_+/E_0)\to\text{\rm Gal}(M/E_0).
$$
$M\not\subset E$ Since $(E:E_0)=e/2$ is odd, hence $K_+=ME$ and 
$M\cap E=E_0$. Then the restriction mapping gives the isomorphism
$$
 \text{\rm Gal}(K_+/E)\,\tilde{\to}\,
 \text{\rm Gal}(M/E_0),
$$
hence we have
$$
 \left(\frac{N_{E/E_0}(\varepsilon)}
            {E_0}\right)
 =\left(N_{E/E_0},M/E_0\right)
 =\left(\varepsilon,K_+/E\right)=-1.
$$
\end{proof}

\eqref{eq:varepsilon-pi1-varepsilon-pi2-in-three-order-two-element}
and Proposition \ref{prop:varepsilon-factor-of-pi3} combined with 
Proposition \ref{prop:lambda-k-tau-prime-times-lambda-k-+-inverse}
gives
\begin{align*}
 \varepsilon(\text{\rm Ad}\circ\varphi,\psi_F)
 &=\varepsilon(\Pi_1,\psi_F)\cdot
   \varepsilon(\Pi_2,\psi_F)\cdot
   \varepsilon(\Pi_3,\psi_F)\\
 &=\vartheta(-1)\cdot w^{n(2n+1)\cdot d(F)/2+n^2r}\times
   \begin{cases}
    -1&:\text{\rm $e/2$ is even},\\
    -(-1)^{\frac{q^{f_+}-1}2}&:\text{\rm $e/2$ is odd}.
   \end{cases}
\end{align*}
Since $K_+/F$ is a tamely ramified extension such that $e(K_+/F)=e$
and $f(K_+/F)=f_+$, and $e$ is even, $e/2$ divides
$(q^{f_+}-1)/2$. Hence $(-1)^{\frac{q^{f_+}-1}2}=1$ if $e/2$ is
even. The proof of the formula of Theorem
\ref{th:varepsilon-factor-of-adjoint-representation} is completed.

\subsection{Verification of root number conjecture}
\label{subsec:verification-of-root-number-onjecture}
Let $D$ be the maximal torus of $Sp_{2n}$ consisting of the diagonal
matrices. The group $X^{\vee}(D)$ of the one-parameter subgroups of
$D$ is identified with $\Bbb Z^n$ by $m\mapsto u_m$ where
$$
 u_m(t)=\begin{bmatrix}
         t^m&0\\
          0 &t^{-m}
        \end{bmatrix}
 \;\;\text{\rm with}\;\;
 t^m=\begin{bmatrix}
      t^{m_1}&      &       \\
             &\ddots&       \\
             &      &t^{m_n}
     \end{bmatrix}\in GL_n,
$$
or we will denote by $u_m=\sum_{i=1}^nm_i\cdot u_i$. Then the set of
the co-roots of $SP_{2n}$ with respect to $D$ is
$$
 \Phi^{\vee}(D)
 =\{\pm(u_i\pm u_j), \pm u_k\mid 1\leq i<j\leq n, 1\leq k\leq n\}.
$$
Now we have
\begin{align*}
 2\cdot\rho
 &=\sum_{1\leq i<j\leq n}(u_i-u_j)+\sum_{1\leq i<j\leq n}(u_i+u_j)
   +\sum_{k=1}^n2u_k\\
 &=2\sum_{i=1}^n\{2(n-i)+1\}\cdot u_i.
\end{align*}
So the special central element is
$$
 \epsilon=2\cdot\rho(-1)=-1_{2n}\in Sp_{2n}(F).
$$
If we recall 
$$
 \pi_{\beta,\theta}
 =\text{\rm ind}_{G(O_F)}^{G(F)}\delta_{\beta,\theta}
 \;\;\text{\rm with}\;\;
 \delta_{\beta,\theta}
 =\text{\rm Ind}_{G(O_F/\frak{p}_F^r;\beta)}^{G(O_F/\frak{p}_F^r)}
   \sigma_{\beta,\theta}
$$
and the construction of $\sigma_{\beta,\theta}$, we have
$$
 \pi_{\beta,\theta}(\epsilon)=\delta_{\beta,\theta}(\epsilon)
 =\sigma_{\beta,\theta}(\epsilon)
 =\theta(-1).
$$
Since $\vartheta=c\cdot\theta$, Theorem
\ref{th:varepsilon-factor-of-adjoint-representation} and Proposition 
\ref{prop:explicit-value-of-c(-1)} show that
$$
 w(\text{\rm Ad}\circ\varphi)
 =\theta(-1)\times\begin{cases}
                   1&:\text{\rm $K/K_+$ is unramified},\\
                  (-1)^{\frac{q-1}
                             {2n}\cdot\frac{n(n-1)}2}
                    &:\text{\rm $K/K_+$ is ramified}.
                  \end{cases}
$$
So we have proved the following theorem.

\begin{thm}\label{th:root-number-conjecture-for-sp(2n)}
If $K/F$ is not totally ramified or $K/F$ is totally ramified and
$$
 \frac{q-1}2\cdot(n-1)\equiv 0\npmod{4}, 
$$
then we have 
$w(\text{\rm Ad}\circ\varphi)=\pi_{\beta,\theta}(\epsilon)$.
\end{thm}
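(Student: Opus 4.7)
The plan is to show both sides of the claimed equality reduce to $\theta(-1)$ (up to the sign condition in the ramified case), by combining the computations that have already been carried out earlier in the paper.

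First I would identify the special central element. Since $G = Sp_{2n}$, the co-roots with respect to the diagonal torus are $\pm(u_i\pm u_j)$ and $\pm u_k$, so $2\rho = 2\sum_{i=1}^n(2(n-i)+1)u_i$. Evaluating at $-1$ gives $\epsilon = -1_{2n} \in Sp_{2n}(F)$, which is in the hyperspecial maximal compact $G(O_F)$. By the construction of $\delta_{\beta,\theta} = \operatorname{Ind}_{G(O_F/\mathfrak{p}_F^r;\beta)}^{G(O_F/\mathfrak{p}_F^r)}\sigma_{\beta,\theta}$ and of $\sigma_{\beta,\theta}$, and since $-1 \in G_\beta(O_F) \cap Z(G(O_F))$, a scalar in the inducing datum is transported to a scalar by $\operatorname{ind}_{G(O_F)}^{G(F)}$, giving $\pi_{\beta,\theta}(\epsilon) = \theta(-1)$.

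Next I would read off $w(\operatorname{Ad}\circ\varphi)$ by combining Theorem \ref{th:varepsilon-factor-of-adjoint-representation} with Proposition \ref{prop:explicit-value-of-c(-1)} via the identity $\vartheta(-1) = c(-1)\theta(-1)$, since $\vartheta = c\cdot\theta$. In the unramified cases one checks that $c(-1)$ precisely cancels the root-number factor depending on $|H|$: when $|H| = 2$ both are $1$, and when $|H| = 4$ both are $-(-1)^{(q^{f_+}-1)/2}$, so the product is $\theta(-1)$ in either subcase. This already gives the identity $w(\operatorname{Ad}\circ\varphi) = \theta(-1) = \pi_{\beta,\theta}(\epsilon)$ whenever $K/K_+$ is unramified, with no extra hypothesis needed.

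The remaining case is when $K/K_+$ is ramified, which forces $K/F$ to be totally ramified so that $2n \mid q-1$. Here $c(-1) = (-1)^{(q-1)/2}$ and the root-number factor is $(-1)^{\frac{q-1}{2n}\cdot\frac{n(n+1)}{2}}$, so $w(\operatorname{Ad}\circ\varphi) = \theta(-1)\cdot(-1)^{E}$ with exponent
\[
E = \frac{q-1}{2} + \frac{(q-1)(n+1)}{4}.
\]
The only real step is a parity check: $E - \frac{(q-1)(n-1)}{4} = q-1$, which is even, so $(-1)^E = (-1)^{(q-1)(n-1)/4}$, matching the expression stated just before the theorem. This sign equals $+1$ exactly when $\frac{q-1}{2}(n-1) \equiv 0 \pmod 4$, which is precisely the hypothesis of the theorem. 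The mild obstacle, essentially the only nontrivial point, is this congruence juggling between the two equivalent forms of the exponent; everything else is a direct concatenation of already-proven identities.
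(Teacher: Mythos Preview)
Your proposal is correct and follows essentially the same approach as the paper: compute $\epsilon=-1_{2n}$, deduce $\pi_{\beta,\theta}(\epsilon)=\theta(-1)$, and then combine Theorem~\ref{th:varepsilon-factor-of-adjoint-representation} with Proposition~\ref{prop:explicit-value-of-c(-1)} via $\vartheta(-1)=c(-1)\theta(-1)$ to reduce $w(\text{\rm Ad}\circ\varphi)$ to $\theta(-1)$ times the residual sign $(-1)^{(q-1)(n-1)/4}$ in the totally ramified case. Your explicit parity check $E-\frac{(q-1)(n-1)}{4}=q-1$ is a detail the paper leaves implicit, but the argument is otherwise identical.
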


This theorem says that the root number conjecture is valid if we
consider $\varphi$ as the Langlands parameter of the supercuspidal
representation $\pi_{\beta,\theta}$ under the required conditions.

\section{The case of $Sp_4(F)$}
\label{sec:case-of-sp(4)}
In this section, let us assume that $K/F$ is a quintic Galois
extension, and consider a candidate of the Langlands parameter of the
supercuspidal representation $\pi_{\beta,\theta}$ of $Sp_4(F)$
different from the parameter considered in the subsection 
\ref{subsec:l-parameter-associated-with-character-of-tame-elliptic-tori}.
Note that $\Gamma=\text{\rm Gal}(K/F)$ is a cyclic group if and only
if $K/F$ is unramified or totally ramified. 

The proofs are omitted because they are 
quite similar to those of the preceding sections.

\subsection{Another candidate for the Langlands parameter}
\label{subsec:another-candidate-for-langlands-parameter}
The character $\theta$ of $U_{K/K_+}$ which parametrizes the
supercuspidal representation $\pi_{\beta,\theta}$ defines the
character $\widetilde\theta$ of $K^{\times}$ by 
$\widetilde\theta(x)=\theta(x^{1-\tau})$. Then the representation
space $V_{\theta}$ of the induced representation 
$\text{\rm Ind}_{K^{\times}}^{W_{K/F}}\widetilde\theta$ has
$W_{K/F}$-quasi invariant anti-symmetric form
$$
 D_{\nu}(\varphi,\psi)
 =\sum_{\gamma\in\text{\rm Gal(K/F)}}\nu(\gamma)\cdot
   \widetilde\theta\left(\alpha_{K/F}(\gamma,\tau)\right)^{-1}
    \varphi(\gamma)\psi(\gamma\tau)
$$
where $\nu$ is a character of $G\text{\rm Gal}(K/F)$ such that
$\nu(\tau)=-1$ (c.f. appendix
\ref{sec:symmetric-or-anti-symmetric-form-on-induced-rep-of-weil-groip}). 
Let us identify $GSp(V_{\theta},D_{\nu})$ with $GSp_4(\Bbb C)$ by
means of the symplectic basis 
$\{u_{\rho},v_{\rho}\}_{\dot\rho\in\Gamma/\langle\tau\rangle}$. Then
we have a group homomorphism
\begin{equation}
 \varphi:W_F\xrightarrow{\text{\rm can.}}
         W_{K/F}
          \xrightarrow{
           \text{\rm Ind}_{K^{\times}}^{W_{K/F}}\widetilde\theta}
         GSp_4(\Bbb C)
          \xrightarrow{(\ast)}
         SO_5(\Bbb C)
\label{eq:representation-of-weil-group-to-so(5)-sp(4)-case}
\end{equation}
where $(\ast)$ is the accidental surjection. The admissible
representation of the Weil-Deligne group $W_F\times SL_2(\Bbb C)$ to
$SO_5(\Bbb C)$ corresponding to the triple 
$(\varphi,SO_5(\Bbb C),0)$ as explained in appendix 
\ref{subsec:weil-deligne-group} is
\begin{equation}
 W_F\times SL_2(\Bbb C)\xrightarrow{\text{\rm proj.}}
 W_F\xrightarrow{\varphi}
 SO_5(\Bbb C)
\label{eq:another-parameter-sp(4)-case}
\end{equation}
which is also denoted by $\varphi$.

\subsection{Formal degree conjecture}
\label{subsec:formal-degree-conjecture-for-another-langlands-parametr}
By writing down the parameter 
\eqref{eq:another-parameter-sp(4)-case} explicitly as in the
subsection 
\ref{subsec:gamma-factor-of-adjoint-representation}, 
we have
\begin{equation}
 Z_{SO_5(\Bbb C)}(\text{\rm Im}\varphi)\simeq
 =\begin{cases}
   \Bbb Z/2\Bbb Z&:\text{\rm $K/F$ is unramified or totally ramified},\\
   \Bbb Z/2\Bbb Z\times\Bbb Z/2\Bbb Z&:\text{\rm otherwise}
  \end{cases}
\label{eq:order-of-centralizer-sp(4)-case}
\end{equation}
and
\begin{equation}
 L(\varphi,\text{\rm Ad},s)
 =\begin{cases}
   1                 &:\text{\rm $K/K_+$ is ramified,}\\
   \frac 1{1+q^{-f_+s}} &:\text{\rm $K/K_+$ is unramified.}
  \end{cases}
\label{eq:l-factor-sp(4)-case}
\end{equation}
The Artin conductor of $\text{\rm Ad}\circ\varphi$ is
\begin{equation}
 a(\text{\rm Ad}\circ\varphi)=8r.
\label{eq:artin-conductor-sp(4)O-case}
\end{equation}
Then
\begin{equation}
 \frac 1{\left|Z_{SO_5(\Bbb C)}(\text{\rm Im}\varphi)\right|}
 \cdot\left|\frac{\gamma(\varphi,\text{\rm Ad},0)}
                 {\gamma(\varphi_0,\text{\rm Ad},0)}\right|
\label{eq:formal-degree-conjecture-sp(4)-case}
\end{equation}
gives the formal degree of the supercuspidal representation 
$\pi_{\beta,\theta}$ given by 
\eqref{eq:formal-degree-of-cuspidal-rep-of-sp(2n)-wrt-euler-poincare} 
if $K/F$ is unramified or totally ramified. If $K/F$ is ramified not
totally ramified, this
is not the case, that is, the order of the centralizer 
$Z_{SO_5(\Bbb C)}(\text{\rm Im}\varphi)$ is twice as big as required, 
in other words, the image $\text{\rm Im}(\varphi)$ of the parameter is
too small.

\subsection{The root number conjecture}
\label{subsec:root-number-conjecture-for-another-langlands-parameter}
Since the parameter \eqref{eq:another-parameter-sp(4)-case} failed the
formal degree conjecture if $K/F$ is ramified not totally ramified, we
will consider in this subsection, the root number conjecture in the
case of $K/F$ being unramified or totally ramified. 

In this case $K/F$ is a cyclic extension. So we put 
$\text{\rm Gal}(K/F)=\langle\rho\rangle$ so that $\tau=\rho^2$. Then
the representation 
\eqref{eq:representation-of-weil-group-to-so(5)-sp(4)-case} has a
decomposition $\varphi=\varphi_1\oplus\det\varphi_1$ with
$$
 \varphi_1:W_F\xrightarrow{\text{\rm can.}}W_{K/F}
   \xrightarrow{
    \text{\rm Ind}_{K^{\times}}^{W_{K/F}}\widetilde\theta_{\rho}}
   O_4(\Bbb C)
$$
where $\widetilde\theta_{\rho}(x)=\widetilde\theta(x^{1-\rho})$ 
($x\in K^{\times}$). Then we have
$$
 \text{\rm Ad}\circ\varphi
 =\bigoplus_{\stackrel{\chi\in\widehat\Gamma}
                      {\chi(\tau)\neq 1}}\chi\oplus
   \text{\rm Ind}_{K^{\times}}^{W_{K/F}}\widetilde\theta^2\oplus
   \text{\rm Ind}_{K^{\times}}^{W_{K/F}}\widetilde\theta_{\rho}.
$$
The epsilon factor with respect to the additive character and the Haar
measure normalized as in Theorem
\ref{th:varepsilon-factor-of-adjoint-representation} is
$$
 \varepsilon(\varphi,\text{\rm Ad},\psi,d(x))
 =w(\text{\rm Ad}\circ\varphi)\cdot q^{4r}
$$
with the root number
$$
 w(\text{\rm Ad}\circ\varphi)
 =\begin{cases}
   1&:\text{\rm $K/F$ is unramified,}\\
   (-1)^{\frac{q-1}4}&:\text{\rm $K/F$ is totally ramified.}
  \end{cases}
$$
This means that the root number conjecture is valid if and only if
$$
 \theta(-1)
 =\begin{cases}
   1&:\text{\rm $K/F$ is unramified,}\\
   (-1)^{\frac{q-1}4}&:\text{\rm $K/F$ is totally ramified.}
  \end{cases}
$$
In other words, the parameter \eqref{eq:another-parameter-sp(4)-case}
is note the Langlands parameter of the supercuspidal representation 
$\pi_{\beta,\theta}$ in general.

\appendix
\section{Local factors}
\label{sec:local-factors}
Fix an algebraic closure $F^{\text{\rm alg}}$ of $F$ in which we will
take every algebraic extensions of $F$. Put
$$
 \nu_F(x)=(F(x):F)^{-1}\text{\rm ord}_F(N_{F(x)/F}(x))
 \;\text{\rm for $\forall x\in F^{\text{\rm alg}}$}
$$
and
$$
 O_K=\{x\in F^{\text{\rm alg}}\mid\nu_F(x)\geq 0\},
 \quad
 \frak{p}_K=\{x\in F^{\text{\rm alg}}\mid\nu(F)(x)>0\}.
$$
Then $\Bbb K=O_K/\frak{p}_K$ is an algebraic extension of 
$\Bbb F=O_F/\frak{p}_F$. If $K/F$ is a finite extension, fix a
generator $\varpi_K\in O_K$ of $\frak{p}_K$.

\subsection{Weil group}
\label{subsec:weil-group}
Let $F^{\text{\rm ur}}$ be the maximal unramified extension of $F$ and 
$\text{\rm Fr}\in\text{\rm Gal}(F^{\text{\rm ur}}/F)$ the
inverse of the Frobenius automorphism of $F^{\text{\rm ur}}$ over
$F$. The the Weil group $W_F$ of $F$ is 
$$
 W_F=\left\{\sigma\in\text{\rm Gal}(F^{\text{\rm alg}}/F)\mid
              \sigma|_{F^{\text{\rm ur}}}\in
               \langle\text{\rm Fr}\rangle\right\}    
$$
The
group $W_F$ is a locally compact group with respect to the topology
such that $I_F=\text{\rm Gal}(F^{\text{\rm alg}}/F^{\text{\rm ur}})$
is an open compact subgroup of $W_F$. 

Let $F^{\text{\rm ab}}$ be the maximal abelian extension of $F$ in
$F^{\text{\rm alg}}$. Then
$$
 \overline{[W_F,W_F]}
 =\text{\rm Gal}(F^{\text{\rm alg}}/F^{\text{\rm ab}})
$$
and
$$
 W_F/\overline{[W_F,W_F]}\xrightarrow[\text{\rm res.}]{\sim}
 \{\sigma\in\text{\rm Gal}(F^{\text{\rm ab}}/F)\mid
    \sigma|_{F^{\text{\rm ur}}}\in\langle\text{\rm Fr}\rangle\}.
$$
So, by the local class field theory, there exists a topological group
isomorphism 
$$
 \delta_F:F^{\times}\,\tilde{\to}\,W_F/\overline{[W_F,W_F]}
$$
such that $\delta_F(\varpi)|_{F^{\text{\rm ur}}}=\text{\rm Fr}$. 
Fix a $\widetilde{\text{\rm Fr}}\in\text{\rm Gal}(F^{\text{\rm alg}}/F)$
such that 
$\widetilde{\text{\rm Fr}}|_{F^{\text{\rm ab}}}
 =\delta_F(\varpi)$. Then
$$
 W_F=\langle\widetilde{\text{\rm Fr}}\rangle\ltimes
      \text{\rm Gal}(F^{\text{\rm alg}}/F^{\text{\rm ur}}).
$$
Let $K/F$ be a finite extension in $F^{\text{\rm alg}}$. Then 
$K^{\text{\rm ur}}=K\cdot F^{\text{\rm ur}}$ and 
$$
 W_K=\{\sigma\in\text{\rm Gal}(F^{\text{\rm alg}}/K)\mid
        \sigma|_{F^{\text{\rm ur}}}\in\langle\text{\rm Fr}^f\rangle
          \}
    =\{\sigma\in W_F\mid\sigma|_K=1\},
$$
where $f=(\Bbb K:\Bbb F)$, is a closed subgroup of $W_F$. 
If further $K/F$ is a Galois extension, then 
$\overline{[W_K,W_K]}\triangleleft W_F$ and
$$
 W_{K/F}=W_F/\overline{[W_K,W_K]}
        =\{\sigma\in\text{\rm Gal}(K^{\text{\rm ab}}/F)\mid
            \sigma|_{F^{\text{\rm ur}}}\in\langle\text{\rm Fr}\rangle
             \}
$$
is called the relative Weil group of $K/F$. Then we have a exact
sequence 
$$
 1\to K{\times}\xrightarrow{\delta_K}W_{K/F}
               \xrightarrow{\text{\rm res.}}\text{\rm Gal}(K/F)
               \to 1
$$
which is the group extension associated with the fundamental calss 
$$
 [\alpha_{K/F}]\in H^2(\text{\rm Gal}(K/F),K^{\times}),
$$
that is, we can identify 
$W_{K/F}=\text{\rm Gal}(K/F)\times K^{\times}$ with the group
operation
$$
 (\sigma,x)\cdot(\tau,y)
 =(\sigma\tau,\alpha_{K/F}(\sigma,\tau)\cdot xy).
$$
Let $K_0=K\cap F^{\text{\rm ur}}$ be the maximal unramified
subextension of $K/F$. Then the fundamental calss can be chosen 
so that $\alpha_{K/F}(\sigma,\tau)\in O_K^{\times}$ for all 
$\sigma, \tau\in\text{\rm Gal}(K/K_0)$, and the image $I_{K/F}$ of 
$I_F=\text{\rm Gal}(F^{\text{\rm alg}}/F^{\text{\rm ur}})\subset W_F$
under the canonical surjection $W_F\to W_{K/F}$ is identified with 
$\text{\rm Gal}(K/K_0)\times O_K^{\times}$.

\subsection{Artin conductor of representations of Weil group}
\label{subsec:artin-conductor}
Let $(\Phi,V)$ be a finite dimensional continuous complex
representation of the Weil group $W_F$. Since 
$I_F\cap\text{\rm Ker}(\Phi)$ is an open subgroup of 
$I_F=\text{\rm Gal}(F^{\text{\rm alg}}/F^{\text{\rm ur}})$, there
exists a finite Galois extension $K/F^{\text{\rm ur}}$ such that 
$text{\rm Gal}(F^{\text{\rm alg}}/K)\subset\text{\rm Ker}(\Phi)$. Let 
\begin{align*}
 V_k=&V_k(K/F^{\text{\rm ur}})\\
 =&\left\{\sigma\in\text{\rm Gal}(K/F^{\text{\rm ur}})\mid
           x^{\sigma}\equiv x\npmod{\frak{p}_K^{k+1}}
           \,\text{\rm for}\,\forall x\in O_K\right\}
\end{align*}
be the $k$-th ramification group of $K/F^{\text{\rm ur}}$ put
$$
 \widetilde V_k
 =\left[\text{\rm Gal}(F^{\text{\rm alg}}/F^{\text{\rm ur}})
         \xrightarrow{\text{\rm res.}}
          \text{\rm Gal}(K/F^{\text{\rm ur}})\right]^{-1}V_k
$$
for $k=0,1,2,3,\cdots$. So $\widetilde V_0=I_F$. The Artin conductor 
$a(\Phi)=a(V)$ is defined by 
$$
 a(\Phi)=a(V)
 =\sum_{k=0}^{\infty}\dim_{\Bbb C}(V/V^{\Phi(\widetilde V_k)})
                     \cdot
                     |V_0/V_k|^{-1}
$$
where 
$$
 V^{\Phi(\widetilde V_k)}
 =\{v\in V\mid \Phi(\widetilde V_k)v=v\}
 \quad
 (k=0,1,2,3,\cdots).
$$

\subsection{$\varepsilon$-factor of representations of Weil group}
\label{subsec:epsilon-factor}
Fix a continuous unitary character $\psi:F\to\Bbb C^{\times}$ of the
additive group $F$ and a Haar measure $d(x)$ of $F$.

Langlands and Deligne \cite{Deligne1973} show that, for every
finite dimensional continuous complex representation $(\Phi,V)$ of
$W_F$, there exists a complex constant 
$$
 \varepsilon(\Phi,\psi,d(x))=\varepsilon(V,\psi,d(x))
$$
which satisfies the following relations:
\begin{enumerate}
\item an exact sequence 
$$
 1\to V^{\prime}\to V\to V^{\prime\prime}\to 1
$$
implies
$$
 \varepsilon(V,\psi,d(x))
 =\varepsilon(V^{\prime},\psi,d(x))\cdot
  \varepsilon(V^{\prime\prime},\psi,d(x)),
$$
\item for a positive real number $r$
$$
 \varepsilon(\Phi,\psi,r\cdot d(x))
 =r^{\dim\Phi}\cdot\varepsilon(\Phi,\psi,d(x)),
$$
\item for any finite extension $K/F$ and a finite dimensional
  continuous complex representation $\phi$ of $W_K$, we have
$$
 \varepsilon\left(\text{\rm Ind}_{W_K}^{W_F}\phi,\psi,d(x)\right)
 =\varepsilon\left(\phi,\psi\circ T_{K/F},d^{(K)}(x)\right)\cdot
  \lambda(K/F,\psi)^{\dim\phi}
$$
where $d^{(K)}(x)$ is a Haar measure of $K$ and
$$
 \lambda(K/F,\psi)=\lambda(K/F,\psi,d(x),d^{(K)}(x))
 =\frac{\varepsilon\left(\text{\rm Ind}_{W_K}^{W_F}\text{\bf 1}_K,
                         \psi,d(x)\right)}
       {\varepsilon\left(\text{\bf 1}_K,\psi\circ T_{K/F},d^{(K)}(x)
                          \right)},
$$
\item if $\dim\Phi=1$, then $\Phi$ factors through 
      $W_F/\overline{[W_F,W_F]}$ and put
$$
 \chi:F^{\times}\xrightarrow{\delta_F}
      W_F/\overline{[W_F,W_F]}\xrightarrow{\Phi}
      \Bbb C~{\times}.
$$
Then we have
$$
 \varepsilon(\Phi,\psi,d(x))=\varepsilon(\chi,\psi,d(x))
$$
where the right hand side is the $\varepsilon$-factor of Tate 
\cite{Tate1979}.
\end{enumerate}

By the definition of $\lambda(K/F,\psi)$, we have the following chain
rule for the finite extensions:

\begin{prop}\label{prop:chain-relation-of-lambda-factor}
For finite extensions $F\subset K\subset L$, we have
$$
 \lambda(L/F,\psi)
 =\lambda(L/K,\psi\circ T_{K/F})\cdot
  \lambda(K/F,\psi)^{(L:K)}.
$$
\end{prop}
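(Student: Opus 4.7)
The plan is to derive the chain rule by applying the inductivity property (item 3 in the list of characterizing properties of $\varepsilon$) in two different ways to the representation $\text{\rm Ind}_{W_L}^{W_F}\text{\bf 1}_L$, and then comparing the resulting expressions. The key observation is the transitivity of the trace $T_{L/F}=T_{K/F}\circ T_{L/K}$, which glues the two applications together.

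First I would compute $\varepsilon(\text{\rm Ind}_{W_L}^{W_F}\text{\bf 1}_L,\psi,d(x))$ directly, which by the very definition of $\lambda(L/F,\psi)$ is equal to
$$
 \lambda(L/F,\psi)\cdot\varepsilon(\text{\bf 1}_L,\psi\circ T_{L/F},d^{(L)}(x)).
$$
Second, I would use the transitivity of induction, $\text{\rm Ind}_{W_L}^{W_F}\text{\bf 1}_L=\text{\rm Ind}_{W_K}^{W_F}(\text{\rm Ind}_{W_L}^{W_K}\text{\bf 1}_L)$, and apply property 3 to the outer induction (with $\phi=\text{\rm Ind}_{W_L}^{W_K}\text{\bf 1}_L$, noting $\dim\phi=(L:K)$) to obtain
$$
 \varepsilon(\text{\rm Ind}_{W_L}^{W_F}\text{\bf 1}_L,\psi,d(x))
 =\varepsilon(\text{\rm Ind}_{W_L}^{W_K}\text{\bf 1}_L,\psi\circ T_{K/F},d^{(K)}(x))
  \cdot\lambda(K/F,\psi)^{(L:K)}.
$$

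Next I would apply the definition of $\lambda(L/K,\psi\circ T_{K/F})$ to the $\varepsilon$-factor on the right hand side, giving
$$
 \varepsilon(\text{\rm Ind}_{W_L}^{W_K}\text{\bf 1}_L,\psi\circ T_{K/F},d^{(K)}(x))
 =\lambda(L/K,\psi\circ T_{K/F})\cdot
  \varepsilon(\text{\bf 1}_L,(\psi\circ T_{K/F})\circ T_{L/K},d^{(L)}(x)).
$$
Using the transitivity of the trace, $(\psi\circ T_{K/F})\circ T_{L/K}=\psi\circ T_{L/F}$, both computations produce the same factor $\varepsilon(\text{\bf 1}_L,\psi\circ T_{L/F},d^{(L)}(x))$, which is nonzero. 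Equating the two expressions for $\varepsilon(\text{\rm Ind}_{W_L}^{W_F}\text{\bf 1}_L,\psi,d(x))$ and cancelling this common factor yields the claimed identity.

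There is essentially no obstacle here; the only thing to be careful about is that the inductivity formula in property 3 requires a choice of Haar measure on the intermediate field, and one must ensure the same Haar measures $d^{(K)}(x)$ and $d^{(L)}(x)$ are used in both steps so that the cancellation is clean. Since the $\lambda$-factors in the statement are understood to depend on the fixed choices $d(x), d^{(K)}(x), d^{(L)}(x)$ of Haar measures, this is automatic.
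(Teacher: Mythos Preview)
Your proof is correct and is exactly the argument the paper has in mind: the paper does not write out a proof but simply remarks ``By the definition of $\lambda(K/F,\psi)$, we have the following chain rule,'' and your computation makes this explicit by unwinding the definition via transitivity of induction and of the trace.
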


If the Haar measure $d(x)$ of $F$ is normalized so that the Fourier
transform
$$
 \widehat\varphi(y)=\int_F\varphi(x)\cdot\psi(-xy)d(x)
$$
has inverse transform
$$
 \varphi(x)=\int_F\widehat\varphi(y)\cdot\psi(xy)d(y),
$$
in other words
$$
 \int_{O_F}d(x)=q^{-n(\psi)/2}\;\;\text{\rm with}\;\;
 \{x\in F\mid \psi(xO_F)=1\}=\frak{p}_F^{-n(\psi)},
$$
then the explicit value of the $\varepsilon$-factor
$\varepsilon(\chi,\psi,d(x))$ is  
\begin{enumerate}
\item if $\chi|_{O_F^{\times}}=1$, then 
\begin{equation}
 \varepsilon(\chi,\psi,d(x))
 =\chi(\varpi)^{n(\psi)}\cdot q^{n(\psi)/2},
\label{eq:explicit-value-of-epsilon-factor-trivial}
\end{equation}
\item if $\chi|_{O_F^{\times}}\neq 1$, then 
\begin{equation}
 \varepsilon(\chi,\psi,d(x))
 =G_{\psi}(\chi^{-1},-\varpi^{-(n(\psi)+f(\chi))})\cdot
  \chi(\varpi)^{n(\psi)+f(\chi)}\cdot q^{-(n(\psi)+f(\chi))/2}
\label{eq:explicit-value-of-epsilon-facot-non-trivial}
\end{equation}
where 
$f(\chi)=\text{\rm Min}\{0<n\in\Bbb Z\mid\chi(1+\frak{p}_F^n)=1\}$ and
$$
 G_{\psi}(\chi^{-1},\varpi^{-(n(\psi)+f(\chi))})
 =q^{-n/2}\sum_{\dot t\in(O_F/\frak{p}_F^{f(\chi)})^{\times}}
           \chi(t)^{-1}\psi\left(-\varpi^{-(n(\psi)+f(\chi))}t\right)
$$
is the Gauss sum.
\end{enumerate}

\begin{rem}\label{remark:normalization-of-gauss-sum}
The definition of the Gauss sum is normalized so that 
$$
 \left|G_{\psi}(\chi^{-1},-\varpi^{-(n(\psi)+f(\chi))})\right|=1.
$$
\end{rem}

We have

\begin{prop}\label{prop:scaling-o-epsilon-factor}
\begin{enumerate}
\item Put $\psi_a(x)=\psi(ax)$ for $a\in F^{\times}$. Then
$$
 \varepsilon(\Phi,\psi_a,d(x))
 =\det\Phi(a)\cdot|a|_F^{-\dim\Phi}\cdot
  \varepsilon(\Phi,\psi,d(x))
$$
where
$$
 \det\Phi:F^{\times}\xrightarrow{\delta_F}
          W_F/\overline{[W_F,W_F]}\xrightarrow{\det\circ\Phi}
          \Bbb C^{\times}.
$$
\item For any $s\in\Bbb C$ 
\begin{align*}
 \varepsilon(\Phi,\psi,d(x),s)
 &=\varepsilon(\Phi\otimes|\cdot|_F^s,\psi,d(x))\\
 &=\varepsilon(\Phi,\psi,d(x))\cdot
   q^{-s(n(\psi)\cdot\dim\Phi+a(\Phi))}.
\end{align*}
\end{enumerate}
\end{prop}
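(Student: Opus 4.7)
The plan is to prove both identities by reducing to the case where $\Phi$ is one-dimensional, using the additivity and inductivity properties of $\varepsilon$ that are listed above, and then invoking the uniqueness of the $\varepsilon$-factor (Deligne--Langlands): any function on finite-dimensional continuous representations of $W_F$ satisfying properties (1)--(4) coincides with $\varepsilon(\cdot,\psi,d(x))$. Concretely, for part (1) I define
$$
 \tilde\varepsilon(\Phi)=\det\Phi(a)^{-1}\cdot|a|_F^{\dim\Phi}\cdot
 \varepsilon(\Phi,\psi_a,d(x))
$$
and check that $\tilde\varepsilon(\cdot)=\varepsilon(\cdot,\psi,d(x))$. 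Additivity of $\tilde\varepsilon$ is immediate from $\det(V'\oplus V'')=\det V'\otimes\det V''$ and $\dim(V'\oplus V'')=\dim V'+\dim V''$; the scaling property (2) is trivial. For the inductivity property (3), the identity $\psi_a\circ T_{K/F}=(\psi\circ T_{K/F})_a$ (viewing $a\in F^{\times}\subset K^{\times}$) converts $\varepsilon(\mathrm{Ind}_{W_K}^{W_F}\phi,\psi_a,d(x))$ into $\varepsilon(\phi,(\psi\circ T_{K/F})_a,d^{(K)}(x))\lambda(K/F,\psi_a)^{\dim\phi}$; combined with the inductive hypothesis for $W_K$ and the standard formula $\det(\mathrm{Ind}_{W_K}^{W_F}\phi)=\epsilon_{K/F}^{\dim\phi}\cdot(\det\phi\circ\mathrm{Ver})$ relating the transfer to the inclusion $F^{\times}\hookrightarrow K^{\times}$, the correction factors collapse so that property (3) for $\psi$ holds.

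It remains to verify the $1$-dimensional case, i.e. property (4). For a character $\chi$ of $F^{\times}$, note that $\{x\in F\mid \psi_a(xO_F)=1\}=a^{-1}\mathfrak{p}_F^{-n(\psi)}$, so $n(\psi_a)=n(\psi)+\mathrm{ord}_F(a)$. If $\chi|_{O_F^{\times}}=1$, then $\chi(a)=\chi(\varpi)^{\mathrm{ord}_F(a)}$ and formula \eqref{eq:explicit-value-of-epsilon-factor-trivial} applied to $\psi_a$ gives the ratio $\chi(a)\cdot q^{\mathrm{ord}_F(a)/2}$; adjusting for the measure by property (2) produces $\chi(a)\cdot|a|_F^{-1}$. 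If $\chi$ is ramified, the same argument applies to \eqref{eq:explicit-value-of-epsilon-facot-non-trivial} after the change of variable $t\mapsto at$ in the Gauss sum, which transforms $G_{\psi_a}(\chi^{-1},-\varpi^{-(n(\psi_a)+f(\chi))})$ into $\chi(a)\cdot G_{\psi}(\chi^{-1},-\varpi^{-(n(\psi)+f(\chi))})$.

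For part (2), I again reduce to the $1$-dimensional case. Since $|\cdot|_F^s$ is unramified, $\chi\otimes|\cdot|_F^s$ has the same conductor as $\chi$, and a direct calculation using \eqref{eq:explicit-value-of-epsilon-factor-trivial}/\eqref{eq:explicit-value-of-epsilon-facot-non-trivial} yields
$$
 \varepsilon(\chi\otimes|\cdot|_F^s,\psi,d(x))
 =q^{-s(n(\psi)+f(\chi))}\cdot\varepsilon(\chi,\psi,d(x)),
$$
which matches the claimed exponent since $n(\psi)\dim\chi+a(\chi)=n(\psi)+f(\chi)$. For general $\Phi$ I use the projection formula $\mathrm{Ind}_{W_K}^{W_F}(\phi)\otimes|\cdot|_F^s=\mathrm{Ind}_{W_K}^{W_F}(\phi\otimes|\cdot|_K^s)$, which holds because class field theory identifies $|\cdot|_F^s|_{W_K}$ with $x\mapsto|N_{K/F}(x)|_F^s=|x|_K^s$. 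Applying the inductivity of $\varepsilon$ and the $1$-dimensional result for $W_K$, one must verify
$$
 n(\psi)\dim\Phi+a(\Phi)
 =f_{K/F}\bigl(n(\psi\circ T_{K/F})\dim\phi+a(\phi)\bigr);
$$
this follows from the conductor-discriminant formula $a(\mathrm{Ind}_{W_K}^{W_F}\phi)=f_{K/F}\bigl(a(\phi)+d(K/F)\dim\phi\bigr)$ combined with $n(\psi\circ T_{K/F})=e_{K/F}n(\psi)+d(K/F)$, the latter coming from the characterization of the different as the largest fractional ideal on which $T_{K/F}$ lands in $O_F$.

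The main obstacle will be in part (1): the careful bookkeeping in checking inductivity of the correction factor $\det\Phi(a)|a|_F^{-\dim\Phi}$, which requires invoking the nontrivial identity $\det(\mathrm{Ind}_{W_K}^{W_F}\phi)=\epsilon_{K/F}^{\dim\phi}\cdot(\det\phi\circ\mathrm{Ver}_{F\to K})$ relating the determinant of an induced representation to the transfer. The verification that the sign character $\epsilon_{K/F}$, the transfer $\mathrm{Ver}$, the $\lambda$-factor ratio $\lambda(K/F,\psi_a)/\lambda(K/F,\psi)$, and the inductive correction combine exactly to give the claimed identity is the delicate point; the compatibility is ultimately encoded in the behaviour of Tate's $\varepsilon$-factor on $1$-dimensional characters, so the Deligne--Langlands uniqueness theorem allows one to short-circuit these computations once properties (1)--(4) are verified for $\tilde\varepsilon$.
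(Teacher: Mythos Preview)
The paper does not give a proof of this proposition at all: it appears in the appendix \S\ref{sec:local-factors} as one of a list of standard facts about local $\varepsilon$-factors, stated without argument and implicitly attributed to Tate \cite{Tate1979} and Deligne \cite{Deligne1973}. So there is no ``paper's own proof'' to compare against.

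Your approach---reducing to the one-dimensional case and invoking the Deligne--Langlands uniqueness characterisation---is the standard one and is correct. The one-dimensional computations you sketch (tracking $n(\psi_a)=n(\psi)+\mathrm{ord}_F(a)$ through the explicit formulae \eqref{eq:explicit-value-of-epsilon-factor-trivial} and \eqref{eq:explicit-value-of-epsilon-facot-non-trivial}, and the conductor-discriminant identity for part (2)) are right. The only place where you have not quite closed the loop is the inductivity verification for $\tilde\varepsilon$ in part (1): you correctly identify that one needs $\det(\mathrm{Ind}_{W_K}^{W_F}\phi)(a)=\epsilon_{K/F}(a)^{\dim\phi}\cdot\det\phi(a)$ (via the transfer, with $a\in F^\times\subset K^\times$) together with the matching behaviour of $\lambda(K/F,\psi_a)$, but you stop at ``the correction factors collapse'' without writing out the cancellation. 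Since you then invoke uniqueness, which \emph{requires} property (3) to have been checked, this is a small gap in the write-up rather than in the strategy; the verification is routine once one uses that $\lambda(K/F,\psi_a)/\lambda(K/F,\psi)=\epsilon_{K/F}(a)\cdot|a|_F^{-(K:F)}\cdot|a|_K$ (which itself follows from applying part (1) to the induction of the trivial character).
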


\begin{prop}\label{prop:epsilon-factor-wrt-normaized-unramified-character}
If $n(\psi)=0$ and the Haar measure $d(x)$ is normalized so that 
$$
 \int_{O_F}d(x)=1,
$$
then
$$
 \varepsilon(\Phi,\psi,d(x))=w(\Phi)\cdot q^{a(\Phi)/2}
                            =w(V)\cdot q^{a(V)/2}
$$
with $w(\Phi)\in\Bbb C$ of absolute value one.
\end{prop}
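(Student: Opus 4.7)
The plan is to reduce via Brauer's induction theorem to the case of one-dimensional characters, where the explicit formulas \eqref{eq:explicit-value-of-epsilon-factor-trivial} and \eqref{eq:explicit-value-of-epsilon-facot-non-trivial} make the claim transparent. By Brauer, I would write $\Phi=\sum_i n_i\cdot\mathrm{Ind}_{W_{K_i}}^{W_F}\chi_i$ in the Grothendieck group of continuous complex representations of $W_F$, with $n_i\in\Bbb Z$, each $K_i/F$ a finite extension, and each $\chi_i$ a one-dimensional character of $W_{K_i}$. By additivity of $\varepsilon$ on exact sequences together with the inductivity property,
\[
\varepsilon(\Phi,\psi,d(x))=\prod_i\bigl[\varepsilon(\chi_i,\psi_{K_i},d^{(K_i)}(x))\cdot\lambda(K_i/F,\psi)\bigr]^{n_i}
\]
with $\psi_{K_i}=\psi\circ T_{K_i/F}$ and any choice of $d^{(K_i)}(x)$. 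The Artin conductor satisfies the analogous additivity and the inductivity identity $a(\mathrm{Ind}_{W_K}^{W_F}\chi)=f_{K/F}(d_K+a(\chi))$ for one-dimensional $\chi$ (where $d_K=v_K(\mathcal{D}_{K/F})$), so it suffices to handle each building block $\mathrm{Ind}_{W_K}^{W_F}\chi$ separately.

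For a one-dimensional $\chi$ on $W_K$, I would choose the self-dual measure with $\int_{O_K}d^{(K)}(x)=q_K^{-n(\psi_K)/2}$. From $n(\psi)=0$ and the characterization of the inverse different, one has $n(\psi_K)=d_K$. Applying \eqref{eq:explicit-value-of-epsilon-factor-trivial} and \eqref{eq:explicit-value-of-epsilon-facot-non-trivial} on $K$, the modulus is $|\varepsilon(\chi,\psi_K,d^{(K)}(x))|=q_K^{(n(\psi_K)+f(\chi))/2}=q_K^{(d_K+a(\chi))/2}$ (with $f(\chi)=a(\chi)=0$ in the unramified case), while the remaining factor is a Gauss sum (of modulus one by Remark \ref{remark:normalization-of-gauss-sum}) times $\chi(\varpi_K)$ to an integer power, and is therefore of modulus one since $\chi$ is unitary.

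Running the same analysis for $\mathbf{1}_K$, and for $\mathrm{Ind}_{W_K}^{W_F}\mathbf{1}_K$ (further reduced by Brauer over the Galois closure), fixes $|\lambda(K/F,\psi)|=1$ under the doubly self-dual normalization via its defining identity. Thus $|\varepsilon(\chi,\psi_K,d^{(K)}(x))\cdot\lambda(K/F,\psi)|=q_K^{(d_K+a(\chi))/2}=q^{f_{K/F}(d_K+a(\chi))/2}=q^{a(\mathrm{Ind}\chi)/2}$ by the conductor-discriminant formula. Taking the Brauer $\Bbb Z$-linear combination then gives $|\varepsilon(\Phi,\psi,d(x))|=q^{a(\Phi)/2}$, so that $w(\Phi):=\varepsilon(\Phi,\psi,d(x))\,q^{-a(\Phi)/2}$ automatically has modulus one, being built from modulus-one Gauss sums and unitary character values.

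The main obstacle is the careful bookkeeping of Haar-measure normalizations as they propagate through nested inductions $K_i/F$: one must convert consistently between $\int_{O_F}d(x)=1$ on $F$ and the self-dual measures on each $O_{K_i}$, and verify by Proposition \ref{prop:scaling-o-epsilon-factor} that the scaling corrections combine with the $\lambda$-factors so that the exponent of $q$ on the $F$-side is exactly $a(\mathrm{Ind}\chi)/2$. Once this compatibility is established, the unit-modulus claim $|w(\Phi)|=1$ follows immediately from $|G_\psi|=1$ and $|\chi(\varpi)|=1$.
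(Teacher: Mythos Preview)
The paper does not supply a proof of this proposition; it is recorded as a standard fact in the appendix. Your Brauer-induction outline is the classical argument and is correct for representations $\Phi$ of Galois type (finite image), which covers every representation to which the proposition is actually applied in the body of the paper.

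There is, however, a real gap at the step ``since $\chi$ is unitary.'' Brauer's theorem for a finite group yields one-dimensional characters of finite order, hence unitary, but you have not explained why $\Phi$ factors through a finite quotient of $W_F$. Under Definition~\ref{def:admisible-representation-of-weil-group} the condition that $\text{\rm Ker}(\Phi)$ be open in $W_F$ does \emph{not} force finite index: $\text{\rm Ker}(\Phi)$ may sit inside $I_F$, and then $\Phi(\widetilde{\text{\rm Fr}})$ has infinite order. In that generality the $\chi_i$ appearing in a Weil-group Brauer decomposition need not be unitary, and the conclusion $|w|=1$ can already fail in dimension one: for a ramified character $\chi=\chi_0\cdot|\cdot|_F^s$ with $\chi_0$ unitary and $\text{\rm Re}(s)\neq 0$ (whose kernel $1+\frak{p}_F^{f(\chi_0)}$ is open in $W_F^{\text{\rm ab}}$) one computes $|w(\chi)|=|\chi(\varpi_F)|^{f(\chi)}=q^{-\text{\rm Re}(s)\,f(\chi)}\neq 1$. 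The remedy is to add the hypothesis that $\Phi$ has finite image, so that genuine Brauer on $\text{\rm Gal}(L/F)$ applies and all $\chi_i$ have finite order; this hypothesis holds in every instance where the paper invokes the proposition. Once that is in place, your bookkeeping with $\lambda$-factors, the conductor--discriminant formula $a(\text{\rm Ind}_{W_K}^{W_F}\chi)=f_{K/F}(d_{K/F}+a(\chi))$, and the explicit one-dimensional $\varepsilon$-factors goes through as written.
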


When $K/F$ is a finite tamely ramified Galois extension, the
maximal unramified subextension $K_0=K\cap F^{\text{\rm ur}}$ is a
cyclic extension of $F$ and $K/K_0$ is also cyclic extension. So, by
means of Proposition \ref{prop:chain-relation-of-lambda-factor}, we
can give the explicit value of $\lambda(K/F,\psi)$. 

Let $\psi_F:F\to\Bbb C^{\times}$ be a continuous unitary character
such that
$$
 \{x\in F\mid\psi_F(xOF)=1\}=\mathcal{D}(F/\Bbb Q_p)^{-1}
 =\frak{p}_F^{-d(F)}
$$
and the Haar measure $d_F(x)$ on $F$ is normalized so that
$$
 \int_{O_F}d_F(x)=q^{-d(F)}.
$$
Let $K/F$ be a tamely ramified finite Galois extension, and put
$\psi_K=\psi_F\circ T_{K/F}$. Put
$$
 e=e(K/F)=(K:K_0),
 \quad
 f=f(K/F)=(K_0:F)
$$
where $K_0=K\cap F^{\text{\rm ur}}$ is the maximal unramified
subextension of $K/F$. Let
$$
 \left(\frac{\varepsilon}{K_0}\right)
 =\begin{cases}
   1&:\varepsilon\equiv\text{\rm square}\npmod{\frak{p}_{K_0}},\\
  -1&:\text{\rm otherwise}
  \end{cases}
 \qquad
 (\varepsilon\in O_{K_0}^{\times})
$$
be the Legendre symbol of $K_0$. Then we have

\begin{prop}\label{prop:lambda-factor-of-tamely-ramified-galois-ext}
\begin{align*}
 \lambda(K/F,\psi_F)
 &=\lambda(K/F,\psi_F,d_F(x),d_K(x))\\
 &=\begin{cases}
    (-1)^{\frac{q^f-1}e\cdot\frac{e(e+2)}8}\cdot
    G_{\psi_{K_0}}(\left(\frac{\ast}{K_0}\right),
                   \varpi_0^{-(d(K_0)+1)})
      &:e=\text{\rm even},\\
    (-1)^{(f-1)d(F)}
      &:e=\text{\rm odd}
   \end{cases}
\end{align*}
where $\varpi_0$ is a prime element of $K_0$ such that 
$\varpi_0\in N_{K/K_0}(K^{\times})$.
\end{prop}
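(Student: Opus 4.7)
The plan is to apply the chain rule of Proposition~\ref{prop:chain-relation-of-lambda-factor} to the tower $F\subset K_0\subset K$, which gives
\[
\lambda(K/F,\psi_F)=\lambda(K/K_0,\psi_{K_0})\cdot\lambda(K_0/F,\psi_F)^{e}.
\]
This reduces the problem to two cyclic sub-extensions: the unramified part $K_0/F$ of degree $f$, and the tamely totally ramified part $K/K_0$ of degree $e$ (so that $e\mid q^f-1$). In each case the induced representation $\text{\rm Ind}\,\mathbf{1}$ splits as a direct sum of one-dimensional characters, so each $\lambda$-factor becomes a product of abelian Tate $\varepsilon$-factors, which I would evaluate via \eqref{eq:explicit-value-of-epsilon-factor-trivial}--\eqref{eq:explicit-value-of-epsilon-facot-non-trivial} after rescaling from the self-dual Haar measure to $d_F$ (respectively $d_{K_0}$) by Proposition~\ref{prop:scaling-o-epsilon-factor}(2).

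For the unramified step, write $\text{\rm Ind}_{W_{K_0}}^{W_F}\mathbf{1}=\bigoplus_{i=0}^{f-1}\chi_0^i$, where $\chi_0$ is the unramified character of $F^{\times}$ with $\chi_0(\varpi_F)=\zeta$, a primitive $f$-th root of unity. A short rescaling calculation gives $\varepsilon(\chi_0^i,\psi_F,d_F)=\zeta^{i\,d(F)}$ together with $\varepsilon(\mathbf{1}_{K_0},\psi_{K_0},d_{K_0})=1$, so
\[
\lambda(K_0/F,\psi_F)=\zeta^{d(F)\cdot f(f-1)/2}=(-1)^{(f-1)d(F)},
\]
since $f(f-1)/2\equiv 0\pmod{f}$ when $f$ is odd and $\equiv f/2\pmod{f}$ when $f$ is even.

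For the totally ramified step, the hypothesis $\varpi_0\in N_{K/K_0}(K^{\times})$ can always be arranged because in a tame cyclic totally ramified extension the map $O_{K_0}^{\times}\hookrightarrow K_0^{\times}/N_{K/K_0}(K^{\times})$ is an isomorphism of cyclic groups of order $e$. Fix a character $\eta\colon K_0^{\times}\to\Bbb C^{\times}$ of exact order $e$, trivial on $N_{K/K_0}(K^{\times})$ and normalised by $\eta(\varpi_0)=1$; class field theory then gives $\text{\rm Ind}_{W_K}^{W_{K_0}}\mathbf{1}=\bigoplus_{j=0}^{e-1}\eta^{j}$. The restriction $\eta|_{O_{K_0}^{\times}}$ is inflated from $\omega^{(q^f-1)/e}$ on $\Bbb K_0^{\times}$, where $\omega$ is the Teichm\"uller lift, so $\eta^{j}(-1)=(-1)^{j(q^f-1)/e}$ and $\eta^{e/2}=\left(\frac{\ast}{K_0}\right)$ when $e$ is even. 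Each nontrivial $\eta^{j}$ has conductor one and $\eta^{j}(\varpi_0)=1$, so by \eqref{eq:explicit-value-of-epsilon-facot-non-trivial} and the measure rescaling,
\[
\varepsilon(\eta^{j},\psi_{K_0},d_{K_0})=G_{\psi_{K_0}}\!\bigl(\eta^{-j},-\varpi_0^{-(d(K_0)+1)}\bigr).
\]

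To finish, pair $j$ with $e-j$ in $\prod_{j=1}^{e-1}\varepsilon(\eta^{j})$ and apply the standard identity $\varepsilon(\chi,\psi)\varepsilon(\chi^{-1},\psi)=\chi(-1)$ to each pair. For $e$ odd every pair is free; since $q$ and $e$ are both odd the quantity $(q^f-1)/e$ is even, so $\eta^{j}(-1)=1$ for all $j$, the product collapses to $1$, and $\lambda(K/K_0,\psi_{K_0})=1$, which combined with $\lambda(K_0/F,\psi_F)^{e}=(-1)^{e(f-1)d(F)}=(-1)^{(f-1)d(F)}$ yields the formula. For $e$ even the fixed point $j=e/2$ survives and contributes $G_{\psi_{K_0}}\!\bigl(\left(\frac{\ast}{K_0}\right),\varpi_0^{-(d(K_0)+1)}\bigr)$ once the sign $\chi(-1)=(-1)^{(q^f-1)/2}$ from $G(\chi,-\alpha)=\chi(-1)G(\chi,\alpha)$ is absorbed into the Gauss-sum argument. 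The main obstacle is the sign bookkeeping from the remaining $(e-2)/2$ pairs: these produce $\prod_{j=1}^{e/2-1}\eta^{j}(-1)=(-1)^{(q^f-1)/e\cdot\sum_{j=1}^{e/2-1}j}=(-1)^{(q^f-1)(e-2)/8}$, and adding the $(-1)^{(q^f-1)/2}$ from the middle term yields the total exponent $(q^f-1)(e-2)/8+(q^f-1)\cdot 4/8=(q^f-1)(e+2)/8=\frac{q^f-1}{e}\cdot\frac{e(e+2)}{8}$, precisely the sign in the statement. Combining with $\lambda(K_0/F,\psi_F)^{e}=1$ (since $e$ is even) completes the proof.
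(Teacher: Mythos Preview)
Your approach is exactly the one the paper itself signals: immediately before stating the proposition the text observes that $K_0/F$ and $K/K_0$ are both cyclic and that ``by means of Proposition~\ref{prop:chain-relation-of-lambda-factor}, we can give the explicit value of $\lambda(K/F,\psi)$,'' but the paper then states the result without carrying out the computation. Your argument supplies precisely the missing details, and the sign bookkeeping in the even case---pairing $j$ with $e-j$, isolating $j=e/2$, and converting $G_{\psi_{K_0}}\bigl(\left(\tfrac{\ast}{K_0}\right),-\varpi_0^{-(d(K_0)+1)}\bigr)$ to the form in the statement via $\left(\tfrac{-1}{K_0}\right)=(-1)^{(q^f-1)/2}$---is correct and yields the stated exponent $\tfrac{q^f-1}{e}\cdot\tfrac{e(e+2)}{8}$.

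One small point worth making explicit: when you invoke the pairing identity $\varepsilon(\chi,\psi)\,\varepsilon(\chi^{-1},\psi)=\chi(-1)$, this holds literally only for the self-dual Haar measure, whereas the paper's $d_F$, $d_{K_0}$, $d_K$ differ from self-dual by powers of $q$. These extra $q$-powers are exactly absorbed by the denominator $\varepsilon(\mathbf{1}_K,\psi_K,d_K)$ in the definition of $\lambda$, so the final answer is unaffected; but since the paper's own normalizations are somewhat implicit (compare the formula $\varepsilon(\mathbf{1}_{W_K},\psi_K)=q_K^{d(K)/2}$ used in \S\ref{subsubsec:decomposition-of-varepsilon-wrt-pi1-pi2-pi3}), it would strengthen the write-up to track the $q$-powers once and confirm they cancel, rather than asserting the collapse to a pure sign.
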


\begin{prop}
\label{prop:lambda-factor-of-tame-galois-with-unramified-order-two-ele}
If there exists an intermediate field $F\subset E\subset K$ such that
$K/E$ is unramified quadratic extension, then $f=2f_+$ is even and
$$
 \lambda(K/F,\psi_F)
 =\begin{cases}
   -(-1)^{\frac{q^{f_+}-1}2}&:\text{\rm $e$ is even},\\
   (-1)^{d(F)}&:\text{\rm $e$ is odd}.
  \end{cases}
$$
\end{prop}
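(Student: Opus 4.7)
The plan is to apply the chain rule (Proposition \ref{prop:chain-relation-of-lambda-factor}) together with Proposition \ref{prop:lambda-factor-of-tamely-ramified-galois-ext} to the intermediate tower $F\subset E\subset K$. First I observe that $E/F$ is itself Galois: the subgroup $\text{\rm Gal}(K/E)\subset\Gamma$ has order two, and Proposition \ref{prop:order-two-element-in-tamely-ramified-galois-group} forces every involution of $\Gamma$ to be central, so $\text{\rm Gal}(K/E)$ is normal in $\Gamma$. Since $K/E$ is unramified of degree $2$, one has $e(E/F)=e$ and $f(E/F)=f_+$, whence $f=2f_+$ is automatically even.

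The case $e$ odd is then immediate from Proposition \ref{prop:lambda-factor-of-tamely-ramified-galois-ext} applied to $K/F$: the formula gives $\lambda(K/F,\psi_F)=(-1)^{(f-1)d(F)}=(-1)^{(2f_+-1)d(F)}=(-1)^{d(F)}$, since $2f_+-1$ is odd. For the case $e$ even, the chain rule yields
\[
 \lambda(K/F,\psi_F)=\lambda(K/E,\psi_E)\cdot\lambda(E/F,\psi_F)^2,
\]
and I treat each factor by the same proposition. For $\lambda(K/E,\psi_E)$, the extension $K/E$ is unramified quadratic with $e(K/E)=1$, so Proposition \ref{prop:lambda-factor-of-tamely-ramified-galois-ext} (odd branch) gives $\lambda(K/E,\psi_E)=(-1)^{d(E)}$. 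Using $\mathcal{D}(E/\Bbb Q_p)=\mathcal{D}(E/F)\cdot\mathcal{D}(F/\Bbb Q_p)O_E$ and the tameness identity $v_E(\mathcal{D}(E/F))=e-1$, one gets $d(E)=e\,d(F)+(e-1)$, which for $e$ even simplifies to an odd integer; hence $\lambda(K/E,\psi_E)=-1$.

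For $\lambda(E/F,\psi_F)^2$, apply Proposition \ref{prop:lambda-factor-of-tamely-ramified-galois-ext} (even branch) to the tame Galois extension $E/F$ of ramification index $e$ and residue degree $f_+$; squaring kills the $\pm 1$ sign prefactor, leaving the square of the quadratic Gauss sum $G_{\psi_{E_0}}(\bigl(\tfrac{\ast}{E_0}\bigr),\,\cdot\,)$ over the residue field of $E_0=E\cap K_0$, which has $q^{f_+}$ elements. In the normalization of Remark \ref{remark:normalization-of-gauss-sum}, the standard identity $G_\psi(\chi_2,\,\cdot\,)^2=\chi_2(-1)$ gives $\lambda(E/F,\psi_F)^2=(-1)^{(q^{f_+}-1)/2}$. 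Combining,
\[
 \lambda(K/F,\psi_F)=(-1)\cdot(-1)^{(q^{f_+}-1)/2}=-(-1)^{(q^{f_+}-1)/2},
\]
as claimed.

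The three points requiring care are (i) the normality of $\text{\rm Gal}(K/E)$, which rests on the structural Proposition \ref{prop:order-two-element-in-tamely-ramified-galois-group}; (ii) the transport of the conductor exponent from $\psi_F$ to $\psi_E=\psi_F\circ T_{E/F}$, namely $d(E)=e\,d(F)+(e-1)$, so that Proposition \ref{prop:lambda-factor-of-tamely-ramified-galois-ext} applies with base $E$; and (iii) the squaring identity for the normalized quadratic Gauss sum, which I expect to be the main place where book-keeping of the measure conventions used in the paper must be done carefully.
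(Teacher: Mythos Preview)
Your argument is correct. The paper states this proposition in the appendix without proof, so there is nothing to compare against; your derivation via the chain rule applied to $F\subset E\subset K$, together with Proposition~\ref{prop:lambda-factor-of-tamely-ramified-galois-ext} and the quadratic Gauss sum identity $G^2=\bigl(\tfrac{-1}{E_0}\bigr)=(-1)^{(q^{f_+}-1)/2}$ (which the paper itself invokes in \S\ref{subsubsec:root-number-ramified}), supplies exactly what is missing. The three ``points requiring care'' you flag are all handled: normality of $\text{\rm Gal}(K/E)$ follows from Proposition~\ref{prop:order-two-element-in-tamely-ramified-galois-group}; the conductor identity $d(E)=e\,d(F)+(e-1)$ ensures $\psi_E=\psi_F\circ T_{E/F}$ satisfies $\{x\in E:\psi_E(xO_E)=1\}=\frak{p}_E^{-d(E)}$, so Proposition~\ref{prop:lambda-factor-of-tamely-ramified-galois-ext} applies with base $E$; and the Gauss sum squaring is the standard computation $G(\chi_2,c)\overline{G(\chi_2,c)}=1$ combined with $\overline{G(\chi_2,c)}=\chi_2(-1)G(\chi_2,c)$.
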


\subsection{$\gamma$-factors of admissible representations of Weil group}
\label{subsec:admissible-representation-of-weil-group}

\begin{dfn}\label{def:admisible-representation-of-weil-group}
The pair $(\Phi,V)$ is called an admissible representation of $W_F$ if
\begin{enumerate}
\item $V$ is a finite dimensional complex vector space and $\Phi$ is a
  group homomorphism of $W_F$ to $GL_{\Bbb C}(V)$,
\item $\text{\rm Ker}(\Phi)$ is an open subgroup of $W_F$,
\item $\Phi(\widetilde{\text{\rm Fr}})\in GL_{\Bbb C}(V)$ is
  semisimple.
\end{enumerate}
\end{dfn}

Let $(\Phi,V)$ be an admissible representation of $W_F$. Since 
$I_F=\text{\rm Gal}(F^{\text{\rm alg}}/F^{\text{\rm ur}})$ is a normal
subgroup of $W_F$, $\Phi(\widetilde{\text{\rm Fr}})\in GL_{\Bbb C}(V)$
keeps 
$$
 V^{I_F}=\{v\in V\mid\Phi(\sigma)v=v\;\forall\sigma\in I_F\}
$$
stable. Then the $L$-factor of $(\Phi,V)$ is defined by 
$$
 L(\Phi,s)=L(V,s)
 =\det\left(1-q^{-s}\cdot\Phi(\widetilde{\text{\rm Fr}})|_{V^{I_F}}
              \right)^{-1}.
$$
Since $\Phi:W_F\to GL_{\Bbb C}(V)$ is continuous group homomorphism,
we have the $\varepsilon$-factor $\varepsilon(\Phi,\psi,d(x),s)$ of
$\Phi$. Then the $\gamma$-factor of $(\Phi,V)$ is defined by
$$
 \gamma(\Phi,\psi,d(x),s)=\gamma(V,\psi,d(x),s)
 =\varepsilon(\Phi,\psi,d(x),s)\cdot
  \frac{L(\Phi\sphat,1-s)}
       {L(\Phi,s)}
$$
where $\Phi\sphat$ is the dual representation of $\Phi$.

\subsection{Symmetric tensor representation of $SL_2(\Bbb C)$}
\label{subsec:symmetric-tensor-representation-fo-sl(2)}
The complex special linear group $SL_2(\Bbb C)$ acts on the polynomial
ring $\Bbb C[X,Y]$ of two variables $X,Y$ by
$$
 g\cdot \varphi(X,Y)=\varphi((X,Y)g)
 \qquad
 (g\in SL_2(\Bbb C), \varphi(X,Y)\in\Bbb C[X,Y]).
$$
Let
$$
 \mathcal{P}_n
 =\langle X^n,X^{n-1}Y,\cdots,XY^{n-1},Y^n\rangle_{\Bbb C}
$$
be the subspace of $\Bbb C[X,Y]$ consisting of the homogeneous
polynomials of degree $n$. The action of $SL_2(\Bbb C)$ on
$\mathcal{P}_n$ defines the symmetric tensor representation 
$\text{\rm Sym}_n$ of degree $n+1$. The complex vector space
$\mathcal{P}_n$ has a non-degenerate bilinear form defined by
$$
 \langle\varphi,\psi\rangle
 =\left.
  \varphi\left(-\frac{\partial}
                     {\partial Y},\frac{\partial}
                                       {\partial X}\right)
  \psi(X,Y)\right|_{(X,Y)=(0,0)}\in\Bbb C
$$
for $\varphi,\psi\in\mathcal{P}_n$. This bilinear form is 
$SL_2(\Bbb C)$-invariant
$$
 \langle\text{\rm Sym}_n(g)\varphi,\text{\rm Sym}_n(g)\psi\rangle
 =\langle\varphi,\psi\rangle
 \quad
 (g\in SL_2(\Bbb C), \varphi,\psi\in\mathcal{P}_n)
$$
and
$$
 \langle\psi,\varphi\rangle=(-1)^n\langle\varphi,\psi\rangle
 \quad
 (\varphi,\psi\in\mathcal{P}_n).
$$
So we have group homomorphisms
$$
 \text{\rm Sym}_n:SL_2(\Bbb C)\to SO(\mathcal{P}_n)
 \;\;\text{\rm if $n$ is even}
$$
and
$$
 \text{\rm Sym}_n:SL_2(\Bbb C)\to Sp(\mathcal{P}_n)
 \;\;\text{\rm if $n$ is odd}.
$$

\subsection{Admissible representations of Weil-Deligne group}
\label{subsec:weil-deligne-group}
Fix a complex Lie group $\mathcal{G}$ such that the connected
component $\mathcal{G}^o$ is a reductive complex algebraic linear
group. Then the $\mathcal{G}^o$-conjugacy class of the group
homomorphisms 
$$
 \varphi:W_F\times SL_2(\Bbb C)\to\mathcal{G}
$$
such that
\begin{enumerate}
\item $I_F\cap\text{\rm Ker}(\varphi)$ is an open subgroup of $I_F$,
\item $\varphi(\widetilde{\text{\rm Fr}})\in\mathcal{G}$ is semi-simple,
\item $\varphi|_{SL_2(\Bbb C)}:SL_2(\Bbb C)\to\mathcal{G}^o$ is a
  morphism of complex linear algebraic group
\end{enumerate}
corresponds bijectively the equivalence classes of the triples 
$(\rho,\mathcal{G},N)$ where $N\in\text{\rm Lie}(\mathcal{G})$ is a
nilpotent element and 
$$
 \rho:W_F\to\mathcal{G}
$$
is a group homomorphism such that
\begin{enumerate}
\item $\rho|_{I_F}:I_F\to\mathcal{G}$ is continuous,
\item $\rho(\widetilde{\text{\rm Fr}})\in\mathcal{G}$ is semi-simple,
\item $\rho(g)N=|g|_F\cdot N$ for $\forall g\in W_F$ where
$$
 |\cdot|_F:W_F\xrightarrow{\text{\rm can.}}
           W_F/\overline{[W_F,W_F]}\xrightarrow{\text{\rm l.c.f.t.}}
           F^{\times}\xrightarrow{q^{-\text{\rm ord}_F(\cdot)}}
           \Bbb Q^{\times}
$$
\end{enumerate}
by the relations
$$
 \rho|_{I_F}=\varphi|_{I_F},
 \quad
 \rho(\widetilde{\text{\rm Fr}})
 =\varphi(\widetilde{\text{\rm Fr}})\cdot
  \varphi\begin{pmatrix}
          q^{-1/2}&0\\
          0&q^{1/2}
         \end{pmatrix},
 \quad
 N=d\varphi\begin{pmatrix}
            0&1\\
            0&0
           \end{pmatrix}
$$
(see \cite[Prop.2.2]{Gross-Reeder2010}). Here two triples 
$(\rho,\mathcal{G},N)$ and $(\rho^{\prime},\mathcal{G},N^{\prime})$ is
 equivalent if there exists a $g\in\mathcal{G}$ such that 
$\rho^{\prime}=g\rho g^{-1}$ and $N^{\prime}=\text{\rm Ad}(g)N$. 

The couple
 $(\varphi,\mathcal{G})$ or the triple $(\rho,\mathcal{G},N)$ is
 called an admissible representation of the Weil-Deligne group. 

Let $(r.V)$ be a continuous finite dimensional complex representation
of $\mathcal{G}$ which is algebraic on $\mathcal{G}^o$. Then the
$L$-factor associated with $(\varphi,\mathcal{G})$ and $(r,V)$ 
is defined by
$$
 L(\varphi,r,s)
 =\det\left(
   1-q^{-s}r\circ\rho(\widetilde{\text{\rm Fr}})|_{V_N^{I_F}}
             \right)^{-1},
$$
where $V_N=\{v\in V\mid dr(N)v=0\}$ and 
$$
 V_N^{I_F}
 =\{v\in V_N\mid r\circ\rho(\sigma)v=v\;\forall\sigma\in I_F\}.
$$
The $\varepsilon$-actor is defined by
$$
 \varepsilon(\varphi,r,\psi,d(x),s)
 =\varepsilon(r\circ\rho,\psi,d(x),s)\cdot
  \det\left(-q^{-s}r\circ\rho(\widetilde{\text{\r Fr}})
             |_{V^{I_F}/V_N^{I_F}}\right)
$$
where $\varepsilon(r\circ\rho,\psi,d(x),s)$ is the $\varepsilon$-factor
of the representation $(r\circ\rho,V)$ of $W_F$ defined in the
subsection \ref{subsec:admissible-representation-of-weil-group}. 
Finally the $\gamma$-factor is defined by 
$$
 \gamma(\varphi,r,\psi,d(x),s)
 =\varepsilon(\varphi,r,\psi,d(x),s)\cdot
   \frac{L(\varphi,r^{\vee},1-s)}
        {L(\varphi,r,s)}
$$
where $r^{\vee}$ is the dual representation of $r$. 

Let $\text{\rm Sym}_n$ be the symmetric tensor representation of
$SL_2(\Bbb C)$ of degree $n+1$. Then the $W_F\times SL_2(\Bbb
C)$-module $V$ has a decomposition
$$
 V=\bigoplus_{n=0}^{\infty}V_n\otimes\text{\rm Sym}_n
$$
where $V_n$ is a $W_F$-module. Then we have
$$
 V_N^{I_F}
 =\bigoplus_{n=0}^{\infty}V_n^{I_F}\otimes\text{\rm Sym}_{n,N}
$$
where $\text{\rm Sym}_{n,N}$ is the highest part of 
$\text{\rm Sym}_n$. 
Since $r\circ\rho(\widetilde{\text{\rm Fr}})$ act on 
$V_n\otimes\text{\rm Sym}_{n,N}$ by 
$q^{-n/2}r\circ\varphi(\widetilde{\text{\rm Fr}})$, we have
$$
 L(\varphi,r,s)
 =\prod_{n=0}^{\infty}\det\left(
   1-q^{-(s+n/2)}r\circ\varphi(\widetilde{\text{\rm Fr}})|_{V_n^{I_F}}
                                 \right)^{-1}.
$$
If the Haar measure $d(x)$ on the additive group $F$ and the additive
character $\psi:F\to\Bbb C^{\times}$ are normalized so
that $\int_{O_F}d(x)=1$ and
$$
 \{x\in F\mid\psi(xO_F)=1\}=O_F,
$$
then we have
$$
 \varepsilon(\varphi,r,\psi,d(x),s)
 =w(\varphi,r)\cdot q^{a(\varphi,r)(1/2-s)}
$$
where
$$
 w(\varphi,r)=\prod_{n=0}^{\infty}w(V_n)^{n+1}\cdot
              \prod_{n=1}^{\infty}\det\left(
               -\varphi(\widetilde{\text{\rm Fr}})|_{V_n^{I_F}}
                                              \right)^n
$$
and
$$
 a(\varphi,r)=\sum_{n=0}^{\infty}(n+1)a(V_n)
             +\sum_{n=1}^{\infty}n\cdot\dim V_n^{I_F}.
$$
If $\varphi|_{SL_2(\Bbb C)}=1$, then $V_n=0$ for all $n>0$ and we have
$$
 w(\varphi,r)=w(r\circ\varphi)=w(r\circ\rho),
 \quad
 a(\varphi,r)=a(r\circ\varphi)=a(r\circ\rho).
$$

\section{Symmetric or anti-symmetric forms on induced
            representations of Weil group}
\label{sec:symmetric-or-anti-symmetric-form-on-induced-rep-of-weil-groip}

Let $K/F$ be a finite Galois extension of even degree. We will assume that 
the elements of $\Gamma=\text{\rm Gal}(K/F)$ of order two are central 
\footnote{This is the case if $K/F$ is tamely ramified extension. 
See Proposition
\ref{prop:order-two-element-in-tamely-ramified-galois-group}.}. 
Fix an element $\tau\in\Gamma$ of order two. 
Let $K_+$ be the  intermediate field of $K/F$ such that 
$\text{\rm Gal}(K/K^+)=\langle\tau\rangle$, and put
$$
 U_{K/K_+}=\{\varepsilon\in O_K^{\times}\mid
              N_{K/K_+}(\varepsilon)=1\}.
$$
Take a continuous unitary character 
$\vartheta:U_{K/K_+}\to\Bbb C^{\times}$ and put 
$\widetilde\vartheta(x)=\vartheta(x^{1-\tau})$ ($x\in K^{\times}$). 
The representation space 
$V_{\vartheta}
 =\text{\rm Ind}_{K^{\times}}^{W_{K/F}}\widetilde\vartheta$ 
is the complex vector space of the $\Bbb C$-valued functions $v$ on
$\Gamma$ on which 
$(\sigma,x)
 \in W_{K/F}=\Gamma{\ltimes}_{\alpha_{K/F}}K^{\times}$ acts by
$$
 (x\cdot v)(\gamma)
 =\widetilde\vartheta(x^{\gamma})\cdot v(\gamma),
 \quad
 (\sigma\cdot v)(\gamma)
 =\widetilde\vartheta(\alpha_{K/F}(\sigma,\sigma^{-1}\gamma))
    \cdot v(\sigma^{-1}\gamma)
$$
with the fundamental class 
$[\alpha_{K/F}]\in H^2(\Gamma,K^{\times})$. 
The character $\chi_{\vartheta}$ of $V_{\vartheta}$ is
$$
 \chi_{\vartheta}(\sigma,x)
 =\begin{cases}
   0&:\sigma\neq 1,\\
   \sum_{\gamma\in\Gamma}\widetilde\vartheta(x^{\gamma})
    &:\sigma=1
  \end{cases}
$$
for $(\sigma,x)\in W_{K/F}$, which is self-conjugate, that is 
$\overline\chi_{\vartheta}=\chi_{\vartheta}$. 

Let $\nu:W_{K/F}\to\Bbb C^{\times}$ be a continuous group
homomorphism. We will look for the $\nu$-invariant $\nu$-symmetric
bilinear form on $V_{\vartheta}$, that is, the non-zero complex
bilinear form $B$ on $V_{\vartheta}$ such that
\begin{enumerate}
\item $B(g\cdot u,g\cdot v)=\nu(g)\cdot B(u,v)$ for all 
      $g\in W_{K/F}$,
\item $B(v,u)=\nu(\tau)\cdot B(u,v)$ for all $u,v\in V_{\vartheta}$.
\end{enumerate}
Note that, in this case, we have $\nu(\tau)=\pm 1$. 

If $\nu|_{K^{\times}}=1$, then 
$$
 B_{\nu}(u,v)
 =\sum_{\gamma\in\Gamma}\nu(\gamma)\cdot
   \widetilde\vartheta\left(
    \alpha_{K/F}(\gamma,\tau)\right)^{-1}\cdot
     u(\gamma)v(\gamma\tau)
 \quad
 (u,v\in V_{\vartheta})
$$
is a non-degenerate $\nu$-invariant $\nu$-symmetric 
bilinear form on $V_{\vartheta}$. For a $\rho\in\Gamma$, define 
$w_{\rho}\in V_{\vartheta}$ by
$$
 w_{\rho}(\gamma)=\begin{cases}
                   1&:\gamma=\rho,\\
                   0&:\gamma\neq\rho
                  \end{cases}
$$
and $u_{\rho}, v_{\rho}\in V_{\vartheta}$ by
$$
 u_{\rho}=\nu(\rho)^{-1}w_{\rho},
 \qquad
 v_{\rho}
 =\widetilde\vartheta\left(\alpha_{K/F}(\rho,\tau)\right)\cdot
   w_{\rho\tau}.
$$
If we fix a complete system of representatives $\mathcal{S}$ of
$\Gamma/\langle\tau\rangle$, 
then $\{u_{\rho},v_{\rho}\}_{\dot\rho\in\mathcal{S}}$ is
a $\Bbb C$-basis of $V_{\vartheta}$ such that
$$
 B_{\nu}(u_{\rho},u_{\rho^{\prime}})
 =B_{\nu}(v_{\rho},v_{\rho^{\prime}})=0,
 \quad
 B_{\nu}(u_{\rho},v_{\rho^{\prime}})
 =\begin{cases}
   1&:\rho=\rho^{\prime},\\
   0&:\rho\neq\rho^{\prime}.
  \end{cases}
$$

\begin{prop}\label{prop:existence-of-nu-invarinat-nu-symmetric-from}
Assume that
\begin{enumerate}
\item $\nu$ is of finite order,
\item $\{\sigma\in\Gamma\mid
         \widetilde\vartheta(x^{\sigma})=\widetilde\vartheta(x)\;
         \forall x\in 1+\frak{p}_K\}=\{1\}$.
\end{enumerate}
Then $V_{\vartheta}$ has $\nu$-invariant $\nu$-symmetric bilinear form
if and only if $\nu|_{K^{\times}}=1$. In this case, the form is a
constant multiple of $B_{\nu}$.
\end{prop}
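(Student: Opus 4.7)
The ``if'' direction is immediate from the explicit $B_\nu$ preceding the statement, so I concentrate on ``only if'' and uniqueness. The plan is to invoke irreducibility of $V_\vartheta$ (Proposition \ref{prop:induced-representation-of-vartheta-is-irreducible}; Assumption 2 implies the hypothesis of Remark \ref{remark:condition-for-irreducibility-of-induced-rep}) and Schur's lemma to conclude that the form is unique up to scalar if it exists, and then to analyze its matrix $b_{\rho_1,\rho_2}=B(v_{\rho_1},v_{\rho_2})$ in the basis $\{v_\rho\}$. The $K^\times$-invariance $B(x\cdot u,x\cdot v)=\nu(x)B(u,v)$ forces any nonzero entry to satisfy $\widetilde\vartheta(x^{\rho_1})\widetilde\vartheta(x^{\rho_2})=\nu(x)$ for all $x\in K^\times$, while the $\Gamma$-equivariance lets me reduce to $\rho_1=1$; Assumption 2 (in its equivalent form for all $x\in K^\times$) then pins down a unique $\rho_0\in\Gamma$ with $b_{1,\rho_0}\neq 0$, so the support of $B$ is exactly $\{(\sigma,\sigma\rho_0)\mid\sigma\in\Gamma\}$. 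The $\nu$-symmetry condition applied at $(1,\rho_0)$ immediately yields $\rho_0^2=1$, and substituting $x\mapsto x^\tau$ in $\widetilde\vartheta(x)\widetilde\vartheta(x^{\rho_0})=\nu(x)$, using $\widetilde\vartheta(x^\tau)=\widetilde\vartheta(x)^{-1}$ together with the $\Gamma$-invariance $\nu(x^\tau)=\nu(x)$, gives $\nu|_{K^\times}^2=1$.

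The crux is to show $\rho_0=\tau$. Since $(\widetilde\vartheta^{1+\rho_0})^2=\nu^2=1$ on $K^\times$, the restriction of $\widetilde\vartheta^{1+\rho_0}$ to the pro-$p$ group $1+\mathfrak p_K$ takes values in $\{\pm 1\}$; but $p\neq 2$, so any continuous character of the pro-$p$ group $1+\mathfrak p_K$ has image in a pro-$p$ subgroup of $\mathbb C^\times$, and so its image in the order-$2$ group $\{\pm 1\}$ must be trivial. Hence $\widetilde\vartheta(x)\widetilde\vartheta(x^{\rho_0})=1$ for every $x\in 1+\mathfrak p_K$, which combined with $\widetilde\vartheta(x^\tau)=\widetilde\vartheta(x)^{-1}$ rewrites as $\widetilde\vartheta(y^{\rho_0\tau})=\widetilde\vartheta(y)$ for every $y\in 1+\mathfrak p_K$; Assumption 2 now yields $\rho_0\tau=1$, i.e.\ $\rho_0=\tau$.

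Plugging $\rho_0=\tau$ into the defining identity gives $\nu(x)=\widetilde\vartheta(x)\widetilde\vartheta(x^\tau)=1$ for all $x\in K^\times$, which is exactly $\nu|_{K^\times}=1$. For the uniqueness clause, once the support of $B$ is fixed as $\{(\sigma,\sigma\tau)\mid\sigma\in\Gamma\}$ the $\Gamma$-equivariance expresses every $b_{\sigma,\sigma\tau}$ as an explicit scalar multiple of $b_{1,\tau}$, and inspection of the formula for $B_\nu$ shows it has the same support with the same recursion; hence $B$ is a complex multiple of $B_\nu$. The main obstacle is the pro-$p$ argument of the second paragraph, which is precisely where the standing hypothesis $p\neq 2$ of the paper is indispensable.
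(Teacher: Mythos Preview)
Your proof is correct, but it follows a different path from the paper's. The paper passes to a finite quotient $G=\Gamma\ltimes_{\alpha_{K/F}}K^\times/M$ (using that $\nu$ has finite order and $\widetilde\vartheta$ is continuous) and then computes the twisted Frobenius--Schur indicator
\[
\frac{1}{|G|}\sum_{g\in G}\nu(g)\,\chi_{\vartheta}(g^2),
\]
showing by a direct character-sum calculation that it equals $\nu(\tau)$ precisely when $\nu|_{O_K^\times}=1$ (after first observing $\nu(\varpi_K)=1$ from $\nu\cdot\chi_\vartheta=\chi_\vartheta$). Your argument instead works directly with the matrix $b_{\rho_1,\rho_2}$ of the form, pins down the support via the $K^\times$- and $\Gamma$-equivariance together with Assumption~2, and then isolates $\rho_0=\tau$ with a neat pro-$p$ trick (the $\pm1$-valued character on $1+\frak p_K$ must be trivial because $p\neq2$).

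Both approaches rely on Assumption~2 in the same essential way (to rigidify the Galois stabilizer), and both use the centrality of order-two elements (you use it implicitly when you commute $\tau$ past $\rho_0$). Your route is more elementary---it avoids the passage to a finite group and the Frobenius--Schur formalism---while the paper's computation makes the dichotomy between the symmetric and alternating parts more transparent and gives the indicator value $\nu(\tau)$ as a by-product. Either argument yields the uniqueness clause by Schur once irreducibility is in hand.
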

\begin{proof}
Due to the second assumption and Remark
\ref{remark:condition-for-irreducibility-of-induced-rep}, the induced
representation 
$V_{\vartheta}
 =\text{\rm Ind}_{K^{\times}}^{W_{K/F}}\widetilde\vartheta$ is
 irreducible. 
Since $\nu$ is of finite order, we can choose positive integers $s,t$
such that $M=\langle\varpi_K^s\rangle\times(1+\frak{p}_K^t)$ is a 
$\Gamma$-subgroup of $K^{\times}$ on which $\vartheta$ and $\nu$ are
trivial. Then the induced representation 
$\text{\rm Ind}_{K^{\times}}^{W_{K/F}}\vartheta$ and the character
$\nu$ factor through the canonical morphism
$$
 W_{K/F}\to G=\Gamma{\ltimes}_{\alpha_{K/F}}K^{\times}/M.
$$
So we will consider them on the finite group $G$. The it is well-known
that
$$
 \dim_{\Bbb C}\text{\rm Hom}_G
  (\overline\nu,
   \text{\rm Hom}_{\Bbb C}(V_{\vartheta},V_{\vartheta}^{\ast}))
 =\begin{cases}
   1&:\nu\cdot\chi_{\vartheta}=\chi_{\vartheta},\\
   0&:\text{\rm otherwise},
  \end{cases}
$$
where $V_{\vartheta}^{\ast}$ is the dual representation of
$V_{\vartheta}$. Since 
$T\in\text{\rm Hom}_{\Bbb C}(V_{\vartheta},V_{\vartheta}^{\ast})$
gives a complex bilinear form
$$
 B_T(u,v)=\langle u,Tv\rangle
 \qquad
 (u,v\in V_{\vartheta})
$$
with the canonical pairing 
$\langle\,,\rangle:V_{\vartheta}\times V_{\vartheta}^{\ast}
 \to\Bbb C$, and
$$
 \text{\rm Hom}_{\Bbb C}(V_{\vartheta},V_{\vartheta}^{\ast})
 =\text{\rm Sym}(V_{\vartheta},V_{\vartheta}^{\ast})\oplus
  \text{\rm Alt}(V_{\vartheta},V_{\vartheta}^{\ast}),
$$
there exists $\nu$-invarinat $\nu$-symmetric bilinear form on
$V_{\vartheta}$ if and only if
$\nu\cdot\chi_{\vartheta}=\chi_{\vartheta}$, and in this case
\begin{align*}
 \dim_{\Bbb C}\text{\rm Hom}_G(\overline\nu,
    \text{\rm Sym}(V_{\vartheta},V_{\vartheta}^{\ast}))
 &=\frac 12\left\{1+\frac 1{|G|}\sum_{g\in G}
                    \nu(g)\chi_{\vartheta}(g^2)\right\},\\
 \dim_{\Bbb C}\text{\rm Hom}_G(\overline\nu,
    \text{\rm Alt}(V_{\vartheta},V_{\vartheta}^{\ast}))
 &=\frac 12\left\{1-\frac 1{|G|}\sum_{g\in G}
                    \nu(g)\chi_{\vartheta}(g^2)\right\},
\end{align*}
that is
$$
 \frac 1{|G|}\sum_{g\in G}\nu(g)\chi_{\vartheta}(g^2)
 =\nu(\tau).
$$
Let us assume $\nu\cdot\chi_{\vartheta}=\chi_{\vartheta}$. Then the
prime element $\varpi_K$ of $K$ can be chosen so that 
$\nu(\varpi_K)=1$. In fact 
there exists a prime element $\varpi_K$of $K$ such that 
$\varpi_K^{\tau}=\pm\varpi_K$. Then
$$
 \widetilde\vartheta(\varpi_K^{\gamma})
 =\vartheta(\varpi_K^{\gamma(1-\tau)})
 =\vartheta(\varpi_K^{(1-\tau)\gamma}
 =\vartheta(\pm 1)
$$
for all $\gamma\in\Gamma$, and hence
$$
 \chi_{\vartheta}(1,\varpi_K)
 =\sum_{\gamma\in\Gamma}\widetilde\vartheta(\varpi_K^{\gamma})
 =|\Gamma|\cdot\vartheta(\pm 1)\neq 0.
$$
Then $\nu\cdot\chi_{\vartheta}=\chi_{\vartheta}$ implies 
$\nu(\varpi_K)=1$. 

Note also that $\nu(x^{\gamma})=\nu(x)$ for all $x\in K^{\times}$ and 
$\gamma\in\Gamma$, since 
$(\gamma,1)^{-1}(1,x)(\gamma,1)=(1,x^{\gamma})$. 

Since
$$
 \chi_{\vartheta}((\sigma,x)^2)
 =\begin{cases}
   0&:\sigma^2\neq 1,\\
   \sum_{\gamma\in\Gamma}\widetilde\vartheta\left(
    x^{(1+\sigma)\gamma}\alpha_{K/F}(\sigma,\sigma)^{\gamma}
                                            \right)
    &:\sigma^2=1,
  \end{cases}
$$
we have
\begin{align*}
 \sum_{g\in G}\nu(g)\chi_{\vartheta}(g^2)
 &=\sum_{\stackrel{\scriptstyle \sigma,\gamma\in\Gamma}
                  {\sigma^2=1}}
   \sum_{\dot x\in K^{\times}/M}
    \nu(\sigma,x)\cdot
    \widetilde\vartheta\left(
           x^{(1+\sigma)\gamma}\alpha_{K/F}(\sigma,\sigma)^{\gamma}
                       \right)\\
 &=\sum_{\stackrel{\scriptstyle \sigma,\gamma\in\Gamma}
                  {\sigma^2=1}}
   \sum_{\dot x\in K^{\times}/M}
    \nu(\sigma,x^{\gamma^{-1}})\cdot
    \widetilde\vartheta\left(
           x^{(1+\sigma)}\alpha_{K/F}(\sigma,\sigma)^{\gamma}
                       \right)\\
 &=\sum_{\stackrel{\scriptstyle \sigma,\gamma\in\Gamma}
                  {\sigma^2=1}}
    \nu(\sigma)\cdot
    \widetilde\vartheta\left(\alpha_{K/F}(\sigma,\sigma)^{\gamma}\right)
   \sum_{\dot x\in K^{\times}/M}
    \nu(x)\cdot\widetilde\vartheta(x^{1+\sigma}).
\end{align*}
Since $\nu(\varpi_K)=1$ and $\varpi_K^{1-\tau}=\pm 1$, we have
$$
 \widetilde\vartheta(\varpi_K^{1+\sigma})
 =\vartheta\left(\varpi_K^{(1+\sigma)(1-\tau)}\right)
 =\vartheta\left((\pm 1)^{1+\sigma}\right)=1
$$
for all $\sigma\in\Gamma$, we have
$$
 \sum_{\dot x\in K^{\times}/M}\nu(x)\cdot
                   \widetilde\vartheta(x^{1+\sigma})
 =s\sum_{\dot x\in(O_K/\frak{p}_K^t)^{\times}}
    \nu(x)\widetilde\vartheta(x^{1+\sigma}).
$$
If $\nu(x)\widetilde\vartheta(x^{1+\sigma})=1$ for all 
$x\in O_K^{\times}$, then we have
$$
 1=\nu(x^{\tau})\widetilde\vartheta(x^{\tau(1+\sigma)})
  =\nu(x)\widetilde\vartheta(x^{1+\sigma})^{-1},
$$
and hence
$$
 \widetilde\vartheta(x^{2\sigma})
 =\widetilde\vartheta(x^{-2})
 =\widetilde\vartheta(x^{2\tau})
$$
for all $x\in O_K^{\times}$. Since $x\mapsto x^2$ gives a surjection 
of $1+\frak{p}_K$ onto $1+\frak{p}_K$, we have 
$\widetilde\vartheta(x^{\sigma})=\widetilde\vartheta(x^{\tau})$ for
all $x\in 1+\frak{p}_K$, and hence $\sigma=\tau$. Since
$$
 \alpha_{K/F}(\tau,\tau)^{\tau}\alpha_{K/F}(\tau,1)^{-1}
 \alpha_{K/F}(1,\tau)\alpha_{K/F}(\tau,\tau)^{-1}
 =1
$$
and $\alpha_{K/F}(1,\tau)=\alpha_{K/F}(\tau,1)=1$, we have
$$
 \widetilde\vartheta\left(\alpha_{K/F}(\tau,\tau)^{\gamma}\right)
 =\vartheta\left(\alpha_{K/F}(\tau,\tau)^{\gamma(1-\tau)}\right)
 =1
$$
fro all $\gamma\in\Gamma$. Then we have
\begin{align*}
 |G|^{-1}\sum_{g\in G}\nu(g)\chi_{\vartheta}(g^2)
 &=\left|(O_K/\frak{p}_K^t)^{\times}\right|^{-1}\nu(\tau)
   \sum_{\dot x\in(O_K/\frak{p}_K^t)^{\times}}\nu(x)\\
 &=\begin{cases}
    0&:\nu|_{O_K^{\times}}\neq 1,\\
    \nu(\tau)&:\nu|_{O_K^{\times}}=1.
   \end{cases}
\end{align*}
This completes the proof.
\end{proof}

Sendai 980-0845, Japan\\
Miyagi University of Education\\
Department of Mathematics
\end{document}